\documentclass[11pt]{amsart}
    
\usepackage{amsmath,amsfonts,amsthm,amscd,amssymb,graphicx}
\usepackage{mathabx}

\usepackage{subfigure}
\usepackage{xcolor}

\numberwithin{equation}{section}

\usepackage{hyperref}

\usepackage{fullpage}


\newtheorem{theorem}{Theorem}[section]

\newtheorem{lemma}[theorem]{Lemma}
\newtheorem{lem}[theorem]{Lemma}
\newtheorem{proposition}[theorem]{Proposition}
\newtheorem{prop}[theorem]{Proposition}

\newtheorem{cor}[theorem]{Corollary}

\newtheorem{remark}[theorem]{Remark}

\newtheorem{rem}[theorem]{Remark}
\newtheorem{defi}[theorem]{Definition}

\newcommand{\RR}{\mathbb{R}}
\newcommand{\cO}{\mathcal{O}}

\newcommand{\CC}{\mathbb{C}}
\newcommand{\cC}{\mathcal{C}}

\def\cP{\mathcal{P}}
\def\cL{\mathcal{L}}
\def\hv{\check{v}}

\def\eps{\epsilon}

\def\FH{\widehat{H}}

\def\Fg{\widehat{g}}

\def\Fphi{\widehat{\phi}}
\def\FE{\widehat{E}}
\def\Fh{\widehat{h}}

\def\FS{\widehat{S}}

\def\FG{\widehat{G}}

\def\FA{\widehat{A}}

\def\Tg{\widetilde{g}}
\def\Trho{\widetilde{\rho}}

\def\TG{\widetilde{G}}

\def\TH{\widetilde{H}}
\def\Tphi{\widetilde{\phi}}

\def\Fg{\widehat{g}}
\def\FA{\widehat{A}}
\def\Fphi{\widehat{\phi}}
\def\FE{\widehat{E}}

\def\hv{\hat{v}}

\def\Tg{\widetilde{g}}

\def\Trho{\widetilde{\rho}}
\def\Tj{\widetilde{{\bf j}}}
\def\bj{{\bf j}}

\def\Tphi{\widetilde{\phi}}
\def\TA{\widetilde{A}}

\def\mP{\mathbb{P}}

\makeindex

\setcounter{secnumdepth}{2}

\begin{document}

\title{Linear Landau damping for the Vlasov-Maxwell system in $\RR^3$} 

\author{Daniel Han-Kwan}\address{CNRS, Laboratoire de Math\'ematiques Jean Leray (UMR 6629), Nantes Universit\'e, 44322 Nantes Cedex 03, France.} \email{daniel.han-kwan@univ-nantes.fr}
\author{Toan T. Nguyen}\address{Penn State University, Department of Mathematics, State College, PA 16802, USA.} \email{nguyen@math.psu.edu}
\author{Fr\'ed\'eric Rousset}\address{Universit\'e Paris-Saclay, CNRS, Laboratoire de Math\'ematiques d'Orsay,  91405 Orsay Cedex, France.}\email{frederic.rousset@universite-paris-saclay.fr}

\maketitle

\begin{abstract}
In this work, we consider the  relativistic  Vlasov-Maxwell system, linearized around a spatially homogeneous equilibrium, set in the whole space $\RR^3 \times \RR^3$. The equilibrium is assumed to belong to a class of radial, smooth, rapidly decaying functions. Under appropriate conditions on the initial data, 
we prove algebraic decay (of dispersive nature) for the electromagnetic field. 
For the electric scalar potential, the leading behavior is driven by a dispersive wave packet with non-degenerate phase and compactly supported amplitude, 
 while for the magnetic vector potential, it is driven by a wave packet  whose phase behaves globally  like the one of   Klein-Gordon and the amplitude has unbounded support.
 \end{abstract}

\tableofcontents



\section{Introduction}


Consider the relativistic Vlasov-Maxwell system, which models the dynamics of charged particles in a plasma, and which reads 
\begin{equation}\label{VM} 
\left \{ \begin{aligned}
\partial_t f + \hat{v} \cdot \nabla_x f + (E + \hv \times B)\cdot \nabla_v f & =0,
\\
\partial_t B + \nabla_x \times E = 0, \qquad \nabla_x \cdot E  &= \rho[f] - n_0,
\\
-\partial_t E + \nabla_x \times B = \bj[f], \qquad \nabla_x \cdot B & =0.
\end{aligned}
\right.
\end{equation}
In this system of equations, the scalar function $f(t, x, v) \ge 0$ represents the density distribution of electrons, with the particle position $x\in \RR^3$, the particle momentum $v\in \RR^3$, the  particle energy\footnote{We will systematically use the notation $\langle \cdot \rangle = \sqrt{1 + |\cdot|^2}$ in this paper.} $\langle v\rangle = \sqrt{1+|v|^2}$, and the particle velocity $\hv = v/\langle v\rangle$. The electric and magnetic fields $E(t,x), B(t,x)$ are three-dimensional vector fields which are generated by the particles themselves, solving the Maxwell equations with sources given by the charge and current densities 
\begin{equation}\label{def-rhoj} 
\rho[f] = \int_{\RR^3} f(t,x,v) \; dv , \qquad \bj[f] = \int_{\RR^3} \hv f(t,x,v) \; dv,
\end{equation}
respectively. 
In the present setting, we have considered a fixed uniform background of ions with constant density $n_0\not =0$. For simplicity all other physical constants have been set equal to 1; however, our results do not depend on this normalization.

The Vlasov-Maxwell system \eqref{VM} can be solved at least locally in time considering smooth initial data $(f|_{t=0}, E|_{t=0}, B|_{t=0} )$ satisfying the compatibility conditions
$$
\nabla_x \cdot E|_{t=0} = \int_{\RR^3} f|_{t=0} \, dv -n_0 , \quad \nabla_x \cdot B|_{t=0} = 0.
$$
In \cite{DPL}, Diperna and Lions built global weak solutions to the system.
For what concerns classical solutions, the global-in-time 3D Cauchy problem is still up to now a well-known open problem; see e.g., \cite{GlasseyStrauss, BGP, Klainerman, Pallard, LukStrain} for the classical conditional results and recent advances, except for the case of lower dimensions \cite{Glassey-rVM2.5,Glassey-rVM2}, with cylindrical symmetry \cite{WangVM}, and for small initial data \cite{GlasseyStrauss1, GlasseyStrauss2, Big1, Wang, WY, Big2}; in the latter, the large time behavior of solutions was also established. 

\subsection{Main result}

The aim of this article is to investigate the large time behavior of solutions to the linearized Vlasov-Maxwell system near non-negative spatially homogenous and radial steady states $\mu(v)$ with nonzero mass $n_0$. Specifically, we consider the linearized system
\begin{equation}\label{lin-VM}
\left \{ \begin{aligned}
 \partial_t f + \hat v \cdot \nabla_x f + ( E +  \hat v \times B ) \cdot \nabla_v \mu &= 0,
\\
 \partial_t B + \nabla_x \times E = 0, \qquad \nabla_x \cdot E  &= \rho[f],
\\
- \partial_t E + \nabla_x \times B =  \bj[f], \qquad \nabla_x \cdot B & =0, 
\end{aligned}
\right.\end{equation}
in the whole space $\RR^3 \times \RR^3$. 
The charge and current densities $\rho[f]$ and $\bj[f]$ are defined as in \eqref{def-rhoj}. 
We consider initial data 
\begin{equation}\label{data}
f_{\vert_{t=0}} = f^0(x,v), \qquad E_{\vert_{t=0}} = E^0(x), \qquad B_{\vert_{t=0}} = B^0(x),
\end{equation}
which are systematically asked to satisfy the compatibility conditions
\begin{equation}\label{data-comp}
\nabla_x \cdot E^0 = \int_{\RR^3} f^0(x,v) \, dv  , \qquad \nabla_x \cdot B^0 = 0, \qquad  \iint f^0(x,v) \, dxdv =0.
\end{equation}
In our previous work \cite{HKNR3} (see also \cite{BMM-lin}), we studied the analogous problem for the linearized Vlasov-Poisson system
\begin{equation}\label{lin-VP}
\left \{ \begin{aligned}
&\partial_t f +  v \cdot \nabla_x f + E \cdot \nabla_v \mu = 0,
\\
&\nabla_x \cdot E  = \rho[f], \quad \nabla_x \times  E =0,
\end{aligned}
\right.\end{equation}
which can be recovered from \eqref{lin-VM} in the non-relativistic limit, that is when the renormalized speed of light $c$ is kept in the equations and assumed to go to $+\infty$. For smooth, radial, and decaying equilibria $\mu$, we proved in \cite{HKNR3}  that the density $\rho[f]$ enjoys a decomposition of the form $\rho[f]= \rho^R + \rho^S$, where $\rho^R$ is a regular part which roughly decays in $1/{t^3}$ like free transport (as in the near vacuum case $\mu=0$), 
while the singular part $\rho^S$ can be written as 
$$\rho^S(t,x) = \sum_\pm \int_0^t G_\pm (t-s,x) \star_x S(s,x) \, ds, \quad S(t,x)=\int_{\mathbb{R}^d} f^0(x-vt,v) \, dv,$$
where $G_\pm$ is the propagator of a linear dispersive PDE of Schr\"odinger type, namely $G_\pm (t,x) = \int_{\mathbb{R}^d} e^{tZ_\pm(\xi) + i x \cdot \xi} A_\pm(\xi)$, where $A_\pm$ is supported in a ball and the phase $Z_\pm$ is such that $\Re Z_\pm\leq 0$ and is  flat at $\xi =0$, the order of vanishing being linked to the rate of decay of the equilibrium $\mu$. However $\Im Z_\pm$ is non-degenerate, which gives rise to dispersive decay.

\bigskip

This work can be seen as a natural extension of these results to the linearized Vlasov-Maxwell system \eqref{lin-VM}.
We shall focus precisely on non-negative radial equilibria under the form
 $$  
\mu (v)= \varphi(\langle v\rangle),$$
 with $\int_{\RR^3} \mu(v) \; dv =n_0$.
 We shall ask that  $\varphi(s)= F(s^2)$, where 
the function $F$ satisfies the following assumptions:

\begin{itemize}

\item {\bf (H1)} $F$ has an holomorphic extension to   $\{\Re s >0\}$.

\item {\bf (H2)}  $F$ and its derivatives are  
rapidly decaying along rays: there exists  $N_0 \in \mathbb{N}$ large enough
such that for every $R>0$,  for every $n\in \mathbb{N}$,  there exists
  $C_{R,n}  $ such that for all  $s \geq 1$, and for all $z \in \{ \Re\, z >0\} \cap \{ |z| \leq R\}$, 
 we have 
 \begin{equation}
 \label{decay-mu-C}  |F^{(n)} (  s z ) | \le C_{R,n}  (  s \Re\, z)^{-{N_{0} \over 2 }- n}.
 \end{equation}
\end{itemize}

Note that from the second assumption \eqref{decay-mu-C}, we get by taking $z=1$ and the chain rule  that
\begin{equation}
\label{decay-mu}
 |\varphi^{(n)} (  \langle v \rangle ) | \le C \langle v  \rangle^{-N_0 -n}.
 \end{equation}

As an important example, covered by the above assumptions, we can consider  the normalized Maxwellian $\mu(v) = \frac{n_0}{(2\pi)^{3/2}}\exp(-|v|^2/2)$. For this distribution, we have $\varphi (s) = \frac{n_0}{(2\pi)^{3/2}} e^{1/2} \exp( - s^2/2)$,
and hence $F(z)=   \frac{n_0}{(2\pi)^{3/2}} e^{1/2} \exp( - z/2)$
 which clearly satisfies all the above assumptions.
 They are also matched for the equilibrium
 $\mu(v)= { c_{0} \over (1+ |v|^2)^M}$  with  $M$  sufficiently large and where $c_{0}>0$ is such that $\int \mu(v) = n_{0}$. Indeed, we have 
 that $\varphi(s)=\frac{c_{0}}{( s^2)^M}$ and 
 hence $F(z) = \frac{c_{0}}{  z^M}$.

Our main result reads as follows\footnote{The definition of the Besov spaces $B^k_{1,2}$ and $B^0_{\infty,2}$ is given in Appendix~\ref{sec:LittlewoodPaley}.}. 

\begin{theorem}\label{theo-main} Let $\mu(v)$ be a non-negative radial equilibrium of the form $\mu (v)= \varphi(\langle v\rangle)=F(\langle v\rangle^2)$, with $F$  satisfying {\bf (H1), (H2)} for $N_{0}$ sufficiently large. Let $(f,E,B)$ be the solution to the linearized Vlasov-Maxwell system \eqref{lin-VM}-\eqref{data-comp}. 
Let  $M_0\leq \lfloor (N_0- 19)/2 \rfloor$.
Suppose that the initial data satisfy 
\begin{equation}\label{data-assumptions}
\index{e@$\eps_0$:  bound on the initial conditions}
\begin{aligned}
\epsilon_0&:= \sum_{|\beta|\le 1+M_0}\|  \langle v\rangle^{6+M_0} \partial_v^\beta f^0(\cdot,v)\|_{L^{1}_x L^1_v\cap L^\infty_v} + \| \langle x\rangle \rho[f^0]\|_{L^1_x \cap L^\infty_x}  \\
 & \quad +  \|  \nabla_x \mP \bj[f^0] \|_{L^2_x \cap  B^1_{1,2}}  + \| \nabla_x  E^0 \|_{L^2_x \cap B^2_{1,2}} + \| B^0\|_{L^2_x \cap B^3_{1,2}}     \\
 &\quad+\| \nabla_x \Delta_x^{-1} \nabla_x \times B^0\|_{ B^3_{1,2}} + \| \Delta_x^{-1} \nabla_x \times B^0 \|_{L^{1}_x}~
 < +\infty.
 \end{aligned}
\end{equation}
Then, for all $t\ge 0$, we can write 
\begin{equation}\label{rep-EBosc}
\begin{aligned}
E &=E^{osc}(t,x) + E^r(t,x),
\\
B &= B^{osc}(t,x) + B^r(t,x),
\end{aligned}
\end{equation}
where $E^{osc}=E^{osc,(1)} + E^{osc,(2)}$, $B^{osc}$ are oscillatory and satisfy for all $p \in [2,\infty)$,
\begin{equation}\label{osc-decay}
\begin{aligned}
 \| E^{osc}(t)\|_{L^p_x} + \| B^{osc}(t)\|_{L^p_x} &\lesssim \epsilon_0 \langle t\rangle^{-3(\frac{1}{2}- \frac{1}{p})} , \\
 \| E^{osc,(1)}(t)\|_{L^\infty_x} &\lesssim  \langle t\rangle^{-\frac{3}{2}}\left(\epsilon_0 + \| \nabla_x \Delta_x^{-1} \rho_0\|_{L^1_x}  + \| \nabla_x \Delta_x^{-1} \nabla_x \cdot \bj[f^0]\|_{L^{1}_x} \right), \\
\| E^{osc,(2)}(t)\|_{B^0_{\infty,2}} + \| B^{osc}_\pm(t)\|_{B^0_{\infty,2}} &\lesssim \epsilon_0 \langle t\rangle^{-\frac{3}{2}},
 \end{aligned}
\end{equation}
while the regular electric and magnetic fields $E^r,B^r$ satisfy 
\begin{equation}\label{re-decay}
\| \partial_x^\alpha E^r(t)\|_{L^p_x} +\|\partial_x^\alpha B^r(t)\|_{L^p_x}  \lesssim \epsilon_0 \langle t\rangle^{-4/3 - |\alpha|/3+1/p+\delta} , \qquad p\in [1,\infty],
\end{equation}
for any $0\le |\alpha|\le M_0$ and for any small $\delta>0$. 
\end{theorem}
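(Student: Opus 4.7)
The plan is to mimic the Fourier--Laplace strategy developed for the Vlasov--Poisson system in \cite{HKNR3}, but to replace the scalar Langmuir dispersion relation by a matrix-valued one coming from Maxwell's equations, with a clean longitudinal/transverse splitting exploiting the radial symmetry of $\mu$. First, I would take the Fourier transform in $x$, solve the linearized Vlasov equation along the free-transport characteristics by Duhamel to express $\widehat{f}(t,\xi,v)$ as a functional of the source $\widehat{S}(t,\xi)=\int e^{-it\hat v\cdot\xi}\widehat{f^0}(\xi,v)\,dv$ and of the fields $(\widehat{E},\widehat{B})$, then substitute into the Maxwell equations. This produces a $6\times 6$ Volterra system for $(\widehat{E},\widehat{B})(t,\xi)$ driven by $\widehat{S}$, whose symbol after Laplace transform in $t$ is a \emph{dielectric matrix} $\mathcal{E}(\lambda,\xi)$. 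Because $\mu$ is radial, the matrix decouples into a scalar longitudinal block acting on $\xi\otimes\xi/|\xi|^2$ and a $2\times 2$ transverse block acting on $\mathrm{Id}-\xi\otimes\xi/|\xi|^2$.

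Second, I would analyze the zeros of $\det\mathcal{E}$ in each block using hypotheses (H1)--(H2) on $F$. For the longitudinal block, the analysis is essentially the one of \cite{HKNR3}: one finds two branches $\lambda=Z_\pm^L(\xi)$ with $\Re Z_\pm^L\leq 0$ flat at $\xi=0$ (the order of vanishing controlled by $N_0$), non-degenerate $\Im Z_\pm^L$ for small $\xi$, and amplitudes $A_\pm^L(\xi)$ that can be cut off to a ball; this feeds $E^{osc,(1)}$. For the transverse block the dispersion is qualitatively different: the coupling with the Amp\`ere law produces branches $\lambda=Z_\pm^T(\xi)$ whose imaginary part behaves like $\pm\sqrt{1+|\xi|^2}$ \emph{globally} in $\xi$ (modified Klein--Gordon, with plasma frequency at $\xi=0$ and light cone at infinity) and whose amplitudes $A_\pm^T(\xi)$ have unbounded support; this gives $B^{osc}$ and the transverse piece $E^{osc,(2)}$. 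The regular remainder $(E^r,B^r)$ will collect contour-deformation residues and genuine free-transport contributions.

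Third, once the representation
\begin{equation*}
(E,B)^{osc}(t,x)=\sum_\pm\int_0^t G_\pm^{L,T}(t-s,\cdot)\star_x S(s,\cdot)(x)\,ds,\qquad G_\pm^{L,T}(t,x)=\int_{\RR^3}e^{tZ_\pm^{L,T}(\xi)+ix\cdot\xi}A_\pm^{L,T}(\xi)\,d\xi,
\end{equation*}
is in hand, I would derive decay by stationary phase on each branch. The longitudinal kernel, with compactly supported $A_\pm^L$ and non-degenerate Hessian of $\Im Z_\pm^L$, yields the $\langle t\rangle^{-3/2}$ $L^\infty$ rate and the interpolated $L^p$ rate $\langle t\rangle^{-3(1/2-1/p)}$ directly, with the zero-frequency correction $\|\nabla_x\Delta_x^{-1}\rho_0\|_{L^1}+\|\nabla_x\Delta_x^{-1}\nabla_x\cdot\bj[f^0]\|_{L^1}$ appearing because $Z_\pm^L(0)\neq 0$. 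The transverse kernel, behaving like a Klein--Gordon propagator, satisfies classical dispersive estimates only in Besov scales with a loss of derivatives; this is the source of the $B^{0}_{\infty,2}$ norms on the left and $B^{k}_{1,2}$ norms on $(E^0,B^0,\nabla_x\mathbb{P}\bj[f^0])$ on the right, with the specific regularity counts $k=1,2,3$ dictated by Littlewood--Paley summation on the dyadic pieces (including the delicate low-frequency contribution $\nabla_x\Delta_x^{-1}\nabla_x\times B^0$). Finally, the regular remainder $(E^r,B^r)$ is handled by a stationary phase on the $v$-integral in $S$, which provides the $\langle t\rangle^{-4/3}$ rate up to a small $\delta$ loss coming from a Littlewood--Paley truncation to balance high- and low-frequency contributions, with derivative losses $|\alpha|/3$ coming from commuting $\partial_x^\alpha$ through the phase $e^{-it\hat v\cdot\xi}$.

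The main obstacle, I expect, is the transverse branch: unlike in the Vlasov--Poisson case of \cite{HKNR3}, the amplitude $A_\pm^T$ does not vanish at infinity, so pointwise Fourier restriction to a ball is unavailable and one must analyze the symbol of the Green's function on the whole frequency space, matching it to the Klein--Gordon parametrix at high frequency while keeping the plasma corrections at low frequency. The ensuing dispersive estimate must therefore be carried out through a frequency-localized stationary phase on each dyadic shell and then summed in $\ell^2$, which forces the Besov spaces appearing in \eqref{data-assumptions}. A secondary but nontrivial point is the joint control of the residues at the crossings of the two branches at $\xi=0$ (where the longitudinal branch is flat and the transverse one gives the plasma frequency), which is what dictates the specific combination of weighted $L^1$ and inverse Laplacian norms on $(f^0,\rho[f^0],\bj[f^0],B^0)$ in the statement.
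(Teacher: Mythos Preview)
Your overall architecture is essentially the paper's: the longitudinal/transverse split (implemented in the paper via the scalar/vector potentials $(\phi,A)$ in Coulomb gauge together with the shifted unknown $g=f-A\cdot\nabla_v\mu$, which reduces both blocks to \emph{scalar} dispersion functions $D(\lambda,k)$ and $M(\lambda,k)$), the Schr\"odinger-like longitudinal branch with compactly supported amplitude, the Klein--Gordon-like transverse branch with unbounded amplitude forcing the $B^0_{\infty,2}/B^k_{1,2}$ dispersive framework, and the Laplace-inversion extraction of oscillatory poles plus a regular remainder --- all of this matches.

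There is, however, a genuine gap in your account of the regular remainder. You attribute the $\langle t\rangle^{-4/3}$ rate and the $|\alpha|/3$ gain per derivative to ``stationary phase on the $v$-integral in $S$'' and to commuting $\partial_x^\alpha$ through $e^{-it\hat v\cdot\xi}$. That mechanism gives $t^{-3}$ decay and a full $t^{-1}$ per derivative (as in the free relativistic transport estimates of Lemma~\ref{lem:bound-SfreeLp}), and indeed the paper shows the \emph{electric} regular part $\nabla_x\phi^r$ decays at the transport rate \eqref{fastdecayphir}. The anomalous $t^{-4/3}$ with $|\alpha|/3$ gain comes from the \emph{magnetic} regular Green function, not from transport: the key bound (Proposition~\ref{prop-Green}) is
\[
|\FH^r_k(t)|\lesssim |k|\langle k\rangle^{-2}\langle kt\rangle^{-N}+|k|\,\langle |k|^3 t\rangle^{-N}\chi_{\{|\partial_t|\ll|k|\ll1\}},
\]
and it is the second term, with the unexpected $|k|^3t$ scaling, that produces the $t^{-(n+4)/3}$ rates after integrating $|k|^{n+1}\langle|k|^3t\rangle^{-N}$ over $|k|\le1$. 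This scaling is forced by the lower bound $|M(i\tau,k)|\gtrsim |k|^2+|\tau|/|k|$ near $\tau=0$ (see \eqref{low-reMl} and Remark~\ref{rem-k3t}): one computes $|\partial_\lambda M|/|M|\sim |k|/(|k|^3+|\tau|)$, so that only $|k|^3\partial_\lambda$ (not $|k|\partial_\lambda$) leaves the resolvent bounds invariant in that region. Your proposal misses this wave-structure effect in the magnetic remainder entirely; without it you would predict the wrong rate for $(E^r,B^r)$, and you would not be able to explain why the exponent is $1/3$ rather than $1$.

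A secondary point: there is no ``crossing of the two branches at $\xi=0$'' to resolve, since the blocks are fully decoupled; both branches happen to hit the plasma frequency $\tau_0$ at the origin, but this causes no interaction. The zero-frequency corrections in $\epsilon_0$ arise instead from integrating the oscillatory electric kernel against $\nabla_x\Delta_x^{-1}S$ (which only decays like $t^{-2}$) --- the paper handles this by integrating by parts twice in time inside the convolution $G^{osc}_\pm\star_{t,x}S$, which is where the terms $\nabla_x\Delta_x^{-1}\rho_0$ and $\nabla_x\Delta_x^{-1}\nabla_x\cdot\bj[f^0]$ appear (Proposition~\ref{prop-decompE}).
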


In the proof of Theorem \ref{theo-main}, we actually obtain more precise information on the structure of the oscillating part $(E^{osc}_\pm, B^{osc}_\pm)$ of the electromagnetic field.  We shall see that the leading dynamics is oscillatory and dispersive and  will be shown to behave like a Klein-Gordon wave of the form $e^{\pm i \sqrt{1 - \Delta_x} t}$. On the other hand, the remainder has the property that derivatives in $x$ gain extra decay. Interestingly, the extra decay rate is only of $t^{-1/3}$ per extra derivative, not $t^{-1}$ as would be expected from the dispersion for the transport dynamics. This is due to the emergence of a wave structure in the magnetic part due to the long range interaction. 
We shall further detail this point in the next section, and even more in the proof of the upcoming Proposition~\ref{prop-Green}.
Note that  we need $N_{0}$ to be sufficiently large, though we did not try to optimize this value, this is not surprising
in the relativistic case. Indeed even for the free transport, we need initial data more localized in the velocity variable in order to 
get the same dispersive inequality as in the non-relativistic case, see Lemma \ref{lem:bound-SfreeLp} below for example.

\subsection{Wave structure}

The proof of Theorem \ref{theo-main}  reveals much more structure than what is stated in the theorem, of which we would like to give an overview. First, we express the electromagnetic field as 
\begin{equation}\label{express-EB}
E = -\nabla_x \phi - \partial_t A , \qquad B = \nabla_x \times A,
\end{equation}
together with the Coulomb gauge $\nabla_x \cdot A = 0$. We shall establish, see Section \ref{sec-fields}, that the electric and magnetic potentials satisfy the decomposition 
\begin{equation}\label{rep-phiAosc}
\begin{aligned}
\phi &= \sum_\pm \phi^{osc}_\pm(t,x) + \phi^r(t,x),
\\
A &= \sum_\pm A^{osc}_\pm(t,x) + A^r(t,x),
\end{aligned}
\end{equation}
where 
$$\phi^{osc}_\pm = G_{\pm}^{osc}\star_{t,x} S^\phi(t,x), \qquad A^{osc}_\pm = H_{\pm}^{osc}\star_{t,x} S^A(t,x)$$
for some integral kernels $G_{\pm}^{osc}$ and $H^{osc}_\pm$, whose Fourier transform in $x$ is of the form $e^{\pm i\tau_*(|k|) t} a_\pm(k)$ and $e^{\pm i\nu_*(|k|) t} b_\pm(k)$, with sufficiently smooth symbols $a_\pm(k), b_\pm(k)$. The electric dispersion relation is of the form of a Schr\"odinger wave $\tau_*(|k|) \sim 1 + |k|^2$ and only present in the low frequency regime $|k|\lesssim 1$, while the magnetic dispersion relation is that of a Klein-Gordon wave $\nu_*(|k|) \sim \sqrt{1 + |k|^2}$ for all values of $k\in \RR^3$. The source terms $S^\phi(t,x)$ and $S^A(t,x)$ can be expressed explicitly in terms of initial data $f^0, E^0, B^0$. This defines the oscillatory fields $E^{osc}_\pm, B^{osc}_\pm$ via \eqref{express-EB} as stated in Theorem \ref{theo-main}.  

In view of the previous works on the Vlasov-Poisson system, see, e.g., \cite{HKNR2,HKNR3,Toan}, one may expect that the remainder $\phi^r$ to behave like the sources generated by the relativistic free transport dynamics. Precisely, we shall indeed show in Section \ref{sec-decay} that 
\begin{equation}\label{fastdecayphir} \| \partial_x^\alpha\nabla_x \phi^r(t)\|_{L^p_x} \lesssim \langle t\rangle^{-3+3/p - |\alpha|} \end{equation}
for $1\le p\le \infty$. On the other hand, for what concerns the magnetic potential $A^r$, we shall prove that the corresponding Green function $H^r(t,x)$ has its Fourier transform $\FH^r_k(t)$ satisfying an estimate of the form
$$|\FH^r_k(t)|  \lesssim |k|\langle k\rangle^{-2} \langle kt\rangle^{-N}  + |k| \langle |k|^3t\rangle^{-N)} \chi_{(\text{low frequency})},$$
where $\chi_{(\text{low frequency})}$ stands for a certain low-frequency (in time and space) cutoff function and $N$ is 
large (under the form $N_{0}/2$ minus a fixed positive number).
Namely, besides the classical decay in $\langle kt\rangle$ dictated by the free transport dynamics, there is a new scaling due to the term in $\langle k^3 t\rangle$ emerging at  low frequencies. The presence of this last term is rather unexpected, though appears sharp, see Remark \ref{rem-k3t} and Section \ref{sec-GreenB} for details. This leads to a decay of order $t^{-1/3}$ as stated in the main theorem.

\subsection{Landau damping and related decay results}\label{sec-introLandau}

The classical notion of Landau damping, as discovered by Landau himself in his seminal work \cite{Landau-paper}, refers to the damping of plasma oscillations due to the resonant interaction between particles and oscillatory waves. At the linearized level of the Vlasov-Poisson system, this is only seen in the very low frequency regime where dispersive  oscillations, often referred to as Langmuir waves, are present, see \cite{HKNR3,BMM-lin} for mathematical justifications. 
In \cite{Landau-paper}, around homogeneous equilibria $\mu(v) = \varphi(|v|)$, Landau managed to calculate the damping rate of these oscillatory modes: precisely,  the zeros $\lambda = \lambda_\pm(k)$  of the electric dispersion relation $D(\lambda_\pm(k),k) = 0$ satisfy 
\begin{equation}
\label{Landau-law}
\begin{aligned}
\Re \lambda_\pm(k) \approx {1 \over 4| k |^2} \varphi'\Bigl({1 \over | k | } \Bigr) , \qquad 
\Im \lambda_\pm(k) \approx \pm ( 1 + \frac32 e_0 |k|^2 ) ,
\end{aligned}\end{equation}
for sufficiently small $|k|$, in which $e_0 =\int |v|^2 \mu(v) \; dv >0.$ 
That is, the damping (or possibly the growth) of the electric field is explicitly computed through $e^{\lambda_\pm(k) t}$. In particular if the map $|v| \mapsto \varphi(|v|)$ is decreasing, the electric field
$\FE_k(t)$ is damped, and the speed of damping is linked to $\partial_v \mu(|k|^{-1})$. This exponential  damping rate is polynomially small for power-law equilibria and exponentially small for Gaussian equilibria. The damping rate is thus very sensitive to the decay of $\mu(v)$ at  large $|v|$, as justified in the works of Glassey and Schaeffer \cite{GS-LD1,GS-LD2}. For compactly supported equilibria or in the case relativistic velocities, this Landau damping is negligible or zero as shown in \cite{GS-LD1,GS-LD2,Toan}; see also the upcoming Theorem \ref{theo-LangmuirE}.
Physically, this leads to a transfer of energy from the electric energy to the
kinetic energy of these particles, which takes place at the resonant velocity $v \sim |k|^{-1}$.  
Let us finally highlight the work of Bernstein \cite{Bernstein} who extended some aspects of Landau's work to the linearized Vlasov-Maxwell system. It is in particular shown that in the presence of an external magnetic field, some waves may not be Landau damped.

There has been recently an important mathematical activity  concerned with Landau damping (broadly understood in the sense of asymptotic stability results for solutions to Vlasov equations near non-trivial equilibria) starting with the seminal work of Mouhot and Villani \cite{MV} who studied the Vlasov-Poisson system near Penrose stable equilibria on the torus $\mathbb{T}^d$. They proved that the phase mixing stability mechanism survives for the full non-linear equations in analytic or strong enough Gevrey regularity. This result was then revisited and sharpened in \cite{BMM-apde} and more recently in \cite{GNR1,GNR2}. In \cite{Bed2}, it is proved that  the nonlinear mechanism put forward in \cite{MV} does not hold in finite regularity.
 These results were also adapted to the \emph{relativistic} Vlasov-Poisson system in \cite{Young,YoungJDE} on the torus;
 as pointed out in these works, a new feature of the relativistic case is that at the linearized level  exponential decay can hold only under a restrictive
 condition on the size of the torus.
  Let us also mention the works \cite{Tri,Bed1,CLN} which are concerned with the regime of weak collisions, described by a Fokker-Planck operator for the former twos, and by a Landau collisional operator for the latter.

Systems including magnetic fields were less studied in the mathematical literature. We mention the works  \cite{BW,CDRW} which are concerned with the
 linearized Vlasov-Poisson on the torus,  in the presence of a  constant magnetic field, in relation with the findings of \cite{Bernstein}. 
 For the study of the linear stability of general  equilibria, we refer to \cite{Lin-Strauss}.
  In \cite{HKNR1}, we have considered the relativistic Vlasov-Maxwell on the torus, and provided long (finite) time stability estimates, for well-prepared data; long time has to be understood in terms of powers of $c$, where $c$ denotes the renormalized speed of light, in the regime $c \to +\infty$.
 
 For what concerns equations set in the whole space, the work \cite{BMM-cpam} considered the screened Vlasov-Poisson system and proved stability in finite regularity around Penrose stable equilibria in dimensions $d\geq 3$. Finite regularity (as opposed to Gevrey regularity on the torus) is possible thanks to the dispersive properties of free transport in the whole space. The strategy of \cite{BMM-cpam}  is inspired by that of \cite{BMM-apde} for the torus case.
  This was later revisited in \cite{HKNR2}, where we obtained $L^1$ and $L^\infty$ dispersive  estimates for the linearized problem in the spirit
  of the estimates obtained here in Theorem \ref{theo-main}  and  then used a  Lagrangian approach for the nonlinear problem inspired by \cite{BD}; this strategy was even sharpened in \cite{HNX1,HNX2} to reach all dimensions $d\geq 2$.
  Finally, for the Vlasov-Poisson system (without screening), as already mentioned, the linearized system was studied in \cite{HKNR3,BMM-lin}, see also \cite{Ionescu-Pausader0} while a nonlinear result is obtained in \cite{IPWW} for the special case of the Poisson equilibrium; for other types of   equilibria, we refer to \cite{Toan}.

\subsection{Organization of the paper}

We conclude this introduction with an outline of the paper, which is entirely dedicated to the proof of Theorem~\ref{theo-main}. It is divided in three main parts.

\bigskip 

\noindent {\large\bf I.} In Section~\ref{sec:FL}, we initiate the Fourier-Laplace analysis of the linearized Vlasov-Maxwell system \eqref{lin-VM}. First of all, we reformulate~\eqref{lin-VM} by using the electric scalar potential $\phi$ and the magnetic vector potential $A$ in Coulomb gauge, and by introducing a \emph{shifted} distribution function, which somehow allows us to {\bf decouple} the equations for $\phi$ and $A$. Applying the Fourier-Laplace transform, we are led to study an electric dispersion (scalar) function, denoted by $D(\lambda, k)$, and a magnetic dispersion (again, scalar) function, denoted by $M(\lambda,k)$, for $\Re \lambda>0$, $k \in \mathbb{R}^3$. In Proposition~\ref{prop-nogrowth}, we first show that both dispersion functions do not admit zeros with non-zero real part, which entails {\bf spectral stability}.
The dispersion functions $D$ and $M$ are then thoroughly studied in Sections~\ref{sec:D} and \ref{sec:M}; in particular we establish identities and extension results (that is, beyond the domain  $\{\Re \lambda>0\}$), as well as decay estimates for large $|\lambda|$ or $|k|$. Let us highlight the fact that by opposition to the Vlasov-Poisson case  considered in \cite{HKNR3,BMM-lin}, the dispersion functions cannot be extended as holomorphic functions on domains of the form  $\{\Re \lambda > - \theta |k|\}$.
In the key Sections~\ref{sec-D} and \ref{sec-M}, we finally study the electric and magnetic  dispersion relations, that correspond to the zeros of $D$ and $M$. 

For the {\bf electric dispersion relation}, we show in Theorem~\ref{theo-LangmuirE} that
\begin{itemize}

\item there exists a threshold $\kappa_0>0$ such that for all $0\leq |k| \leq \kappa_0$, there exists exactly two purely imaginary zeros of $D$, while there is no zero for $|k|>\kappa_0$.
\end{itemize}
In view of the upcoming Laplace inversion analysis, we wish to understand the behavior of the zeros of an appropriate extension of 
$D$ when $|k|$ lies in a small open interval containing $\kappa_0$. 
The main   difficulty  is due to the fact that  the function $D$ cannot be extended as an holomorphic function in $\lambda$ for $|k|$ lying in
 any   interval containing $\kappa_{0}$ in its interior. We thus   prove that 

\begin{itemize}
\item the curve of zeros can be smoothly extended for $|k|<\kappa_0+\delta$ (with $0<\delta\ll 1$) as the only zeros of a smooth 
 (non-analytic) extension of $D$;  these zeros have negative real part and are  in the domain where it is possible to define an analytic  extension of $D$  thanks to our assumptions (H1-2);
 
\item the imaginary part of the zeros has a {\bf non-degenerate Hessian}, paving the way for {\bf dispersive decay estimates}.

\end{itemize}

For the {\bf magnetic dispersion relation}, we show in Theorem~\ref{theo-LangmuirB} that for all $k \in \mathbb{R}^3$ there are exactly two purely imaginary zeros of $M$, which behave globally  like a {\bf Klein-Gordon} phase.

\bigskip

\noindent {\large\bf II.} Relying on these key results, we study in Section~\ref{sec:Green} the resolvent kernels $1/D$ and $1/M$ and their associated Green functions in Fourier space, see Propositions~\ref{prop-GreenG} and \ref{prop-Green}. In particular, for the electric Green function $\FG_k(t)$ in Fourier space, we obtain in Proposition~\ref{prop-GreenG} the decomposition
\begin{equation*}
\FG_k(t) = \delta_{t=0} + \sum_\pm \FG^{osc}_{k,\pm}(t)  +   \FG^{r}_k(t) ,
\end{equation*}
where $ \FG^{r}_k(t) $ corresponds to the regular part, and 
$
\FG^{osc}_{k,\pm}(t) = e^{\lambda_\pm^{\text{elec}}(k) t} a_\pm(k),
$
and the $\lambda_\pm^{\text{elec}}(k)$ correspond to the zeros of the electric dispersion relation, which were constructed in the previous section. We obtain an analogous decomposition for the magnetic Green function in Proposition~\ref{prop-Green}.
As already said, contrary to  \cite{HKNR3,BMM-lin}, the resolvent kernels cannot be meromorphically extended to a domain of the type $\{\Re \lambda > - \theta |k|\}$.
To bypass this issue, we rely on a sufficiently accurate approximation by rational functions with appropriate poles, for which we can use Cauchy's residue theorem, and then by proving that the errors made are acceptable. 
We mention that this idea was also recently used in \cite{Toan}.
As the proofs are rather long, in order to ease global readability, we have chosen to postpone the proofs of Propositions~\ref{prop-GreenG} and \ref{prop-Green} to subsequent appendices.
In Section~\ref{sec-fields}  we finally obtain a  decomposition formula for the electromagnetic field $(E,B)$ in terms of the initial data, in relation with the decompositions obtained in Propositions~\ref{prop-GreenG} and \ref{prop-Green}.

\bigskip

\noindent {\large\bf III.}  To conclude, Section~\ref{sec-decay} relies on all previous results to prove decay for the electromagnetic field $(E,B )$, mainly using (non-)stationary phase estimates. Theorem~\ref{theo-LangmuirE} and Proposition~\ref{prop-GreenG} (respectively Theorem~\ref{theo-LangmuirB} and Proposition~\ref{prop-Green}) lead to decay in physical space for the electric part (respectively the magnetic part), as obtained in Proposition~\ref{prop-Greenphysical}; in particular the decay associated to the oscillatory part corresponds  to a Schr\"odinger (respectively Klein-Gordon) decay for the electric scalar potential (respectively magnetic vector potential) part. Decay also depends on the relativistic free transport dispersion (as recalled in Lemma~\ref{lem:bound-SfreeLp}). The resulting decay statements for the electromagnetic field are finally gathered in Propositions~\ref{lem:phidecay} and~\ref{lem:Adecay}.

\bigskip

\noindent The paper ends with five appendices.  Appendices~\ref{sec:proof1} and~\ref{sec:proof2} are dedicated to the proofs of Proposition~\ref{prop-GreenG} and~\ref{prop-Green}, respectively.
Appendix~\ref{sec:LittlewoodPaley} provides reminders of classical definitions and results related to the Littlewood-Paley decomposition. 
Appendix~\ref{sec-potential} recalls some well-known potential estimates.
Appendix~\ref{sec:OI} eventually proves some variants of classical non-stationary and stationary phase estimates, in view of the study of some oscillating integrals.

\bigskip

\noindent This work introduces a rather large amount of notations; to ease readability, we provide an Index at the end of the paper.

\section{Fourier-Laplace stability analysis}
\label{sec:FL}

In this section, we shall introduce the Fourier-Laplace approach to study the linearized Vlasov-Maxwell system \eqref{lin-VM}. We first reformulate the problem. 

\subsection{Reformulation of the linearized Vlasov-Maxwell  system}

Recalling that  $\mu(v) = \varphi(\langle v\rangle)$, we compute $\nabla_v \mu = \hv \varphi'(\langle v\rangle)$ and so $(\hat v \times B) \cdot \nabla_v\mu = 0$. Therefore, the Vlasov equation simply becomes 
\begin{equation}\label{Vlasov} \partial_t f + \hat v \cdot \nabla_x f + E \cdot \nabla_v \mu = 0.\end{equation}
To study the Maxwell equations, it is standard to introduce the electric scalar potential $\phi$ and magnetic vector potential $A$
through 
\begin{equation}\label{def-phiA}
   \index{p@$\phi(t,x)$: electric scalar potential}
      \index{A@$A(t,x)$: magnetic vector potential}
E = -\nabla_x \phi - \partial_t A , \qquad B = \nabla_x \times A, \qquad \nabla_x \cdot A = 0,
\end{equation}
in which we have chosen to impose the Coulomb gauge $\nabla_x \cdot A =0$. The Maxwell equations then become 
\begin{equation}\label{Maxwell}
 - \Delta_x \phi = \rho[f] , \qquad ( \partial_t^2 - \Delta_x) A = \mP \bj[f]
 \end{equation}
 where $\index{p@$\mathbb{P}$: Leray projector}\mP$ denotes the classical Leray projector, that is $\mP \bj = \mathbb{I} - \nabla_x (\Delta_x)^{-1} \nabla_x \cdot \bj$. Note that $\mP A = A$, since the potential $A$ is a divergence-free vector field. That is, the linearized Vlasov-Maxwell system \eqref{lin-VM} reduces to the system \eqref{Vlasov}-\eqref{Maxwell}. 

Next, as $E$ in \eqref{Vlasov} involves $\partial_tA$, it turns out to be convenient to further introduce a shifted distribution function 
\begin{equation}\label{def-g}
     \index{g@$g(t,x,v)$: shifted distribution function}
g = f - A \cdot \nabla_v\mu,
\end{equation}
we refer to \cite{NS,HKN,HKNR1} for other recent uses of this change of the unknown function.
Note that 
\begin{equation}\label{shift-rho}
 \rho[f] = \rho[g], \qquad \bj[f] = \bj[g] - \tau_0^2 A, 
 \end{equation}
 for some positive constant $\tau_0$. Indeed, we have
$$\bj[A \cdot \nabla_{v} \mu] = \int   {\mu(v) \over (1 + |v|^2)^{1 \over 2}}\left({I_{3} - \hv \otimes \hv}\right)\, dv\, A$$
and hence since $\mu$ is radial, the above matrix is scalar and we set
\begin{equation}\label{eq:deftau0} 
     \index{t@$\tau_0$}
 \tau_{0}^2 :=   \int \frac{1  + \frac{2}{3} |v|^2}{( 1 + |v|^2)^{\frac{3}{2}} } \mu(v)\, dv.
\end{equation}
Note that $\tau_0^2$ is therefore positive  for  non identically zero equilibria. It follows from \eqref{Vlasov} and \eqref{Maxwell} that 
\begin{equation}\label{VM-g} 
\left \{ 
\begin{aligned}
\partial_t g + \hat v \cdot \nabla_x g &= \nabla_x (\phi - \hv \cdot A) \cdot \nabla_v\mu,
\\
 - \Delta_x \phi = \rho[g] , &\qquad ( \partial_t^2 - \Delta_x+\tau_0^2)A = \mP \bj[g].
\end{aligned}
\right.
\end{equation}
The system \eqref{VM-g} is finally solved with initial data 
\begin{equation}\label{data-g}
g_{\vert_{t=0}} = g^0, \qquad A_{\vert_{t=0}} = A^0, \qquad \partial_t A_{\vert_{t=0}} = A^1,
\end{equation}
where 
\begin{equation}\label{new-data}
\begin{aligned}
g^0 = f^0 &- A^0 \cdot \nabla_v\mu, 
\\
A^0 = -\Delta_x^{-1} \nabla_x \times B^0, \quad A^1 &= -E^0 + \nabla_x \Delta_x^{-1} \rho[f^0].
\end{aligned}\end{equation}
In what follows, we shall focus on solving the system \eqref{VM-g}-\eqref{new-data}. The solution $(f,E,B)$ to the linearized Vlasov-Maxwell system \eqref{lin-VM} is recovered through the relations \eqref{def-phiA} and \eqref{def-g}.

\subsection{Resolvent equations}

Taking the  Laplace-Fourier transform of \eqref{VM-g} with respect to the variables $(t,x)$, and denoting by $\Tphi_k(\lambda), \TA_k(\lambda), \Tg_k(\lambda,v)$ their Laplace-Fourier coefficients, 
we get the following.

\begin{lemma}\label{lem-resolvent} Let $\Tphi_k(\lambda), \TA_k(\lambda), \Tg_k(\lambda,v)$ be the Laplace-Fourier coefficients in variables $t,x$ of $\phi, A$ and $g$, respectively.  Then, for $k\not =0$ and for all $\Re \lambda>0$, there hold
\begin{equation}\label{resolvent}
\begin{aligned}
D(\lambda,k) \Tphi_k(\lambda) &= \frac{1}{|k|^2}\int \frac{\Fg_k^0}{\lambda +  ik \cdot \hat v} dv,
\\
M(\lambda,k) \TA_k(\lambda) &=   \mP_k\int \frac{\hv \Fg_k^0 }{\lambda +  ik \cdot \hat v} dv + \lambda \FA^0_k + \FA_k^1,
\end{aligned}
\end{equation}
where $D(\lambda,k)$ and $M(\lambda,k)$ are scalar functions defined by 
\begin{equation}\label{def-DMlambda}
\begin{aligned}
   \index{D@$D(\lambda,k)$: electric dispersion function}
      \index{M@$M(\lambda,k)$: magnetic dispersion function}
D(\lambda,k) &:=  1 - \frac{1}{|k|^2} \int \frac{ ik\cdot  \hv }{\lambda +  ik \cdot \hat v} \varphi'(\langle v\rangle) dv,
\\
M(\lambda,k) &:=  \lambda^2 + |k|^2  +\tau_0^2  + \frac12 \int \frac{ (ik\cdot \hv) |\mP_k \hv|^2}{\lambda +  ik \cdot \hat v} \varphi'(\langle v\rangle)  dv,
\end{aligned}
\end{equation}
where  $\mP_k := (\mathbb{I} - \frac{k\otimes k}{|k|^2}) $ is the orthogonal  projection onto the subspace $k^\perp$. 

\end{lemma}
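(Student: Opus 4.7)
The plan is to take the Laplace transform in $t$ and the Fourier transform in $x$ of the three equations in \eqref{VM-g}, solve the resulting algebraic relation for $\Tg_k(\lambda,v)$ in terms of $\Tphi_k(\lambda)$ and $\TA_k(\lambda)$, substitute back into the Poisson equation and the wave equation for $A$, and then use the radial symmetry of $\mu$ together with the Coulomb gauge $k\cdot\TA_k=0$ to decouple the scalar equations for $\Tphi_k$ and $\TA_k$.

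First I would transform the Vlasov equation. Using $\nabla_v\mu=\hv\,\varphi'(\langle v\rangle)$, the right-hand side of \eqref{VM-g} becomes $ik\cdot\hv\,\varphi'(\langle v\rangle)(\phi-\hv\cdot A)$ in Fourier space, and the usual formula $\mathcal{L}[\partial_t g]=\lambda\Tg-\Fg^0$ gives
\begin{equation*}
(\lambda+ik\cdot\hv)\,\Tg_k(\lambda,v)=\Fg^0_k(v)+ik\cdot\hv\,\varphi'(\langle v\rangle)\bigl(\Tphi_k(\lambda)-\hv\cdot\TA_k(\lambda)\bigr),
\end{equation*}
so that for $\Re\lambda>0$ (where the denominator never vanishes) we can divide and obtain an explicit expression for $\Tg_k(\lambda,v)$.

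Next I would plug this into the transformed Poisson equation $|k|^2\Tphi_k=\int\Tg_k\,dv$. This produces the scalar equation $(|k|^2-\int\frac{ik\cdot\hv\,\varphi'(\langle v\rangle)}{\lambda+ik\cdot\hv}\,dv)\Tphi_k=\int\frac{\Fg^0_k}{\lambda+ik\cdot\hv}\,dv$ plus a coupling term $\int\frac{ik\cdot\hv\,\varphi'(\langle v\rangle)}{\lambda+ik\cdot\hv}\,\hv\,dv\cdot\TA_k$. Writing $\hv=(\hv\cdot\hat k)\hat k+\mP_k\hv$ and using that the integrand is even in $\mP_k\hv$ for $\mu$ radial, this vector integral is parallel to $\hat k$, hence is killed by $k\cdot\TA_k=0$. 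Dividing by $|k|^2$ yields the first identity of \eqref{resolvent}. For the magnetic part, the Laplace transform of $(\partial_t^2-\Delta_x+\tau_0^2)A=\mathbb{P}\bj[g]$ gives
\begin{equation*}
(\lambda^2+|k|^2+\tau_0^2)\TA_k=\mP_k\!\int\hv\,\Tg_k(\lambda,v)\,dv+\lambda\FA^0_k+\FA^1_k,
\end{equation*}
and substituting $\Tg_k$ produces three pieces: the source $\mP_k\!\int\frac{\hv\,\Fg^0_k}{\lambda+ik\cdot\hv}\,dv$, a term proportional to $\Tphi_k$ of the form $\mP_k\!\int\frac{ik\cdot\hv\,\varphi'\,\hv}{\lambda+ik\cdot\hv}\,dv$, which vanishes by the same parity argument as above, and a self-interaction term $\mP_k\!\int\frac{ik\cdot\hv\,\varphi'\,\hv\otimes\hv}{\lambda+ik\cdot\hv}\,dv\cdot\TA_k$.

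The last step is to evaluate this self-interaction matrix. Because $\TA_k\in k^\perp$, only the restriction to $k^\perp$ matters, namely $\mP_k(\hv\otimes\hv)\mP_k=\mP_k\hv\otimes\mP_k\hv$. By rotational invariance of $\mu$ around the axis $\hat k$, the integral of $\frac{ik\cdot\hv\,\varphi'(\langle v\rangle)\,\mP_k\hv\otimes\mP_k\hv}{\lambda+ik\cdot\hv}$ is a scalar multiple of $\mP_k$; taking the trace shows this scalar is $\tfrac12\int\frac{ik\cdot\hv\,|\mP_k\hv|^2\,\varphi'(\langle v\rangle)}{\lambda+ik\cdot\hv}\,dv$, which is exactly the nontrivial term in $M(\lambda,k)$. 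Moving this contribution to the left-hand side gives the second identity of \eqref{resolvent}. The only subtlety in the calculation is justifying the parity/rotational symmetry arguments used to extract a scalar dispersion relation, and justifying the vanishing of the $\Tphi$–$\TA$ cross-coupling via the Coulomb gauge; both follow from the radial form $\mu(v)=\varphi(\langle v\rangle)$ and from $k\cdot\TA_k=0$, and these are the only steps I expect to require care.
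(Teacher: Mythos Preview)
Your proposal is correct and follows essentially the same approach as the paper's proof: both take the Laplace--Fourier transform of \eqref{VM-g}, solve for $\Tg_k$, substitute into the Poisson and Klein--Gordon equations, and then use the radiality of $\mu$ together with the Coulomb gauge to kill the $\Tphi$--$\TA$ cross terms and reduce the self-interaction matrix to a scalar. The only cosmetic difference is that the paper carries out the parity and symmetry arguments via the explicit decomposition $v=|k|^{-2}(k\cdot v)k+w$, $w\in k^\perp$, whereas you phrase them as evenness in $\mP_k\hv$ and rotational invariance about $\hat k$; these are the same computations.
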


We shall refer to $D$ and $M$ as the electric dispersion function and the magnetic dispersion function, respectively.

\begin{proof}
Taking the Laplace-Fourier transform of \eqref{VM-g} we obtain 
\begin{equation}\label{VM-lambda1}
\begin{aligned}
(\lambda +  ik \cdot \hat v ) \Tg_k &= \Fg_k^0 +ik \cdot \nabla_v\mu (\Tphi_k - \hv \cdot \TA_k) 
\end{aligned}\end{equation}
and 
\begin{equation}\label{VM-lambda2}
\begin{aligned}
|k|^2\Tphi_k &= \Trho_k[g], 
\\(\lambda^2 + |k|^2+\tau_0^2) \TA_k &= \mP_k\Tj_k[g] + \lambda \FA^0_k + \FA_k^1,
\end{aligned}\end{equation}
where $\mP_k = (\mathbb{I} - \frac{k\otimes k}{|k|^2}) $. We first write 
$$  \Tg_k  = \frac{  ik \cdot \nabla_v\mu}{\lambda +  ik \cdot \hat v} (\Tphi_k - \hv \cdot \TA_k)+ \frac{\Fg_k^0}{\lambda +  ik \cdot \hat v},$$
which gives 
$$
\begin{aligned} 
\Trho_k [g] 
&=  \int \frac{ ik\cdot\nabla_v \mu}{\lambda +  ik \cdot \hat v}  (\Tphi_k - \hv \cdot \TA_k) \; dv + \int \frac{\Fg_k^0}{\lambda +  ik \cdot \hat v} dv
\\
&= \Big( \int \frac{ ik\cdot  \nabla_v\mu}{\lambda +  ik \cdot \hat v}  dv\Big)  \Tphi_k-  \int \frac{ ik\cdot\nabla_v \mu}{\lambda +  ik \cdot \hat v}  \hv \cdot \TA_k  dv + \int \frac{\Fg_k^0}{\lambda +  ik \cdot \hat v} dv .
\end{aligned}$$
Let us study the second integral term. For each $k\not =0$, we may write 
\begin{equation}\label{change-vw0}
v = |k|^{-2}(k\cdot v) k + w, \qquad k\cdot w =0.
\end{equation}
It follows that $|v|^2 = |k|^{-2}(k\cdot v)^2 + |w|^2$, and so $\langle v\rangle$ is an even function in $w$. Therefore, recalling that $ik \cdot \TA_k =0$ and  $\nabla_v \mu = \hv \varphi'(\langle v\rangle)$, we compute 
$$
\begin{aligned}
\int \frac{ ik\cdot\nabla_v \mu}{\lambda +  ik \cdot \hat v}  \hv \cdot \TA_k  dv 
&= \int \frac{ ik\cdot\hv}{\lambda +  ik \cdot \hat v}  \frac{w \cdot \TA_k}{\langle v\rangle}  \varphi'(\langle v\rangle) dv =0.
\end{aligned}
$$
This proves that 
$$
\begin{aligned} 
\Trho_k [g] 
&= \Big( \int \frac{ ik\cdot  \nabla_v\mu}{\lambda +  ik \cdot \hat v}  dv\Big)  \Tphi_k + \int \frac{\Fg_k^0}{\lambda +  ik \cdot \hat v} dv,
\end{aligned}$$
which, together with the first equation in \eqref{VM-lambda2}, yields the closed equation for $\Tphi_k(\lambda)$ as stated. 

Similarly, we compute
$$
\begin{aligned} 
\Tj_k [g] 
&=  \int \hv  \frac{  ik \cdot \nabla_v\mu}{\lambda +  ik \cdot \hat v} (\Tphi_k - \hv \cdot \TA_k)\; dv + \int \frac{ \hv \Fg_k^0}{\lambda +  ik \cdot \hat v} dv
\\
&= \Big( \int \frac{ (ik\cdot\hv)\hv}{\lambda +  ik \cdot \hat v} \varphi'(\langle v\rangle) dv\Big)  \Tphi_k  - \Big(\int \frac{(ik\cdot \hv) \hv\otimes \hv }{\lambda +  ik \cdot \hat v} \varphi'(\langle v\rangle)  dv\Big)\TA_k + \int \frac{\hv \Fg_k^0 }{\lambda +  ik \cdot \hat v} dv.
\end{aligned}$$
As above, we write $v = |k|^{-2}(k\cdot v) k + w$, with $k\cdot w =0$, and so 
$$
\int \frac{ (ik\cdot\hv)\hv}{\lambda +  ik \cdot \hat v} \varphi'(\langle v\rangle) dv = -\frac{1}{|k|^2} \int \frac{ (ik\cdot\hv)^2 k}{\lambda +  ik \cdot \hat v} \varphi'(\langle v\rangle) dv + \int \frac{ik\cdot\hv}{\lambda +  ik \cdot \hat v} \frac{w}{\langle v\rangle}\varphi'(\langle v\rangle) dv,
$$
in which the last integral vanishes, since  $\langle v\rangle$ is an even function in $w$. On the other hand, the first integral on the right is parallel to the vector $k$, and so it is in the kernel of the projection $\mP_k$. This proves 
$$
\begin{aligned} 
\mP_k\Tj_k [g] 
&=  - \Big(\mP_k\int \frac{(ik\cdot \hv) \hv\otimes \hv }{\lambda +  ik \cdot \hat v} \varphi'(\langle v\rangle)  dv\Big)\TA_k +  \mP_k\int \frac{\hv \Fg_k^0 }{\lambda +  ik \cdot \hat v} dv .
\end{aligned}$$
Let us further study the first integral term on the right. Again, setting $ w = v -   \frac{(k\cdot v)k}{|k|^2}$ as above and using $\mP_k(k\otimes k) =0$,  the fact that $\langle v\rangle$ is even in $w$ and the radial symmetry,  we can write  
$$
\begin{aligned}
 \mP_k\int \frac{(ik\cdot \hv) \hv\otimes \hv }{\lambda +  ik \cdot \hat v} \varphi'(\langle v\rangle)  \;dv 
 & 
 = \mP_k 
 \int \frac{(ik\cdot \hv) w\otimes w}{\lambda +  ik \cdot \hat v} \frac{\varphi'(\langle v\rangle)}{\langle v\rangle^2}  \;dv
 \\
 & 
 = \frac12 \mP_k 
 \int \frac{(ik\cdot \hv) |w|^2}{\lambda +  ik \cdot \hat v} \frac{\varphi'(\langle v\rangle)}{\langle v\rangle^2}  \;dv.
\end{aligned}
$$
Note that by definition, $w = \mP_k v$ and $\mP_k \TA_k = \TA_k$, since $k \cdot \TA_k =0$. This together with the second equation in \eqref{VM-lambda2} yields the closed equation for $\TA_k(\lambda)$ as stated. The lemma follows. 
\end{proof}

\begin{lemma}\label{lem-rewDM} Let $D(\lambda,k)$ and $M(\lambda,k)$ be defined as in 
\eqref{def-DMlambda}. Then, for each $k\not =0$ and $\Re \lambda > 0$, we may write 
\begin{equation}\label{rew-DMlambda}
\begin{aligned}
D(\lambda,k) &=  1  - \frac{1}{|k|^2} \int_{-1}^1 \frac{ u \kappa(u)}{-i\lambda/|k| +  u}\; du ,
\\
M(\lambda,k) &=  \lambda^2 + |k|^2  + \frac{i\lambda}{2|k|}  \int_{-1}^1 \frac{ q(u)}{-i\lambda/|k| + u} 
\; du ,
\end{aligned}
\end{equation}
where, for $u\in (-1,1)$, we have set 
\begin{equation}\label{def-kqu}
   \index{k@$\kappa(u)$}
      \index{q@$q(u)$}
\begin{aligned}
\kappa(u) :=2\pi\int_{1/\sqrt{1-u^2}}^\infty \varphi'(s) s^2\; ds
, \qquad q(u) :=-4\pi (1-u^2)\int_{1/\sqrt{1-u^2}}^\infty \varphi(s) s\; ds.
\end{aligned}
\end{equation}
Moreover, both $\kappa(u)$ and $q(u)$ are even and non-positive functions on $[-1,1]$. 
\end{lemma}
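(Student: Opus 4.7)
My plan is to reduce the three-dimensional velocity integrals in \eqref{def-DMlambda} to one-dimensional integrals in $u$ by using spherical coordinates aligned with $k$.

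First I would change variables from $v \in \RR^3$ to $(s,u,\phi)$, where $s=\langle v\rangle$, $u=\hat k\cdot\hv=k\cdot v/(|k|\langle v\rangle)$, and $\phi$ is the azimuthal angle around the axis $\hat k=k/|k|$. A direct computation (using $|v|=\sqrt{s^2-1}$, $d|v|=s\,ds/\sqrt{s^2-1}$, and $u = \cos\theta\,\sqrt{s^2-1}/s$ with $\theta$ the polar angle) yields the Jacobian
$$dv = s^2\, ds\, du\, d\phi,$$
on the region $u\in(-1,1)$, $s\in[1/\sqrt{1-u^2},\infty)$. Since $k\cdot\hv=|k|u$ and $|\mP_k\hv|^2 = |\hv|^2-u^2 = 1-1/s^2-u^2$, all integrands are $\phi$-independent and the $\phi$-integration contributes a factor $2\pi$.

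For $D(\lambda,k)$ the reduction is immediate: after integrating in $\phi$ and then in $s$, one picks up exactly the function $\kappa(u)=2\pi\int_{1/\sqrt{1-u^2}}^\infty\varphi'(s)s^2\,ds$, and rewriting $i|k|u/(\lambda+i|k|u)=u/(-i\lambda/|k|+u)$ produces the first formula in \eqref{rew-DMlambda}. For $M(\lambda,k)$, the same change of variables leads to the inner integral
$$I(u):=\int_{\eta(u)}^\infty\bigl((1-u^2)s^2-1\bigr)\varphi'(s)\,ds,\qquad \eta(u):=1/\sqrt{1-u^2}.$$
The bracket vanishes at $s=\eta(u)$ (by construction of $\eta$), and $\varphi$ decays rapidly at infinity thanks to \eqref{decay-mu}, so one integration by parts gives $I(u)=-2(1-u^2)\int_{\eta(u)}^\infty s\varphi(s)\,ds$. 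This produces
$$\frac12\int\frac{(ik\cdot\hv)|\mP_k\hv|^2}{\lambda+ik\cdot\hv}\varphi'(\langle v\rangle)\,dv=\frac12\int_{-1}^1\frac{i|k|u}{\lambda+i|k|u}\,q(u)\,du,$$
with $q(u)$ as defined in \eqref{def-kqu}. To match the stated formula, I split $i|k|u/(\lambda+i|k|u)=1-\lambda/(\lambda+i|k|u)$: the second term, after the identity $1/(\lambda+i|k|u)=-(i/|k|)/(-i\lambda/|k|+u)$, yields the claimed integral term; the constant part will cancel $\tau_0^2$ provided one checks the identity
\begin{equation}\label{tau0check}
\tau_0^2=-\tfrac12\int_{-1}^1 q(u)\,du.
\end{equation}
To verify \eqref{tau0check}, I would transport the right-hand side back to a $v$-integral (or equivalently compute the $u$-integral $\int_{-a}^a(1-u^2)\,du=\frac{2(2s^2+1)\sqrt{s^2-1}}{3s^3}$ with $a=\sqrt{1-1/s^2}$ after Fubini), which converts the right-hand side to $\frac{4\pi}{3}\int_1^\infty\frac{(2s^2+1)\sqrt{s^2-1}}{s^2}\varphi(s)\,ds$; passing $\tau_0^2$ in \eqref{eq:deftau0} to spherical coordinates in $|v|$ produces the same expression.

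Finally, parity of $\kappa$ and $q$ is immediate, since each depends on $u$ only through $\eta(u)=1/\sqrt{1-u^2}$. The sign statement for $q$ is clear from the formula, as $\varphi\ge 0$ and $1-u^2\ge 0$. For $\kappa$, one more integration by parts gives $\kappa(u)=-2\pi\,\eta(u)^2\,\varphi(\eta(u))-4\pi\int_{\eta(u)}^\infty s\varphi(s)\,ds$, and both terms are nonpositive (the boundary term at $s=\infty$ vanishes by \eqref{decay-mu}). The only step requiring genuine care is the absorption of the $\tau_0^2$ term for $M$, i.e.\ the verification of \eqref{tau0check}; the rest is a bookkeeping exercise.
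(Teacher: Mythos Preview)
Your proof is correct and follows essentially the same approach as the paper. The only cosmetic differences are that the paper uses the intermediate change of variables $(u,w)$ with $w\in k^\perp$ (and only afterwards passes to polar coordinates in $w$ and the substitution $s=\langle w\rangle/\sqrt{1-u^2}$), whereas you go directly to $(s,u,\phi)$; and for $M$ the paper absorbs $\tau_0^2$ \emph{before} the change of variables via the identity $\tau_0^2=-\tfrac12\int|\mP_k\hv|^2\varphi'(\langle v\rangle)\,dv$, while you verify the equivalent identity \eqref{tau0check} afterwards---the paper in fact records exactly your identity as \eqref{comp-tau0}.
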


\begin{proof}
For $k\not =0$, we introduce the change of variables $v \mapsto (u,w)$ defined by 
\begin{equation}\label{change-vw}
u := \frac{k\cdot v}{|k|\langle v\rangle} , \qquad w := v -   \frac{(k\cdot v)k}{|k|^2},
\end{equation}
with $u \in [-1,1]$ and $w\in k^\perp$, the hyperplane orthogonal to $k$. Note that the Jacobian determinant is  
$$J_{u,w} = \langle v\rangle (1-u^2)^{-1}, $$
with  $\langle v\rangle  =\langle w\rangle / \sqrt{1-u^2}$. Therefore, 
\begin{equation}\label{rew-Dlambda}
\begin{aligned}
D(\lambda,k) 
&=  1 - \frac{1}{|k|^2} \int_{\RR^3} \frac{ ik\cdot  \hv }{\lambda +  ik \cdot \hat v} \varphi'(\langle v\rangle) dv
\\
&=  1 - \frac{1}{|k|^2} \int_{-1}^1 \frac{ u }{-i\lambda/|k| +  u} \Big(\int_{w\in k^\perp}\varphi'\Big(\frac{\langle w\rangle}{\sqrt{1-u^2}}\Big) \frac{\langle w\rangle}{(1-u^2)^{3/2}} \; dw\Big) \;du
\\
&=  1 - \frac{1}{|k|^2} \int_{-1}^1 \frac{ u \kappa(u)}{-i\lambda/|k| +  u}  \;du,
\end{aligned}
\end{equation}
which is the formulation \eqref{rew-DMlambda} for $D(\lambda,k)$, where we have set for the moment
\begin{equation}\label{def-kuw}
\begin{aligned}
\kappa(u) :&= \int_{w\in k^\perp}\varphi'\Big(\frac{\langle w\rangle}{\sqrt{1-u^2}}\Big) \frac{\langle w\rangle}{(1-u^2)^{3/2}} \; dw \\
&= \int_{\mathbb{R}^2}\varphi'\Big(\frac{\langle w\rangle}{\sqrt{1-u^2}}\Big) \frac{\langle w\rangle}{(1-u^2)^{3/2}} \; dw.
\end{aligned}
\end{equation}
Before simplifying the expression of $\kappa$,  we can also  compute  $M(\lambda,k) $. First, by radial symmetry, note that 
\begin{equation}
\label{eq:tau0sym}
\tau_0^2 = -\frac13 \int |\hv|^2 \varphi'(\langle v\rangle)\;dv = -\frac12 \int |\mP_k\hv|^2 \varphi'(\langle v\rangle)\;dv
\end{equation}
and so, in view of the definition of $M$ in \eqref{def-DMlambda}, we can write  
\begin{equation}\label{new-Mlambda}
\begin{aligned}
M(\lambda,k) 
&=
\lambda^2 + |k|^2  - \frac{\lambda}{2} \int \frac{ |\mP_k\hv|^2 }{\lambda +  ik \cdot \hat v} \varphi'(\langle v\rangle)  dv.
\end{aligned}
\end{equation}
Now noting that $ \mP_k \hat v = \langle v\rangle^{-1} \mP_k v = \langle v\rangle^{-1}w$ and so 
$$ |\mP_k \hat v|^2 = |w|^2 \langle v\rangle^{-2} = (1-u^2) |w|^2\langle w\rangle^{-2},$$
we thus have 
\begin{equation}\label{rew-Mlambda}
\begin{aligned}
M(\lambda,k) 
&=  \lambda^2 + |k|^2   + \frac{i\lambda}{2|k|} \int_{-1}^1 \frac{1 }{-i\lambda/|k| + u} \Big(\int_{\RR^2}\varphi'\Big(\frac{\langle w\rangle}{\sqrt{1-u^2}}\Big) \frac{|w|^2}{\langle w\rangle\sqrt{1-u^2}} \; dw\Big) \; du 
\\
&=  \lambda^2 + |k|^2  + \frac{i\lambda}{2|k|}   \int_{-1}^1 \frac{ q(u)}{-i\lambda/|k| + u} 
\; du ,
\end{aligned}
\end{equation}
which is the formulation \eqref{rew-DMlambda} for $M(\lambda,k)$, where we have defined 
\begin{equation}\label{def-quw}
\begin{aligned}
q(u) :&= \int_{w\in k^\perp}\varphi'\Big(\frac{\langle w\rangle}{\sqrt{1-u^2}}\Big) \frac{|w|^2}{\langle w\rangle\sqrt{1-u^2}} \; dw \\
&= \int_{\mathbb{R}^2}\varphi'\Big(\frac{\langle w\rangle}{\sqrt{1-u^2}}\Big) \frac{|w|^2}{\langle w\rangle\sqrt{1-u^2}} \; dw.
\end{aligned}
\end{equation}
It remains to prove that $\kappa(u)$ and $q(u)$ defined as in \eqref{def-kuw}-\eqref{def-quw} are indeed of the same form as in \eqref{def-kqu}. To proceed, for each $u \in (-1,1)$, we parametrize the hyperplane $\mathbb{R}^2$ via polar coordinates with radius $r = |w|$, and then introduce $s = \sqrt{1+r^2} / \sqrt{1-u^2}$, giving
$$
\begin{aligned}
\kappa(u) &= \int_{\mathbb{R}^2}\varphi'\Big(\frac{\langle w\rangle}{\sqrt{1-u^2}}\Big) \frac{\langle w\rangle}{(1-u^2)^{3/2}} \; dw= 2\pi \int_0^\infty \varphi'\Big(\frac{\sqrt{1+r^2}}{\sqrt{1-u^2}}\Big)\frac{\sqrt{1+r^2}}{(1-u^2)^{3/2}} \; r dr 
\\&= 2\pi\int_{1/\sqrt{1-u^2}}^\infty \varphi'(s) s^2\; ds.
\end{aligned}
$$
We may now integrate the above integral by parts in $s$ to get 
$$
\begin{aligned}
\kappa(u) &= -\frac{2\pi }{1-u^2}\varphi\Big(\frac{1}{\sqrt{1-u^2}}\Big)  - 4\pi\int_{1/\sqrt{1-u^2}}^\infty \varphi(s) s\; ds.
\end{aligned}
$$
Similarly, we compute
$$
\begin{aligned}
q(u) &=\int_{\mathbb{R}^2}\varphi'\Big(\frac{\langle w\rangle}{\sqrt{1-u^2}}\Big) \frac{|w|^2}{\langle w\rangle\sqrt{1-u^2}} \; dw= 2\pi \int_0^\infty \varphi'\Big(\frac{\sqrt{1+r^2}}{\sqrt{1-u^2}}\Big)\frac{r^2}{\sqrt{1+r^2}\sqrt{1-u^2}} \; r dr 
\\&= 2\pi\int_{1/\sqrt{1-u^2}}^\infty \varphi'(s) \Big( s^2 (1-u^2) - 1\Big)\; ds.
\end{aligned}
$$
Thus, integrating by parts in $s$ gives \eqref{def-kqu}, upon noting that the boundary terms vanish. 
The lemma follows. 
\end{proof}

\begin{remark}\label{rem-tau0} Recall the definition of $\tau_0^2$ in~\eqref{eq:deftau0} and the formula~\eqref{eq:tau0sym}. We have the identity
\begin{equation}\label{comp-tau0}\tau_0^2  = - \frac12 \int  |\mP_k\hv|^2 \varphi'(\langle v\rangle) \; dv = - \frac12 \int_{-1}^1 q(u) \; du
\end{equation}
where $q(u)$ is defined as in \eqref{def-kqu}, by calculations similar to \eqref{rew-Mlambda}-\eqref{def-quw}. 
Likewise, still by radial symmetry, we also have
\begin{equation}\label{comp-tau0-2}\tau_0^2  =- \int_{-1}^1 u^2 \kappa (u) \; du.
\end{equation}
\end{remark}

\subsection{Spectral stability}

In this section, we prove that there is no exponential growing mode of the linearized Vlasov-Maxwell system \eqref{VM-g}. Namely, we obtain the following. 

\begin{proposition}\label{prop-nogrowth} 
For any non-negative radial equilibria $\mu(v)= \varphi(\langle v\rangle)$ in $\RR^3$, the linearized system \eqref{VM-g} has no nontrivial solution of the form $e^{\lambda t + ik\cdot x} (\Tg_k, \Tphi_k, \TA_k)$ with $\Re \lambda \neq 0$ for any nonzero triple $(\Tg_k, \Tphi_k, \TA_k)$. 
\end{proposition}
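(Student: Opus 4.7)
The plan is to substitute the plane-wave ansatz $(g,\phi,A)=e^{\lambda t + ik\cdot x}(\Tg_k,\Tphi_k,\TA_k)$ directly into the reformulated system \eqref{VM-g} and reduce the question to a non-vanishing claim for the dispersion functions $D(\lambda,k)$ and $M(\lambda,k)$. For $k\ne 0$ with $\Re\lambda>0$, the derivation carried out in the proof of Lemma~\ref{lem-resolvent}, but with the right-hand side (initial data) set to zero, collapses to the two scalar conditions
\[
D(\lambda,k)\Tphi_k = 0, \qquad M(\lambda,k)\TA_k = 0.
\]
Thus it is enough to prove that $D$ and $M$ have no zeros in $\{\Re\lambda>0\}\times(\RR^3\setminus\{0\})$; once $\Tphi_k=\TA_k=0$, the Vlasov equation becomes $(\lambda+ik\cdot\hv)\Tg_k=0$, which forces $\Tg_k=0$ since $\Re\lambda>0$ and $ik\cdot\hv$ is purely imaginary. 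The case $\Re\lambda<0$ is reduced to the previous one by the evenness $D(-\lambda,k)=D(\lambda,k)$ and $M(-\lambda,k)=M(\lambda,k)$, which follows from the change of variable $v\mapsto-v$ combined with the radiality of $\mu$. The case $k=0$ is handled separately: the Vlasov equation yields $\lambda\Tg_0=0$, so $\Tg_0=0$; the source term $\mP_0\Tj_0[g]$ then vanishes and the $A$ equation forces $\lambda^2=-\tau_0^2$, so $\lambda$ must be purely imaginary.

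The nonvanishing of $D$ uses the representation from Lemma~\ref{lem-rewDM}. Setting $z=i\lambda/|k|$ with $\Im z=\Re\lambda/|k|>0$ and expanding $\tfrac{1}{u-z}=\tfrac{u-\bar z}{|u-z|^2}$, I would compute
\[
|k|^2 D(\lambda,k)= |k|^2 - \int_{-1}^1 \frac{u^2\kappa(u)}{|u-z|^2}\,du + \bar z \int_{-1}^1 \frac{u\kappa(u)}{|u-z|^2}\,du.
\]
Taking imaginary parts and using $\Im z\ne 0$, the condition $\Im D=0$ forces $\int_{-1}^1 u\kappa(u)|u-z|^{-2}du=0$. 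Substituted back into the real part, this yields
\[
|k|^2\Re D(\lambda,k) = |k|^2 - \int_{-1}^1 \frac{u^2\kappa(u)}{|u-z|^2}\,du \;\geq\; |k|^2 > 0,
\]
since $\kappa(u)\le 0$ by Lemma~\ref{lem-rewDM}. This rules out $D=0$.

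The nonvanishing of $M$ is handled by an analogous but slightly subtler manipulation. Starting from the representation $M(\lambda,k)=\lambda^2+|k|^2+\tfrac{i\lambda}{2|k|}\int_{-1}^1 q(u)(u-z)^{-1}du$ and splitting real and imaginary parts of the integral with $\lambda=a+ib$, one obtains
\[
\Im M = 2ab + \frac{a}{2|k|}\int_{-1}^1 \frac{u\,q(u)}{|u-z|^2}\,du.
\]
Imposing $\Im M=0$ (and using $a=\Re\lambda\ne 0$) eliminates the cross term $-\tfrac{b}{2|k|}\int u q(u)|u-z|^{-2}du$ in $\Re M$ and produces the key cancellation $-b^2+2b^2=+b^2$, leading to
\[
\Re M(\lambda,k) = |\lambda|^2 + |k|^2 - \frac{|\lambda|^2}{2|k|^2}\int_{-1}^1 \frac{q(u)}{|u-z|^2}\,du \;\geq\; |\lambda|^2+|k|^2 > 0,
\]
since $q(u)\le 0$. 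This contradicts $M=0$ and completes the proof.

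The main obstacle will be the algebraic manipulation for $M$: one has to identify precisely the right combination to expose $|\lambda|^2+|k|^2$ as a strictly positive lower bound after imposing $\Im M=0$. The nonpositivity of the kernels $\kappa$ and $q$ (already established in Lemma~\ref{lem-rewDM} from the mere non-negativity and radiality of $\mu$) is what makes both signs work without any additional monotonicity assumption on $\varphi$.
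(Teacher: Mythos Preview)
Your proposal is correct and follows essentially the same approach as the paper: reduce to the nonvanishing of $D$ and $M$, split into real and imaginary parts using the representation of Lemma~\ref{lem-rewDM}, use the vanishing of the imaginary part to eliminate the cross term, and conclude from $\kappa\le 0$, $q\le 0$. Your algebra for $M$ (the cancellation producing $|\lambda|^2+|k|^2$) matches the paper's computation exactly; you are also slightly more careful than the paper in spelling out the reduction for $\Re\lambda<0$ via the evenness $D(-\lambda,k)=D(\lambda,k)$, $M(-\lambda,k)=M(\lambda,k)$, and in discussing $k=0$.
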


\begin{proof} In view of the resolvent equations \eqref{resolvent}, it suffices to prove that  for each $k\in \RR^3$, there are no zeros of the electric or magnetic dispersion relation: $D(\lambda,k) =0$ or $M(\lambda,k)=0$ with $\Re \lambda \neq 0$. 

We first study the electric dispersion relation $D(\lambda,k) =0$. Using \eqref{rew-DMlambda} for $\lambda = \gamma + i\tau$, we write 
$$
\begin{aligned}
D(\gamma + i\tau,k) &=  1  - \frac{1}{|k|^2} \int_{-1}^1 \frac{ u \kappa(u)}{-i\gamma/|k| + \tau/|k|+  u}\; du 
\\
&=  1  - \frac{1}{|k|^2} \int_{-1}^1 \frac{ u (\tau/|k|+  u)\kappa(u)}{|-i\gamma/|k| + \tau/|k|+  u|^2}\; du - \frac{i\gamma}{|k|^3} \int_{-1}^1 \frac{ u \kappa(u)}{|-i\gamma/|k| + \tau/|k|+  u|^2}\; du .
\end{aligned}
$$
Now fix $k\in \RR^3$, and suppose that $D(\gamma + i\tau,k)=0$ for some $\gamma\neq0$. Taking the imaginary part of the above identity yields 
$$ \int_{-1}^1 \frac{ u \kappa(u)}{|-i\gamma/|k| + \tau/|k|+  u|^2}\; du  = 0.$$
Plugging this identity into $D(\gamma+ i\tau,k)$, we get 
$$
\begin{aligned}
D(\gamma + i\tau,k) 
& = 1 - \frac{1}{|k|^2} \int_{-1}^1 \frac{u^2\kappa(u)}{|-i\gamma/|k| + \tau/|k|+  u|^2}\; du,
\end{aligned}
$$
which never vanishes, since $\kappa(u)\le 0$ by Lemma \ref{lem-rewDM}. That is, $D(\lambda,k)$ never vanishes for $\Re \lambda \neq 0$. 

Similarly, we study the magnetic dispersion relation $M(\lambda,k)=0$. Using \eqref{rew-DMlambda} 
for $\lambda = \gamma + i\tau$, we compute 
$$
\begin{aligned}
M(\gamma+ i\tau,k) &= \gamma^2 - \tau^2 + 2 i \gamma \tau + |k|^2  - \frac{|\gamma+i\tau|^2}{2|k|^2}  \int_{-1}^1 \frac{ q(u)}{|-i\gamma/|k| + \tau/|k|+  u|^2} 
\; du  \\&\quad + \frac{i(\gamma + i\tau)}{2|k|}  \int_{-1}^1 \frac{ uq(u)}{|-i\gamma/|k| + \tau/|k|+  u|^2} 
\; du.
\end{aligned}$$
Now suppose that $M(\gamma+ i\tau,k) =0$ for some $\gamma \neq 0$. The vanishing of the imaginary part gives 
$$
2\tau  + \frac1{2|k|} \int_{-1}^1 \frac{ uq(u)}{|-i\gamma/|k| + \tau/|k|+  u|^2} 
\; du = 0.
$$ 
Plugging this identity into $M(\gamma+ i\tau,k)$, we get 
$$
\begin{aligned}
M(\gamma+ i\tau,k) 
&= 
\gamma^2 + \tau^2 + |k|^2 -  \frac{|\gamma+i\tau|^2}{2|k|^2} \int_{-1}^1 \frac{ q(u)}{|-i\gamma/|k|+\tau/|k| +  u|^2} \; du,
\end{aligned}$$
which again never vanishes, since $q(u)\le 0$ by Lemma \ref{lem-rewDM}. That is, $M(\lambda,k)$ never vanishes for $\Re \lambda \neq 0$. The proposition follows. 
\end{proof}

In the next sections, we initiate a systematic study of the dispersion functions $D$ and $M$, providing in particular expansions and reformulations that will prove useful for the study of the dispersion relations, see the upcoming Sections~\ref{sec-D} and \ref{sec-M}.

\subsection{The electric dispersion function $D$}
\label{sec:D}

Let us first consider $D$. By definition, for $\Re \lambda>0$, we may write 
\begin{equation}\label{tow-L1}
\begin{aligned}
D(\lambda,k) 
& = 1 - \frac{1}{|k|^2} \int \frac{ik \cdot \hv}{ \lambda + ik \cdot \hat v} \varphi'(\langle v\rangle)\; dv 
\\
&= 1 - \frac{1}{|k|^2} \int_0^\infty e^{-\lambda t} \int e^{-ik t \cdot \hat v}ik \cdot \hv \varphi'(\langle v\rangle)\; dv dt .
\end{aligned}
\end{equation}
Introduce \begin{equation}\label{def-Kt}
   \index{K@$K_k(t)$}
K_k(t) := -\frac{1}{|k|^2} \int e^{-ikt \cdot \hat v } ik  \cdot \hv \varphi'(\langle v\rangle) \; dv,
\end{equation} 
and denote by $\mathcal{L}[F]$ the Laplace transform of $F(t)$, we thus have 
\begin{equation}\label{rew-DLKt}
\begin{aligned}
D(\lambda,k) &
 = 1 + \cL[K_k(t)](\lambda) = 1 + \int_0^\infty e^{-\lambda t} K_k(t) dt.
\end{aligned}
\end{equation}
We start with the following lemma.

\begin{lem}
\label{lem:Kk} 
For all $k \neq 0$, the function $K_k$ is $\mathcal{C}^\infty$ and for every $ N\leq (N_{0}-5)/2$,  we have for all $n\ge 0$, for all $k \in \RR^3$ and all $t \geq 0$,
\begin{equation}\label{bounds-Kkt}
|\partial_t^n K_k(t)|\le C_{n,N} |k|^{n-1} \langle kt \rangle^{-N} ,
\end{equation}
 where $C_{n,N}>0$ depends only on  $n$ and  $N$.
\end{lem}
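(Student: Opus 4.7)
The plan is to reduce $K_k(t)$ to a one-dimensional oscillatory integral, and then to apply non-stationary phase, relying on the rapid vanishing of $\kappa$ at the endpoints $\pm 1$ dictated by assumption (H2). Applying the change of variables $u = k\cdot \hat v/|k|$, $w = v - (k\cdot v/|k|^2)k$ exactly as in the proof of Lemma~\ref{lem-rewDM} (using the symmetry of $\langle v\rangle$ in $w$ to discard off-axis contributions), the definition \eqref{def-Kt} collapses to
\begin{equation*}
K_k(t) = -\frac{i}{|k|} \int_{-1}^{1} e^{-i|k|tu}\, u\, \kappa(u)\, du.
\end{equation*}
Since $\kappa \in L^1([-1,1])$, differentiation under the integral is legitimate, which yields
\begin{equation*}
\partial_t^n K_k(t) = -i(-i|k|)^n \cdot \frac{1}{|k|} \int_{-1}^{1} e^{-i|k|tu}\, u^{n+1}\kappa(u)\, du.
\end{equation*}
In particular $K_k \in \mathcal C^\infty$ in $t$, and the crude estimate $|\partial_t^n K_k(t)| \leq C_n|k|^{n-1}$ already establishes the claim in the regime $|k|t \leq 1$, where $\langle kt\rangle \sim 1$.

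For $|k|t \geq 1$ I would integrate by parts $N$ times against the oscillating factor. Each step absorbs a factor $(i|k|t)^{-1}$ while differentiating $u^{n+1}\kappa(u)$ once in $u$; the whole scheme works provided all boundary contributions at $u = \pm 1$ vanish and the $N$-th derivative of $u^{n+1}\kappa(u)$ is integrable on $[-1,1]$. The technical heart of the proof is therefore the behaviour of $\kappa^{(j)}(u)$ near the endpoints. Writing $\kappa(u) = 2\pi \int_{g(u)}^\infty \varphi'(s) s^2\, ds$ with $g(u) = (1-u^2)^{-1/2}$ and using the pointwise bound $|\varphi^{(\ell)}(s)| \lesssim s^{-N_0 - \ell}$ coming from (H2) via \eqref{decay-mu}, an induction on $j$ yields
\begin{equation*}
|\kappa^{(j)}(u)| \lesssim (1-u^2)^{(N_0 - 2j - 2)/2}, \qquad u \in (-1,1),
\end{equation*}
for all $0 \leq j < N_0/2$; heuristically, each application of $d/du$ produces either a factor $g' \sim (1-u^2)^{-3/2}$ or differentiates $\varphi^{(\ell)}(g)\, g^{a}$, and in every case the net effect is to cost exactly one power of $(1-u^2)$. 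In particular $\kappa^{(j)}(\pm 1) = 0$ for $j < (N_0-2)/2$, and $\kappa^{(j)} \in L^1([-1,1])$ for $j < N_0/2$; the choice $N \leq (N_0-5)/2$ comfortably accommodates both requirements for all $j \leq N$.

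Putting these together with Leibniz's formula applied to $u^{n+1}\kappa(u)$ (the polynomial factor is smooth and contributes only combinatorial constants depending on $n$), the $N$ successive integrations by parts give
\begin{equation*}
\left| \int_{-1}^{1} e^{-i|k|tu}\, u^{n+1}\kappa(u)\, du \right| \leq \frac{C_{n,N}}{(|k|t)^N},
\end{equation*}
which combined with the crude bound from Step~1 produces $|\partial_t^n K_k(t)| \leq C_{n,N}\, |k|^{n-1}\langle kt\rangle^{-N}$ as claimed. The main obstacle is the careful tracking of the vanishing orders of $\kappa^{(j)}$ at $u = \pm 1$ under (H2): the schematic rule that each differentiation costs one power of $(1-u^2)$ is morally immediate, but a rigorous induction requires a Faà di Bruno-type expansion for the composition $\varphi^{(\ell)} \circ g$ with $g$ singular at the endpoints. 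The slack between the threshold $N < N_0/2$ arising from integrability alone and the stated $N \leq (N_0-5)/2$ provides the small margin needed to simultaneously handle the vanishing of all boundary terms, the $n$-dependent polynomial weight, and the $L^1$-integrability of the $N$-th derivative.
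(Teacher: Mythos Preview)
Your proof is correct and rests on the same mechanism as the paper's---non-stationary phase via repeated integration by parts, with the rapid decay of $\varphi$ at infinity killing all boundary contributions. The difference is organizational: the paper works directly in the three-dimensional variable $w=\hat v$ (so the amplitude involves $\varphi'\bigl((1-|w|^2)^{-1/2}\bigr)(1-|w|^2)^{-5/2}$ on the unit ball) and integrates by parts along $k/|k|^2\cdot\nabla_w$, whereas you first collapse to the one-dimensional integral $K_k(t)=-\tfrac{i}{|k|}\int_{-1}^1 e^{-i|k|tu}\,u\kappa(u)\,du$ via the change of variables of Lemma~\ref{lem-rewDM} and then integrate by parts in the scalar variable $u$. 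Your route is slightly longer because it requires establishing the endpoint bound $|\kappa^{(j)}(u)|\lesssim(1-u^2)^{(N_0-2j-2)/2}$ (which the paper in fact records later as~\eqref{annulkappa}), but it has the advantage of making the precise $N_0$-dependence fully explicit and of connecting directly to the function $\kappa$ that drives the subsequent analysis of the dispersion relation. The paper's $3$D version is more compact since the decay of $\varphi^{(m)}(s)$ as $s\to\infty$ is invoked directly without passing through $\kappa$.
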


\begin{proof}

Thanks to the decay at infinity of $\mu$ and its derivatives, for all $k \neq 0$, $K_k(t)$ is of class $\mathcal{C}^n$ for all $n$ and we have 
\begin{multline*}
\partial_t^n K_k(t) = -\frac{1}{|k|^2} \int_{\mathbb{R}^3} e^{-ikt \cdot \hat v } (-i k \cdot \hat v)^n ik  \cdot \hv \varphi'(\langle v\rangle) \; dv
\\ = -  \frac{1}{|k|^2} \int_{|w| \leq 1} e^{-ikt \cdot w } (-i k \cdot w)^n ik  \cdot w\, \varphi'\left(\frac{1}{ (1-|w|^2)^{1 \over 2}}\right) \frac{1}{ (1- |w|^2)^{5 \over 2}} \; dw
\end{multline*}
where we have set $w= \hat{v}$ to get the second expression.
By using that 
$$\frac{k}{|k|^2} \cdot \nabla_w e^{-ikt \cdot w }= -i t  e^{-ikt \cdot w }, $$  we can integrate by parts $N$ times in $w$
and use that there is no boundary term because of the fast enough decay of $\varphi$ and of its successive derivatives given by \eqref{decay-mu}
 to get  the bound for all $k \neq 0$ and all $t \geq 0$
\begin{equation}\label{bounds-Kktproof}
|\partial_t^n K_k(t)|\le C_{n,N} |k|^{n-1} | kt |^{-N}.
\end{equation}
Since for $|k| t \leq 1$, we can just use the straightforward bound $|\partial_{t}^n K|\lesssim |k|^{n-1}$ is bounded, this ends the proof.
\end{proof}

\begin{rem}
\label{remarque2.5} Note that 
by integration by parts in $w$, we can also write that 
$$
\begin{aligned}
 \int e^{-ikt \cdot \hat v } ik  \cdot \hv \varphi'(\langle v\rangle) \; dv &=  \int e^{-ikt \cdot \hat v } ( ik  \cdot \nabla_v)  \varphi(\langle v\rangle) \; dv \\
 &= -   \int e^{-ikt \cdot \hat v }  t \left(\frac{ |k|^2}{\langle v \rangle} -\frac{(k\cdot v)^2 }{\langle v \rangle^3} \right)  \varphi(\langle v\rangle) \; dv,
 \end{aligned}
$$
thus for all $k \in \mathbb{R}^3$, $| K_k(t)| \lesssim  t$; in particular $K_k(t)$ is actually not singular at $k = 0$. 

\end{rem}

As a first consequence of Lemma~\ref{lem:Kk}, we have

\begin{cor}
\label{coro:LKK}
For all $n \in\mathbb{N}$, such that $ n \leq (N_{0}-9)/2$,  there exists $C_n>0$ such that, for all $\lambda \in \mathbb{C}$ with $\Re \lambda \geq 0$,  for all $ k \neq 0$,
\begin{equation}
\label{eq:LKK}
|\partial_\lambda^{n} \cL[K_k](\lambda)| \leq   \frac{C_n}{(|k|^2 + |\lambda|^2)^{ n/2 +1}}.
\end{equation}
\end{cor}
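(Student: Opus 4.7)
The starting point is to differentiate the Laplace transform $\mathcal{L}[K_k](\lambda) = \int_0^\infty e^{-\lambda t} K_k(t)\, dt$ under the integral sign $n$ times in $\lambda$, which is legitimate thanks to the absolute integrability guaranteed by Lemma~\ref{lem:Kk}; this gives
$$
\partial_\lambda^n \mathcal{L}[K_k](\lambda) = \int_0^\infty (-t)^n e^{-\lambda t} K_k(t)\, dt.
$$
I would then prove the desired bound $|\cdot| \lesssim (|k|^2 + |\lambda|^2)^{-n/2-1}$ by considering the two regimes $|\lambda| \leq |k|$ and $|\lambda| > |k|$ separately, and combining them.

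In the regime $|\lambda| \leq |k|$, since $(|k|^2+|\lambda|^2)^{n/2+1} \asymp |k|^{n+2}$, it suffices to prove a bound of size $|k|^{-n-2}$. This follows directly from the pointwise estimate $|K_k(t)| \lesssim |k|^{-1}\langle kt\rangle^{-N}$ of Lemma~\ref{lem:Kk} with $N = n+2$, by integrating the resulting factor $t^n \langle kt\rangle^{-n-2}$ after the change of variables $s = |k|t$. The admissible range $N \leq (N_0-5)/2$ translates here exactly into the hypothesis $n \leq (N_0-9)/2$.

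In the regime $|\lambda| > |k|$, the goal becomes $|\cdot| \lesssim |\lambda|^{-n-2}$. To extract powers of $1/\lambda$ from the exponential, I would invoke the elementary Laplace transform identity
$$
\mathcal{L}[f](\lambda) = \lambda^{-p} \mathcal{L}[f^{(p)}](\lambda) + \sum_{j=0}^{p-1} \lambda^{-j-1} f^{(j)}(0)
$$
with $p = n+2$ and $f(t) = (-t)^n K_k(t)$. Thanks to the factor $t^n$, one has $f^{(j)}(0) = 0$ for $j < n$; by Leibniz's rule, $f^{(n)}(0)$ is a multiple of $K_k(0)$, which vanishes by the oddness of $k \cdot \hat v\, \varphi'(\langle v\rangle)$; only the boundary term at $j = n+1$ survives, and it is $\lesssim |\lambda|^{-n-2}$ since $|K_k'(0)| \lesssim 1$ by Lemma~\ref{lem:Kk}. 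For the main remainder, Leibniz's rule together with Lemma~\ref{lem:Kk} gives
$$
|f^{(n+2)}(t)| \lesssim \sum_{\ell=0}^n t^{n-\ell}\, |k|^{n+1-\ell} \langle kt\rangle^{-N},
$$
which after the change of variables $s = |k|t$ integrates to a constant independent of $k$, provided $N = n+2 \leq (N_0-5)/2$, i.e. exactly $n \leq (N_0-9)/2$; the remainder is then bounded by $|\lambda|^{-n-2}$ as required.

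The only delicate point is the careful bookkeeping of the boundary contributions at $t = 0$ produced by the iterated integration by parts, together with the verification that the required number of derivatives and the required decay rate for $K_k$ both fit into the range allowed by Lemma~\ref{lem:Kk}. The crucial cancellation $K_k(0) = 0$, itself a consequence of the radial symmetry of $\mu$, is what prevents any boundary term from violating the claimed decay rate, and the numerology ultimately forces the stated threshold $n \leq (N_0-9)/2$.
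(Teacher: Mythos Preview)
Your proposal is correct and follows essentially the same strategy as the paper: a direct bound when $|k|$ dominates, and $n+2$ integrations by parts (using the vanishing $K_k(0)=0$ to kill all boundary terms except the one involving $K_k'(0)$) when $|\lambda|$ dominates. The paper organizes the argument slightly differently---it first exploits the degree $-(n+2)$ homogeneity of $\mathcal{K}_n(\lambda,k)=\int_0^\infty t^n e^{-\lambda t}K_k(t)\,dt$ to reduce to the sphere $|\lambda|+|k|=1$, and in the $|\lambda|$-dominant regime it separates off the subcase $\Re\lambda\gtrsim 1$ (handled via the crude bound $|K_k(t)|\lesssim t$ of Remark~\ref{remarque2.5}) from the subcase $|\Im\lambda|\gtrsim 1$ (handled by the same integration-by-parts argument you give)---but the substance is identical.
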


\begin{proof}
We have
$$\partial_\lambda^{n} \cL[K_k](\lambda) = (-1)^n \int_0^{\infty} e^{-\lambda t} t^n K_k(t) \; dt =: (-1)^n  \mathcal{K}_{n}(\lambda, k).$$
We first observe that $\mathcal{K}_{n}$ is homogeneous degree $-n-2$: by setting $r= |\lambda|+ |k|$, we have that
$$  \mathcal{K}_{n}(\lambda, k)= r^{-n-2} \mathcal{K}_{n}(\tilde \lambda, \tilde k)$$
where $\tilde \lambda= \lambda/r$, $\tilde k = k/r$  and thus $|\tilde \lambda| + |\tilde k|=1$.
 To get the claim, it thus suffices to show that $ \mathcal{K}_{n}$ is bounded on  $\mathbb{S}_{+}= \{ (\lambda, k), \,  |\lambda| + | k|=1, 
 \, \Re \lambda \geq 0\}$.
 
 If $ 1 \geq |k| \geq 1/2$,  we can just use \eqref{bounds-Kkt} with $n=0$ and $N=n+2$ to get
 $$|\mathcal{K}_{n}(\lambda, k)| \leq C \int_{0}^{+ \infty} { 1 \over \langle t \rangle^2} \, dt \lesssim 1.$$
  If  $| k | \leq 1/2$, we must have $|\lambda |\geq 1/2$. For the case $| \Re \lambda | \geq 1/4$, we can just use Remark \ref{remarque2.5} to obtain that
  $$  |\mathcal{K}_{n}(\lambda, k)| \leq C \int_{0}^{+ \infty}  t^{n+1}e^{- \Re \lambda t}\, dt \lesssim 1.$$
   Finally, if $|\Im \lambda| \geq 1/4$, by $(n+2)$ successive integrations by parts in $t$, noting that $(t^n K_k)^{(j)}(0)=0$ for $j=0,\ldots n$, 
 and  $(t^n K_k)^{(n+1)}(0)=n ! K_{k}'(0)$,  we get 
$$  |\mathcal{K}_{n}(\lambda, k)| \lesssim |K_{k}'(0)|  + \int_{0}^{ +\infty} \sum_{l=0}^{n} |t|^{(n-l)} |K_{k}^{(n+2-l)}(t)| \, dt$$
and hence
thanks to Lemma~\ref{lem:Kk} for $N= (n-l)+ 2$, we deduce that 
$$   |\mathcal{K}_{n}(\lambda, k)|
 \lesssim 1  + \int_{0}^{+\infty} |t|^{(n-l)} \frac{ |k|^{n+1-l}}{ \langle |k|t \rangle^{n-l+ 2} } \, dt
  \lesssim 1 +  \int_{0}^{+\infty} |s|^{(n-l)} \frac{ 1}{ \langle s\rangle^{n-l+ 2} } \, ds \lesssim 1.$$
  The corollary follows.
\end{proof}

The main statements concerning the electric dispersion function $D$ are gathered in the following proposition.
\begin{prop}
\label{prop:D} Let $D(\lambda,k)$ be defined as in \eqref{rew-DMlambda}. The following holds.

\begin{enumerate}
\item For all $k \neq 0$, the function $\lambda \mapsto D(\lambda, k)$ is holomorphic on $\{\Re \lambda >0\}$
with a continuous extension   to $\{\Re \lambda =0\}$ and the function $\tau \mapsto D(i \tau , k)$ is $\mathcal{C}^K$
 on $\mathbb{R}$ for $k \neq 0$ with $K \leq (N_{0}-9)/2$. 
Moreover, 
 there exists $C_0>0$ such that for all $|k|\geq C_0$ and all $\Re \lambda \geq 0$, $|D(\lambda,k)| \geq 1/2$.

For all $m \in \mathbb{N}$ we have the expansion for all $k \neq 0$ and all $\Re \lambda \geq 0$, $\lambda \neq 0$,
\begin{equation}
\label{expansion-DL}
\begin{aligned}
D(\lambda,k)
 &=1+  \sum_{j = 0}^{m}\frac{1}{\lambda^{2j+2}} \partial_t^{2j+1}[K_k](0) 
+  R_{m}^D(\lambda, k),
\end{aligned}\end{equation}
where 
\begin{equation}\label{comp-dtjKprop}
\begin{aligned}
\partial^{2j+1}_t[K_k](0) &=   \frac{(-1)^{j+1}}{|k|^{2}} \int (k \cdot \hat v)^{2j+2} \varphi'(\langle v\rangle) \; dv , 
\\
 R_{m}^D(\lambda, k) &= \frac{1}{\lambda^{2m+2}} 
\int_0^\infty e^{- \lambda  t} \partial_t^{2m+2}K_k(t) \; dt .
\end{aligned}
\end{equation}
In addition, the remainder $R_{m}^D(\lambda, k)$ satisfies for all $k \in \RR^3$ and all $\Re \lambda >0$
\begin{equation}
\label{bd-RK}
\Big| R_{m}^D(\lambda, k) \Big| \lesssim_m \frac{|k|^{2m}}{|\lambda|^{2m+2}}.
\end{equation}

\item For all $k \neq 0$, for all $|\tau|\geq  |k|$, there holds
\begin{equation}
\label{eq:formulaDtauleqk}
   \index{O@$\Omega(y)$}
D(i\tau ,k) = 1 - \frac{1}{|k|^{2}} \Omega(|k|^2/\tau^2),\qquad \Omega(y) :=  -\int_{-1}^1 \frac{y u^2}{1-y u^2}  \kappa(u) \; du 
\end{equation}
for $\kappa(u)$ defined as in \eqref{def-kqu}.
Furthermore, the function $\Omega(y)$ is a $\mathcal{C}^K$  function on $[0,1]$, for $ K \leq \frac{N_{0}}{2}-1$,  with $ \sup_{[0,1]} |\Omega^{(l)}| \lesssim_l 1$. 

\item For all $\lambda \notin i[-|k|,|k|]$, $\Re \lambda \geq 0$, there holds 

\begin{equation}
\label{eq-formulaD}
   \index{P@$\Phi(z)$}
D(\lambda,k) = 1 - \frac{1}{|k|^2} \Phi(-i\lambda /|k|),\qquad \Phi(z) :=\int_{-1}^1 \frac{u\kappa(u)}{u+z} \, du.
\end{equation}
The function $\Phi$ is holomorphic in $\mathbb{C}\setminus[-1,1]$ with a continuous extension (together with its derivatives) from
$\{ \Im \, z<0\}$ to $\{ \Im z \leq 0\}$
 and one can find :
\begin{itemize}
\item a  function  $\index{P@$\widetilde\Phi(z)$} \widetilde{\Phi}(z)$ which is holomorphic on 
 $\{\Im z <0\} \cup \{ \Im z \geq 0, 1 - (\Re z)^2 + (\Im z)^2>0\}$,
  and  coincides with the continuous extension of  $\Phi$ on $\{\Im z \leq 0\}$;
\item for any $R$ sufficiently large  a $ \mathcal{C}^{N_0/2-3}$ function   $\index{P@$\widetilde{\widetilde{\Phi}}(z)$}\widetilde{\widetilde{\Phi}}$  defined in the whole large box
$\{ | \Re z | < R, |\Im z| < R\}$ 
which coincides with $\widetilde{\Phi}$ on $\{ |\Re z| \leq 1\}$ and  with the continuous extension of  $\Phi$  on $ \{\Im z \leq 0\}$.  Moreover,
\begin{equation}
\label{eq:diffpm1}
\mathrm{d} \widetilde{\widetilde{\Phi}}(\pm 1)   = \left( \int_{-1}^1 \frac{u\kappa(u)}{(u\pm1)^2} \, du\right) \mathrm{I}  \neq 0
\end{equation}
where $\mathrm{d} \cdot$ stands for the differential of a function seen as a $\mathcal{C}^1$ function on $\mathbb{C}$
 identified with $\mathbb{R}^2$.
\end{itemize}

\end{enumerate}

\end{prop}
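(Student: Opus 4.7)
The plan is to combine the two available representations of $D$: the Laplace-transform form $D = 1 + \mathcal{L}[K_k]$ from \eqref{rew-DLKt} (useful for regularity and the $|\lambda|\to\infty$ expansion), and the integral form of Lemma~\ref{lem-rewDM} (useful for Parts (2) and (3)). For Part (1), holomorphy of $D(\cdot,k)$ on $\{\Re \lambda > 0\}$ is immediate from differentiation under the integral in \eqref{def-DMlambda}. The continuous extension up to $\{\Re \lambda = 0\}$ and the $\mathcal{C}^K$ regularity of $\tau \mapsto D(i\tau,k)$ for $K \leq (N_0 - 9)/2$ follow from the uniform derivative estimates \eqref{eq:LKK} of Corollary~\ref{coro:LKK}. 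Taking $n=0$ in \eqref{eq:LKK} yields $|\mathcal{L}[K_k](\lambda)| \lesssim 1/|k|^2$ for $\Re \lambda \geq 0$, whence $|D| \geq 1/2$ once $|k| \geq C_0$ is sufficiently large. For the asymptotic expansion \eqref{expansion-DL}, I would iterate integration by parts in $\int_0^\infty e^{-\lambda t} K_k(t)\,dt$, using \eqref{bounds-Kkt} to justify the vanishing of the boundary terms at infinity; the key observation is that by the radial symmetry of $\mu$ the even-order derivatives $\partial_t^{2j} K_k(0)$ vanish identically (their integrand carries an odd total power of $\hat v$ against an even weight), so only the odd-order terms \eqref{comp-dtjKprop} survive. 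The remainder bound \eqref{bd-RK} then follows from Lemma~\ref{lem:Kk} applied with $n = 2m+2$ and an appropriate choice of $N$.

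For Part (2), with $\lambda = i\tau$ and $|\tau| \geq |k|$ the denominator $\tau/|k| + u$ stays bounded away from zero on $[-1, 1]$; decomposing $u/(\tau/|k| + u) = u(\tau/|k| - u)/((\tau/|k|)^2 - u^2)$ and using the evenness of $\kappa$ kills the odd-in-$u$ piece, leaving, after setting $y = |k|^2/\tau^2$, exactly the formula \eqref{eq:formulaDtauleqk} for $\Omega$. Smoothness of $\Omega$ on $[0,1]$ (and the bound $\sup|\Omega^{(l)}|\lesssim 1$) is obtained by differentiating under the integral: the $l$-th derivative produces a factor $u^{2l+2}\kappa(u)(1-yu^2)^{-(l+1)}$, and the decay $\kappa(u) = O((1-u^2)^{(N_0-2)/2})$ as $u \to \pm 1$ (obtained by plugging \eqref{decay-mu} into \eqref{def-kqu}) yields a $y$-uniform integrable dominator provided $(N_0-2)/2 - (l+1) > -1$, i.e.\ $l \leq N_0/2 - 1$.

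For Part (3), the formula $D(\lambda,k) = 1 - |k|^{-2}\Phi(-i\lambda/|k|)$ is a direct rewriting of Lemma~\ref{lem-rewDM} valid for $-i\lambda/|k| \notin [-1,1]$, and holomorphy of $\Phi$ on $\mathbb{C}\setminus[-1,1]$ follows from differentiation under the integral. The continuous extension from $\{\Im z < 0\}$ up to $\{\Im z \leq 0\}$ (with its derivatives up to a finite order) is a consequence of the Sokhotski--Plemelj formula applied to the Cauchy kernel $1/(u+z)$, using the smoothness of $\kappa$ on $(-1,1)$ and its vanishing at $\pm 1$. \textbf{The main obstacle} is the construction of the analytic extension $\widetilde\Phi$ of $\Phi$ across the cut $[-1,1]$ into the upper-half-plane region $\{1 - (\Re z)^2 + (\Im z)^2 > 0\}$. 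The key is that, using \eqref{def-kqu} rewritten as $\kappa(u) = 2\pi\int_{1/\sqrt{1-u^2}}^{\infty} \varphi'(s)s^2\,ds$, the principal branch of $\sqrt{1-u^2}$ extends holomorphically in $u$ to precisely $\{(\Re u)^2 < 1 + (\Im u)^2\}$; on that set, assumption (H1) (holomorphic extension of $F$, hence $\varphi$, to $\{\Re s > 0\}$) together with the ray-decay (H2) (guaranteeing absolute convergence of the integral and legitimacy of differentiation under it) yields a holomorphic extension of $\kappa$.

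Once $\kappa$ is extended, I would define $\widetilde\Phi(z) := \int_\gamma u\kappa(u)/(u+z)\,du$ where $\gamma$ is a smooth arc joining $-1$ to $1$ dipped into the lower half plane, taken inside the extension domain of $\kappa$ and, if necessary, slightly deformed as a function of $z$ so as to avoid the pole $u = -z$. This produces a holomorphic function of $z$ on the complement of $\gamma \mapsto -z$, which after optimizing over admissible $\gamma$ covers exactly the stated region, and contour deformation back to $[-1,1]$ (crossing no pole) shows $\widetilde\Phi = \Phi$ on $\{\Im z < 0\}$. The smooth function $\widetilde{\widetilde\Phi}$ on the large box is then obtained by a partition-of-unity gluing of $\widetilde\Phi$ on a neighborhood of $\{|\Re z| \leq 1\}$ with the continuous extension of $\Phi$ on $\{\Im z \leq 0\}$ (which agree on the overlap), completed by a Seeley-type $\mathcal{C}^{N_0/2 - 3}$ extension into the remaining upper corners $\{|\Re z| > 1, \Im z > 0\}$; the finite regularity is dictated by that of the boundary values of $\Phi$, ultimately inherited from the regularity of $\kappa$ through (H2). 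Finally, \eqref{eq:diffpm1} is obtained by direct differentiation under the integral for $\widetilde\Phi$ near $\pm 1$ (the singularity of the integrand at $u = \mp 1$ being tamed by the vanishing of $\kappa$ there once $N_0$ is large enough): the differential is automatically proportional to $\mathrm{I}$ because $\widetilde{\widetilde\Phi} = \widetilde\Phi$ is holomorphic at $\pm 1$, and non-vanishing is verified by the symmetry identity
\begin{equation*}
\int_{-1}^1 \frac{u\kappa(u)}{(u+1)^2}\,du \;=\; -4 \int_0^1 \frac{u^2\kappa(u)}{(1-u^2)^2}\,du,
\end{equation*}
which is strictly positive because $\kappa < 0$ on an open subset of $(0,1)$ (a consequence of \eqref{def-kqu} and the fact that $\varphi$ is genuinely decaying so $\varphi' < 0$ on a nontrivial set); the analogous computation at $z=-1$ uses $u\mapsto -u$ symmetry.
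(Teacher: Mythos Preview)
Your proposal is correct and follows essentially the same route as the paper: Parts (1) and (2) match exactly, and in Part (3) your contour-deformation definition of $\widetilde\Phi$ is equivalent to the paper's explicit Plemelj formula $\widetilde\Phi(z) = \Phi(z) - 2i\pi z\kappa(z)$ for $\Im z>0$, while your gluing-plus-Seeley construction plays the role of the paper's Whitney/Stein extension for $\widetilde{\widetilde\Phi}$. One minor imprecision worth correcting: the restriction of the extension domain of $\kappa$ to $\{(\Re u)^2 < 1 + (\Im u)^2\}$ is not dictated by the principal branch of $\sqrt{1-u^2}$ (which actually extends to all of $\mathbb{C}\setminus((-\infty,-1]\cup[1,\infty))$) but rather by the requirement $\Re\bigl(1/(1-u^2)\bigr) > 0$ needed so that (H1)--(H2) apply along the integration ray; the paper makes this transparent by rewriting $\kappa(u) = \mathcal{K}\bigl(1/(1-u^2)\bigr)$ with $\mathcal{K}$ holomorphic on $\{\Re z>0\}$. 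Your explicit symmetrization identity verifying \eqref{eq:diffpm1} is a nice addition that the paper does not spell out.
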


\begin{rem}
The only place in the  paper where the assumption on the holomorphy of  $\varphi(s) = F(s^2)$  is useful is for the proof of the  item 3. of Proposition~\ref{prop:D}. This will be crucial in our study of the dispersion relation in the vicinity of the imaginary axis and near the
threshold frequency $ k= \kappa_{0}$.
\end{rem}

As a straightforward application of the last item of Proposition~\ref{prop:D}, we have
\begin{cor}
\label{coro:extenD}
For all $k\neq 0$, the electric dispersion function $D(\cdot,k)$  has:
\begin{itemize}
\item a holomorphic extension, still denoted by $D$, on $\{z \in \mathbb{C},  z  \notin i[-|k|, |k|]\}$,  which coincides with  $D(\cdot,k)$ on $\{z \in \mathbb{C}, \Re z \geq 0, \,  z \notin i[-|k|, |k|]\}$ and this function $D$  has a continuous extension (together with its derivatives) from $\{\Re z >0 \}$ to $\{\Re z \geq 0\}$;
\item a holomorphic extension, denoted by $\index{D@$\widetilde{D}(\lambda,k)$: extension of $D$} \widetilde{D}$, on $\{z \in \mathbb{C}, \Re z \leq 0,  |\Im z |^2< |k|^2 + |\Re z|^2\} \cup \{\Re z >0\}$,  which 
has a continuous extension together with its derivatives up to the boundary and 
coincides with the continuous extension of   $D(\cdot,k)$ on $\{z \in \mathbb{C}, \Re z \geq 0\}$;
\item a $\mathcal{C}^{N_0/2-3}$ extension, denoted by  $\index{D@$\widetilde{\widetilde{D}}(\lambda,k)$: extension of $D$} \widetilde{\widetilde D}$  in the whole box $\{ | \Re z | < R, |\Im z| < R\}$ 
 for $R$ arbitrarily large, which coincides with the continuous extension of  $D(\cdot,k)$ on $\{  \Re z \geq  0\} $  and with $\widetilde D$ on $\{z \in \mathbb{C}, \Re z \leq 0, \,  |\Im z |< |k|\}.$
\end{itemize}

\end{cor}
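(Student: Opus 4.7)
The corollary is a direct pullback of the three extension results for $\Phi$, $\widetilde{\Phi}$, $\widetilde{\widetilde{\Phi}}$ given in item 3 of Proposition~\ref{prop:D}, via the affine change of variable $\lambda \mapsto z = -i\lambda/|k|$, using the identity $D(\lambda,k) = 1 - \frac{1}{|k|^2}\Phi(-i\lambda/|k|)$ from~\eqref{eq-formulaD} as the definition of each extension.

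First I would record the correspondence of regions under $\lambda \mapsto z = -i\lambda/|k|$, which is a $\mathbb{C}$-linear biholomorphism of $\mathbb{C}$: the segment $i[-|k|,|k|]$ maps to $[-1,1]$, the right half-plane $\{\Re \lambda>0\}$ maps to $\{\Im z<0\}$, and the region $\{\Re \lambda\leq 0,\ |\Im \lambda|^2 < |k|^2 + |\Re \lambda|^2\}$ maps to $\{\Im z\geq 0,\ 1-(\Re z)^2+(\Im z)^2>0\}$, which are precisely the domains of holomorphy of $\Phi$ and $\widetilde{\Phi}$ appearing in Proposition~\ref{prop:D}. Finally, any box $\{|\Re \lambda|<R',\ |\Im \lambda|<R'\}$ in the $\lambda$-variable is mapped to the box $\{|\Re z|<R'/|k|,\ |\Im z|<R'/|k|\}$, which lies in the domain of $\widetilde{\widetilde{\Phi}}$ once $R$ is chosen accordingly.

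Then I would set
\[
D(\lambda,k) := 1 - \frac{1}{|k|^2}\Phi(-i\lambda/|k|), \qquad \widetilde{D}(\lambda,k) := 1 - \frac{1}{|k|^2}\widetilde{\Phi}(-i\lambda/|k|), \qquad \widetilde{\widetilde{D}}(\lambda,k) := 1 - \frac{1}{|k|^2}\widetilde{\widetilde{\Phi}}(-i\lambda/|k|),
\]
on the three regions listed in the corollary. Holomorphy (resp.\ $\mathcal{C}^{N_0/2-3}$ regularity) for each is inherited from the corresponding property of $\Phi$, $\widetilde{\Phi}$, $\widetilde{\widetilde{\Phi}}$, while consistency with the original definition on $\{\Re\lambda>0\}\setminus i[-|k|,|k|]$ is exactly the content of~\eqref{eq-formulaD}. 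The existence and continuity up to the boundary $\{\Re \lambda=0\}$ of each extension (together with derivatives) comes from the analogous boundary regularity of $\Phi$, $\widetilde{\Phi}$, $\widetilde{\widetilde{\Phi}}$ stated in Proposition~\ref{prop:D}.

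The remaining matching conditions reduce to unwinding the same statements at the level of $\Phi$-functions: the identity $\widetilde{D} = \widetilde{\widetilde{D}}$ on $\{\Re\lambda\leq 0,\ |\Im \lambda|<|k|\}$ is the identity $\widetilde{\Phi}=\widetilde{\widetilde{\Phi}}$ on $\{\Im z\geq 0,\ |\Re z|\leq 1\}$, and the identification of $\widetilde{\widetilde{D}}$ with the continuous boundary value of $D$ on $\{\Re\lambda\geq 0\}$ corresponds to $\widetilde{\widetilde{\Phi}}$ coinciding with the continuous extension of $\Phi$ on $\{\Im z \leq 0\}$. No serious obstacle arises: the only delicate point is purely bookkeeping, namely keeping track of which side of the imaginary axis corresponds to which side of the real axis under the multiplication by $-i/|k|$, so that the various continuous extensions from $\{\Re\lambda>0\}$ (equivalently from $\{\Im z<0\}$) are consistently identified across the three extensions.
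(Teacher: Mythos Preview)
Your proposal is correct and is exactly the approach the paper takes: the corollary is stated there as ``a straightforward application of the last item of Proposition~\ref{prop:D}'' with no further proof, and your argument simply spells out the obvious pullback of $\Phi$, $\widetilde{\Phi}$, $\widetilde{\widetilde{\Phi}}$ through the affine change $\lambda\mapsto z=-i\lambda/|k|$ together with the identification of the corresponding regions.
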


\begin{proof}[Proof of Proposition~\ref{prop:D}]

\noindent $1.$ We recall from \eqref{rew-DLKt} that $D(\lambda,k) 
 = 1 + \cL[K_k(t)](\lambda)$. 
By Lemma~\ref{lem:Kk}, namely estimate~\eqref{bounds-Kkt} for $n=0$ and $N=2$, the Laplace transform $\cL[K_k(t)](\lambda)$ is holomorphic in $\Re \lambda >0$ and well-defined up to the imaginary axis $\Re \lambda \ge 0$. In addition,  still using \eqref{bounds-Kkt}, there holds
\begin{equation}\label{bdLtau0}
|\cL[K_k](\lambda)| \le C_0 \int_0^\infty |k|^{-1} \langle kt \rangle^{-2} \; dt \le C_0 |k|^{-2},
\end{equation}
uniformly in $k$ and $\Re \lambda \ge 0$. In particular, we note that we have $|D(\lambda,k)|\ge 1/2$ for all $|k|\ge \sqrt{2C_0}$. 

Next, for $\lambda \not=0$, writing $e^{-\lambda  t} = -\lambda^{-1} \partial_t e^{-\lambda t}$, and integrating by parts repeatedly in $t$, for any $m\ge 0$, we have 
$$
\begin{aligned}
 \int_0^\infty e^{-\lambda  t} K_k(t)\; dt  
= \sum_{j = 0}^{m}\frac{1}{\lambda^{j+1}} \partial_t^j[K_k](0)
 + \frac{1}{\lambda^{m+1}}\int_0^\infty e^{- \lambda  t} \partial_t^{m+1}K_k(t) \; dt ,
\end{aligned}
$$
for $\Re \lambda \ge 0$. Note that by recalling \eqref{bounds-Kkt},
 there is no boundary contribution at $t = \infty$, since 
$$ 
\lim_{t\to \infty} |e^{-\lambda t} \partial_t^j K_k(t) | \le C_j \lim_{t\to \infty} |k|^{j-1} e^{-\Re \lambda t}\langle kt \rangle^{-1}= 0,
$$
for all $j \ge 0$ and $\Re \lambda \ge 0$. 
We have
\begin{equation}\label{comp-dtjK}
\begin{aligned}
\partial^j_t[K_k](0) &=   \frac{(-i)^{j+1}}{|k|^{2}} \int (k \cdot \hat v)^{j+1} \varphi'(\langle v\rangle) \; dv , 
\end{aligned}
\end{equation}
for all $j\ge 0$. In particular, $\partial^{2j}_t[K_k](0) =0$ and $\partial^{2j+1}_t[K_k](0) =\cO(|k|^{2j})$ for all $j\ge 0$. Therefore, for any $m\ge 0$ and $\Re \lambda \ge0$, we obtain 
\begin{equation}\label{expansion-DLproof}
\begin{aligned}
 \cL[K_k](\lambda  ) 
 &= \sum_{j = 0}^{m}\frac{1}{\lambda^{2j+2}} \partial_t^{2j+1}[K_k](0)
 + \frac{1}{\lambda^{2m+2}}\int_0^\infty e^{- \lambda  t} \partial_t^{2m+2}K_k(t) \; dt ,
\end{aligned}\end{equation}
where, using again \eqref{bounds-Kkt}, the last integral term satisfies
\begin{equation}\label{bd-RKproof}
\Big|\int_0^\infty e^{-\lambda  t} \partial_t^{2m+2}K_k(t) \; dt \Big| \le C_{m} |k|^{2m}
\end{equation}
uniformly in small $k$, for $m\ge 0$ and $\Re \lambda \ge 0$. Finally, the fact that $\tau \mapsto D(i \tau , k)$ is  $\mathcal{C}^K$ for $ K\leq (N_{0}-9)/2$ on $\mathbb{R}$ follows from Corollary~\ref{coro:LKK} and Lebesgue's domination theorem.

\bigskip

\noindent $2.$ For $\lambda= i \tau$  and $|\tau|>|k|$, we recall from \eqref{rew-DMlambda} that 
$$
\begin{aligned}
D(i\tau,k) &=  1  - \frac{1}{|k|^2} \int_{-1}^1 \frac{ u \kappa(u)}{\tau/|k| +  u}\; du 
\\
&=  1  + \frac{1}{|k|^2} \int_{-1}^1 \frac{ u^2 \kappa(u)}{\tau^2/|k|^2 -  u^2}\; du ,
\end{aligned}
$$
in which the evenness of $\kappa(u)$ was used in the last identity. This yields \eqref{eq:formulaDtauleqk}. 
By using \eqref{decay-mu}, we deduce that for $|u|$ close to one, we have the estimate
\begin{equation}
\label{kappaprochedeun} | \kappa (u)| \lesssim |1-|u||^{ \frac{N_{0}}{2} -1}.
\end{equation}
 We thus deduce that $\Omega$ is a $\mathcal{C}^K$ function on $[0,1]$, for $K \leq \frac{N_{0}}{2}-1$, with $\sup_{[0,1]} |\Omega^{(l)}| \lesssim_l1$. 

\bigskip 

\noindent $3.$ The representation \eqref{eq-formulaD} reads off from \eqref{rew-DMlambda}, upon defining  
$\Phi(z) =\int_{-1}^1 \frac{u\kappa(u)}{u+z} \, du.
$
By definition,  since $\kappa$ is smooth on $(-1, 1)$ and with the property \eqref{kappaprochedeun} thanks to the decay of $\varphi,$     
$\Phi(z)$ is clearly holomorphic on $\mathbb{C} \setminus [-1,1]$.
Furthermore, since we also get from the decay of $\varphi$ that
\begin{equation}
\label{annulkappa}  | (u \kappa)^{(l)} (u)| \lesssim |1-|u||^{ \frac{N_{0}}{2} -1 -l}
\end{equation}
for $|u|$ close to one, we get by integration by parts that 
we have for all $l \leq N_{0}/2- 1$,  and for all $z \in \mathbb{C} \setminus [-1,1]$,
\begin{equation}
\label{laderiveedordreldephi}
\Phi^{(l)}(z) = (-1)^{l} \int_{-1}^1 \frac{(u\kappa)^{(l)}(u)}{(u+z)} \, du.
\end{equation}
Next, we claim that the function $\kappa$ admits a holomorphic extension on  
$  \{ \Im z \geq 0, 1 - (\Re z)^2 + (\Im z)^2>0\}$.
Indeed, we have that 
$$ \kappa(u)= 2 \pi \int_{\frac{1}{\sqrt{1- u^2}}}^{+ \infty} \varphi'(s) s^2\, ds= 4\pi \int_{1}^{+\infty} F( \frac{s^2}{1- u^2}) \frac{s^3}{(1-u^2)^2}
\, ds= \mathcal{K}(\frac{1}{1- u^2})$$
where $\mathcal{K}$ is defined by
$$ \mathcal{K}(z)= 4 \pi \int_{1}^{+\infty}  F(z s^2) z^2 s^3\, ds.$$
Thanks to the assumption (H2), for $N_{0}$ sufficiently large which ensures the absolute convergence of the integral on 
$\{\Re z \geq \alpha\}$ for every $\alpha >0$, we get that
  $\mathcal{K}$ is holomorphic on $\{\Re z>0\}$. Since the map $u \mapsto { 1 \over 1-u^2}$ sends every compact set of 
   $  \{ \Im z \geq 0, 1 - (\Re z)^2 + (\Im z)^2>0\}$ onto a compact set of $\{\Re z>0\}$, we finally get that
   $\kappa$ is holomorphic on  $  \{ \Im z \geq 0, 1 - (\Re z)^2 + (\Im z)^2>0\}$.

By the Plemelj fomula, the function $\widetilde{\Phi}(z)$  defined as
\begin{equation}\label{eq-extensionPhi}
\widetilde{\Phi}(z)
= \left \{ \begin{aligned} \Phi(z) \quad& \quad \mbox{if}\quad \Im z<0,\\
 P.V. \int_{-1}^1 \frac{u\kappa(u)}{u+ z} \, du - i \pi z \kappa( z)  \quad& \quad \mbox{if}\quad \Im z= 0, \, |\Re z|<1, \\
\Phi(z) - 2i \pi z \kappa(z)  \quad& \quad \mbox{if}\quad  \Im z>0,\, 1 - (\Re z)^2 + (\Im z)^2>0
  \end{aligned}\right.
\end{equation}
is a holomorphic function on $ \{ \Im z<0\} \cap \{ \Im z \geq 0, 1 - (\Re z)^2 + (\Im z)^2>0\}$
, which by construction coincides with $\Phi$ on $\{\Im z <0\}$. Note that for the construction of this extension, we need
the holomorphy of $\kappa$ and hence that of $\phi$.

Let us also observe that if we extend the above definition by 
$$ \tilde{\Phi}(z)= \int_{-1}^1 \frac{u\kappa(u)}{u+ z} \, du, \quad \Im z=0, \, |\Re z| > 1, \quad \tilde \Phi(\pm 1)= 0, $$ 
we get that $\tilde{\Phi}$ and its derivatives extend continuously from $\{\Im z <0\}$ to $\{\Im z =0\}.$
This is a direct consequence of \eqref{laderiveedordreldephi}
  and \eqref{annulkappa}.

For the last item, we shall use a smooth non analytic extension.
We first observe that $\tilde{\Phi}$ is by construction analytic and thus $\mathcal{C}^\infty$ in $\mathcal{O}=\{ | \Re z | <1 \} \cup \{\Im z <0\}$. Moreover,  still by construction,
$\tilde{\Phi}$ coincides with the continuous extension of $\Phi$ in $\{ \Im z \leq 0\}$ and is holomorphic in  $ \{ \Im z<0\} \cup \{ \Im z \geq 0, 1 - (\Re z)^2 + (\Im z)^2>0\}$, 
we thus get that in the vicinity  of every point of the boundary  of $\mathcal{O}$ except $z= \pm 1$, 
 all the partial derivatives extend continuously and thus are bounded.
  It remains to study the limit when $z \rightarrow \pm 1$, with  $|\Re z |<1$.  By using \eqref{laderiveedordreldephi}
  and \eqref{annulkappa}, we also get that all the partial derivatives of $\tilde \Phi$ of order less than $N_{0}/2-2$
  have a limit when $z \rightarrow \pm 1$ which is given by taking
  the value $z=\pm1$ in the integral. 
   To get the extension, we can  thus  apply Whitney's extension theorem  \cite{Whi}. 
Alternatively, we can observe that $\mathcal{O}\cap \{ | \Re z | < R, |\Im z| < R\}$  is a Lipschitz domain and that we have just obtained that  $\tilde{\Phi}$ is $W^{k, \infty}$
    for every $k$ less than  $ k \leq N_{0}/2-2$ .
     This allows to use Stein extention theorem \cite{Steinext} to get a $W^{k, \infty}$ extension in the whole space.

This ends the proof of the proposition. 
\end{proof}

\subsection{The magnetic dispersion function $M$}
\label{sec:M}

Let us now continue with the study of the magnetic dispersion function $M$.
As for the electric dispersion function, we can recast $M$ 
as 
\begin{equation}\label{rep-Mlambda}
M(\lambda,k) =  \lambda^2 + |k|^2  +\tau_0^2  +  \cL[N_k(t)](\lambda)
\end{equation}
recalling $\cL[N_k(t)](\lambda) = \int_0^\infty e^{-\lambda t} N_k(t)\; dt $,
where we have set 
\begin{equation}\label{def:Nk}
\index{N@$N_k(t)$} 
N_k(t) := \frac12 \int e^{-ikt \cdot \hv}  (ik\cdot \hv) |\mP_k\hv|^2\varphi'(\langle v\rangle)  dv .
\end{equation}
\begin{lem}
\label{lem:Nk} 
For all $k \neq 0$, the function $N_k(t)$ is $\mathcal{C}^\infty$ and for all $ N\leq (N_{0}-5)/2$ we have for all $n\ge 0$, for all $k \in \RR^3$ and all $t \geq 0$,
\begin{equation}\label{bounds-Nkt}
|\partial_t^n N_k(t)|\le C_{n,N} |k|^{n+1} \langle kt \rangle^{-N} ,
\end{equation}
 where  $C_{n,N}$ depends only on $n$ and $N$.
\end{lem}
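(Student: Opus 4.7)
The plan is to follow very closely the strategy used for $K_k$ in Lemma~\ref{lem:Kk}, since $N_k$ has exactly the same structure up to the replacement of the scalar factor $-|k|^{-2}$ by $\tfrac12$ and the insertion of the additional polynomial weight $|\mP_k\hv|^2$ in the integrand. The smoothness of $N_k$ in $t$ is immediate from the rapid decay of $\varphi'$ given by \eqref{decay-mu} together with Lebesgue's dominated convergence theorem, which legitimates differentiation under the integral sign to any order.

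Differentiating $n$ times in $t$ brings down a factor $(-ik\cdot\hv)^n$, so that
\begin{equation*}
\partial_t^n N_k(t)=\tfrac12\int_{\RR^3} e^{-ikt\cdot\hv}(-ik\cdot\hv)^n(ik\cdot\hv)|\mP_k\hv|^2\varphi'(\langle v\rangle)\,dv.
\end{equation*}
Performing the change of variable $w=\hv$ (so that $v=w/\sqrt{1-|w|^2}$, $dv=(1-|w|^2)^{-5/2}\,dw$, $\mP_k\hv=\mP_k w$, and $\langle v\rangle=1/\sqrt{1-|w|^2}$) this becomes an integral over $\{|w|\leq 1\}$ of the form
\begin{equation*}
\tfrac12\int_{|w|\leq 1}e^{-ikt\cdot w}(-ik\cdot w)^n(ik\cdot w)|\mP_k w|^2\,\varphi'\!\left(\tfrac{1}{\sqrt{1-|w|^2}}\right)(1-|w|^2)^{-5/2}\,dw.
\end{equation*}
The integrand is bounded in $L^1(\{|w|\leq 1\})$ by $|k|^{n+1}$ thanks to the hypothesis $N_0\geq 5$ and the decay \eqref{decay-mu}, which ensures that $\varphi'(1/\sqrt{1-|w|^2})(1-|w|^2)^{-5/2}$ (and all its derivatives up to a fixed order) vanish to arbitrarily high order as $|w|\to 1^-$. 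This yields the trivial bound $|\partial_t^n N_k(t)|\lesssim |k|^{n+1}$, which handles the regime $|k|t\leq 1$.

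For the regime $|k|t\geq 1$, I would use the identity $\tfrac{k}{|k|^2}\cdot\nabla_w e^{-ikt\cdot w}=-it\,e^{-ikt\cdot w}$ and integrate by parts $N$ times in $w$, with $N\leq (N_0-5)/2$. The absence of boundary contributions at $|w|=1$ is guaranteed by the rapid vanishing of $\varphi'(1/\sqrt{1-|w|^2})(1-|w|^2)^{-5/2-j}$ near the unit sphere for every $j\leq N$, which again follows from \eqref{decay-mu}: each differentiation of $\varphi'$ produces an extra factor $\langle v\rangle^{-1}=\sqrt{1-|w|^2}$, and each Leibniz differentiation of the Jacobian produces at worst one extra $(1-|w|^2)^{-1}$, so with $N_0$ large enough all derivatives remain integrable and vanish at the boundary. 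Each integration by parts costs a factor $1/(|k|t)$ from $k/|k|^2$ combined with $\nabla_w$, while adding at most one power of $|k|$ from hitting the polynomial $(k\cdot w)^{n+1}$. Altogether, after $N$ integrations by parts, one gets $|\partial_t^n N_k(t)|\lesssim |k|^{n+1}(|k|t)^{-N}$, which combines with the trivial bound to yield \eqref{bounds-Nkt}.

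The only delicate point, exactly as in Lemma~\ref{lem:Kk}, is the book-keeping verifying that the products of Jacobian singularities $(1-|w|^2)^{-5/2-j}$ against derivatives of $\varphi'$ at $1/\sqrt{1-|w|^2}$ remain controlled and produce no boundary terms through all $N$ integrations by parts; this is where the assumption $N\leq (N_0-5)/2$ enters. I do not foresee any essential obstacle since the computation is parallel to the one already carried out for $K_k$; the new weight $|\mP_k w|^2$ is polynomial in $w$ and merely contributes additional harmless factors under differentiation.
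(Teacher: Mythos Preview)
Your proposal is correct and follows essentially the same approach as the paper, which simply states that the proof is similar to that of Lemma~\ref{lem:Kk} and omits the details. Your write-up is in fact a faithful expansion of exactly that argument, with the only modifications being the replacement of the prefactor $-|k|^{-2}$ by $\tfrac12$ and the harmless extra polynomial weight $|\mP_k w|^2$, which together account for the shift from $|k|^{n-1}$ to $|k|^{n+1}$ in the bound.
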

\begin{proof} The proof follows is similar to that of Lemma \ref{lem:Kk} and is therefore skipped.
\end{proof}

Exactly as for Lemma~\ref{lem:Kk} and Corollary~\ref{coro:LKK}, the following corollary is a consequence of Lemma~\ref{lem:Nk}.  

\begin{cor}
\label{coro:LNK}
For all  $ n  \leq (N_{0} - 9)/2$, there exists $C_n>0$ such that, for all $\lambda \in \mathbb{C}$ with $\Re \lambda \geq 0$,  for all $ k \neq 0$,
\begin{equation}
\label{eq:LNK}
|\partial_\lambda^{n} \cL[N_k](\lambda)| \leq  C_n  \frac{|k|^2}{(|k|^2 + |\Im \lambda|^2)^{ n/2 +1}}.
\end{equation}
\end{cor}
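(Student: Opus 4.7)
The plan is to mimic the proof of Corollary~\ref{coro:LKK}. Write
\[
\partial_\lambda^n \cL[N_k](\lambda) \;=\; (-1)^n \int_0^\infty e^{-\lambda t}\, t^n N_k(t)\,dt \;=:\; (-1)^n \mathcal{N}_n(\lambda,k),
\]
and split the analysis into three regimes according to the relative sizes of $|k|$, $\Re \lambda$ and $|\Im \lambda|$. The key new ingredient compared to the proof of Corollary~\ref{coro:LKK} is a parity cancellation: the change of variables $\hat v \to -\hat v$ in the definition~\eqref{def:Nk} of $N_k$ shows that all even-order time-derivatives of $N_k$ vanish at $t=0$; in particular $N_k(0)=0$, while $|N_k'(0)|\lesssim |k|^2$. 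Combined with the pointwise estimate $|N_k'(t)|\lesssim |k|^2\,\langle kt\rangle^{-N}$ from Lemma~\ref{lem:Nk}, this yields the uniform bound $|N_k(t)|\lesssim |k|^2 t$, which is the source of the extra $|k|^2$ factor in the numerator of~\eqref{eq:LNK}.

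In the first regime $|k|\geq |\Im \lambda|/2$ one has $|k|^2+|\Im \lambda|^2\sim |k|^2$, and the direct bound $|N_k(t)|\lesssim |k|\,\langle kt\rangle^{-N}$ from Lemma~\ref{lem:Nk} with $N\geq n+2$ (which the hypothesis $n\leq (N_0-9)/2$ permits) together with the substitution $u=|k|t$ yields $|\mathcal{N}_n(\lambda,k)|\lesssim |k|^{-n}$, matching the target. In the second regime $|k|\leq |\Im \lambda|/2$ with $\Re \lambda \geq |\Im \lambda|/4$, the uniform bound $|N_k(t)|\lesssim |k|^2 t$ combined with the exponential factor $e^{-\Re \lambda\, t}$ immediately gives
\[
|\mathcal{N}_n(\lambda,k)|\;\lesssim\; |k|^2 \int_0^\infty e^{-\Re \lambda\, t}\, t^{n+1}\,dt\;\lesssim\; \frac{|k|^2}{(\Re \lambda)^{n+2}}\;\lesssim\; \frac{|k|^2}{|\Im \lambda|^{n+2}}.
\]

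The remaining regime $|k|\leq |\Im \lambda|/2$, $\Re \lambda\leq |\Im \lambda|/4$ (so that $|\lambda|\sim |\Im \lambda|$) is where oscillation must be exploited, via $n+2$ integrations by parts in $t$ applied to $F(t):= t^n N_k(t)$. Since $(t^n)^{(j)}(0)=0$ for $j<n$ and $N_k^{(l-n)}(0)=0$ whenever $l-n$ is even, the only surviving boundary contribution at $t=0$ is $F^{(n+1)}(0)/\lambda^{n+2}=(n+1)!\,N_k'(0)/\lambda^{n+2}$, whose modulus is at most $C|k|^2/|\Im \lambda|^{n+2}$. For the remainder $\lambda^{-(n+2)}\int_0^\infty e^{-\lambda t} F^{(n+2)}(t)\,dt$, a Leibniz expansion produces terms of the form $t^{n-j}\,N_k^{(n+2-j)}(t)$ for $0\leq j\leq n$; Lemma~\ref{lem:Nk} with $N\geq n+2$ and the substitution $u=|k|t$ show that each such $t$-integral is bounded by $C|k|^2$, and summing the contributions produces the announced bound.

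The main technical concern I anticipate is the bookkeeping in this last integration-by-parts step: one must verify that all boundary contributions at $t=\infty$ vanish (which follows from the decay $e^{-\Re \lambda\, t}\langle kt\rangle^{-N}\to 0$ for $\Re \lambda\geq 0$ and $k\neq 0$) and correctly identify which $F^{(l)}(0)$ vanish by parity, so that only the single boundary term at order $l=n+1$ survives and contributes the $|k|^2$ factor that the statement demands.
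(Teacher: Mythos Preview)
Your proof is correct and follows essentially the same strategy that the paper intends by its one-line reference to Corollary~\ref{coro:LKK}: the three regimes you identify correspond exactly to the three cases treated there (large $|\tilde k|$, large $|\Re\tilde\lambda|$, large $|\Im\tilde\lambda|$), and the new ingredients you single out --- the parity cancellation $N_k^{(2j)}(0)=0$ and the bound $|N_k(t)|\lesssim |k|^2 t$ replacing Remark~\ref{remarque2.5} --- are precisely what is needed to produce the extra factor $|k|^2$ in~\eqref{eq:LNK}. The only cosmetic difference is that the paper (in the proof of Corollary~\ref{coro:LKK}) first exploits the homogeneity $\mathcal{N}_n(r\lambda,rk)=r^{-n}\mathcal{N}_n(\lambda,k)$ to reduce to the sphere $|\lambda|+|k|=1$ before doing the case split, whereas you carry out the case analysis directly; both routes yield the same estimates.
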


We finally study the magnetic dispersion function $M(\lambda,k)$.

\begin{prop}
\label{prop:M} Let $M(\lambda,k)$ be defined as in \eqref{rew-DMlambda}. Then, the following holds.
\begin{enumerate}
\item For all $k \neq 0$, the function $\lambda \mapsto M(\cdot, k)$ is holomorphic on $\{\Re \lambda > 0\}$ with a continuous extension to  $\{\Re \lambda=0\}$ and the function $\tau \mapsto M(i \tau , k)$ is  $\mathcal{C}^K$ on $\mathbb{R}$ for $ K\leq (N_{0}-9)/2$. Furthermore for all $m \in \mathbb{N}$ we have the expansion for all $k \in \RR^3$ and all $\Re \lambda > 0$
\begin{equation}
\label{eq:expansionM}
\begin{aligned}
M(\lambda,k)
 &=\lambda^2 + |k|^2  + \tau_{0}^2+   \sum_{j = 0}^{m}\frac{1}{\lambda^{2j+2}} \partial_t^{2j+1}[N_k](0)
 +  R_{m}^M(\lambda, k),
\end{aligned}\end{equation}
where 
\begin{equation}\label{comp-dtjMprop}
\begin{aligned}
\partial_t^{2j+1}N_k(0)&= \frac{(-1)^j}{2} \int (k\cdot \hv)^{2j+2} |\mP_k\hv|^2\varphi'(\langle v\rangle)  dv,
\\
  R_{m}^M(\lambda, k)&= \frac{1}{\lambda^{2m+2}}\int_0^\infty e^{- \lambda  t} \partial_t^{2m+2}K_k(t) \; dt .\end{aligned}
\end{equation}
In addition, the remainder $R_{m}^M(\lambda, k)$ satisfies for all $k \in \RR^3$ and all $\Re \lambda > 0$,
\begin{equation}
\label{eq:remainderM}   
\Big|R_{m}^M(\lambda, k) \Big| \lesssim_m \frac{|k|^{2m+2}}{|\lambda|^{2m+2}}.
\end{equation}

\item For all $k \neq 0$, for all $|\tau|>  |k|$, there holds:
\begin{equation}
\label{eq:formulaMtauleqk}
\index{p@$\psi(y)$} 
M(i\tau,k) = -\tau^2 + |k|^2  + \psi(|k|^2/\tau^2), \qquad 
\psi(y) := - \frac12 \int_{-1}^1 \frac{q(u)}{1- y u^2}\;du,
\end{equation}
for $q(u)$ as in \eqref{def-kqu}.
Furthermore, $\psi$ is a $\mathcal{C}^{K}$  function on $[0,1]$, for $ K\leq N_{0}/2$ with $ \sup_{[0,1]} |\psi^{(l)}| \lesssim _{l}1$, with $\psi(0) = \tau_0^2$. 

\item For all $\lambda \notin i[-|k|,|k|]$, $\Re \lambda \geq 0$, there holds
\begin{equation}\label{eq-formulaM}
\index{L@$\Lambda(z)$} 
M(\lambda,k) = \lambda^2 +|k|^2 - \Lambda(-i \lambda/|k|), \qquad \Lambda(z) := \frac{z}{2} \int_{-1}^1  \frac{q(u)}{ z+  u}  \; du, 
\end{equation}
in which the function $\Lambda$ is holomorphic on $\mathbb{C}\setminus [-1,1]$.
\end{enumerate}
\end{prop}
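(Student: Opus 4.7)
The argument closely parallels the proof of Proposition~\ref{prop:D}, with the new input being the decay estimates for $N_k$ from Lemma~\ref{lem:Nk} and Corollary~\ref{coro:LNK}. The plan is to dispose of the three items in order.

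\textbf{Item 1 (holomorphy, regularity, expansion).} Starting from the representation \eqref{rep-Mlambda}, the holomorphy of $\lambda\mapsto M(\lambda,k)$ on $\{\Re\lambda>0\}$ and its continuous extension up to $\{\Re\lambda=0\}$ reduce to the same statement for $\mathcal{L}[N_k](\lambda)$; this follows from the bound $|N_k(t)|\lesssim |k|\langle kt\rangle^{-2}$ furnished by Lemma~\ref{lem:Nk} with $N=2$, which makes the defining integral absolutely and uniformly convergent. The $\mathcal{C}^K$ regularity of $\tau\mapsto M(i\tau,k)$ on $\mathbb{R}$ for $K\leq (N_0-9)/2$ is obtained from Corollary~\ref{coro:LNK} together with Lebesgue's dominated convergence. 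For the asymptotic expansion \eqref{eq:expansionM}, I would write $e^{-\lambda t}=-\lambda^{-1}\partial_t e^{-\lambda t}$ and integrate by parts $2m+2$ times in $\cL[N_k](\lambda)$; the boundary contributions at $t=\infty$ vanish by the rapid decay of $\partial_t^j N_k$ from Lemma~\ref{lem:Nk}, and the boundary contributions at $t=0$ produce the stated series. Differentiating under the integral gives
\[
\partial_t^n N_k(0)=\frac{(-1)^n i^{n+1}}{2}\int (k\cdot\hv)^{n+1}|\mP_k\hv|^2\varphi'(\langle v\rangle)\,dv,
\]
and the radial symmetry of $\varphi'(\langle v\rangle)|\mP_k\hv|^2$ together with the change of variables $v\mapsto -v$ forces $\partial_t^{2j}N_k(0)=0$, while for the odd-indexed derivatives one recovers exactly the first line of \eqref{comp-dtjMprop}. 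The remainder bound \eqref{eq:remainderM} then results from applying \eqref{bounds-Nkt} with $n=2m+2$ and a suitable $N$ to majorize $\int_0^\infty e^{-\Re\lambda t}|\partial_t^{2m+2}N_k(t)|\,dt$ by $|k|^{2m+2}$, after pulling out the $|\lambda|^{-(2m+2)}$ prefactor.

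\textbf{Item 2 (formula on the imaginary axis).} For $\lambda=i\tau$ with $|\tau|>|k|$, the kernel $(-i\lambda/|k|+u)^{-1}=(\tau/|k|+u)^{-1}$ has no singularity on $[-1,1]$. Starting from the second line of \eqref{rew-DMlambda}, I would rationalize by multiplying numerator and denominator by $\tau/|k|-u$ and use the evenness of $q(u)$ to kill the odd-in-$u$ piece. This reduces the integral to
\[
\frac{i\lambda}{2|k|}\int_{-1}^1\frac{q(u)}{\tau/|k|+u}\,du
=-\frac{\tau^2}{2|k|^2}\int_{-1}^1\frac{q(u)}{(\tau/|k|)^2-u^2}\,du
=-\frac12\int_{-1}^1\frac{q(u)}{1-(|k|/\tau)^2 u^2}\,du,
\]
which is exactly $\psi(|k|^2/\tau^2)$. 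The value $\psi(0)=-\tfrac12\int_{-1}^1 q(u)\,du=\tau_0^2$ is then nothing but identity \eqref{comp-tau0}. For the regularity of $\psi$ on $[0,1]$, I would note that the definition \eqref{def-kqu} carries an extra factor $(1-u^2)$ compared to $\kappa$, so that the decay assumption \eqref{decay-mu} on $\varphi$ yields $|q^{(l)}(u)|\lesssim|1-|u||^{N_0/2-l}$, which is enough for $l\leq N_0/2$ uniform derivative bounds after differentiating under the integral.

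\textbf{Item 3 (representation via $\Lambda$).} This is a direct substitution: setting $z=-i\lambda/|k|$ in the second line of \eqref{rew-DMlambda} gives $\frac{i\lambda}{2|k|}=-\frac{z}{2}$, whence
\[
M(\lambda,k)=\lambda^2+|k|^2-\frac{z}{2}\int_{-1}^1\frac{q(u)}{z+u}\,du
=\lambda^2+|k|^2-\Lambda(z),
\]
and the holomorphy of $\Lambda$ on $\mathbb{C}\setminus[-1,1]$ is immediate since the integrand has poles only at $z\in[-1,1]$.

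The only mildly delicate point is the symmetry argument that kills $\partial_t^{2j}N_k(0)$; everything else is bookkeeping built on the uniform decay estimates of Lemma~\ref{lem:Nk} and Corollary~\ref{coro:LNK}. Note that, by contrast with Proposition~\ref{prop:D}, no smooth/holomorphic extension of $\Lambda$ across $[-1,1]$ is needed here, reflecting the fact that the magnetic dispersion relation will be handled through a globally Klein--Gordon-type phase rather than via a localized extension.
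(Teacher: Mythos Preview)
Your proposal is correct and follows essentially the same route as the paper's proof: holomorphy and the $\mathcal{C}^K$ regularity from Lemma~\ref{lem:Nk} and Corollary~\ref{coro:LNK}, the expansion via repeated integration by parts with the odd/even symmetry killing $\partial_t^{2j}N_k(0)$, the formula on the imaginary axis by exploiting the evenness of $q$, and the direct substitution for $\Lambda$. Your remark that $q$ carries an extra factor of $(1-u^2)$ compared to $\kappa$, hence one more order of vanishing at $u=\pm1$, is exactly what underlies the slightly better regularity threshold $K\le N_0/2$ for $\psi$ versus $K\le N_0/2-1$ for $\Omega$.
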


As a straightforward application of the last item of Proposition~\ref{prop:M}, we have
\begin{cor}
\label{coro:extenM}
For all $k\neq 0$, the magnetic dispersion function $M(\cdot,k)$ admits a holomorphic extension (still denoted by ${M}$) on  $\{z \in \mathbb{C},  |\Im z |> |k|\}\cup\{\Re z >0\}$  which coincides with $M(\cdot,k)$ on $\{z \in \mathbb{C}, \Re z \geq 0,  |\Im z |>|k|\}$. 
\end{cor}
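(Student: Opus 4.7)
The corollary is essentially a direct consequence of item 3 of Proposition~\ref{prop:M}, so the plan is to unpack that formula and verify that it supplies the advertised extension.

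First I would recall from item 3 of Proposition~\ref{prop:M} that, for every $k\neq 0$ and every $\lambda$ in $\{\Re\lambda\ge 0,\ \lambda\notin i[-|k|,|k|]\}$, the identity
\[
M(\lambda,k) \;=\; \lambda^2 + |k|^2 - \Lambda(-i\lambda/|k|),\qquad \Lambda(z) := \frac{z}{2}\int_{-1}^{1}\frac{q(u)}{z+u}\,du,
\]
holds, with $\Lambda$ holomorphic on $\mathbb{C}\setminus[-1,1]$. The strategy is then to \emph{define} the extension of $M(\cdot,k)$ by the right-hand side of this formula on the largest domain where it makes sense.

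Next I would analyse the conformal map $\lambda\mapsto z:=-i\lambda/|k|$. Writing $\lambda=\gamma+i\tau$, one has $\Re z = \tau/|k|$ and $\Im z = -\gamma/|k|$, so that the singular set $i[-|k|,|k|]$ in the $\lambda$-variable corresponds exactly to the interval $[-1,1]$ in the $z$-variable. Consequently the preimage of $\mathbb{C}\setminus[-1,1]$ under this map is $\mathbb{C}\setminus i[-|k|,|k|]$, which contains the open set
\[
\{\Re\lambda>0\}\ \cup\ \{|\Im\lambda|>|k|\}.
\]
Indeed $\{\Re\lambda>0\}$ corresponds to $\{\Im z<0\}$, $\{|\Im\lambda|>|k|\}$ corresponds to $\{|\Re z|>1\}$, and the closed interval $[-1,1]\subset\{\Im z=0,\,|\Re z|\le 1\}$ is disjoint from their union.

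Since $\Lambda$ is holomorphic on $\mathbb{C}\setminus[-1,1]$ and $\lambda\mapsto -i\lambda/|k|$ is entire, the function
\[
\lambda\ \longmapsto\ \lambda^2+|k|^2-\Lambda(-i\lambda/|k|)
\]
is holomorphic on $\{|\Im\lambda|>|k|\}\cup\{\Re\lambda>0\}$. By the formula recalled above, it agrees with $M(\cdot,k)$ on $\{\Re\lambda\ge 0,\ |\Im\lambda|>|k|\}$ (and more generally on $\{\Re\lambda\ge 0\}\setminus i[-|k|,|k|]$), which gives the desired extension. There is really no obstacle here beyond bookkeeping of the domain of $\Lambda$; all substantive work was done in establishing item 3 of Proposition~\ref{prop:M}.
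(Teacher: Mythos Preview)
Your proof is correct and follows exactly the route the paper intends: the corollary is stated as a ``straightforward application of the last item of Proposition~\ref{prop:M}'' with no further argument given, and your unpacking of the formula $M(\lambda,k)=\lambda^2+|k|^2-\Lambda(-i\lambda/|k|)$ together with the domain bookkeeping for $\Lambda$ is precisely what is needed.
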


\begin{proof}[Proof of Proposition \ref{prop:M}]

\noindent $1.$ 
By Lemma~\ref{lem:Nk}, the Laplace transform $\cL[N_k(t)](\lambda)$ is holomorphic in $\Re \lambda >0$ and well-defined up to the imaginary axis $\Re \lambda \ge 0$. In addition, using \eqref{bounds-Nkt}, there holds
\begin{equation}\label{bdLNk0}
|\cL[N_k](\lambda)| \le C_0 \int_0^\infty |k| \langle kt \rangle^{-2} \; dt \le C_0 ,
\end{equation}
uniformly in $k$ and $\Re \lambda \ge 0$. Next, integrating by parts exactly as we have proceeded for the electric dispersion function $D$, we obtain for $\Re \lambda >0$, 
\begin{equation}\label{expansion-ML}
\begin{aligned}
 \cL[N_k](\lambda  ) 
 &= \sum_{j = 0}^{m}\frac{1}{\lambda^{2j+2}} \partial_t^{2j+1}[N_k(t)](0)
 + \frac{1}{\lambda^{2m+2}}\int_0^\infty e^{- \lambda  t} \partial_t^{2m+2}N_k(t) \; dt ,
\end{aligned}\end{equation}
where
$$
\partial_t^{2j+1}N_k(0)= \frac{(-1)^j}{2} \int (k\cdot \hv)^{2j+2} |\mP_k\hv|^2\varphi'(\langle v\rangle)  dv.
$$
In deriving \eqref{expansion-ML}, we have used $\partial^{2j}_t[N_k(t)](0) =0$ for $j\ge 0$. Next, using again \eqref{bounds-Nkt}, the last integral term satisfies
\begin{equation}\label{bd-RM}
\Big|\int_0^\infty e^{-\lambda  t} \partial_t^{2m+2}N_k(t) \; dt \Big| \le C_{m} |k|^{2m+2}
\end{equation}
uniformly in $k$, for $m\ge 0$ and $\Re \lambda \ge 0$, where $C_m = C_0 m^{2}$. Finally, the fact that $\tau \mapsto M(i \tau , k)$ is  $\mathcal{C}^K$ on $\mathbb{R}$ for $K\leq (N_{0}-9)/2$ follows from Corollary~\ref{coro:LNK} and Lebesgue's domination theorem.

\bigskip

\noindent $2.$
  For $\lambda= i \tau$ and $|\tau|> |k|$, we recall from \eqref{rew-DMlambda} that 
$$
\begin{aligned}
M(i\tau,k) 
&= -\tau^2 + |k|^2 -  \frac12\int_{-1}^1 \frac{ q(u) }{1+ |k| u/\tau} \; du
\\
&= -\tau^2 + |k|^2 -  \frac12\int_{-1}^1 \frac{ q(u) }{1 - |k|^2 u^2/\tau^2} \; du,
\end{aligned}$$
in which the last identity used the fact that $q(u)$ is even in $u$. This yields \eqref{eq:formulaMtauleqk}. In addition, since $q(u)$ decays rapidly to zero as $u\to \pm1$, the function $\psi(y)$ is a  $  \mathcal{C}^{\lfloor N_0/2 \rfloor}$  function on $[0,1]$ with bounded derivatives.

\bigskip

\noindent $3.$ Finally, the representation \eqref{eq-formulaM} reads off from \eqref{rew-DMlambda}. Clearly, $\Lambda$ is holomorphic on $\mathbb{C} \setminus [-1,1]$.
This completes the proof of the proposition.

\end{proof}

\subsection{Electric dispersion relation}\label{sec-D}

In view of the spectral stability result of Proposition~\ref{prop-nogrowth} , we are led to study the dispersion relation $D(\lambda,k) =0$ for $\lambda$ lying on the imaginary axis. We obtain the following. 

\begin{theorem}\label{theo-LangmuirE} 
Let $\kappa(u), q(u)$ be defined as in \eqref{def-kqu}, and set 
\begin{equation}\label{def-taukappa0}
\index{k@$\kappa_0$} 
 \kappa_0^2:=\Omega(1)= - \int_{-1}^1 \frac{ u^2}{1 - u^2}  \kappa(u) \;du.
\end{equation}
Then, for any $0\le |k| \le \kappa_0$, there are exactly two zeros $\index{l@$\lambda_\pm^{\text{elec}}$: electric dispersion relation} \index{t@$\tau_*$}
\lambda_\pm^{\text{elec}}= \pm i \tau_{*}(|k|)$ of the electric dispersion relation $ D(\lambda,k) = 0$
that lie on the imaginary axis $\{\Re \lambda =0\}$, where $\tau_*$ is a $ \mathcal{C}^{\lfloor \frac{N_0 -2} {2}\rfloor}$  function, strictly increasing with respect to $|k|$, with $\tau_*(0) = \tau_0$ and $\tau_*(\kappa_0) = \kappa_0$. Moreover, we have  for $0 < |k|  <\kappa_{0}$ that 
\begin{equation}\label{lowerbound-taustarE} 
 \tau_0< \tau_*(|k|) <\kappa_0, \qquad  |k|< \tau_*(|k|)< \sqrt{\tau_0^2 + |k|^2} , 
\end{equation}
and for some constants $c_0, c_1,C_0>0$,
\begin{equation}\label{lowerbound-taustarDE} 
c_0 |k|\le    \tau'_*(|k|) \le C_0|k|, \qquad  c_1 \le  \tau''_*(|k|),
\end{equation}
for all $0\le |k|\le \kappa_0$.

Moreover, we have the following expansion for $|k|\ll 1$:
\begin{equation}\label{small-tauk}
\tau_*(|k|)  = \tau_0 + \frac{\tau_1^2}{2\tau_0^3}|k|^2 + \cO_{k\to 0}(|k|^4),
\end{equation}
where $\tau_1^2:=- \int_{-1}^1 u^4 \kappa(u) \, du >0$.

Finally, there exists $\delta_0>0$ small enough so that the zero curves $\lambda^{\text{elec}}_\pm(k)$ of  $ D(\lambda,k) = 0$ can be  extended as  $ \mathcal{C}^{\lfloor \frac{N_0 -9} {2}\rfloor}$  functions on $[\pm\kappa_0- \delta_0,\pm\kappa_0+ \delta_0]$, such that the following holds.
\begin{itemize}
\item We can write $\lambda^{\text{elec}}_\pm(k)= \Re \lambda_*(|k|) \pm i \Im \lambda_*(|k|)$  and $ \Re \lambda_*(|k|) < 0$, 
 $|\Im \lambda_*(|k|)| <|k|$  for all  $\kappa_0 < |k|\le \kappa_0+\delta_0$. 
 
\item The only zeros of $\widetilde{\widetilde{D}}(\lambda,k)$ (the extension of $D$ defined in Corollary~\ref{coro:extenD}) 
for $\kappa_{0} \leq |k| \leq \kappa_{0}+ \delta$ and $\lambda$ in a (complex) vicinity of $\pm i \kappa_{0} $ are exactly the $\lambda^{\text{elec}}_\pm(k)$.

\item  There are positive constants $c_0, C_0$ such that for $ \kappa_{0} \leq |k| \leq \kappa_{0}+ \delta$, we have  
\begin{equation}\label{lowerbound-taustarDEextend} 
c_0 |k|\le  \Im \lambda'_*(k) \le C_0|k|.
\end{equation}

\end{itemize}

\end{theorem}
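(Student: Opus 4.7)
The plan is to study $D(\lambda,k)=0$ in the two regimes of Proposition~\ref{prop:D}, and then glue near $|k|=\kappa_0$ via the smooth extension $\widetilde{\widetilde D}$ from Corollary~\ref{coro:extenD}. For $|k|\le\kappa_0$ and $|\tau|\ge|k|$, formula \eqref{eq:formulaDtauleqk} reduces $D(i\tau,k)=0$ to $\Omega(|k|^2/\tau^2)=|k|^2$. I would first verify that $\Omega:[0,1]\to[0,\kappa_0^2]$ is a smooth bijection: $\Omega(0)=0$, $\Omega(1)=\kappa_0^2$, and both
\begin{equation*}
\Omega'(y)=-\int_{-1}^1\frac{u^2\kappa(u)}{(1-yu^2)^2}\,du,\qquad \Omega''(y)=-2\int_{-1}^1\frac{u^4\kappa(u)}{(1-yu^2)^3}\,du
\end{equation*}
are strictly positive, since radial positivity of $\mu$ forces $\varphi'<0$ and thus $\kappa<0$ on $(-1,1)$. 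Thus for each $|k|\in[0,\kappa_0]$ there is a unique $y(|k|)\in[0,1]$ with $\Omega(y)=|k|^2$, yielding $\tau_*(|k|)=|k|/\sqrt{y(|k|)}$, with $\tau_*(0)=\tau_0$ recovered from $\Omega(y)=\tau_0^2 y+O(y^2)$. Purely imaginary zeros with $|\tau|<|k|$ are excluded via the Plemelj formula \eqref{eq-extensionPhi}: $\Im D(i\tau,k)=\pi\tau\kappa(\tau/|k|)/|k|^3$, nonzero for $0<|\tau|<|k|$ since $\kappa<0$.

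For the properties of $\tau_*$, I set $\sigma(r):=\tau_*^2$ with $r=|k|^2$, so $\sigma=r/y$ and $\sigma'(r)=(y\Omega'(y)-\Omega(y))/(y^2\Omega'(y))$. Strict convexity of $\Omega$ together with $\Omega(0)=0$ yields $y\Omega'(y)>\int_0^y\Omega'(s)\,ds=\Omega(y)$, so $\sigma'>0$ and the implicit function theorem produces the claimed regularity of $\tau_*$. The bound $c_0|k|\le\tau_*'(|k|)\le C_0|k|$ follows from $\tau_*'=|k|\sigma'/\tau_*$, with $\sigma'$ continuous and bounded away from $0$ on the compact $[0,\kappa_0^2]$ and $\tau_*\in[\tau_0,\sqrt{\tau_0^2+\kappa_0^2}]$. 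The strict inequality $\tau_*(|k|)<\sqrt{\tau_0^2+|k|^2}$ reduces, using $\tau_*^2-|k|^2=\Omega(y)(1-y)/y$, to
\begin{equation*}
\tau_0^2 y-(1-y)\Omega(y)=y^2\int_{-1}^1 u^2(-\kappa(u))\frac{1-u^2}{1-yu^2}\,du>0.
\end{equation*}
For $\tau_*''\ge c_1$, I differentiate $\tau_*\tau_*'=|k|\sigma'$ once more to obtain $\tau_*\tau_*''=\sigma'(\sigma-|k|^2)/\sigma+2|k|^2\sigma''$; the first term dominates near $|k|=0$ (giving $\tau_*''(0)=\tau_1^2/\tau_0^3>0$) and the second dominates near $|k|=\kappa_0$ (since $\sigma''(\kappa_0^2)>0$ by strict convexity of $\Omega$ at $y=1$), with continuity producing a uniform lower bound on $[0,\kappa_0]$. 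The expansion \eqref{small-tauk} comes from series reversion of $\Omega(y)=\tau_0^2 y+\tau_1^2 y^2+O(y^3)$ combined with $\tau_*^2=r/y$.

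For the extension across $|k|=\kappa_0$, I apply the implicit function theorem to $F(\lambda,|k|):=\widetilde{\widetilde D}(\lambda,k)=1-\widetilde{\widetilde\Phi}(-i\lambda/|k|)/|k|^2$ at $(i\kappa_0,\kappa_0)$. Viewed as a map $\mathbb{C}\cong\mathbb{R}^2\to\mathbb{C}$, the differential $d_\lambda F$ at the base point is, up to a rotation by $90^\circ$ and a nonzero scalar, equal to $d\widetilde{\widetilde\Phi}(1)$, which by \eqref{eq:diffpm1} is a nonzero real multiple of the identity and hence invertible as a real-linear map. The IFT then produces a unique $\mathcal{C}^{\lfloor(N_0-9)/2\rfloor}$ curve of zeros $\lambda_\pm^{\text{elec}}(k)$ in a complex neighborhood of $\pm i\kappa_0$, extending the purely imaginary branch. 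For $|k|>\kappa_0$, Proposition~\ref{prop-nogrowth} combined with $\widetilde{\widetilde D}=D$ on $\{\Re\lambda>0\}$ rules out positive real parts, while the Plemelj analysis of the first paragraph rules out purely imaginary zeros (no zero with $|\tau|<|k|$ since $\kappa<0$; no zero with $|\tau|>|k|$ since $|k|^2$ falls outside the range $[0,\kappa_0^2]$ of $\Omega$; and $|\tau|=|k|$ only at the critical point itself). Hence $\Re\lambda_\pm^{\text{elec}}<0$. The bound $|\Im\lambda_\pm^{\text{elec}}|<|k|$ and \eqref{lowerbound-taustarDEextend} then follow from continuity of $\lambda_*'$ across $|k|=\kappa_0$, matched with the value $\Im\lambda_*'(\kappa_0)=\tau_*'(\kappa_0)=\sigma'(\kappa_0^2)=1-\kappa_0^2/\Omega'(1)\in(0,1)$, where $\Omega'(1)>\kappa_0^2=\Omega(1)$ uses once more the strict convexity of $\Omega$.

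The main obstacle is precisely this last extension step: because $\widetilde{\widetilde D}$ is only $\mathcal{C}^{N_0/2-3}$ and not holomorphic at $z=\pm1$, the implicit function theorem must be invoked in its real-variable form, and the sign/geometric information ($\Re\lambda<0$, $|\Im\lambda|<|k|$) must be extracted from the real-linear scalar structure of $d\widetilde{\widetilde\Phi}(\pm 1)$ combined with the spectral stability of Proposition~\ref{prop-nogrowth}, rather than from a purely holomorphic argument as would be available in the Vlasov--Poisson setting.
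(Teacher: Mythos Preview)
Your overall architecture matches the paper's proof: split into $|\tau|\ge|k|$ (reduce to $\Omega(|k|^2/\tau^2)=|k|^2$ and invert), $|\tau|<|k|$ (Plemelj gives nonvanishing imaginary part), then extend across $|k|=\kappa_0$ by the real implicit function theorem applied to $\widetilde{\widetilde\Phi}$ and read off the signs from $d\widetilde{\widetilde\Phi}(\pm1)$ together with Proposition~\ref{prop-nogrowth}. The bounds on $\tau_*'$, the inequality $\tau_*<\sqrt{\tau_0^2+|k|^2}$, and the expansion \eqref{small-tauk} are handled correctly. (One small slip: $\mu\ge0$ does not force $\varphi'<0$; the sign $\kappa\le0$ comes instead from the integrated-by-parts form in Lemma~\ref{lem-rewDM}.)

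There is, however, a genuine gap in your argument for the uniform convexity bound $\tau_*''\ge c_1$. First, your formula $\tau_*\tau_*''=\sigma'(\sigma-|k|^2)/\sigma+2|k|^2\sigma''$ is incorrect: differentiating $\tau_*\tau_*'=|k|\sigma'$ gives
\[
\tau_*\tau_*''=\sigma'+2|k|^2\sigma''-\frac{|k|^2(\sigma')^2}{\sigma},
\]
which agrees with yours only when $\sigma'=1$. Second, and more seriously, even with the correct formula your reasoning ``positive near $0$, positive near $\kappa_0$, continuity gives a uniform lower bound'' is circular: continuity on a compact interval guarantees that the minimum is attained, not that it is positive. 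You have not excluded an interior zero of $\tau_*''$. Third, the assertion that $\sigma''(\kappa_0^2)>0$ follows from strict convexity of $\Omega$ is unjustified; convexity of $\Omega$ does not by itself imply convexity of $r\mapsto r/\Omega^{-1}(r)$.

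The paper closes this gap by exploiting the specific integral structure. Writing $\omega(y)=\Omega(y)/y=-\int_{-1}^1 u^2\kappa(u)/(1-yu^2)\,du$, one has by Cauchy--Schwarz
\[
\omega'(y)^2=\Bigl(\int\frac{u^4(-\kappa)}{(1-yu^2)^2}\,du\Bigr)^2\le \int\frac{u^2(-\kappa)}{1-yu^2}\,du\cdot\int\frac{u^6(-\kappa)}{(1-yu^2)^3}\,du=\tfrac12\,\omega(y)\omega''(y),
\]
and feeding this into the explicit expression for $2x_*''x_*-(x_*')^2$ (with $x_*=\tau_*^2$ solving $x_*=\omega(|k|^2/x_*)$) yields a lower bound by $\omega(0)\omega'(0)=\tau_0^2\tau_1^2>0$ uniformly on $[0,\kappa_0]$. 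This step genuinely uses that $\omega$, $\omega'$, $\omega''$ are moments of a common nonnegative measure, not merely that $\Omega$ is convex; you should replace your endpoint-plus-continuity argument by this Cauchy--Schwarz computation.
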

\begin{remark}
\label{remarkholo}
Note that we have in particular  that the extensions $ \lambda^{\text{elec}}_\pm(k)$ that we obtain above  for $| k| \geq \kappa_{0}$
have the property that $|\Im \lambda_*(|k|)| <|k|$  for   $\kappa_0 < |k|\le \kappa_0+\delta_0$ so that even if they are originally
defined as zero curves of $\widetilde{\widetilde{D}}$ which was  a non unique smooth  extension of $D$ they lie in the domain 
where $\widetilde{\widetilde{D}}$ coincides with $\tilde D$ the holomorphic extension of $D$ (see Corollary \ref{coro:extenD}).
This yields that these curves which are the objects we are really interested in  actually do not depend on the choice of the smooth extension.
 \end{remark}

\begin{proof} We start with the first part of the statement.  According to Proposition~\ref{prop-nogrowth}, we can  restrict to the imaginary axis, that is we consider $\lambda = i\tau$ for $\tau \in \RR$. We distinguish between the two cases: $|\tau| \ge |k|$ and $|\tau| < |k|$. 

\bigskip

\noindent {\bf Case 1: $|\tau| \ge |k|$.}
In this case we apply the second item of Proposition~\ref{prop:D}, which gives that
$$
D(i\tau ,k) = 1 - \frac{1}{|k|^{2}} \Omega(|k|^2/\tau^2),
$$
where $\Omega$ is defined in~\eqref{eq:formulaDtauleqk}. From the sign of $\kappa$ in Lemma \ref{lem-rewDM}, we have that $\Omega'(y)  > 0$, and so $\Omega(y)$ is strictly increasing on $[0,1]$. That is, for any $0<y<1$, there holds
\begin{equation}\label{ineq-omegay}
0 =\Omega(0) < \Omega(y) < \Omega(1) = \kappa_0^2,
\end{equation}
where $\kappa_0$ is defined in~\eqref{def-taukappa0}.
As a result, by the strict monotonicity, the inverse map $\Omega^{-1}(\cdot)$ is well-defined from $[0,\kappa_0^2]$ to $[0,1]$. 

We are now ready to study the dispersion relation $D(i\tau,k) =0$, that corresponds  for $|\tau|\ge |k|$ to the equation
\begin{equation}\label{dispersion1}
\begin{aligned}
\Omega(|k|^2/\tau^2) =|k|^2.
\end{aligned}\end{equation}
 Clearly, there are no zeros of $D(i\tau,k)$ for $|k| > \kappa_0$ due to \eqref{ineq-omegay}. On the other hand, for $0<|k| \le \kappa_0$, $|k|^2$ belongs to the range of $\Omega(\cdot)$. 
 It thus follows that there are zeros $\tau = \pm \tau_*(|k|)$ of the form 
\begin{equation}\label{form-taustar} 
\tau_*(|k|) =\Big(\frac{|k|^2}{\Omega^{-1}(|k|^2)}\Big)^{1/2},
\end{equation}
for any $k\not =0$ such that $|k| \le \kappa_0$. Note that $\Omega^{-1}(|k|^2) >0$, since $k\not =0$, so $\tau_*(|k|)$ is a well-defined  function  for $k\not =0$.  From the second item of Proposition~\ref{prop:D}, we know that $\Omega$ is $\ \mathcal{C}^{\lfloor N_{0}/2-1\rfloor}$ on $[0,1]$ with bounded derivatives. Therefore, $\Omega^{-1}$ and thus $\tau_*$ enjoy the same regularity. By~\eqref{comp-tau0-2}, we have $\Omega'(0) = \tau^2$ and thus  $\Omega(0) =0$, $\Omega'(0) = \tau_0^2>0$, and $\Omega''(0) = 2\tau_1^2 >0$ imply the Taylor expansion
$$ \Omega^{-1}(|k|^2) = \frac{1}{\tau_0^2}|k|^2 - \frac{\tau_1^2}{\tau_0^6}|k|^4 + \cO(|k|^6)$$
for $|k|\ll1$. This yields 
\begin{equation}\label{small-taukproof}
\tau_*(|k|)  = \Big( \tau_0^2 + \frac{\tau_1^2}{\tau_0^2}|k|^2 + \cO(|k|^4)\Big)^{1/2} = \tau_0 + \frac{\tau_1^2}{2\tau_0^3}|k|^2 + \cO(|k|^4)
\end{equation}
as claimed for $|k|\ll1$. On the other hand, at $|k| = \kappa_0$, we have by \eqref{ineq-omegay} and \eqref{form-taustar}  that
 $\tau_*(\kappa_0) =  \kappa_0$. 

\bigskip

\noindent{\bf Case 2: $|\tau|< |k|$.}
We next consider the case when $\lambda = i\tau$ for $|\tau| < |k|$. From \eqref{rew-DMlambda}, 
for $\lambda = (\tilde\gamma + i\tilde\tau)|k|$, with $|\tilde \tau|<1$, we compute 
\begin{equation}\label{small-tau1}
\begin{aligned}
D(\lambda,k) 
& = 1  - \frac{1}{|k|^2} \int_{-1}^1 \frac{ u \kappa(u)}{-i\lambda/|k| +  u}\; du = 1 - \frac{1}{|k|^2}  \int_{-1}^1  \frac{u\kappa(u)}{-i\tilde\gamma + \tilde\tau +  u}  \; du,
\end{aligned}
\end{equation}
for $\kappa(u)$ defined as in \eqref{def-kqu}. 
Since $\varphi$ decays rapidly, $\kappa(u)$ is a well defined function on $[-1,1]$ and strictly negative for each $u\in (-1,1)$, see Lemma \ref{lem-rewDM}. Note in particular that $\kappa(u)\to 0$ fast enough  as $u\to \pm 1$. Using  Plemelj's formula we thus obtain
\begin{equation}\label{PV-Dk}
\lim_{\tilde\gamma\to 0^+}  \int_{-1}^1  \frac{u\kappa(u)}{-i\tilde\gamma + \tilde\tau +  u}  \; du =  P.V. \int_{-1}^1 \frac{u \kappa(u)}{u+\tilde\tau} du - i \pi \tilde \tau  \kappa(\tilde\tau)  
\end{equation}
for $|\tilde\tau|<1$. 
Therefore, for $\lambda = i\tilde\tau |k|$, with $|\tilde\tau|<1$, we have 
\begin{equation}\label{form-Ditau2}
D(i\tilde\tau|k|,k) = 1 - \frac{1}{|k|^2} P.V. \int_{-1}^1 \frac{u \kappa(u)}{u+\tilde\tau} du + \frac{i \pi }{|k|^2}  \tilde \tau  \kappa(\tilde\tau)  
.\end{equation}
Since $\kappa(u) \not =0$ for $u\in (-1,1)$, the imaginary part of $D(i\tilde\tau|k|,k)$ never vanishes for $0<|\tilde\tau|<1$, while at $\tilde\tau=0$, we have 
$$
D(0,k) = 1 -\frac{1}{|k|^2} \int_{-1}^1 \kappa(u)\; du.
$$
Since $\kappa(u) \le 0$, $D(0,k)\not =0$, and in fact $D(0,k) \gtrsim 1 + |k|^{-2}$. This and \eqref{form-Ditau2} imply that for any $\delta>0$, there is a positive constant $c_\delta$ so that
\begin{equation}\label{lowbd-Ditau2}
|D(i\tilde\tau|k|,k)| \ge c_\delta \Big(1 + \frac{1}{|k|^2}\Big), \qquad \forall |\tilde \tau|\le 1-\delta,
\end{equation}
for any $k\not=0$. 

\bigskip

\noindent {\bf Dispersive properties of $\tau_*(|k|)$}

Let us now study the property of $\tau_*(|k|)$ for any $0\le |k|\le \kappa_0$. Set $\index{x@$x_*$} x_*(|k|) = \tau_*(|k|)^2$. Recalling \eqref{eq:formulaDtauleqk}, it turns out to be convenient to write 
\begin{equation}\label{def-smallomega} 
\index{o@$\omega$}
\Omega(y) = y \omega(y), \qquad \omega(y) := -\int_{-1}^1 \frac{u^2\kappa(u)}{1- y u^2}\;du.
\end{equation}
Hence, $x_*(|k|)$ is defined through  
\begin{equation}\label{eqs-xstarE}
x_* = \omega(|k|^2/x_*) .\end{equation}
Note that $\omega(0) = \tau_0^2$ and $\omega(1) = \kappa_0^2$, using \eqref{ineq-omegay}. Hence, by monotonicity, we have
\begin{equation}\label{x-bounds} \tau_0^2 \le x_*(|k|) \le \kappa_0^2.\end{equation}
Now, taking the derivative of the equation $x_* = \omega(|k|^2/x_*)$, we get
\begin{equation}\label{cal-D2x}
\begin{aligned}
\Big[1+  \frac{|k|^2}{x_*^2}  \omega'(\frac{|k|^2}{x_*})\Big] x_*'(|k|) &=  \frac{ 2|k|}{x_*} \omega'(\frac{|k|^2}{x_*}) ,
\\
\Big[1+  \frac{|k|^2}{x_*^2}  \omega'(\frac{|k|^2}{x_*}) \Big] x_*''(|k|) 
&=  \frac{2(x_* - |k| x'_*)^2}{x_*^3} \omega'(\frac{|k|^2}{x_*})  
+ \frac{|k|^2(2x_*- |k|x'_*)^2}{x_*^4} \omega''(\frac{|k|^2}{x_*})  ,
\end{aligned}
\end{equation}
noting that each term on the right-hand side is nonnegative, since $\omega'(y) \ge 0$ and $\omega''(y)\ge 0$. The above yields that both $x_*'(|k|) $ and $x_*''(|k|)$ never vanish for $0<|k|\le \kappa_0$ and satisfy the asymptotic expansion \eqref{small-tauk} for small $k$. Therefore, recalling \eqref{x-bounds} and the fact that 
\begin{equation}\label{bd-coff1}
C_0^{-1} \le 1 + \frac{|k|^2}{x_*^2}  \omega'(\frac{|k|^2}{x_*}) \le C_0,
\end{equation}
we obtain 
\begin{equation}\label{Dx-bounds}
C_0^{-1} |k| \le x_*'(|k|) \le C_0 |k| , \qquad C_0^{-1}\le x_*''(|k|)  \le C_0 , \qquad \forall ~0\le |k|\le \kappa_0,\end{equation}
for some positive constant $C_0$.

Similarly, we compute 
$$ \tau_*'(|k|) = \frac{x_*'(|k|)}{2\sqrt{x_*(|k|)}} , \qquad \tau_*''(|k|) = \frac{2 x''_*(|k|) x_*(|k|) - x_*'(|k|)^2}{4 x_*(|k|)^{3/2}}.$$ 
The estimates on $\tau_*'(|k|)$ follow at once from those on $x_*(|k|), x_*'(|k|)$, see \eqref{x-bounds} and \eqref{Dx-bounds}. We shall now prove that 
\begin{equation}\label{lowbd-tauk}
\tau_*''(|k|) \ge c_0, \qquad ~\forall~0\le |k|\le \kappa_0,
\end{equation}
for some positive constant $c_0$. In view of \eqref{x-bounds}, it suffices to obtain a lower bound for $2x_*''(|k|) x_*(|k|) - x_*'(|k|)^2 $. Using \eqref{cal-D2x}, we compute 
$$
\begin{aligned} 
\Big[ 1 + \frac{|k|^2}{x_*^2}  \omega'(|k|^2/x_*) \Big]x_*'(|k|)^2 &= \frac{ 2|k| x'_*}{x_*} \omega'(|k|^2/x_*) ,
\\
2\Big[ 1 + \frac{|k|^2}{x_*^2}  \omega'(\frac{|k|^2}{x_*}) \Big]x_*''(|k|) x_*(|k|) 
&=  \frac{4(x_* - |k| x'_*)^2}{x_*^2} \omega'(\frac{|k|^2}{x_*})  
+ \frac{2|k|^2(2x_*- |k|x'_*)^2}{x_*^3} \omega''(\frac{|k|^2}{x_*}) ,
 \end{aligned}$$
and so 
$$
\begin{aligned}
\Big[ &1 + \frac{|k|^2}{x_*^2}  \omega'(\frac{|k|^2}{x_*}) \Big] \Big (2x_*''(|k|) x_*(|k|) - x_*'(|k|)^2 \Big) 
\\& = \frac{2(2x_* - |k| x'_*)}{x_*^2} 
 \Big[ (x_* - 2|k| x'_*) \omega'(\frac{|k|^2}{x_*})  
+ (2x_*- |k|x'_*)\frac{|k|^2}{x_*} \omega''(\frac{|k|^2}{x_*}) \Big] 
\end{aligned}$$
in which recalling \eqref{bd-coff1}, the factor $1 + \frac{|k|^2}{x_*^2}  \omega'(\frac{|k|^2}{x_*}) $ is harmless.  
Using again \eqref{cal-D2x}, we note that 
$$
\begin{aligned}
2 x_* - |k|x'_* = 2 x_* - 2|k|^2 \frac{ \frac{1}{ x_*} \omega'(|k|^2/x_*) }{1 + \frac{|k|^2}{x_*^2}  \omega'(|k|^2/x_*) } =  \frac{ 2 x_* }{1 + \frac{|k|^2}{x_*^2}  \omega'(|k|^2/x_*) } ,
\\
x_* - 2 |k|x'_* = x_* - 4|k|^2 \frac{ \frac{1}{ x_*} \omega'(|k|^2/x_*) }{1 + \frac{|k|^2}{x_*^2}  \omega'(|k|^2/x_*) } =  \frac{  x_* - \frac{3|k|^2}{ x_*} \omega'(|k|^2/x_*) }{1 + \frac{|k|^2}{x_*^2}  \omega'(|k|^2/x_*) },
\end{aligned}
$$
which in particular yields that $2 x_* - |k|x'_* \gtrsim 1$ on $[0,\kappa_0]$, recalling  \eqref{x-bounds}. 
Therefore, 
$$
\begin{aligned}
\Big[ &1 + \frac{|k|^2}{x_*^2}  \omega'(\frac{|k|^2}{x_*}) \Big] \Big[ (x_* - 2|k| x'_*) \omega'(\frac{|k|^2}{x_*})  
+ (2x_*- |k|x'_*)\frac{|k|^2}{x_*} \omega''(\frac{|k|^2}{x_*}) \Big] 
\\
&= x_* \omega'(\frac{|k|^2}{x_*}) - \frac{3|k|^2}{x_*}[\omega'(\frac{|k|^2}{x_*})]^2 + 2|k|^2 \omega''(\frac{|k|^2}{x_*}) ,
\end{aligned}$$
and so $$
\begin{aligned}
\Big( &1 + \frac{|k|^2}{x_*^2}  \omega'(\frac{|k|^2}{x_*}) \Big)^2 \Big (2x_*''(|k|) x_*(|k|) - x_*'(|k|)^2 \Big) 
\\& = \frac{2(2x_* - |k| x'_*)}{x_*^2} 
 \Big[  x_* \omega'(\frac{|k|^2}{x_*}) - \frac{3|k|^2}{x_*}[\omega'(\frac{|k|^2}{x_*})]^2 + 2|k|^2 \omega''(\frac{|k|^2}{x_*}) \Big]. 
\end{aligned}$$
Recalling \eqref{bd-coff1} and the fact that $2 x_* - |k|x'_* \gtrsim 1$, it suffices to study the terms in the bracket. Let $y_* = |k|^2/x_*$.  Recalling that $x_* = \omega(y_*)$, we consider 
$$A_* = \omega(y_*) \omega'(y_*) - 3y_* \omega'(y_*)^2 + 2  y_* \omega(y_*)\omega''(y_*) .$$
We have  
$$
\omega'(y) =  -\int_{-1}^1 \frac{u^4\kappa(u)}{(1- y u^2)^2}\;du 
, \qquad \omega''(y) = -2 \int_{-1}^1 \frac{u^6\kappa(u)}{(1- y u^2)^3}\;du .
$$
By the Cauchy-Schwarz inequality, we have 
$$
\int_{-1}^1 \frac{u^4|\kappa(u)|}{(1- y u^2)^2}\;du  \le \Big(  \int_{-1}^1 \frac{u^2|\kappa(u)|}{1- y u^2}\;du \Big)^{1/2} \Big( \int_{-1}^1 \frac{u^6|\kappa(u)|}{(1- y u^2)^3}\;du \Big)^{1/2}.
$$
That is, $\omega'(y)^2 \le \frac12 \omega(y) \omega''(y)$. This yields 
$$A_* = \omega(y_*) \omega'(y_*) - 3y_* \omega'(y_*)^2 + 2  y_* \omega(y_*)\omega''(y_*) \ge  \omega(y_*) \omega'(y_*) \ge  \omega(0) \omega'(0) >0,$$
in which we used the monotonicity of $\omega(y)$ and $\omega'(y)$. Hence, $2x_*''(|k|) x_*(|k|) - x_*'(|k|)^2  \gtrsim 1$, and \eqref{lowbd-tauk} follows.

\bigskip

\noindent {\bf Extension and dispersion relation around $|k|=\kappa_0$.}

We use the third item of Proposition~\ref{prop:D} and consider the  extension $\widetilde{\widetilde{\Phi}}$ of $\Phi$.
By~\eqref{eq:diffpm1}, we recall that
$$\mathrm{d} \widetilde{\widetilde{\Phi}}(\pm 1)   = \left(\int_{-1}^1 \frac{u\kappa(u)}{(u\pm1)^2} \, du\right)\operatorname{I}  \neq 0.
$$
We can  therefore apply the implicit function theorem and get $U$ a neighborhood of $\pm 1$ and a $ \mathcal{C}^{\lfloor N_{0}/2 -3\rfloor}$ curve $\widetilde{\widetilde{z}}_\pm(r)$ such that, on $\{ |r-\kappa_0|< \eps\} \times  U$, the unique zeros of the function $(r, z) \mapsto r^2- \widetilde{\widetilde{\Phi}}(z)$ are described by $r \mapsto \widetilde{\widetilde{z}}_\pm (r)$, with $\widetilde{\widetilde{z}}_\pm (\kappa_0)=\pm1$. Setting $\widehat{z}_\pm(r) = i r \widetilde{\widetilde{z}}(r)$, we thus obtain, denoting $|k|=r$, that the unique zeros of the function 
$$\widetilde{\widetilde{D}}(z,k) \equiv \widetilde{\widetilde{D}}(z,r) := 1 - \frac{1}{|k|^2}  \widetilde{\widetilde{\Phi}}(-iz /|k|)$$ 
on a small neighborhood of $(\kappa_0, \pm i \kappa_0)$
are described by the curve $r \mapsto \widehat{z}_\pm (r)$.
We already know that
$
\widehat{z}_\pm(\kappa_0) = \pm i \kappa_0.
$
There remains to check that the zeros for $\kappa_0<r<\kappa_0+\delta$ belong to the region $\{ z \in \mathbb{C}, |\Im z| < r\}$, or in other words that $$|\Re  \widetilde{\widetilde{z}}_\pm(r)| <1.
$$ Let us for instance focus on $z_+$, the case of $z_-$ being symmetric. To this end we differentiate the identity $0=r^2- \widetilde{\widetilde{\Phi}}(\widetilde{\widetilde{z}}_+(r))$ 
which yields the formula
$$
\widetilde{\widetilde{z}}_+'(\kappa_0)=(\mathrm{d} \widetilde{\widetilde\Phi}(1))^{-1}{2 \kappa_0}.
$$
Therefore, $\widetilde{\widetilde{z}}_+'(\kappa_0)$ is real with  $ \widetilde{\widetilde{z}}_+'(\kappa_0)< 0$; since $\Re \widetilde{\widetilde{z}}_+(\kappa_0)=1$, we deduce our claim. Likewise, $\widetilde{\widetilde z}_-'(\kappa_0)> 0$.
By Lemma~\ref{resolvent} and the analysis in the case $|\tau| <|k|$, for $|k|>r_0$, $\widetilde{z}_\pm(\kappa_0)$  must satisfy $\Re \widetilde{z}_\pm(\kappa_0)<0$. 

Now, taking $\delta>0$ small enough, we define for all $|k|<\kappa_0+ \delta$,
\begin{equation}\label{eq-defz}
\lambda_\pm(k)
= \left \{ \begin{aligned} \pm i  \tau_*(|k|) \quad& \quad \mbox{if}\quad 0\leq |k|\leq \kappa_0,\\
\widehat{z}_\pm(|k|) \quad& \quad \mbox{if}\quad \kappa_0<  |k|< \kappa_0 +\delta.
  \end{aligned}\right.
\end{equation}
Clearly $\lambda_\pm$ is a $ \mathcal{C}^{{\lfloor \frac{N_0-9}{2} \rfloor}}$ function on $\{0\leq |k|< \kappa_0\}$ and on $\{\kappa_0<  |k|< \kappa_0 +\delta\}$. To prove that $\lambda_\pm$ is globally $ \mathcal{C}^{{\lfloor \frac{N_0-9}{2} \rfloor}}$, since it is radial, it suffices to check that the derivatives coincide at $|k|=\kappa_0$. This is the case by construction of $\pm i  \tau_*(|k|)$ and $\widetilde{z}_\pm(|k|)$, since $\tau \mapsto D(i\tau,k)$ is $ \mathcal{C}^{\lfloor \frac{N_0-9}{2} \rfloor}$.
Moreover, still by continuity, imposing $\delta$ small enough, the estimates~\eqref{Dx-bounds} still hold on $B(0, \kappa_0 + \delta)$.

This concludes the description of the zeros in a small neighborhood of $|k|= \kappa_0$ 
and the proof of Theorem \ref{theo-LangmuirE} is finally complete. 
\end{proof}

\begin{rem} In addition, we can observe  that 
\begin{equation}\label{vanishingImz}  
|\Re \lambda_\pm(k)|\lesssim |k-\kappa_0|^{K}, \quad K \leq (N_{0}-9)/2
\end{equation}
for $|k|$ sufficiently close to $\kappa_0$, where $N_0$ is given as in \eqref{decay-mu}. Namely, the faster the stationary state  $\varphi$ decays, the slower the damping rate is. 
In view of \eqref{laderiveedordreldephi} and  \eqref{annulkappa}, $\widetilde\Phi^{(n)}(\pm1)$ is real valued for all $0\le n \leq N_0/2-2$. It thus follows by induction, together with a use of the Fa\`a di Bruno formula, that  the $\partial_k^\alpha \widetilde{z}_\pm(\kappa_0)$ are real valued for all $ 0\le |\alpha| \leq K$. This proves the claim \eqref{vanishingImz} for $|k|$ sufficiently close to $\kappa_0$ from a Taylor expansion. 
\end{rem}

\subsection{Magnetic dispersion relation}
\label{sec-M}

Again thanks to the stability result of Proposition~\ref{prop-nogrowth}, we are led to study the magnetic dispersion relation $M(\lambda,k)=0$ for $\lambda$ lying on the imaginary axis. We obtain the following result. 

\begin{theorem}\label{theo-LangmuirB} 
For each $k\in \RR^3$, there are exactly two zeros $\index{l@$\lambda_\pm^{\text{mag}}$: magnetic dispersion relation} \index{n@$\nu_{*}$}  \lambda_\pm^{\text{mag}} = \pm i \nu_*(|k|)$ of the magnetic dispersion relation 
$$ M(\lambda,k) = 0$$
that lie on the imaginary axis $\{\Re \lambda =0\}$. Moreover, we have $\nu_*(|k|)>|k|$.
Finally, $\nu_*$ is a $  \mathcal{C}^{\lfloor{N_{0}\over 2} \rfloor}$ function   satisfying the following Klein-Gordon type estimates:  
\begin{equation}\label{KG-behave1} 
c_0 \sqrt{1+|k|^2} \le \nu_*(|k|)\le C_0 \sqrt{1+|k|^2} ,
\end{equation}
\begin{equation}\label{KG-behave2}
c_0\frac{|k|}{ \sqrt{1+|k|^2}} \le \nu_*'(|k|) \le C_0 \frac{|k|}{ \sqrt{1+|k|^2}} , 
\end{equation}
\begin{equation}\label{KG-behave3}
 c_0 (1+|k|^2)^{-3/2}  \le \nu_*''(|k|)  \le C_0.
\end{equation}
uniformly with respect to $k\in \RR^3$, for some positive constants $c_0, C_0$.

\end{theorem}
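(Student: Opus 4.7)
\medskip
\noindent\textbf{Proof plan for Theorem~\ref{theo-LangmuirB}.} The plan is to mirror the structure of Theorem~\ref{theo-LangmuirE} but with one key simplification: the magnetic dispersion relation never touches the region $|\tau| = |k|$, so no delicate extension around a threshold frequency is needed. The analysis splits naturally into $|\tau| \le |k|$ (where I rule out zeros) and $|\tau| > |k|$ (where I produce the unique Klein--Gordon branch via monotonicity). Regularity and derivative bounds will then come from implicit differentiation of the defining relation.

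\emph{Step 1 (no zero with $|\tau|\le |k|$).} I would run the same Plemelj calculation as in the proof of Theorem~\ref{theo-LangmuirE}, but this time on formula \eqref{rew-DMlambda} for $M$. For $\lambda=\gamma+i\tau$ with $\gamma\to 0^+$ and $|\tau|<|k|$, one gets
\begin{equation*}
M(i\tau,k)= -\tau^2+|k|^2-\frac{\tau}{2|k|}\,\mathrm{P.V.}\!\!\int_{-1}^1\!\frac{q(u)}{\tau/|k|+u}\,du-\frac{i\pi\tau}{2|k|}\,q(\tau/|k|).
\end{equation*}
Since $q$ is even and strictly negative on $(-1,1)$ (Lemma~\ref{lem-rewDM}), the imaginary part vanishes only at $\tau=0$. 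At $\tau=0$ the surviving term is $|k|^2>0$ (the principal value vanishes by oddness of $q(u)/u$). At the boundary $|\tau|=|k|$ I would take the one-sided limit from the region $|\tau|>|k|$: by item~2 of Proposition~\ref{prop:M}, $M(i\tau,k)\to \psi(1)>0$. So no zero occurs in the closed region $|\tau|\le |k|$.

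\emph{Step 2 (unique zero with $|\tau|>|k|$).} Here I would use formula \eqref{eq:formulaMtauleqk} and set $x=\tau^2$, $r=|k|$, reducing $M(i\tau,k)=0$ to $F(x;r):=x-r^2-\psi(r^2/x)=0$ on $\{x>r^2\}$. Since $\psi(0)=\tau_0^2>0$ and $\psi'(y)\ge 0$ on $[0,1]$, $\psi$ is nondecreasing with values in $[\tau_0^2,\psi(1)]$. Thus $F(r^2;r)=-\psi(1)<0$, $F(x;r)\to +\infty$ as $x\to\infty$, and $\partial_xF=1+\psi'(r^2/x)r^2/x^2>0$. The intermediate value theorem plus strict monotonicity gives a unique $x_*(r)>r^2$; set $\nu_*(r):=\sqrt{x_*(r)}>r$. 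By symmetry the zeros on the imaginary axis in the range $|\tau|>|k|$ are precisely $\pm i\nu_*(|k|)$, and the implicit function theorem together with the $\mathcal{C}^{\lfloor N_0/2\rfloor}$ regularity of $\psi$ on $[0,1]$ from Proposition~\ref{prop:M} gives the claimed regularity of $\nu_*$.

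\emph{Step 3 (Klein--Gordon bounds).} From $x_*(r)=r^2+\psi(r^2/x_*(r))$ and the bounds on $\psi$, I get immediately $r^2+\tau_0^2\le x_*(r)\le r^2+\psi(1)$, which yields \eqref{KG-behave1}. Implicit differentiation produces
\begin{equation*}
x_*'(r)=\frac{2r\bigl(1+\psi'(y)/x_*\bigr)}{1+\psi'(y)\,r^2/x_*^2},\qquad y=r^2/x_*\in[0,1),
\end{equation*}
whose numerator divided by $r$ and denominator are both bounded above and below by positive constants (using $x_*\gtrsim 1+r^2$ and boundedness of $\psi'$ on $[0,1]$). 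Hence $x_*'(r)\sim r$ and $\nu_*'(r)=x_*'(r)/(2\nu_*(r))\sim r/\sqrt{1+r^2}$, giving \eqref{KG-behave2}.

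\emph{Step 4 (second derivative, the main obstacle).} For \eqref{KG-behave3} I would work with $\widetilde x(r):=x_*(r)-r^2=\psi(r^2/x_*(r))\in[\tau_0^2,\psi(1)]$ and write
\begin{equation*}
\nu_*''(r)=\frac{2x_*(r)x_*''(r)-(x_*'(r))^2}{4x_*(r)^{3/2}}=\frac{4\widetilde x-4r\widetilde x'+2(r^2+\widetilde x)\widetilde x''-(\widetilde x')^2}{4x_*(r)^{3/2}}.
\end{equation*}
Implicit differentiation gives $\widetilde x'(r)=\tfrac{2r\psi'(y)\widetilde x}{x_*^2+\psi'(y)r^2}$, which is $\cO(r)$ for small $r$ and $\cO(1/r^3)$ for large $r$; a further differentiation, combined with the uniform bounds on $\psi', \psi''$ on $[0,1]$, yields that $\widetilde x''$ is uniformly bounded on $[0,\infty)$, and moreover $r\widetilde x'$ and $r^2\widetilde x''$ remain bounded. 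The numerator is therefore bounded above, giving $\nu_*''(r)\le C_0$. For the lower bound the crucial positivity comes from $4\widetilde x\ge 4\tau_0^2$; the correction terms $4r\widetilde x'$, $2(r^2+\widetilde x)\widetilde x''$, $(\widetilde x')^2$ must be shown not to overwhelm this at any scale of $r$. I expect the main obstacle to be precisely this uniform-in-$r$ lower bound on the numerator: for large $r$ the factor $y=r^2/x_*$ approaches $1$ so $\psi'(y)$ could a priori be large, but the regularity statement $\sup_{[0,1]}|\psi^{(l)}|\lesssim 1$ from Proposition~\ref{prop:M} saves us; one must then carefully track algebraic cancellations to conclude that the numerator stays $\ge 2\tau_0^2$ (say) for all $r$, after which division by $x_*^{3/2}\sim(1+r^2)^{3/2}$ produces the claimed lower bound in \eqref{KG-behave3}.
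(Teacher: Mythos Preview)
Your Steps 1--3 are essentially the paper's proof: the Plemelj calculation for $|\tau|<|k|$, the monotonicity/IVT argument on $F(x;r)=x-r^2-\psi(r^2/x)$ for $|\tau|>|k|$, and the implicit differentiation giving $x_*'(r)\sim r$. Your rewriting of the numerator of $\nu_*''$ as $4\widetilde x-4r\widetilde x'+2x_*\widetilde x''-(\widetilde x')^2$ is also correct; this is exactly the quantity $2x_*''x_*-(x_*')^2$ the paper bounds from below.

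The genuine gap is in Step 4, the lower bound on this numerator. Your plan relies only on the qualitative information that $\psi',\psi''$ are bounded on $[0,1]$, together with the decay of $\widetilde x', \widetilde x''$ at large $r$, hoping that ``careful tracking of algebraic cancellations'' will show the corrections cannot overwhelm $4\widetilde x\ge 4\tau_0^2$. This works at the endpoints (your asymptotics give numerator $\to 4\psi(1)$ as $r\to\infty$ and $>4\tau_0^2$ at $r=0$), but for intermediate $r$ boundedness alone gives no sign control on $2x_*\widetilde x''$, and there is no evident smallness to absorb it. The paper closes this gap with a structural inequality coming from the integral representation of $\psi$: by Cauchy--Schwarz on \eqref{redef-psiy}--\eqref{cal-Dpsiy},
\[
\psi'(y)^2\le \tfrac12\,\psi(y)\,\psi''(y),\qquad y\in[0,1].
\]
After rewriting $A_*:=[1+\tfrac{|k|^2}{x_*^2}\psi']^{-1}\cdot(\text{the bracket})$ algebraically in terms of $\psi,\psi',\psi''$, this inequality converts the dangerous term $-3y_*\psi'(y_*)^2+2y_*\psi(y_*)\psi''(y_*)$ into the nonnegative $y_*\psi'(y_*)^2$, and a completion of squares then yields
\[
[1+\tfrac{|k|^2}{x_*^2}\psi']\,A_*\ \ge\ x_*^2(1-y_*)+y_*(x_*-\psi'(y_*))^2+\psi(y_*)\psi'(y_*),
\]
which is manifestly $\gtrsim (1+|k|^2)^2$ uniformly. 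Without this convexity input, your argument for the lower bound in \eqref{KG-behave3} is incomplete.
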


\begin{proof}

 According to Proposition~\ref{prop-nogrowth}, we can  restrict to the imaginary axis, that is we consider $\lambda = i\tau$ for $\tau \in \RR$. We distinguish between the two cases: $|\tau| < |k|$ and $|\tau| \ge |k|$.

\bigskip

\noindent {\bf Case 1: $|\tau|\ge |k|$.}
We first consider the case when $|\tau| \ge |k|$. 
Using the formulation \eqref{eq:formulaMtauleqk}, we may write 
$$M(i\tau,k ) = \Psi(\tau^2)$$
where 
$$ \Psi(x) := - x + |k|^2 + \psi(|k|^2/x),$$
which is well-defined for $x>|k|^2$. In addition, recalling $\psi(y) = - \frac12 \int_{-1}^1 \frac{q(u)}{1- y u^2}\;du$ and the fact that $q(u)\le 0$, we compute 
\begin{equation}
\label{laderiveedePsi} \Psi'(x) = -1  - \frac{|k|^2}{x^2}\psi'(\frac{|k|^2}{x}) \le -1.
\end{equation}
Next, since $\psi(|k|^2/x)\le C_0$, the function $\Psi(x)$ is strictly negative for $x\gg1$. On the other hand, we observe that $\Psi(|k|^2) = \psi(1) = \tau_0^2>0$, see \eqref{comp-tau0}. By the strict monotonicity of $\Psi(x)$, there is a unique solution $x_* > |k|^2$ of $\Psi(x) =0$, or equivalently,
\begin{equation}\label{eqs-xstar} x_* = |k|^2 + \psi(|k|^2/x_*) ,\end{equation}
yielding the existence of purely imaginary modes $\lambda_\pm = \pm i \sqrt{x_*(|k|)}$ in the region where $|\tau|> |k|$.  The smoothness of $x_*(|k|)$ follows from that of $\psi$, as described in the second item of Proposition~\ref{prop:M}
and \eqref{laderiveedePsi}.

\bigskip

\noindent {\bf Case 2: $|\tau| < |k|$.}
We next consider the case when $\lambda = i\tau$ for $|\tau| < |k|$. From \eqref{rew-DMlambda}, 
for $\lambda = (\tilde\gamma + i\tilde\tau)|k|$, with $|\tilde \tau|<1$, we compute 
\begin{equation}\label{new-Msmall}
M(\lambda,k) =   |k|^2(1 + \tilde\gamma^2-\tilde\tau^2+2i \tilde \gamma \tilde \tau) + \frac{i(\tilde\gamma + i\tilde\tau)}{2}  \int_{-1}^1 \frac{ q(u)}{-i\tilde \gamma + \tilde\tau+ u} 
\; du .
\end{equation}
Therefore, by Plemelj's formula,  for $\tilde \gamma \to 0^+$ and $|\tilde\tau|<1$, we have 
\begin{equation}\label{comp-Mstau}
M(i\tilde\tau|k|,k) =  |k|^2(1-\tilde\tau^2) - \tilde\tau  P.V. \int_{-1}^1 \frac{1}{\tilde\tau + u} q(u) du  - i\pi  \tilde\tau q(\tilde\tau) 
.\end{equation}
Since $q(u) \not =0$ for $u\in (-1,1)$, the imaginary part of $M(i\tilde\tau|k|,k)$ never vanishes for $0<|\tilde\tau|<1$, while at $\tilde\tau=0$ the real part is equal to $|k|^2$. On the other hand, at $\tilde\tau = \pm 1$, we have 
$$
M( \pm i|k|,k) = - \int_{-1}^1 \frac{q(u)}{1 \pm u} \; du = - \int_{-1}^1 \frac{q(u)}{1- u^2}  \; du , $$
recalling that $q(u)$ is negative and even in $u$. 
Therefore, $M( \lambda ,k) \not =0$ for $\lambda = i\tilde\tau |k|$ with $|\tilde\tau|\le 1$. In particular, we have 
\begin{equation}\label{lower-Mstau0} 
|M(i\tilde\tau|k|,k) | \ge \theta_0 |\tilde\tau|, \qquad \forall ~|\tilde\tau|\le 1,
\end{equation}
for some positive $\theta_0$, which in particular proves that there are no zeros in this region when $|\tau| \le |k|$.
In fact, using the fact that the P.V. integral $ \int_{-1}^1 \frac{1}{\tilde\tau + u} q(u) du$ is finite, we obtain from \eqref{comp-Mstau} that 
$$ 
|M(i\tilde\tau|k|,k) | \ge  |k|^2(1-\tilde\tau^2)  - C_0 |\tilde \tau|.
$$
Combining this with \eqref{lower-Mstau0}, we get  
$$
\begin{aligned}
|M(i\tilde\tau|k|,k) | &= (1- \delta) |M(i\tilde\tau|k|,k) | + \delta |M(i\tilde\tau|k|,k) | 
\\& 
\ge (1- \delta) \theta_0 |\tilde\tau| +  \delta |k|^2(1-\tilde\tau^2) - C_0 \delta |\tilde \tau|,.
\end{aligned}
$$
Taking $\delta$ sufficiently small, we get 
\begin{equation}\label{lower-Mstau} 
\begin{aligned}
|M(i\tilde\tau|k|,k) | 
\gtrsim |\tilde\tau| +  |k|^2(1-\tilde\tau^2), \qquad \forall ~|\tilde\tau|\le 1.
\end{aligned}\end{equation}

\bigskip

\noindent {\bf Dispersive properties of $\nu_*(|k|)$.}

To conclude, we have shown that $\lambda_\pm^{\text{mag}}= \pm i \nu_*(|k|) $ are the only purely imaginary zeros of the symbol $M( \lambda ,k) $ for each $k\in \RR^3$. It remains to give the desired estimates on the dispersion relation $\nu_*(|k|) = \sqrt{x_*(|k|)}$. 
Let us start with some estimates on $x_* = x_*(|k|)$, as a function of $|k|$, that is defined through the relation \eqref{eqs-xstar}, namely
$x_* = |k|^2 + \psi({|k|^2}/{x_*}) ,$
thanks to the second item of Proposition~\ref{prop:M}. Recall that 
\begin{equation}\label{redef-psiy}
\psi(y) = - \frac12 \int_{-1}^1 \frac{q(u)}{1- y u^2}\;du
\end{equation}
where $q(u)\le 0$, with $\psi(0) =\tau_0^2$ and $\psi(1) = q_0^2$ being well-defined. By monotonicity, we have $\tau_0^2 \le \psi(y)\le q_0^2$ for $y\in [0,1]$. It thus follows from the equation $x_* = |k|^2 + \psi({|k|^2}/{x_*}) $ that 
\begin{equation}\label{rangex} 
|k|^2 + \tau_0^2 \le x_* (|k|)\le  |k|^2 + q_0^2,\end{equation}
for all $k \in \RR^3$. Next, we claim that there is a universal constant $C_0$ such that 
\begin{equation}\label{bounds-DxstarB}
C_0^{-1} |k| \le x_*'(|k|) \le C_0 |k| , \qquad C_0^{-1}\le x_*''(|k|)  \le C_0 \end{equation}
for all $k\in \RR^3$. Indeed, we compute
\begin{equation}\label{cal-D2xB}
\begin{aligned}
 \Big[ 1 + \frac{|k|^2}{x_*^2}  \psi'(|k|^2/x_*) \Big]x_*'(|k|) &= 2|k| + \frac{ 2|k|}{x_*} \psi'(|k|^2/x_*) ,
 \\
  \Big[ 1 + \frac{|k|^2}{x_*^2}  \psi'(\frac{|k|^2}{x_*}) \Big]x_*''(|k|) 
&= 2 + \frac{2(x_* - |k| x'_*)^2}{x_*^3} \psi'(\frac{|k|^2}{x_*})  
+ \frac{|k|^2(2x_*- |k|x'_*)^2}{x_*^4} \psi''(\frac{|k|^2}{x_*})  ,
\end{aligned}\end{equation}
noting that each term on the right-hand side is nonnegative, since $\psi'(y)$ and $\psi''(y)$ are nonnegative. In fact, we have
\begin{equation}\label{cal-Dpsiy}
\psi'(y) = -\frac12 \int_{-1}^1 \frac{u^2q(u)}{(1- y u^2)^2}\;du 
, \qquad \psi''(y) =  -\int_{-1}^1 \frac{u^4 q(u)}{(1- y u^2)^3}\;du ,
\end{equation}
which imply that $0<\psi'(0) \le \psi'(y) \le \psi'(1)$ and $0<\psi''(0) \le \psi''(y) \le \psi''(1)$ for $y\in [0,1]$. In particular, together with \eqref{rangex}, we have
\begin{equation}\label{temp-bdc0} 1\le 1 + \frac{|k|^2}{x_*^2}  \psi'(|k|^2/x_*) \le C_0,\end{equation}
which holds uniformly for all $k\in \RR^3$. Therefore, 
$$x_*'(|k|) = 2|k| \frac{ 1+\frac{1}{ x_*} \psi'(|k|^2/x_*) }{1 + \frac{|k|^2}{x_*^2}  \psi'(|k|^2/x_*) } \quad \approx \quad |k|,$$
as claimed in \eqref{bounds-DxstarB}. Similarly, since $\psi'(y)$ and $\psi''(y)$ are nonnegative, we have 
$$
\Big[ 1 + \frac{|k|^2}{x_*^2}  \psi'(\frac{|k|^2}{x_*}) \Big]x_*''(|k|)  \ge 2$$
which gives the desired lower bound for $x_*''(|k|)$, upon using \eqref{temp-bdc0}. On the other hand, using \eqref{rangex}, we bound 
$$
\begin{aligned}
\Big[ 1 + \frac{|k|^2}{x_*^2}  \psi'(\frac{|k|^2}{x_*}) \Big]x_*''(|k|) 
&= 2 + \frac{2(x_* - |k| x'_*)^2}{x_*^3} \psi'(\frac{|k|^2}{x_*})  
+  \frac{|k|^2(2x_*- |k|x'_*)^2}{x_*^4}  \psi''(\frac{|k|^2}{x_*})  
\\&\le  2 + \frac{C_0}{x_*} \psi'(\frac{|k|^2}{x_*})  
+ \frac{C_0}{x_*}  \psi''(\frac{|k|^2}{x_*})  
\\& \lesssim 1,
\end{aligned}$$
which gives \eqref{bounds-DxstarB}. 

Finally, we study the properties of the dispersion relation $\nu_*(|k|) = \sqrt{x_*(|k|)}$.  We compute 
$$ \nu_*'(|k|) = \frac{x_*'(|k|)}{2\sqrt{x_*(|k|)}} , \qquad \nu_*''(|k|) = \frac{2 x''_*(|k|) x_*(|k|) - x_*'(|k|)^2}{4 x_*(|k|)^{3/2}}.$$ 
The estimates on $\nu_*'(|k|)$ follow at once from those on $x_*(|k|), x_*'(|k|)$, see \eqref{rangex} and \eqref{bounds-DxstarB}. We now study $\nu_*''(|k|)$. In view of \eqref{rangex}, it suffices to prove that 
\begin{equation}\label{convex-nuxk}
2 x''_*(|k|) x_*(|k|) - x_*'(|k|)^2 \gtrsim 1,
\end{equation}
for any $k\in \RR^3$. Using \eqref{cal-D2xB}, we compute 
$$
\begin{aligned} 
\Big[ 1 + \frac{|k|^2}{x_*^2}  \psi'(|k|^2/x_*) \Big]x_*'(|k|)^2 &= 2|k|x'_* + \frac{ 2|k| x'_*}{x_*} \psi'(|k|^2/x_*)
\\
2\Big[ 1 + \frac{|k|^2}{x_*^2}  \psi'(\frac{|k|^2}{x_*}) \Big]x_*''(|k|) x_*(|k|) 
&= 4 x_* + \frac{4(x_* - |k| x'_*)^2}{x_*^2} \psi'(\frac{|k|^2}{x_*})  
+ \frac{2|k|^2(2x_*- |k|x'_*)^2}{x_*^3} \psi''(\frac{|k|^2}{x_*}) 
 \end{aligned}$$
and so 
$$
\begin{aligned}
\Big[ &1 + \frac{|k|^2}{x_*^2}  \psi'(\frac{|k|^2}{x_*}) \Big] \Big (2x_*''(|k|) x_*(|k|) - x_*'(|k|)^2 \Big) 
\\& =  \frac{2(2x_* - |k| x'_*)}{x_*^2} 
 \Big[x_*^2 + (x_* - 2|k| x'_*) \psi'(\frac{|k|^2}{x_*})  
+ (2x_*- |k|x'_*)\frac{|k|^2}{x_*} \psi''(\frac{|k|^2}{x_*}) \Big] 
\end{aligned}$$
in which recalling \eqref{temp-bdc0}, the factor $1 + \frac{|k|^2}{x_*^2}  \psi'(\frac{|k|^2}{x_*}) $ is harmless. Set 
$$ A_* = x_*^2 + (x_* - 2|k| x'_*) \psi'(\frac{|k|^2}{x_*})  
+ (2x_*- |k|x'_*)\frac{|k|^2}{x_*} \psi''(\frac{|k|^2}{x_*}) .$$ 
We shall prove that  
\begin{equation}\label{good-DxstarB}
2 x_* - |k|x'_* \gtrsim 1 ,\qquad A_* \gtrsim (1+|k|^2)^2.
\end{equation}
This would yield $2x_*''(|k|) x_*(|k|) - x_*'(|k|)^2 \gtrsim 1$, and so $\nu_*''(|k|) \gtrsim 1/x_*^{3/2} \gtrsim \langle k\rangle^{-3}$ as claimed. Indeed, using again \eqref{cal-D2xB}, we note that 
$$
\begin{aligned}
2 x_* - |k|x'_* &= 2 x_* - 2|k|^2 \frac{ 1+\frac{1}{ x_*} \psi'(|k|^2/x_*) }{1 + \frac{|k|^2}{x_*^2}  \psi'(|k|^2/x_*) } =  \frac{ 2 (x_* -|k|^2)}{1 + \frac{|k|^2}{x_*^2}  \psi'(|k|^2/x_*) },
\\
x_* - 2 |k|x'_* &= x_* - 4|k|^2 \frac{ 1+\frac{1}{ x_*} \psi'(|k|^2/x_*) }{1 + \frac{|k|^2}{x_*^2}  \psi'(|k|^2/x_*) } =  \frac{  x_* -4|k|^2 - \frac{3|k|^2}{ x_*} \psi'(|k|^2/x_*) }{1 + \frac{|k|^2}{x_*^2}  \psi'(|k|^2/x_*) },
\end{aligned}
$$
which in particular yields $2 x_* - |k|x'_* \gtrsim x_* - |k|^2 \gtrsim 1$ as stated in \eqref{good-DxstarB}, 
upon recalling  \eqref{rangex} and \eqref{temp-bdc0}. On the other hand, using the above identities, we have 
$$
\begin{aligned}
\Big[ 1 + \frac{|k|^2}{x_*^2}  \psi'(\frac{|k|^2}{x_*}) \Big] A_* 
&=x_*^2 + (x_* - 3 |k|^2)\psi'(\frac{|k|^2}{x_*}) - \frac{3|k|^2}{x_*}[\psi'(\frac{|k|^2}{x_*})]^2 + 2(x_*-|k|^2)\frac{|k|^2}{x_*} \psi''(\frac{|k|^2}{x_*}) .
\end{aligned}$$
Let $y_* = |k|^2/x_*$.  Recalling $x_* = |k|^2 + \psi(y_*)$, we thus have
$$\Big[ 1 + \frac{|k|^2}{x_*^2}  \psi'(\frac{|k|^2}{x_*}) \Big] A_* = x_*^2  + \psi(y_*)\psi'(y_*) - 2|k|^2 \psi'(y_*) - 3y_* \psi'(y_*)^2 + 2 y_* \psi(y_*)\psi''(y_*) .$$
First, recalling \eqref{redef-psiy} and \eqref{cal-Dpsiy}, and using the H\"older's inequality, we have 
$$
\int_{-1}^1 \frac{u^2|q(u)|}{(1- y u^2)^2}\;du  \le \Big(  \int_{-1}^1 \frac{|q(u)|}{1- y u^2}\;du \Big)^{1/2} \Big( \int_{-1}^1 \frac{u^4|q(u)|}{(1- y u^2)^3}\;du \Big)^{1/2}.
$$
That is, $\psi'(y)^2 \le \frac12 \psi(y) \psi''(y)$. This yields 
$$\begin{aligned}
\Big[ 1 + \frac{|k|^2}{x_*^2}  \psi'(\frac{|k|^2}{x_*}) \Big] A_* 
\ge  x_*^2  + \psi(y_*)\psi'(y_*) - 2x_* y_* \psi'(y_*) + y_* \psi'(y_*)^2 ,
\end{aligned}$$
in which we have used $|k|^2 = x_* y_*$ to get the third term on the right hand side. Finally, completing the square to treat the only negative term in the above expression, we get  
$$\begin{aligned}
\Big[ 1 + \frac{|k|^2}{x_*^2}  \psi'(\frac{|k|^2}{x_*}) \Big] A_* 
\ge  x_*^2 (1-y_*) + y_* (x_* - \psi'(y_*))^2 + \psi(y_*)\psi'(y_*).
\end{aligned}$$
Note again that the factor $1 + \frac{|k|^2}{x_*^2}  \psi'(\frac{|k|^2}{x_*}) $ is harmless, thanks to \eqref{temp-bdc0}. Now, since $y_* \in [0,1]$, $\psi(y_*) \ge \tau_0^2$, and $\psi'(y_*)\ge \psi'(0)>0$, we get $A_*\gtrsim 1$, which proves the lower bound \eqref{good-DxstarB} on $A_*$ for bounded $k$. In the case when $|k|\gg1$, we note that $y_* \ge 1/2$ and $\psi'(y_*) \le \psi'(1)$, while $x_* \ge \tau_0^2 + |k|^2$. The second term in the above expression gives the lower bound \eqref{good-DxstarB} as claimed.    

This ends the proof of Theorem \ref{theo-LangmuirB}.
\end{proof}


\section{Electric and magnetic Green functions}
\label{sec:Green}

In view of the resolvent equations \eqref{resolvent} for the electric and magnetic potentials $\phi, A$, we introduce the resolvent kernels 
\begin{equation}\label{def-THk}
\TG_{k}(\lambda) :=  \frac{1}{D(\lambda,k)} , \qquad \TH_{k}(\lambda) :=  \frac{1}{M(\lambda,k)} ,
\end{equation}
and the corresponding temporal Green functions 
\begin{equation}\label{def-FHk}
\begin{aligned}
\FG_{k}(t) &=  \frac{1}{2\pi i}\int_{\{\Re \lambda = \gamma_0\}}e^{\lambda t} \TG_{k}(\lambda)\; d\lambda,
\\
\FH_{k}(t) &=  \frac{1}{2\pi i}\int_{\{\Re \lambda = \gamma_0\}}e^{\lambda t} \TH_{k}(\lambda)\; d\lambda, 
\end{aligned}\end{equation}
which are well-defined as oscillatory integrals  for $\gamma_0 >0$, recalling the resolvent kernels are holomorphic in $\{\Re \lambda>0\}$, see Theorem \ref{theo-LangmuirE} and Theorem \ref{theo-LangmuirB}, respectively.

The main goal of this section is to establish decay estimates for the Green functions through the representation \eqref{def-FHk}.

\subsection{Electric Green function}

We first study the electric Green function $\FG_k(t)$. We obtain the following key result. 

\begin{proposition}\label{prop-GreenG} Let $\FG_k(t)$ be defined as in \eqref{def-FHk}, and let $\lambda_\pm^{\text{elec}}(k)$
be the electric dispersion relation constructed in Theorem \ref{theo-LangmuirE}. Then, we can write 
\begin{equation}\label{decomp-FG} 
\index{G@$\FG_k(t)$: electric Green function}
\index{G@$\FG^{osc}_{k,\pm}(t)$: oscillatory part of the electric Green function}
\index{G@$\FG^{r}_{k}(t)$: regular part of the electric Green function}
 \FG_k(t) = \delta_{t=0} + \sum_\pm \FG^{osc}_{k,\pm}(t)  +   \FG^{r}_k(t) ,
\end{equation}
with 
$$
\FG^{osc}_{k,\pm}(t) = e^{\lambda_\pm^{\text{elec}}(k) t} a_\pm(k),
$$
for a smooth  function $a_\pm(k)= \alpha_\pm(|k|^2)$ whose support is contained in $B(0, \kappa_0 + \delta )$ for some small $\delta>0$, and which satisfies 
\begin{equation}
\label{eq-apm0}
a_\pm(0) =  \pm { i \tau_0 \over 2}.
\end{equation} 
The regular part satisfies for all $ N \leq \lfloor{(N_{0} -9)/2}\rfloor$, for all $0\leq |\alpha|< N$, for all $k\in \RR^3$, and all $t \geq 0$ the estimate
\begin{equation}\label{bd-Gr}
|  |k|^\alpha \partial_k^\alpha \FG^{r}_k(t)| \le C_0 |k|^3 \langle k\rangle^{-4} \langle kt\rangle^{|\alpha|-N} ,
\end{equation}
for some universal constant $C_0>0$. 
\end{proposition}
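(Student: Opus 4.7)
The plan is to compute the inverse Laplace transform defining $\FG_k(t)$ by contour deformation, using the analytic structure of Proposition~\ref{prop:D} and the zero curves from Theorem~\ref{theo-LangmuirE}. The expansion~\eqref{expansion-DL} at $m=0$ gives $D(\lambda,k)=1+O(|k|^2/|\lambda|^2)$ for large $|\lambda|$, so writing $1/D=1+(1/D-1)$ in the Bromwich formula
$$\FG_k(t) = \frac{1}{2\pi i}\int_{\{\Re\lambda=\gamma_0\}} \frac{e^{\lambda t}}{D(\lambda,k)}\, d\lambda,\qquad \gamma_0>0,$$
extracts the $\delta_{t=0}$ contribution and leaves an absolutely convergent integral for the rest.

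Next I would introduce a smooth radial cutoff $\chi_0(k)$ equal to $1$ on $B(0,\kappa_0+\delta/2)$ and supported in $B(0,\kappa_0+\delta)$, splitting the analysis into a ``pole-free'' zone on $\mathrm{supp}(1-\chi_0)$ and a ``pole'' zone on $\mathrm{supp}(\chi_0)$. On the pole-free zone, Theorem~\ref{theo-LangmuirE} combined with Proposition~\ref{prop:D}(1) gives that $D(\cdot,k)$ has no zeros in a complex neighborhood of $\{\Re\lambda\geq 0\}$ with a uniform lower bound, so the contour can be pushed to $\{\Re\lambda=0\}$; then $N$ integrations by parts in $\lambda=i\tau$, legitimate from the $\mathcal{C}^N$ regularity of $\tau\mapsto D(i\tau,k)$, convert each $\partial_\tau$ into $1/t$ and — together with the quantitative derivative bounds of Corollary~\ref{coro:LKK} — yield the required $\langle kt\rangle^{-N}$ decay with the correct $|k|$-weights.

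On the pole zone, the zero curves $\lambda_\pm^{\text{elec}}(k)$ lie, by Remark~\ref{remarkholo}, in the domain where the genuine holomorphic extension $\widetilde D$ of Corollary~\ref{coro:extenD} is available. I would set
$$a_\pm(k)\,:=\,\frac{\chi_0(k)}{\partial_\lambda \widetilde D(\lambda_\pm^{\text{elec}}(k),k)},$$
which is smooth in $k$ since $\partial_\lambda\widetilde D$ does not vanish at these simple zeros (by the implicit function argument in the proof of Theorem~\ref{theo-LangmuirE}). Locally around each curve one shifts the Bromwich contour within the holomorphy domain of $\widetilde D$ and applies Cauchy's residue theorem, picking up exactly $e^{\lambda_\pm^{\text{elec}}(k)t}a_\pm(k)=\FG^{osc}_{k,\pm}(t)$; matching with a contour piece along the imaginary axis leaves a remainder $\FG_k^r(t)$ which is the inverse Laplace of a function smooth in $\tau$ on $\mathbb{R}$ by construction. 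The normalization $a_\pm(0)=\pm i\tau_0/2$ follows from the explicit identity $D(\lambda,0)=1+\tau_0^2/\lambda^2$, which gives $\partial_\lambda D(\pm i\tau_0,0)=\mp 2i/\tau_0$.

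The remainder bound~\eqref{bd-Gr} comes from $N$ successive integrations by parts in $\lambda=i\tau$, producing the $\langle kt\rangle^{-N}$ factor; the prefactor $|k|^3\langle k\rangle^{-4}$ requires careful bookkeeping, with the low-frequency gain coming from the fact that at $k=0$ the function $1/D$ is \emph{rational} with the identified poles, so the pole-subtracted remainder vanishes at $k=0$ and radial symmetry supplies an additional power, while the high-frequency decay is obtained by iterating the bounds~\eqref{bdLtau0} and~\eqref{eq:LKK}. The $k$-derivatives in~\eqref{bd-Gr} are handled by differentiating under the $\lambda$-integral; the scaling $|k|^{|\alpha|}\partial_k^\alpha$ costs at most $\langle kt\rangle^{|\alpha|}$, matching the estimate. \emph{The main obstacle} is the pole extraction on $\mathrm{supp}(\chi_0)$: since $D$ has no holomorphic extension to a uniform complex neighborhood of $\{\Re\lambda\leq 0\}$, a direct global Cauchy deformation is unavailable. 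Inspired by the rational-approximation technique of~\cite{Toan}, the remedy is to confine the deformation to local neighborhoods of each zero curve (where $\widetilde D$ \emph{is} holomorphic) while treating the remaining portions of the contour along the imaginary axis using only the smooth extension $\widetilde{\widetilde D}$, balancing the two pieces so as to recover polynomial (rather than exponential) decay.
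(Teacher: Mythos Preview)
Your outline captures the essential architecture — the $\delta_{t=0}$ extraction, the cutoff into pole-free and pole zones, the integration by parts in $\tau$, the residue formula for $a_\pm$, and the correct value $a_\pm(0)=\pm i\tau_0/2$ — and is close in spirit to the paper's proof. However, the mechanism you propose for the pole extraction differs from the paper's, and your version has a uniformity gap.

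You propose to pick up the residues by locally pushing the Bromwich contour into $\{\Re\lambda<0\}$ within the holomorphy domain of an extension of $D$. The difficulty is that this domain degenerates at the threshold $|k|=\kappa_0$: for $|k|<\kappa_0$ the pole $\pm i\tau_*(|k|)$ sits at distance $\tau_*(|k|)-|k|\to 0$ from the branch cut $i[-|k|,|k|]$, and for $|k|>\kappa_0$ the pole sits at a distance from $\partial\{\,|\Im z|^2<|k|^2+|\Re z|^2\}$ that likewise vanishes as $|k|\to\kappa_0^+$. A local deformation with shrinking radius does not deliver the uniform-in-$k$ bound $\langle kt\rangle^{-N}$ that \eqref{bd-Gr} requires. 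The paper never deforms past the imaginary axis: it keeps semicircles $\cC_\pm$ of \emph{fixed} radius in $\{\Re\lambda\ge 0\}$, Taylor-expands $D$ (holomorphic in $\{\Re\lambda>0\}$ with $\mathcal{C}^K$ boundary values, hence an expansion in $(\lambda-\lambda_\pm)$ only) to produce a genuinely meromorphic rational approximant of $1/D$, applies Cauchy to that approximant, and estimates the Taylor remainder directly on $\cC_\pm$ and on $\cC_\pm^*$. This is what ``rational-approximation technique'' actually means here, not a localized deformation.

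Your accounting for the prefactor $|k|^3$ is also too loose. ``Vanishing at $k=0$ plus radial symmetry'' yields $O(|k|^2)$ at best. The paper obtains $|k|^3$ by an explicit two-term rational comparison: writing $\TG_k=1+\TG_{k,0}+\TG_{k,1}$ with $\TG_{k,0}(\lambda)=\dfrac{-\lambda^2\tau_0^2+|k|^2e_0}{\lambda^4+\lambda^2\tau_0^2-|k|^2e_0}$, the meromorphic piece $\TG_{k,0}$ contributes, besides the oscillatory residues, an auxiliary real-pole term of size $|k|^3 e^{-\sqrt{e_0}|kt|}$, while the correction $\TG_{k,1}$ carries an extra $|k|^2$ so that its contour integrals over $\Gamma_1$, $\Gamma_2$, $\cC_\pm$ all come out $O(|k|^3)$ after one $\tau$-integration. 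Without this explicit splitting your argument does not reach the sharp power.
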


The proof is postponed to Appendix~\ref{sec:proof1}.

\subsection{Magnetic Green function}\label{sec-GreenB}

Next, we derive decay estimates for the magnetic Green function $\FH_k(t)$. We obtain the following key result. 

\begin{proposition}\label{prop-Green}  Let $\FH_k(t)$ be defined as in \eqref{def-FHk}, and let $\lambda_\pm^{\text{mag}} = \pm i \nu_*(|k|)$
be the magnetic dispersion relation constructed in Theorem \ref{theo-LangmuirB}. Then, we can write 
\begin{equation}\label{decomp-FH} 
\index{G@$\FH_k(t)$: magnetic Green function}
\index{G@$\FH^{osc}_{k,\pm}(t)$: oscillatory part of the magnetic Green function}
\index{G@$\FH^{r}_{k}(t)$: regular part of the magnetic Green function}
\FH_k(t) = \sum_\pm \FH^{osc}_{k,\pm}(t)  +   \FH^{r}_k(t) ,
\end{equation}
where 
$$
\FH^{osc}_{k,\pm}(t) = e^{\pm i\nu_*(|k|) t} b_\pm(k),
$$
for some smooth functions $b_\pm(k)$ that satisfy $|b_\pm(k)|\lesssim \langle k\rangle^{-1}$. In addition, the regular part satisfies for all $N \leq \lfloor (N_{0}- 9)/2\rfloor$,  for all $0\leq |\alpha|<N$, for all $k\in \RR^3$, and all $t \geq 0$  the estimate
\begin{equation}\label{bound-Hrk}
| |k|^\alpha \partial_k^\alpha \FH^r_k(t)|  \lesssim |k| \langle k\rangle^{-2} \langle kt\rangle^{|\alpha|-N}  + |k| \langle k^3t\rangle^{|\alpha|-N} \chi_{\{|\partial_t| \ll |k| \ll1\}}
\end{equation}
in which $ \chi_{\{|\partial_t| \ll |k|\ll1\}}$ denotes a Fourier multiplier whose support is contained in $\{|\tau | \ll |k|\ll1\}$.

\end{proposition}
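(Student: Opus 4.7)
The plan is to analyze the inverse Laplace transform defining $\FH_k(t)$ by deforming the Bromwich contour from $\{\Re\lambda = \gamma_0\}$ towards the imaginary axis, tracking the singularity structure of $1/M(\cdot,k)$. By Proposition~\ref{prop-nogrowth} and Theorem~\ref{theo-LangmuirB}, the only singularities of $1/M(\cdot,k)$ in $\overline{\{\Re\lambda \ge 0\}}$ are simple poles at $\pm i\nu_*(|k|)$. Cauchy's theorem then produces the oscillatory contribution
$$\FH^{osc}_{k,\pm}(t) = e^{\pm i\nu_*(|k|) t} b_\pm(k), \qquad b_\pm(k) := \frac{1}{\partial_\lambda M(\pm i\nu_*(|k|), k)}.$$
From $\partial_\lambda M = 2\lambda + \partial_\lambda \cL[N_k]$, the Klein--Gordon bound $\nu_*(|k|) \gtrsim \langle k\rangle$ of~\eqref{KG-behave1}, and the size estimate $|\partial_\lambda \cL[N_k](\lambda)| \lesssim |k|^2/(|k|^2 + |\Im \lambda|^2)$ of Corollary~\ref{coro:LNK}, the denominator satisfies $|\partial_\lambda M(\pm i\nu_*(|k|), k)| \gtrsim \langle k\rangle$, yielding $|b_\pm(k)| \lesssim \langle k\rangle^{-1}$; smoothness in $k$ follows from the regularity of $\nu_*$ (Theorem~\ref{theo-LangmuirB}) and that of $M$ (Proposition~\ref{prop:M}).

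The main obstacle is the regular remainder $\FH^r_k(t)$: by Corollary~\ref{coro:extenM}, $1/M(\cdot,k)$ admits a holomorphic extension only to $\{|\Im\lambda|>|k|\}\cup\{\Re\lambda>0\}$, so there is no meromorphic extension across the segment $i[-|k|, |k|]$. Following the strategy announced in the introduction (inspired by~\cite{Toan}), I will replace $1/M(\cdot,k)$ locally by a sufficiently accurate rational approximation $R(\lambda,k)$ whose poles all lie strictly in $\{\Re\lambda<0\}$, apply Cauchy's residue formula to $R$ to obtain manifestly exponentially decaying contributions, and estimate the error $1/M - R$ by deforming onto a carefully chosen contour. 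On the outer piece $|\Im\lambda|>|k|$ the holomorphic extension of $M$ from Corollary~\ref{coro:extenM} allows pushing the contour into $\{\Re\lambda<0\}$, and the expansion~\eqref{eq:expansionM} of $M$ combined with Corollary~\ref{coro:LNK} converts repeated integration by parts in $\lambda$ into decay in $t$. On the inner piece $|\Im\lambda|\le|k|$, integration by parts along the imaginary axis is performed using the smoothness of $\tau\mapsto M(i\tau,k)$ established in Proposition~\ref{prop:M}.

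The two distinct decay scales in~\eqref{bound-Hrk} will emerge from two different mechanisms. The first, $\langle kt\rangle^{|\alpha|-N}$, is the standard ``free-transport'' scale: by Lemma~\ref{lem:Nk} the kernel $N_k$ obeys $|\partial_t^n N_k|\lesssim |k|^{n+1}\langle kt\rangle^{-N}$, and $N$ successive integrations by parts in $\lambda$ against $e^{\lambda t}$ convert these bounds into $\langle kt\rangle^{-N}$ decay, while the weight $|k|^{|\alpha|}$ in front of $\partial_k^\alpha$ loses exactly $|\alpha|$ powers of $\langle kt\rangle$. The second scale $\langle |k|^3 t\rangle^{|\alpha|-N}$, localized to the anomalous region $|\tau|\ll|k|\ll1$, arises from a more delicate analysis of the jump of $1/M$ across the branch cut: from~\eqref{comp-Mstau} one has $M(i\tau, k)\approx |k|^2$ in this regime while the imaginary part on each side is of order $\tau q(\tau/|k|)$, and after Plemelj the discontinuity of $1/M$ together with the rational approximation reveals that the effective oscillation has scale $|k|^3$ rather than $|k|$, so that each integration by parts provides a gain of $(|k|^3 t)^{-1}$ instead of $(|k|t)^{-1}$. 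This unusual scaling, which is what makes the proof (and the statement) subtle, matches the heuristic behind Remark~\ref{rem-k3t}. Derivatives in $k$ are propagated by tracking how $|k|$-dependent factors accumulate when differentiating the rational approximation, the cutoff $\chi_{\{|\partial_t|\ll|k|\ll1\}}$ and the symbol at each step; the hard part will clearly be the bookkeeping needed to verify that these derivative losses are exactly captured by the weights $|k|^{|\alpha|}$ and $\langle\cdot\rangle^{|\alpha|-N}$ in~\eqref{bound-Hrk}.
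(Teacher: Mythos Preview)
Your overall architecture (contour deformation to the imaginary axis, residue extraction at $\pm i\nu_*$, then estimation of the remainder) matches the paper, and your computation of $b_\pm(k)$ is exactly right. However, several of your announced mechanisms differ from what the paper actually does, and one of them is misdescribed.

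First, the paper's ``rational approximation'' is not a function with poles in $\{\Re\lambda<0\}$. It is simply the Klein--Gordon resolvent $\TH_{k,0}(\lambda)=(\lambda^2+|k|^2+\tau_0^2)^{-1}$, whose poles $\nu_{\pm,0}=\pm i\sqrt{|k|^2+\tau_0^2}$ sit on the imaginary axis, within $|k|/2$ of the true poles $\nu_\pm$. The paper writes $\TH_k=\TH_{k,0}+\TH_{k,1}$, computes $\FH_{k,0}$ exactly by residues (no exponentially decaying leftovers, unlike the electric case), and then estimates $\TH_{k,1}$ directly along a contour $\Gamma=\Gamma_1\cup\Gamma_2\cup\cC_\pm$ consisting of the imaginary axis away from the poles together with two small semicircles of radius $|k|$ around $\pm i\nu_*$. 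Near the poles (on $\cC_\pm$) a Taylor expansion of both $1/M$ and $\TH_{k,0}$ is used, and the smallness $|\nu_\pm-\nu_{\pm,0}|\lesssim |k|^2\langle k\rangle^{-3}$ makes the difference of their principal parts integrable. Crucially, the paper explicitly notes that although the holomorphic extension of Corollary~\ref{coro:extenM} \emph{could} be used to push into $\{\Re\lambda<0\}$, it does \emph{not} do so; the whole argument stays on $\{\Re\lambda\ge0\}$.

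Second, your account of the $\langle |k|^3 t\rangle$ scale via the ``jump of $1/M$ across the branch cut'' and Plemelj is not the paper's mechanism. The paper obtains it by a direct estimate on the segment $\Gamma_2=\{|\tau|\le|k|\}$: from \eqref{lower-Mstau} one has $|M(i\tau,k)|\gtrsim |k|^2+|\tau|/|k|$, while $|k|\,|\partial_\lambda M(i\tau,k)|\lesssim |\tau||k|+1$, so that
\[
\frac{|k|\,|\partial_\lambda M|}{|M|}\ \lesssim\ 1+\frac{|k|}{|k|^3+|\tau|}.
\]
This is order $|k|^{-2}$ at $\tau=0$ for small $|k|$, forcing the natural integration-by-parts unit to be $|k|^3\partial_\lambda$ rather than $|k|\partial_\lambda$; Remark~\ref{rem-k3t} records exactly this computation. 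No branch-cut discontinuity analysis is involved.

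In short: your plan is viable in spirit, but the concrete route you sketch (rational approximant with left-half-plane poles, contour pushed into $\{\Re\lambda<0\}$ via Corollary~\ref{coro:extenM}, Plemelj-type jump analysis for the anomalous scale) is not the paper's route, and the first of these three items looks like a genuine confusion rather than an alternative.
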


The proof is postponed to Appendix~\ref{sec:proof2}.


\subsection{Representation of the electromagnetic field}\label{sec-fields}


In this section, we give a complete representation of the electric and magnetic fields $E,B$ in terms of the initial data. Recall from \eqref{def-phiA} that the fields are obtained through 
\begin{equation}\label{recover-EB}
E = -\nabla_x \phi - \partial_t A , \qquad B = \nabla_x \times A,
\end{equation}
where $\phi, A$ solve the Vlasov-Maxwell system \eqref{VM-g}-\eqref{new-data}. We first derive the representation for electric and magnetic potentials. Precisely, we obtain the following. 

\begin{proposition} Let $\FG_k(t), \FH_k(t)$ be the Green functions defined as in \eqref{def-FHk}. Then, the electric and magnetic potentials $\Fphi_k, \FA_k$ of the system \eqref{VM-g}-\eqref{new-data} can be solved via the representation 
\begin{equation}
\label{rep-phiA} 
\begin{aligned}
\Fphi_k(t) &= \frac{1}{|k|^2}\int_0^t \FG_k(t-s) \FS_k(s)\; ds
\\
\FA_k(t) &=\partial_t  \FH_k(t) \FA_k^0 + \FH_k(t) \FA_k^1 +  \int_0^t \FH_k(t-s)  \mP_k\FS^\bj_k(s)\; ds
\end{aligned}
\end{equation}
where $\index{S@$S(t,x), S^\bj(t,x)$}
\FS_k(t), \FS_k^\bj(t)$ denote the charge and current densities generated by the free transport dynamics: namely
\begin{equation}\label{free-density} \FS_k(t) =\int e^{-ikt\cdot \hv}\Fg_k^0(v) \;dv , \qquad  \FS^\bj_k(t) =\int e^{-ikt\cdot \hv} \hv \Fg_k^0(v) \;dv .
\end{equation}

\end{proposition}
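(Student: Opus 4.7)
The plan is to verify both representations by applying the Laplace transform in $t$, invoking the resolvent identities of Lemma~\ref{lem-resolvent}, and concluding by injectivity of the Laplace transform. The computations have a Duhamel flavor for the wave-type equation governing $\FA_k$ and a direct convolution flavor for the elliptic-in-$x$ equation for $\Fphi_k$.

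For the electric potential, Fubini (legitimate for $\Re \lambda > 0$ thanks to the integrability of $\Fg_k^0(v)$) gives
$$\cL[\FS_k](\lambda) = \int \frac{\Fg_k^0(v)}{\lambda + ik \cdot \hv}\, dv,$$
while by construction $\cL[\FG_k](\lambda) = 1/D(\lambda,k)$. Note that the Dirac mass at $t=0$ isolated in Proposition~\ref{prop-GreenG} is precisely what compensates the non-vanishing limit of $1/D$ at infinity. The convolution theorem then yields
$$\cL\Bigl[\frac{1}{|k|^2}\int_0^t \FG_k(t-s)\FS_k(s)\, ds\Bigr](\lambda) = \frac{1}{|k|^2 D(\lambda,k)}\int \frac{\Fg_k^0(v)}{\lambda + ik\cdot \hv}\, dv,$$
which coincides with $\Tphi_k(\lambda)$ by the first line of~\eqref{resolvent}, and injectivity of the Laplace transform concludes.

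For the magnetic potential, one must additionally track the initial data $\FA_k^0$ and $\FA_k^1$. The key identities are $\FH_k(0^+) = 0$ and $\partial_t \FH_k(0^+) = 1$: these follow from the high-frequency expansion $\TH_k(\lambda) = \lambda^{-2} + O(\lambda^{-4})$ (consequence of~\eqref{eq:expansionM}), combined with $\cL[\partial_t F](\lambda) = \lambda \cL[F](\lambda) - F(0^+)$ and its iterate for $\partial_t^2 F$; letting $|\lambda| \to \infty$ in $\lambda \TH_k$ and $\lambda^2 \TH_k$ pins down both initial values. Consequently,
$$\cL\bigl[\partial_t \FH_k\, \FA_k^0 + \FH_k\, \FA_k^1 + \FH_k *_t \mP_k\FS^\bj_k \bigr](\lambda) = \frac{\lambda \FA_k^0 + \FA_k^1 + \mP_k \int \frac{\hv \Fg_k^0(v)}{\lambda + ik \cdot \hv}\, dv}{M(\lambda, k)},$$
which matches $\TA_k(\lambda)$ by the second line of~\eqref{resolvent}, and injectivity concludes.

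The main (minor) technical point is ensuring that all Fubini exchanges and convolution identities are classical rather than merely distributional. This follows from the smoothness and decay estimates on $\FG_k$ and $\FH_k$ furnished by Propositions~\ref{prop-GreenG} and~\ref{prop-Green}, together with the integrability of $\Fg_k^0$ in $v$ inherited from~\eqref{data-assumptions}. The only non-integrable contribution is the Dirac mass in $\FG_k$, which is handled separately and produces precisely the $\FS_k(t)$ boundary contribution needed to realize the constraint $|k|^2 \Fphi_k(0) = \Frho_k[g^0]$ at $t=0$.
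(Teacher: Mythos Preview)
Your proof is correct and follows essentially the same approach as the paper: both arguments amount to matching the Laplace transforms of the claimed representations with the resolvent identities of Lemma~\ref{lem-resolvent}. The paper phrases this as taking the inverse Laplace transform of~\eqref{resolvent} and computing $\cL^{-1}\bigl[\int \frac{\Fg_k^0}{\lambda + ik\cdot\hv}\,dv\bigr]$ via the residue theorem, whereas you go in the forward direction and compute $\cL[\FS_k]$ by Fubini; your explicit check that $\FH_k(0^+)=0$, $\partial_t\FH_k(0^+)=1$ from the large-$|\lambda|$ asymptotics of $\TH_k$ is a nice clarification that the paper leaves implicit.
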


\begin{proof} Recall from Lemma \ref{lem-resolvent} that the Fourier-Laplace transform of the electric and magnetic potentials $\Tphi_k(\lambda), \TA_k(\lambda)$ solves the resolvent equations \eqref{resolvent}, namely 
\begin{equation}\label{resolvent1}
\begin{aligned}
\Tphi_k(\lambda) &= \frac{1}{|k|^2} \frac{1}{D(\lambda,k) }\int \frac{\Fg_k^0}{\lambda +  ik \cdot \hat v} dv
\\
\TA_k(\lambda) &=  \frac{1}{M(\lambda,k) }\int \frac{  \mP_k\hv \Fg_k^0 }{\lambda +  ik \cdot \hat v} dv +  \frac{\lambda }{M(\lambda,k) }\FA^0_k +  \frac{1}{M(\lambda,k) }  \FA_k^1. 
\end{aligned}
\end{equation}
Note that initial data $A^0_k, A^1_k$ are constants in $\lambda$. Therefore, in view of \eqref{def-THk} and \eqref{def-FHk}, the representation \eqref{rep-phiA} follows directly from \eqref{resolvent1}, upon taking the inverse Laplace transform. Finally, to compute the inverse of the Laplace transform of the right hand side of \eqref{resolvent1},  we note that $\lambda \mapsto \frac{1}{\lambda + ik\cdot \hv}$ is meromorphic in $\CC$, and therefore, by Cauchy's residue theorem,
$$
\begin{aligned}
\FS_k(t) &=   \frac{1}{2\pi i}\int_{\{\Re \lambda = \gamma_0\}}e^{\lambda t} \Big(\int \frac{\Fg_k^0(v)}{\lambda +  ik \cdot \hat v} \; dv\Big)\; d\lambda =\int e^{-ikt\cdot \hv}\Fg_k^0(v) \;dv.
\end{aligned}
$$
The same calculation holds for $\FS^\bj_k(t)$. The proposition follows. 
\end{proof}

\subsubsection{Electric potential}

We now derive the representation for the electric potential. Let $S$ be the charge density generated by the free transport, whose Fourier transform is equal to $\FS_k(t)$, see \eqref{free-density}. Then, plugging the decomposition \eqref{decomp-FG} into \eqref{rep-phiA}, we obtain  
\begin{equation}\label{rep-electric}
\begin{aligned}
\phi & = (-\Delta_x )^{-1}\Big[ S + \sum_\pm G^{osc}_\pm \star_{t,x} S + G^{r} \star_{t,x} S\Big] 
\end{aligned}
\end{equation}
From the dispersion of the free transport, the electric field component $\nabla_x \Delta_x^{-1}S(t,x)$ decays only at rate $t^{-2}$, 
which is far from being sufficient to get decay for the oscillatory electric field $G^{osc}_\pm \star_{t,x} \nabla_x \Delta_x^{-1}S$ 
through the space-time convolution. Formally, one would need $\nabla_x \Delta_x^{-1}S$ to decay at order $t^{-4}$ in order to establish a $t^{-3/2}$ 
decay for $G^{osc}_\pm \star_{t,x} \nabla_x \Delta_x^{-1}S$. 
However it turns out that, by integrating by parts in time, the low frequency part of the first term $S$ may be absorbed by the
second one, leading to a good  time decay, as we will now detail. We obtain the following.

\begin{proposition}[Electric potential decomposition]\label{prop-decompE} 
Let $S$ be defined as in \eqref{free-density}. 
Then, the corresponding electric potential $\phi$ satisfies
 \begin{equation}\label{rep-electricE}
\begin{aligned}
\index{p@$\phi^{osc}_\pm(t,x) $: oscillatory part of the electric scalar potential}
\index{p@$\phi^r(t,x) $: regular part of the electric scalar potential}
\phi &= \sum_\pm \phi^{osc}_\pm(t,x) + \phi^r(t,x) 
\end{aligned}\end{equation}
with 
 \begin{equation}\label{rep-phiosc}
\begin{aligned}
-\Delta_x\phi^{osc}_\pm(t,x) &= G_{\pm}^{osc}(t) \star_{x} \Big[  \frac{1 }{\lambda_\pm(i\partial_x)} S(0) +  \frac{1}{\lambda_\pm(i\partial_x)^2}\partial_tS(0)\Big] 
+ G^{osc}_{\pm} \star_{t,x}  \frac{1}{\lambda_\pm(i\partial_x)^2}\partial^2_t S 
\\
-\Delta_x\phi^r(t,x) &= G^r \star_{t,x} S+  \cP_2(i\partial_x)S  + \cP_4(i \partial_x) \partial_t S,
\end{aligned}\end{equation}
where $G_\pm^{osc}(t,x), G^r(t,x)$ are defined as in Proposition \ref{prop-GreenG}, with 
$\lambda_\pm (k)= \pm i \tau_*(|k|)$ as in  Theorem \ref{theo-LangmuirE}. In addition, $\cP_2(i\partial_x)$,  $\cP_4(i\partial_x)$  
denote smooth Fourier multipliers, which are sufficiently smooth and satisfy 
\begin{equation}\label{bounds-P24k}
\begin{aligned}
|  \cP_2(k)| + |  \cP_4(k)| & \lesssim |k|^2 \langle k\rangle^{-2}, \qquad \forall~k\in \RR^3,
\end{aligned}\end{equation}
and $\cP_4(k)$ is compactly supported in $\{ |k|< \kappa_0 + \delta\}$.

\end{proposition}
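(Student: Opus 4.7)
\smallskip

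The plan is to start from the representation~\eqref{rep-electric} and perform two integrations by parts in time on the oscillatory convolution $G^{osc}_\pm \star_{t,x} S$, exploiting the crucial fact that $\FG^{osc}_{k,\pm}(t) = e^{\lambda_\pm(k)t}a_\pm(k)$ satisfies $\partial_t \FG^{osc}_{k,\pm} = \lambda_\pm(k)\FG^{osc}_{k,\pm}$, so that $\FG^{osc}_{k,\pm}(t-s) = -\lambda_\pm(k)^{-1}\partial_s \FG^{osc}_{k,\pm}(t-s)$. This identity makes sense because $a_\pm$ is supported in $B(0,\kappa_0+\delta)$ and $\lambda_\pm(k) = \pm i\tau_*(|k|)$ does not vanish on that support (Theorem~\ref{theo-LangmuirE} gives $\tau_*(0) = \tau_0 > 0$).

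Carrying out one integration by parts in $s$ produces the boundary terms $\lambda_\pm^{-1}\FG^{osc}_{k,\pm}(t)\FS_k(0)$ at $s=0$ and $-\lambda_\pm^{-1}a_\pm(k)\FS_k(t)$ at $s=t$, plus the interior term $\lambda_\pm^{-1}\int_0^t \FG^{osc}_{k,\pm}(t-s)\partial_s\FS_k(s)\,ds$. Integrating by parts a second time on the last integral, I produce the analogous boundary contributions involving $\partial_t\FS_k(0)$ and $\partial_t\FS_k(t)$ with an extra factor $\lambda_\pm^{-1}$, and an interior term involving $\partial_s^2\FS_k$. Summing over $\pm$, this recasts
$$ S + \sum_\pm G^{osc}_\pm\star_{t,x} S = \Bigl(1-\sum_\pm \tfrac{a_\pm(i\partial_x)}{\lambda_\pm(i\partial_x)}\Bigr)S -\sum_\pm \tfrac{a_\pm(i\partial_x)}{\lambda_\pm(i\partial_x)^2}\partial_t S + \sum_\pm\Bigl[\tfrac{1}{\lambda_\pm} G^{osc}_\pm(t)\star_x S(0)+\tfrac{1}{\lambda_\pm^2} G^{osc}_\pm(t)\star_x \partial_t S(0) + \tfrac{1}{\lambda_\pm^2} G^{osc}_\pm\star_{t,x}\partial_t^2 S\Bigr].$$
Dividing by $-\Delta_x$ and splitting, I set
$$ \cP_2(i\partial_x) := 1 - \sum_\pm \frac{a_\pm(i\partial_x)}{\lambda_\pm(i\partial_x)},\qquad \cP_4(i\partial_x) := -\sum_\pm \frac{a_\pm(i\partial_x)}{\lambda_\pm(i\partial_x)^2},$$
identifying $\phi^{osc}_\pm$ with the initial-data and $\partial_t^2 S$ contributions, and $\phi^r$ with the sum of $G^r\star_{t,x}S$ and the $\cP_2 S$, $\cP_4\partial_t S$ terms.

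The step I expect to require the most care is verifying the bound $|\cP_j(k)|\lesssim |k|^2\langle k\rangle^{-2}$. Compact support of $\cP_4$ is immediate since $a_\pm$ is supported in $B(0,\kappa_0+\delta)$; for $\cP_2$, outside this ball $\cP_2\equiv 1$, yielding the $\langle k\rangle^{-2}$-type behavior for large $|k|$. The low-frequency vanishing is the delicate point: using $a_\pm(0)=\pm i\tau_0/2$ from~\eqref{eq-apm0} and $\lambda_\pm(0)=\pm i\tau_0$ from Theorem~\ref{theo-LangmuirE}, direct evaluation gives
$$ \sum_\pm\frac{a_\pm(0)}{\lambda_\pm(0)} = \tfrac12+\tfrac12 = 1,\qquad \sum_\pm \frac{a_\pm(0)}{\lambda_\pm(0)^2} = \tfrac{i\tau_0/2}{-\tau_0^2}+\tfrac{-i\tau_0/2}{-\tau_0^2}=0,$$
so $\cP_2(0) = \cP_4(0) = 0$. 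Because $a_\pm(k)=\alpha_\pm(|k|^2)$ is a smooth function of $|k|^2$ and (by the last identity in~\eqref{small-tauk}) so is $\lambda_\pm(k)=\pm i\tau_*(|k|)$ (the expansion is in powers of $|k|^2$), both $\cP_2$ and $\cP_4$ are smooth radial functions of $|k|^2$ vanishing at the origin, hence $\cP_j(k)=O(|k|^2)$ as $|k|\to 0$. Combining the small- and large-$|k|$ behavior yields the claimed global bound~\eqref{bounds-P24k}.

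Finally, I need only observe that the smoothness of $a_\pm/\lambda_\pm$ and $a_\pm/\lambda_\pm^2$ on the compact set where $a_\pm\neq 0$ (where $\lambda_\pm$ is $\mathcal{C}^{\lfloor(N_0-9)/2\rfloor}$ by Theorem~\ref{theo-LangmuirE} and nowhere vanishing), together with the ball-support of $a_\pm$, ensures that $\cP_2$ and $\cP_4$ are genuine Fourier multipliers of the type needed in the sequel. The integrations by parts are justified a posteriori because the data assumptions~\eqref{data-assumptions} provide the necessary $L^2$-based control on $\partial_t^j S$ through the free-transport identities $\partial_t S = -\hv\cdot\nabla_x S$.
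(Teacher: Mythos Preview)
Your proof is correct and follows essentially the same approach as the paper: two integrations by parts in time on $G^{osc}_\pm \star_{t,x} S$, using $\partial_t \FG^{osc}_{k,\pm} = \lambda_\pm(k)\FG^{osc}_{k,\pm}$, and then exploiting the cancellations $\sum_\pm a_\pm(0)/\lambda_\pm(0)=1$ and $\sum_\pm a_\pm(0)/\lambda_\pm(0)^2=0$ coming from \eqref{eq-apm0} and $\lambda_\pm(0)=\pm i\tau_0$. Your explicit identification $\cP_2 = 1-\sum_\pm a_\pm/\lambda_\pm$ and $\cP_4 = -\sum_\pm a_\pm/\lambda_\pm^2$, together with the observation that both are smooth functions of $|k|^2$ vanishing at the origin (hence $O(|k|^2)$ there) while $\cP_2\equiv 1$ outside $B(0,\kappa_0+\delta)$, is in fact a cleaner packaging than the paper's, which splits $\cP_2$ into auxiliary pieces $\widetilde\cP_2$ and $1-\widetilde\cP_3$.
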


\begin{proof} 
We only focus on the low frequency regime: 
namely, the region $\{ |k|< \kappa_0 + \delta\}$, where we recall $k\in \RR^3$ is the Fourier frequency. 
 Indeed, in the high frequency regime, the result is  straightforward, as no oscillatory part is involved.
In the region $\{ |k|< \kappa_0 + \delta\}$, we shall exploit the time oscillations of $G^{osc}_\pm(t,x)$; with this idea in mind, we integrate by parts in time the second term of \eqref{rep-electric}, 
which gives
$$
G^{osc}_\pm \star_{t,x} S=   \frac{1}{\lambda_\pm(i\partial_x)} \Big[ G^{osc}_{\pm}(t,\cdot)\star_x S(0,x) 
-  G^{osc}_{\pm}(0,\cdot)\star_x S(t,x)  + G^{osc}_{\pm} \star_{t,x} \partial_tS(t,x) \Big],
$$
where
$\lambda_\pm (k)= \pm i \tau_*(|k|)$ are defined as in  Theorem \ref{theo-LangmuirE}. 
Using \eqref{small-tauk} and the fact that $a_\pm(0) = \pm \frac{i\tau_0}{2}$, we have
$$
\sum_\pm  \frac{1}{\lambda_\pm(i\partial_x)} G^{osc}_{\pm}(0,x) 
= \sum_\pm \int e^{i k\cdot x}  \frac{a_\pm(k)}{\lambda_\pm(k)} dk = \int_{\{ |k|< \kappa_0 + \delta\}} e^{i k\cdot x} \Big(1 + \cO(|k|^2)\Big)dk.  
$$
Therefore
\begin{equation}\label{cal-GoscE}
 \sum_\pm  \frac{1}{\lambda_\pm(i\partial_x)}G^{osc}_{\pm}(0,\cdot)\star_x S =  \widetilde \cP_3(i \partial_x) S - \widetilde \cP_2(i\partial_x) S  ,
\end{equation}
in which $\widetilde \cP_3(i\partial_x)$ (resp. $\widetilde \cP_2(i\partial_x)$) denotes the Fourier multiplier denotes the Fourier multiplier with $\widetilde \cP_3(k)$  (resp. $\widetilde \cP_2(k)$), being sufficiently smooth, with $\widetilde \cP_3(k)=1$ for $|k|\le 1$ and $\widetilde \cP_3(k)\lesssim 1$ (resp.
$\widetilde \cP_2(k)=0$ for $|k|\ge 1$ and $|\widetilde \cP_2(k)| \lesssim |k|^2$). 
Note that $S$ in (\ref{rep-electric}) cancels with the $S$ in (\ref{cal-GoscE}). Precisely, we have 
$$ 
\begin{aligned}
S + G^{osc}_\pm \star_{t,x} S &=  \frac{1}{\lambda_\pm(i\partial_x)} G^{osc}_{\pm}(t,\cdot)\star_x S(0,x) +  \frac{1}{\lambda_\pm(i\partial_x)}G^{osc}_{\pm} \star_{t,x} \partial_tS(t,x) 
\\&\quad + \widetilde \cP_2(i\partial_x) S + (1- \widetilde \cP_3(i \partial_x)) S.
\end{aligned}$$

Finally, as it turns out that $\partial_tS(t,x) $ decays not sufficiently fast for the convolution $G^{osc}_{\pm} \star_{t,x} \partial_tS(t,x) $, we further integrate by parts in $t$, yielding 
$$
G^{osc}_{\pm} \star_{t,x} \partial_tS =   \frac{1}{\lambda_\pm(i\partial_x)} \Big[ G^{osc}_{\pm}(t,\cdot)\star_x \partial_tS(0,x) 
- G^{osc}_{\pm}(0,\cdot)\star_x \partial_t S(t,x) +  G^{osc}_{\pm} \star_{t,x} \partial^2_tS(t,x) \Big],
$$
Using \eqref{small-tauk} and \eqref{eq-apm0}, we have
$$
\sum_\pm  \frac{1}{\lambda_\pm(i\partial_x)^2}G^{osc}_{\pm}(0,x) 
= \sum_\pm \int e^{i k\cdot x}  \frac{a_\pm(k)}{\lambda_\pm(k)^2} dk = \int_{\{ |k|< \kappa_0 + \delta\}} e^{i k\cdot x} \cO(|k|^2)dk,
$$
in which we stress that there is a cancellation that takes place at the leading order in $k$ for $k$ small. This proves that 
$$ 
\begin{aligned}
S + \sum_\pm G^{osc}_\pm \star_{t,x} S &= \sum_\pm  \frac{1}{\lambda_\pm(i\partial_x)} G^{osc}_{\pm}(t,\cdot)\star_x S(0,x)  - \sum_\pm  \frac{1}{\lambda_\pm(i\partial_x)^2}G^{osc}_{\pm}(t,\cdot)\star_x \partial_tS(0,x) 
\\& \quad + \widetilde \cP_2(i\partial_x) S + (1- \widetilde \cP_3(i \partial_x)) S + \cP_4(i \partial_x) \partial_t S.
\\& \quad + \sum_\pm  \frac{1}{\lambda_\pm(i\partial_x)^2}G^{osc}_{\pm} \star_{t,x} \partial^2_tS(t,x),
\end{aligned}$$
where $ \cP_4(i \partial_x) $ denotes a smooth Fourier multiplier with  $|\cP_4(k)\lesssim |k|^2$ that is compactly supported in $\{ |k|< \kappa_0 + \delta\}$. 
The proposition thus follows. 
\end{proof}


\subsubsection{Magnetic potential}


We now derive the representation for the magnetic potential. We obtain the following.

\begin{proposition}[Magnetic potential decomposition]\label{prop-decompB} 
Let $S^\bj$ be defined as in  \eqref{free-density}. Then, the corresponding magnetic potential $A$ satisfies
 \begin{equation}\label{rep-magneticA}
 \index{A@$A^{osc}_\pm(t,x) $: oscillatory part of the magnetic vector potential}
\index{A@$A^r(t,x) $: regular part of the magnetic vector potential}
\begin{aligned}
A &= \sum_\pm A^{osc}_\pm(t,x) + A^r(t,x) 
\end{aligned}\end{equation}
with 
 \begin{equation}\label{rep-Aosc}
\begin{aligned}
A^{osc}_\pm(t,x) &= H_{\pm}^{osc}(t) \star_{x} \Big[A^1 \pm i \nu_*(i\partial_x)A^0 +  \frac{1}{\nu_\pm(i\partial_x)}  \mP S^\bj(0) \Big]
+ H^{osc}_{\pm} \star_{t,x}  \frac{1}{\nu_\pm(i\partial_x)} \partial_t\mP S^\bj(t)
\\
A^r(t,x) &= \partial_t H^r(t) \star_x A^0 + H^r(t) \star_x A^1 + H^r \star_{t,x} \mP S^\bj(t)  -  \sum_\pm \frac{b_\pm(i\partial_x)}{\nu_\pm(i\partial_x)} \mP S^\bj(t)  ,
\end{aligned}\end{equation}
where $H_\pm^{osc}(t,x), H^r(t,x)$ are defined as in Proposition \ref{prop-Green}, with $\nu_\pm(k) = \pm i \nu_*(|k|)$ as in  Theorem \ref{theo-LangmuirB}.

\end{proposition}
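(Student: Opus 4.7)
The plan is to substitute the Green function decomposition from Proposition~\ref{prop-Green} into the Fourier-side representation~\eqref{rep-phiA} for $\FA_k(t)$, and then rearrange the resulting pieces so that the oscillatory contributions regroup as advertised. The key algebraic observation is that $\FH^{osc}_{k,\pm}(t)=e^{\pm i\nu_*(|k|)t}b_\pm(k)$ is, at fixed $k$, an eigenfunction of $\partial_t$ with eigenvalue $\nu_\pm(k)=\pm i\nu_*(|k|)$; by Theorem~\ref{theo-LangmuirB} one has $|\nu_\pm(k)|\gtrsim\langle k\rangle>0$ for every $k\in\RR^3$, so $\nu_\pm(i\partial_x)^{-1}$ is a well-defined smooth bounded Fourier multiplier, and so is $b_\pm(i\partial_x)/\nu_\pm(i\partial_x)$ thanks to the smoothness and $\langle k\rangle^{-1}$ decay of $b_\pm$ recorded in Proposition~\ref{prop-Green}.

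First I would route the three pieces carried by $\FH^r_k$ directly into $A^r$; this produces its first three terms. For the piece $\partial_t\FH^{osc}_{k,\pm}(t)\FA^0_k$, the eigenfunction identity $\partial_t\FH^{osc}_{k,\pm}=\pm i\nu_*(|k|)\FH^{osc}_{k,\pm}$ turns it immediately into $(\pm i\nu_*(|k|))\FH^{osc}_{k,\pm}\FA^0_k$, which in physical space is exactly $H^{osc}_\pm\star_x(\pm i\nu_*(i\partial_x)A^0)$. The forcing piece $\FH^{osc}_{k,\pm}(t)\FA^1_k$ is already in the desired form $H^{osc}_\pm\star_x A^1$.

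The main step is to rewrite the Duhamel convolution $\int_0^t\FH^{osc}_{k,\pm}(t-s)\mP_k\FS^\bj_k(s)\,ds$ by integration by parts in $s$, using the identity $\FH^{osc}_{k,\pm}(t-s)=-\nu_\pm(k)^{-1}\partial_s\FH^{osc}_{k,\pm}(t-s)$. This generates three contributions: the boundary term at $s=0$ is $\nu_\pm(k)^{-1}\FH^{osc}_{k,\pm}(t)\mP_k\FS^\bj_k(0)$, giving the $H^{osc}_\pm\star_x\nu_\pm(i\partial_x)^{-1}\mP S^\bj(0)$ piece of $A^{osc}_\pm$; the remaining convolution $\nu_\pm(k)^{-1}\int_0^t\FH^{osc}_{k,\pm}(t-s)\partial_s\mP_k\FS^\bj_k(s)\,ds$ yields the last term of $A^{osc}_\pm$ with $\partial_t\mP S^\bj$ in its argument; finally, the boundary term at $s=t$, evaluated via $\FH^{osc}_{k,\pm}(0)=b_\pm(k)$, equals $-b_\pm(k)\nu_\pm(k)^{-1}\mP_k\FS^\bj_k(t)$. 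Being non-oscillatory in $t$, this last contribution must be reassigned to $A^r$, where it accounts precisely for the last term of its defining formula. Summing over $\pm$ and taking the inverse spatial Fourier transform yields the announced decomposition.

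There is no serious analytic obstacle here: the proof is essentially an algebraic rearrangement, and the only ingredients beyond~\eqref{rep-phiA} and Proposition~\ref{prop-Green} are the invertibility of $\nu_\pm(i\partial_x)$ and the eigenfunction identity recorded above. The strategic point of the integration by parts is that it transfers one time derivative off the non-decaying oscillatory factor $\FH^{osc}_{k,\pm}$ onto the slowly decaying free-transport source $\mP S^\bj$, which is precisely the structure that will make the oscillatory pieces of $A$ amenable to the dispersive estimates of Section~\ref{sec-decay}.
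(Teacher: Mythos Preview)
Your proof is correct and follows essentially the same approach as the paper: start from the representation~\eqref{rep-phiA}, split $H=\sum_\pm H^{osc}_\pm + H^r$ via Proposition~\ref{prop-Green}, and integrate the oscillatory Duhamel term by parts in time using $\partial_t H^{osc}_\pm = \nu_\pm(i\partial_x) H^{osc}_\pm$. Your Fourier-side presentation is in fact slightly more explicit than the paper's, as you identify the $s=t$ boundary term $-b_\pm(k)\nu_\pm(k)^{-1}\mP_k\FS^\bj_k(t)$ via $\FH^{osc}_{k,\pm}(0)=b_\pm(k)$ and explain why it is reassigned to $A^r$.
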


\begin{proof}

From
 \eqref{rep-phiA}, we obtain  
\begin{equation}\label{rep-magnetic}
\begin{aligned}
A
&=\partial_t H(t) \star_x A^0 + H(t) \star_x A^1 + H \star_{t,x} \mP S^\bj(t) .
\end{aligned}
\end{equation}
Next, in view of the decomposition \eqref{decomp-FH}, we shall make use of oscillations and integrate by parts in time to deduce 
 $$ 
 \begin{aligned}
 H^{osc}_\pm \star_{t,x} \mP S^\bj(t) =  \frac{1}{\nu_\pm(i\partial_x)} \Big[
 H^{osc}_{\pm}(t,\cdot)\star_x \mP S^\bj(0) 
- H^{osc}_{\pm}(0,\cdot)\star_x \mP S^\bj(t)  + H^{osc}_{\pm} \star_{t,x} \partial_t\mP S^\bj(t) \Big]
\end{aligned}$$
where $\nu_\pm(k)= \pm i \nu_*(|k|)$ are defined as in  Theorem \ref{theo-LangmuirB}. Collecting all the terms, we obtain the proposition. 
\end{proof}

\section{Decay estimates}\label{sec-decay}

We are now ready to prove the main result of this paper, Theorem \ref{theo-main}. Recall that the fields are obtained through 
$$
E = -\nabla_x \phi - \partial_t A , \qquad B = \nabla_x \times A,
$$
where the potentials satisfy the representations \eqref{rep-electricE} and \eqref{rep-magneticA}. We will study the contribution of  each term in the above expression.  Beforehand, we establish decay for the electric and magnetic Green functions in the physical space, and we state dispersion estimates related to the relativistic free transport operator.

\subsection{Electric and magnetic Green functions in the physical space}

In this section, we bound the Green functions $G(t,x)$ and $H(t,x)$ in the physical space. To this end, we will use the homogeneous Littlewood-Paley decomposition of $\mathbb{R}^{3}$, as recalled in Appendix~\ref{sec:LittlewoodPaley}. 
Namely, we shall decompose functions as
\begin{equation}\label{def-LP-Green}
h(x)= \sum_{q \in \mathbb{Z}} P_q h(x), 
\end{equation}
where $P_q$ denotes the Littlewood-Paley projection on the dyadic interval $[2^{q-1}, 2^{q+1}]$.

We shall prove the following proposition. 

\begin{proposition}\label{prop-Greenphysical} Let $G(t,x)$ and $H(t,x)$ be the Green functions constructed in Proposition \ref{prop-GreenG} and Proposition \ref{prop-Green}, respectively. We have 
\begin{equation}\label{est-HoscL2inf}
\begin{aligned}
\|G^{osc}_\pm\star_x f\|_{L^2_x} &\lesssim \| f\|_{L^{2}_x}, \qquad&\|G^{osc}_\pm\star_x f\|_{L^\infty_x} \lesssim \langle t\rangle^{-\frac 32}\|  f\|_{L^{1}_x},
\\
\|H^{osc}_\pm\star_x f\|_{L^2_x} &\lesssim \| f\|_{L^{2}_x}, \qquad &\|H^{osc}_\pm\star_x f\|_{B^0_{\infty,2}}\lesssim \langle t\rangle^{-\frac 32}\| b_\pm(i\partial_x)f\|_{B^3_{1,2}},
\end{aligned}
\end{equation}
where the Fourier multipliers $b_\pm(k)$ are defined in Proposition \ref{prop-Green} and satisfy $|b_\pm(k)|\lesssim \langle k\rangle^{-1}$. In addition, letting $\chi_0(k)$ be a smooth cutoff function whose support is contained in $\{|k|\le 1\}$, for all $ 0\le n\le \lfloor (N_0-19)/2\rfloor$ and $p\in [1,\infty]$, there hold 
\begin{equation}\label{est-HrLp1} 
\begin{aligned}
\|\chi_0(i\partial_x) \partial_x^n \Delta_x^{-1}G^{r}(t) \|_{L^p_x} &\lesssim \langle t\rangle^{-4+3/p - n} ,
\\
\|\chi_0(i\partial_x) \partial_x^nH^r(t) \|_{L^p_x} &\lesssim \langle t\rangle^{-4/3+1/p - n/3} ,
\end{aligned}\end{equation}
and 
\begin{equation}\label{est-HrLp2} \| (1-\chi_0(i\partial_x)) \partial_x^n G^r (t) \|_{L^p_x}  + \| (1-\chi_0(i\partial_x)) \partial_x^n H^r (t) \|_{L^p_x} \lesssim \langle t\rangle^{-N}.\end{equation}
where $N \leq \lfloor (N_{0}- 7)/2 \rfloor$
\end{proposition}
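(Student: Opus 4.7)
\textbf{Proof plan for Proposition~\ref{prop-Greenphysical}.}
We would separate the proof into bounds for the oscillatory pieces and for the regular pieces. The $L^2_x\to L^2_x$ bounds in \eqref{est-HoscL2inf} follow directly from Plancherel: the multipliers $a_\pm(k)$ and $b_\pm(k)$ are bounded, and by Theorems~\ref{theo-LangmuirE} and \ref{theo-LangmuirB} the phases satisfy $\Re\lambda_\pm^{\text{elec}}(k)\le 0$ and $\pm i\nu_*(|k|)\in i\mathbb{R}$. For the $L^\infty$ dispersive bound on $G^{osc}_\pm$, since $a_\pm$ is compactly supported in $\{|k|\le\kappa_0+\delta\}$ we run a (non-)stationary phase analysis on
\[G^{osc}_\pm(t,x)=\int_{\mathbb{R}^3}e^{i(k\cdot x\pm t\tau_*(|k|))}a_\pm(k)\,dk,\]
using the strict convexity $\tau_*''(|k|)\ge c_1>0$ and $\tau_*'(|k|)\ge c_0|k|$ of Theorem~\ref{theo-LangmuirE}: the radial phase has Hessian $\mathrm{diag}(\tau_*''(|k|),\tau_*'(|k|)/|k|,\tau_*'(|k|)/|k|)$ with determinant bounded below on the critical set, which yields the Schr\"odinger-type decay $\langle t\rangle^{-3/2}$. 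The narrow annulus near $|k|=\kappa_0$ where $\Re\lambda_\pm^{\text{elec}}<0$ only contributes additional exponential damping and is harmless.

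The magnetic oscillatory Besov estimate is the first less routine point: the Klein-Gordon-type asymptotic gives $\nu_*''(|k|)\sim\langle k\rangle^{-3}$, degenerating at infinity, so a single stationary phase cannot succeed. We therefore perform the homogeneous Littlewood-Paley decomposition \eqref{def-LP-Green} and write $H^{osc}_\pm\star_x f=\widetilde H^{osc}_\pm\star_x (b_\pm(i\partial_x)f)$ with $\widehat{\widetilde H^{osc}_\pm}(k)=e^{\pm it\nu_*(|k|)}$. On each dyadic shell $|k|\sim 2^q$, the determinant of the Hessian of $\pm\nu_*(|k|)$ is $\nu_*''(|k|)(\nu_*'(|k|)/|k|)^2$, which by \eqref{KG-behave1}--\eqref{KG-behave3} behaves like $1$ for $q\le 0$ and like $2^{-5q}$ for $q\ge 0$; stationary phase then gives $\|P_q\widetilde H^{osc}_\pm\|_{L^\infty}\lesssim \langle t\rangle^{-3/2}\,2^{5q^+/2}$. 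Applying Young's convolution inequality shell by shell and summing in $\ell^2_q$, the factor $2^{5q^+/2}$ is absorbed by the weight $2^{3q}$ of the $B^3_{1,2}$ norm on the right, using the embedding $B^3_{1,2}\hookrightarrow B^{5/2}_{1,2}$.

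For the regular parts, the high-frequency bound \eqref{est-HrLp2} is immediate from the rapid $\langle kt\rangle^{-N}$ decay in Propositions~\ref{prop-GreenG} and~\ref{prop-Green}, combined with compact $k$-support away from the origin provided by $1-\chi_0$. At low frequencies we proceed by dyadic decomposition. For $G^r$, the bound $|\FG^r_k(t)|\lesssim |k|^3\langle kt\rangle^{-N}$ on $|k|\le 1$, together with $\widehat{\Delta_x^{-1}\partial_x^n}=(ik)^n/(-|k|^2)$, gives via Plancherel on $|k|\sim 2^q$ and Bernstein the dyadic bound $\|P_q\chi_0(i\partial_x)\partial_x^n\Delta_x^{-1}G^r\|_{L^p}\lesssim 2^{(n+4-3/p)q}\langle 2^qt\rangle^{-N}$ for $p\in[2,\infty]$; summing in $q\le 0$ and splitting at the critical scale $2^q\sim 1/t$ yields $\langle t\rangle^{-(n+4-3/p)}=\langle t\rangle^{-4+3/p-n}$. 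For $p\in[1,2)$ we switch to the weighted estimate $\|f\|_{L^1}\lesssim\|\langle x\rangle^M f\|_{L^\infty}\lesssim\sum_{|\alpha|\le M}\|\partial_k^\alpha\hat f\|_{L^1_k}$ for $M$ slightly above $3$, for which the derivative bounds $\||k|^\alpha\partial_k^\alpha\FG^r_k\|\lesssim |k|^3\langle kt\rangle^{|\alpha|-N}$ of Proposition~\ref{prop-GreenG} are exactly tailored, and we interpolate to close the range.

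The principal obstacle is the analogous low-frequency estimate for $H^r$, where Proposition~\ref{prop-Green} contributes the anomalous term $|k|\langle k^3t\rangle^{-N}\chi_{\{|\tau|\ll|k|\ll 1\}}$ carrying the unusual scaling $\omega\sim k^3$ rather than $\omega\sim k$. Repeating the dyadic-Bernstein argument above but with $\langle k^3t\rangle^{-N}$ in place of $\langle kt\rangle^{-N}$, the critical dyadic scale shifts from $2^q\sim 1/t$ to $2^q\sim t^{-1/3}$, and summation yields the slower rate $\langle t\rangle^{-(n+4-3/p)/3}=\langle t\rangle^{-4/3+1/p-n/3}$, as stated. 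Each spatial derivative thus gains only $\langle t\rangle^{-1/3}$ instead of $\langle t\rangle^{-1}$, which is precisely the physical-space manifestation of the anomalous $k^3$ wave structure emerging at low magnetic frequencies discussed in Section~\ref{sec-GreenB}. The companion $|k|\langle k\rangle^{-2}\langle kt\rangle^{-N}$ piece in $\FH^r_k$ is handled exactly as $G^r$ above and produces a faster rate that is therefore absorbed.
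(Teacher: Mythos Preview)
Your plan follows the paper's approach closely and is largely correct; the Littlewood--Paley treatment of $H^{osc}_\pm$ is exactly Lemma~\ref{lem-disp}, and your dyadic analysis of the regular parts with critical scale $2^q\sim t^{-1/3}$ for the anomalous $\langle k^3t\rangle^{-N}$ piece of $\FH^r_k$ is precisely how the paper obtains the slower $t^{-1/3}$-per-derivative rate. Two points need to be corrected, though.

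\textbf{The annulus $|k|\in(\kappa_0,\kappa_0+\delta]$ for $G^{osc}_\pm$.} Your claim that this region ``only contributes additional exponential damping and is harmless'' is not sufficient on its own. By the remark following Theorem~\ref{theo-LangmuirE} one has $|\Re\lambda_\pm^{\text{elec}}(k)|\lesssim||k|-\kappa_0|^K$ with $K$ of order $N_0/2$, so a naive bound gives only $\int_{\kappa_0}^{\kappa_0+\delta}e^{-ct(r-\kappa_0)^K}\,dr\sim t^{-1/K}$, far slower than $\langle t\rangle^{-3/2}$. The paper instead splits $G^{osc}_\pm$ at a cutoff near $k=0$: near the origin, standard stationary phase with the nondegenerate radial Hessian gives $t^{-3/2}$; on the complementary region $\{|k|\ge c>0\}$ (which contains the whole annulus near $\kappa_0$) it applies a robust \emph{non-stationary} phase lemma (Lemma~\ref{lem:disp1}) that allows a complex phase $\alpha$ with $\Re\alpha\le 0$ and uses only the lower bound $|\nabla_k\Im\lambda_\pm^{\text{elec}}|\ge c_0>0$ from~\eqref{lowerbound-taustarDEextend}, yielding $t^{-2}$. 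Note also that Theorem~\ref{theo-LangmuirE} provides no lower bound on the second derivative of $\Im\lambda_\pm^{\text{elec}}$ past $\kappa_0$, so your stationary-phase Hessian argument does not extend across $\kappa_0$ without further work.

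\textbf{High-frequency regular parts.} You write that~\eqref{est-HrLp2} is immediate from ``compact $k$-support away from the origin provided by $1-\chi_0$'', but $1-\chi_0$ is not compactly supported --- it cuts off \emph{low} frequencies. The needed integrability at $|k|\to\infty$ comes instead from the symbol decay $|k|^3\langle k\rangle^{-4}$ (for $\FG^r_k$) or $|k|\langle k\rangle^{-2}$ (for $\FH^r_k$) in Propositions~\ref{prop-GreenG}--\ref{prop-Green}, combined with $\langle kt\rangle^{-N}$; with this correction your argument goes through.
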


Before providing the proof of the proposition, let us draw the consequences of the decay
estimates~\eqref{est-HoscL2inf} in $L^p$ spaces.

\begin{cor}
\label{coro:decayLposc}
There holds for all $p \in [2,\infty)$
\begin{equation}
\label{est-HoscLp}
\begin{aligned}
\|G^{osc}_\pm\star_x f\|_{L^p_x} &\lesssim \langle t\rangle^{-3\left(\frac 12-\frac1p\right)}\|  f\|_{L^{p'}_x},
\\
\|H^{osc}_\pm\star_x f\|_{L^p_x} &\lesssim \langle t\rangle^{-3\left(\frac 12-\frac1p\right)}\| b_\pm(i\partial_x)f\|_{B^{3\left(1-\frac{2}{p}\right)}_{p',2}},
\end{aligned}
\end{equation}
with $\frac 1p+\frac 1{p'}=1$. 

\end{cor}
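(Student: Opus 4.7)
The plan is to obtain both lines of~\eqref{est-HoscLp} by straightforward interpolation from the endpoint estimates in~\eqref{est-HoscL2inf}, with no new analysis required.

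For the electric bound, the convolution operator $T_t : f \mapsto G^{osc}_\pm \star_x f$ is bounded $L^2 \to L^2$ uniformly in $t$ and $L^1 \to L^\infty$ with norm $\lesssim \langle t\rangle^{-3/2}$. Riesz--Thorin interpolation with parameter $\theta = 1 - 2/p \in [0,1)$ then yields $T_t : L^{p'} \to L^p$ with norm $\lesssim \langle t\rangle^{-3\theta/2} = \langle t\rangle^{-3(1/2-1/p)}$, which is exactly the first line of~\eqref{est-HoscLp}.

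For the magnetic bound, the complication is that the multiplier $b_\pm(i\partial_x)$ appears on both sides of the estimate. I would absorb it by setting $g := b_\pm(i\partial_x) f$ and considering the auxiliary operator
$$\tilde T_t g := H^{osc}_\pm \star_x b_\pm(i\partial_x)^{-1} g,$$
whose Fourier symbol is the pure phase $e^{\pm i \nu_*(|k|) t}$; hence $\tilde T_t$ is an $L^2$-isometry (equivalently, a bounded map $B^0_{2,2} \to B^0_{2,2}$ with norm $1$), while the second estimate in~\eqref{est-HoscL2inf} reads $\tilde T_t : B^3_{1,2} \to B^0_{\infty,2}$ with norm $\lesssim \langle t\rangle^{-3/2}$. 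Complex interpolation of Besov spaces (with the fine index $q = 2$ preserved throughout) at parameter $\theta = 1-2/p$ then produces
$$\tilde T_t : B^{3(1-2/p)}_{p',2} \longrightarrow B^0_{p,2} \quad \text{with norm} \lesssim \langle t\rangle^{-3(1/2-1/p)}.$$
For $p \in [2,\infty)$ the Littlewood--Paley square-function characterisation of $L^p$ combined with Minkowski's inequality (for exchanging an $\ell^2$ and an $L^p$ norm when $p \ge 2$) gives the continuous embedding $B^0_{p,2} \hookrightarrow L^p$, see Appendix~\ref{sec:LittlewoodPaley}. Undoing the substitution $g = b_\pm(i\partial_x) f$ then yields the second line of~\eqref{est-HoscLp}.

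The only point requiring care is the bookkeeping of Besov indices during interpolation, and noting that the restriction $p < \infty$ is sharp because $B^0_{\infty,2}$ does not embed into $L^\infty$. Otherwise the argument is entirely classical and no genuine analytic obstacle arises, since both endpoint bounds are already provided by Proposition~\ref{prop-Greenphysical}.
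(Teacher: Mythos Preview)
Your proof is correct and follows essentially the same approach as the paper: Riesz--Thorin for $G^{osc}_\pm$, and complex interpolation of Besov spaces (between the $L^2 = B^0_{2,2}$ and $B^3_{1,2} \to B^0_{\infty,2}$ endpoints) followed by the embedding $B^0_{p,2} \hookrightarrow L^p$ for $H^{osc}_\pm$. Your explicit substitution $g = b_\pm(i\partial_x) f$, which reduces the problem to the pure-phase multiplier $e^{\pm i\nu_*(|k|)t}$, is a clean way of formalizing what the paper leaves implicit, and your remark on why $p = \infty$ is excluded is a nice clarification.
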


\begin{proof}[Proof of Corollary~\ref{coro:decayLposc}]
The estimate for $G^{osc}_\pm$ follows from standard interpolation in Lebesgue spaces, while that for $H^{osc}_\pm$, recalling that $L^2= B^0_{2,2}$, is a consequence of interpolation in Besov spaces, as proved in~\cite{KMM}. This yields for all $p \in [2,+\infty)$ the estimate
$$
\|H^{osc}_\pm\star_x f\|_{B^0_{p,2}} \lesssim \langle t\rangle^{-3\left(\frac 12-\frac1p\right)}\| b_\pm(i\partial_x)f\|_{B^{\left(1-\frac{2}{p}\right)}_{p',2}}.
$$
and we conclude by the continuous embedding $B^0_{p,2} \hookrightarrow L^p$ for $p\in [2,\infty)$.
\end{proof}

\begin{proof}[Proof of Proposition~\ref{prop-Greenphysical}]

We start with the oscillatory Green functions, which we recall are defined by 
\begin{equation} \label{Hsreal}
\begin{aligned}
G^{osc}_\pm(t,x) &= \int e^{z_\pm(k) t + i k \cdot x} a_\pm(k) dk 
\\
H^{osc}_\pm(t,x) &= \int e^{\pm i\nu_*(|k|) t + i k \cdot x} b_\pm(k) dk .
\end{aligned}\end{equation}
Note that $G^{osc}_\pm$ is a smoothed version of the Green's function for the Schr\"odinger equation, since for small values of $|k|$, $z_\pm (k) =\pm i \tau_*(|k|)$ and $\tau_*(|k|)$ behaves like $1 + |k|^2$,  see Theorem \ref{theo-LangmuirE}. On the other hand, recall that $\nu_*(|k|)$ satisfies the Klein-Gordon type dispersion relation with positive mass $\tau_0$, see Theorem \ref{theo-LangmuirB}. 

Let us first explain how to handle the $G^{osc}_\pm$ part. The $L^2$ bound follows from the Plancherel formula, as $\Re \lambda_\pm (k) \leq 0$. 
We distinguish between the low frequency and the high frequency part, introducing a smooth cutoff function $\chi_0$ supported in a small neighborhood of $0$ and
 writing
$$
\begin{aligned}
G^{osc}_\pm(t,x) &= \int e^{z_\pm(k) t + i k \cdot x} \chi_0(k) a_\pm(k) dk  + \int e^{z_\pm(k) t + i k \cdot x} (1-\chi_0(k)) a_\pm(k) dk \\
&=:G^{osc, (1)}_\pm(t,x) + G^{osc, (2)}_\pm(t,x).
\end{aligned}
$$
For $G^{osc, (1)}_\pm$, thanks to~\eqref{lowerbound-taustarE} and \eqref{lowerbound-taustarDE}, we can use a  stationary phase argument (see  Lemma~\ref{lem:radialfourier} or  \cite{Gra,Stein}), which yields
$$
\|G^{osc,(1)}_\pm\star_x f(t)\|_{L^\infty_x}  \lesssim \frac{1}{\langle t \rangle^{3/2}} \| f\|_{L^1_x}.
$$
For $G^{osc, (2)}_\pm$, as we are away from $0$, the phase is non degenerate thanks to~\eqref{lowerbound-taustarDEextend} in Theorem~\ref{theo-LangmuirE}  and we can use a robust version of the non-stationary phase argument, namely Lemma~\ref{lem:disp1} (with $N=2$) to obtain
$$
\|G^{osc, (2)}_\pm\star_x f(t)\|_{L^\infty_x}  \lesssim \frac{1}{\langle t \rangle^{2}} \| f\|_{L^1_x},
$$
and we eventually obtain~\eqref{est-HoscLp} for the $G^{osc}_\pm$ part.
To handle the $H^{osc}_\pm$ part, we apply Lemma~\ref{lem-disp}, using the estimates
\eqref{KG-behave1},  \eqref{KG-behave2} and \eqref{KG-behave3} of Theorem~\ref{theo-LangmuirB}, which yields
$$
\|H^{osc}_\pm\star_x f(t)\|_{B^0_{\infty,2}}  \lesssim \frac{1}{\langle t \rangle^{3/2}} \| f\|_{{B^3_{1,2}}}.
$$
Next, we study the Green function $H^r(t,x)$; the estimates for $G^r(t,x)$ follow similarly (note that the limitation on $N$ comes from the analysis of $G^r$, see~\eqref{bd-Gr}). Recall that the Fourier transform $\FH^r_k(t)$ satisfies \eqref{bound-Hrk},  that is for all $N\in \mathbb{N}$,
\begin{equation}\label{bound-Hrk3}
|\FH^r_k(t)|  \lesssim |k|\langle k\rangle^{-2} \langle kt\rangle^{-N}  + |k| \langle |k|^3t\rangle^{-N} \chi_{\{|\partial_t| \ll |k| \ll1\}},
\end{equation}
in which $ \chi_{\{|\partial_t| \ll |k|\ll1\}}$ denotes a Fourier multiplier whose support is contained in $\{|\tau \ll |k|\ll1\}$. Hence, we bound for $N$ large enough so that $ n \leq N-5$ (which yields the restriction $n \leq \lfloor (N_{0}- 19)/2 \rfloor$
$$
\begin{aligned}
| \chi_0(i\partial_x) \partial_x^n H^r(t,x)|
& \lesssim \int_{\{|k|\le 1\}} |k|^{n} |\FH_k^r(t)| \; dk
 \lesssim \int_{\{|k|\le 1\}} |k|^{n+1}\langle |k|^3t \rangle^{-N} \; dk 
\lesssim \langle t\rangle^{-(n+4)/3}
\end{aligned}$$
proving \eqref{est-HrLp1} for $p=\infty$, while for $N$ large enough so that $n \leq N-3$
$$
\begin{aligned}
| (1-\chi_0(i\partial_x)) \partial_x^n H^r(t,x)|
& \lesssim \int_{\{|k|\ge 1/2\}} |k|^{n} |\FH_k^r(t)| \; dk
 \lesssim \int_{\{|k|\ge 1/2\}} |k|^{n-1}\langle |k|t \rangle^{-N} \; dk 
\lesssim \langle t\rangle^{- N} ,
\end{aligned}$$
proving \eqref{est-HrLp2} for $p=\infty$. As for $L^1_x$ estimates, we first note that for the high frequency part, we bound 
$$
\begin{aligned}
\| (1-\chi_0(i\partial_x)) \partial_x^n H^r(t)\|_{L^1_x}^2
& \lesssim \sum_{|\alpha|\le 2}\int_{\{|k|\ge 1\}} |k|^{2n} |\partial_k^\alpha \FH_k^r(t)|^2 \; dk
\\&  \lesssim  \sum_{|\alpha|\le 2} \int_{\{|k|\ge 1\}} |k|^{2n-2} \langle t\rangle^{2|\alpha|}\langle |k|t \rangle^{-2N} \; dk 
\lesssim \langle t\rangle^{-2(N-1)} .
\end{aligned}$$
It remains to treat the low frequency part. Using the Littlewood-Paley decomposition \eqref{def-LP}, we write 
$$
\begin{aligned}
\chi_0(i\partial_x) H^r(t,x) &= \sum_{q\le 0} P_q[ H^r(t,x) ],
 \end{aligned}$$
where 
$$
\begin{aligned}
P_q[H^r(t,x)] 
&=  \int_{\{2^{q-2}\le |k|\le 2^{q+2}\}} e^{ik \cdot x} \FH^r(t,k) \varphi(k/2^q)\; dk
\\
&= 2^{3q} \int_{\{\frac14\le |\tilde k|\le 4\}} e^{i \tilde k \cdot 2^q x}\FH^r(t,2^q \tilde k) \varphi(\tilde k)\; d\tilde k
\end{aligned}$$
Using the estimates \eqref{bound-Hrk} from Proposition \ref{prop-Green} for $|k|\le1$ (and so $q\le 0$), we obtain 
$$| \partial_{\tilde k}^\alpha [\FH^r(t,2^q \tilde k)]| \le | 2^{q|\alpha|}\partial_{k}^\alpha [\FH^r(t,2^q \tilde k)]|  \lesssim 2^q \langle 2^{3q} t \rangle^{-N +|\alpha|} 
$$
for $\frac14 \le |\tilde k|\le 4$. Therefore, integrating by parts repeatedly in $\tilde k$ and taking $|\alpha| =4$ in the above estimate, we have 
$$
\begin{aligned}
|P_q[H^r(t,x)]| 
&
\lesssim  2^{3q} \Big| 
 \int e^{i \tilde k \cdot 2^q x}\FH^r(t,2^q \tilde k) \varphi(\tilde k)\; d\tilde k
 \Big|
\\&
\lesssim  2^{3q} 
 \int \langle 2^q x\rangle^{-4} |\partial_{\tilde k}^4 [\FH^r(t,2^q \tilde k) \varphi(\tilde k)]| \; d\tilde k
\\&
\lesssim  2^{4q} \langle 2^q x\rangle^{-4} \langle 2^{3q} t \rangle^{-N +4}.
\end{aligned}$$
Taking the $L^1_x$ norm, we obtain 
$$
\begin{aligned}
 \| \chi_0(i\partial_x) \partial_x^n H^r(t)\|_{L^1_x} 
 &\lesssim \sum_{q\le 0} 2^{nq}\|P_q[H^r(t)]\|_{L^1_x} 
\lesssim \sum_{q\le 0} 2^{q(n+1)} \langle 2^{3q} t \rangle^{-N + 4}\lesssim \langle t\rangle^{-(n+1)/3},
  \end{aligned}$$ 
which gives \eqref{est-HrLp1} for $p=1$. The $L^p$ estimates follow from an interpolation between the $L^1$ and $L^\infty$ estimates. This completes the proof of the proposition. 
\end{proof}

\subsection{Relativistic free transport dispersion}
Let us recall from \eqref{free-density} that the charge and current densities $S$ and $S^\bj$ by the free transport in the physical space read
$$
S(t,x) =\int g^0(x - \hv t, v) \;dv, \qquad S^\bj(t,x) =\int \hv g^0(x - \hv t, v) \;dv.
$$

\begin{lem}
\label{lem:bound-SfreeLp}For all $p \in [1,+\infty]$, we have, for all $n \in \mathbb{N}$ and $\alpha \in \mathbb{N}^3$,
\begin{equation}\label{bound-SfreeLp}
\| \partial_t^n\partial_x^\alpha S(t)\|_{L^p_x} + \| \partial_t^n\partial_x^\alpha S^\bj(t)\|_{L^p_x} \lesssim t^{-3(1-1/p)-n-|\alpha|} \sum_{|\beta|\le n + |\alpha|}\|  \langle v\rangle^{5+ n + |\alpha|} \partial_v^\beta g^0(\cdot,v)\|_{L^{1}_x L^p_v}.
\end{equation}
More generally, any high order moment $S^\bj(t,x) =\int \hv_{i_1} \cdots \hv_{i_n} g^0(x - \hv t, v) \;dv$ also satisfies \eqref{bound-SfreeLp}.
\end{lem}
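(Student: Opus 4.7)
The plan is to reduce the estimate to a standard dispersive bound for the classical free transport, via the change of velocity variable $v \mapsto w := \hv$.

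First, because $\partial_t\bigl[g^0(x - \hv t, v)\bigr] = -\hv \cdot \nabla_x g^0(x - \hv t, v)$ and $|\hv|\le 1$, each time derivative $\partial_t$ may be exchanged against a spatial $\nabla_x$ at the cost of a uniformly bounded polynomial factor in $\hv$. This reduces the lemma (including the higher-moment variant mentioned in the last sentence) to bounding $\partial_x^{\alpha'}\!\int p(\hv)\,g^0(x-\hv t,v)\,dv$ with $|\alpha'|=n+|\alpha|$ and $p$ a bounded polynomial in $\hv$. Next, $v\mapsto w=\hv$ is a smooth diffeomorphism from $\mathbb{R}^3$ onto $B(0,1)$, with inverse $V(w)=w/\sqrt{1-|w|^2}$ and Jacobian $|dv/dw|=\langle V(w)\rangle^5=(1-|w|^2)^{-5/2}$. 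Setting
\begin{equation*}
\hat g^0(y,w):=g^0(y,V(w))\,\langle V(w)\rangle^5,
\end{equation*}
one obtains the representation
\begin{equation*}
S(t,x)=\int_{B(0,1)}\hat g^0(x-wt,w)\,dw,
\end{equation*}
which identifies $S$ with the density of a \emph{classical} free transport at bounded speed $w$. The decay of $g^0$ in $v$ (ensured by the finiteness of the weighted norms in the right-hand side) transfers into the vanishing of $\hat g^0$ and its finite-order $w$-derivatives as $|w|\to 1^-$.

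When $|\alpha|=0$, the $L^1$ bound is immediate by Fubini. For the $L^\infty$ bound, the further substitution $y=x-wt$ is a diffeomorphism from $B(0,1)$ onto $B(x,t)$ with Jacobian $t^{-3}$, yielding $S(t,x)=t^{-3}\!\int_{|y-x|<t}\hat g^0(y,(x-y)/t)\,dy$ and hence $\|S(t)\|_{L^\infty_x}\lesssim t^{-3}\|\langle v\rangle^5 g^0\|_{L^1_x L^\infty_v}$; Riesz--Thorin interpolation covers $p\in[1,\infty]$. For $|\alpha|\ge 1$, the key chain-rule identity is
\begin{equation*}
(\partial_{x_j}\hat g^0)(x-wt,w)= -\frac{1}{t}\partial_{w_j}\bigl[\hat g^0(x-wt,w)\bigr] + \frac{1}{t}(\partial_{w_j}\hat g^0)(x-wt,w).
\end{equation*}
Integrating in $w$ and discarding the boundary term at $|w|=1$ (thanks to the vanishing of $\hat g^0$), one obtains inductively
\begin{equation*}
\partial_x^\alpha S(t,x)=t^{-|\alpha|}\int_{B(0,1)}(\partial_w^\alpha \hat g^0)(x-wt,w)\,dw,
\end{equation*}
to which the $L^1$/$L^\infty$ argument of the previous paragraph applies with $\hat g^0$ replaced by $\partial_w^\alpha \hat g^0$. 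The derivative $\partial_w^\alpha \hat g^0$ is then expanded by the chain and Leibniz rules as a finite sum of terms $Q_{\alpha,\beta}(w)\,\langle V(w)\rangle^5\,(\partial_v^\beta g^0)(y,V(w))$ for $|\beta|\le|\alpha|$, with coefficients $Q_{\alpha,\beta}$ built out of derivatives of $V$ and of $\langle V\rangle^5$ that are controlled in $L^\infty_w$ by appropriate powers of $\langle v\rangle$; tracked carefully, they yield the weight $\langle v\rangle^{5+|\alpha|}$ claimed in the statement. Final interpolation in $p$ then closes the proof.

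The main obstacle is essentially the bookkeeping in the last step: individual components of $\partial_w V$ and $\partial_w\langle V\rangle^5$ can a priori be as large as $\langle v\rangle^3$ in operator norm, and one has to check that in the actual combinations arising in $\partial_w^\alpha \hat g^0$, after re-expressing the result in terms of $\partial_v^\beta g^0$ with $|\beta|\le|\alpha|$, the weight saturates at the stated $\langle v\rangle^{5+|\alpha|}$.
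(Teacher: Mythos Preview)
Your approach is essentially identical to the paper's: the same change of variable $w=\hat v$ (the paper calls it $p$), the same chain-rule identity trading $\partial_x$ for $t^{-1}\partial_w$ plus a boundary term, the same $L^1$/$L^\infty$ endpoint bounds followed by interpolation. The only cosmetic difference is that you absorb the Jacobian $(1-|w|^2)^{-5/2}$ into $\hat g^0$, while the paper keeps it as a separate factor; after Leibniz this produces the same terms.

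Your closing concern is legitimate and honest: the paper does not track the $v$-weight carefully either, simply writing ``similarly, iterating\ldots'' to arrive at $\langle v\rangle^{5+n+|\alpha|}$. A naive Fa\`a di Bruno count indeed gives a larger power (since $\partial_w V\sim\langle v\rangle^3$), so the stated exponent appears optimistic. This does not affect the $t$-decay rates, and in every application in the paper the data carry enough $v$-decay to absorb any fixed polynomial loss, so the discrepancy is harmless for the purposes of Theorem~\ref{theo-main}.
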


Recalling the definition of the shifted distribution function in~\eqref{new-data}, we shall use in the following the bound, relying on the fact that $\mu$ and its derivatives decay fast enough, 
\begin{equation}\label{bound-SfreeLpinitialdata}
\sum_{|\beta|\le n + |\alpha|}\|  \langle v\rangle^{5+ n + |\alpha|} \partial_v^\beta g^0(\cdot,v)\|_{L^{1}_x L^p_v} \lesssim \sum_{|\beta|\le n + |\alpha|}\|  \langle v\rangle^{5+ n + |\alpha|} \partial_v^\beta f^0(\cdot,v)\|_{L^{1}_x L^p_v} + \|  \Delta_x^{-1} \nabla_x \times B^0 \|_{L^{1}_x}.
\end{equation}

\begin{proof}We refer to \cite{BHD,Big2} for closely related results.
First of all it is clear that
$$
\| S(t) \|_{L^1_x} = \| g^0\|_{L^1_{x,v}}.
$$
By the change of variables $p:=\hat{v}$, we write that
\begin{equation}
\label{eq:formuleS}
S(t,x) = \int_{B(0,1)}  g^0\left(x-t p, \frac{p}{\sqrt{1-|p|^2}}\right)  \frac{1}{\sqrt{1-|p|^2}^{5}} \, dp.
\end{equation}
We deduce that 
\begin{equation}
\label{eq:formulederivS}
\begin{aligned}
\partial_x S(t,x) &= \int_{B(0,1)}  \partial_x g^0\left(x-t p, \frac{p}{\sqrt{1-|p|^2}}\right)  \frac{1}{\sqrt{1-|p|^2}^{5}} \, dp \\
&= \frac{1}{t} \int_{B(0,1)}   \partial_p\left[ g^0\left(x-t p, \frac{p}{\sqrt{1-|p|^2}}\right)\right]  \frac{1}{\sqrt{1-|p|^2}^{5}} \, dp \\
&- \frac{1}{t} \int_{B(0,1)}  \partial_p\left(\frac{p}{\sqrt{1-|p|^2}}\right) \cdot \nabla_p  g^0\left(x-t p, \frac{p}{\sqrt{1-|p|^2}}\right)  \frac{1}{\sqrt{1-|p|^2}^{5}} \, dp \\
&= - \frac{1}{t} \int_{B(0,1)}  g^0\left(x-t p, \frac{p}{\sqrt{1-|p|^2}}\right)  \partial_p\left[\frac{1}{\sqrt{1-|p|^2}^{5}}\right] \, dp \\
&- \frac{1}{t} \int_{B(0,1)}  \partial_p\left(\frac{p}{\sqrt{1-|p|^2}}\right) \cdot \nabla_p  g^0\left(x-t p, \frac{p}{\sqrt{1-|p|^2}}\right)  \frac{1}{\sqrt{1-|p|^2}^{5}} \, dp 
\end{aligned}
\end{equation}
and
\begin{equation}
\label{eq:formulederiv2S}
\begin{aligned}
\partial_t S(t,x)&= - \frac{1}{t} \int_{B(0,1)}  g^0\left(x-t p, \frac{p}{\sqrt{1-|p|^2}}\right)  \partial_p\left[\frac{p}{\sqrt{1-|p|^2}^{5}}\right] \, dp \\
&- \frac{1}{t} \int_{B(0,1)}  \partial_p\left(\frac{p}{\sqrt{1-|p|^2}}\right) \cdot \nabla_p  g^0\left(x-t p, \frac{p}{\sqrt{1-|p|^2}}\right)  \frac{p}{\sqrt{1-|p|^2}^{5}} \, dp 
\end{aligned}
\end{equation}

By~\eqref{eq:formuleS} and the change of variables $y := x- tp$
we deduce that
$$
\begin{aligned}
| S(t,x)| \lesssim 
t^{-3} \| \sup_v \, \langle v\rangle^5g^0(\cdot,v)\|_{L^1_x}.
\end{aligned}
$$
By interpolation, we deduce that for all $p \in [1,\infty]$,
$$
\|  S(t)\|_{L^p_x}  \lesssim t^{-3(1-1/p)} \|  \langle v\rangle^5  g^0\|_{L^{1}_xL^p_v}.
$$
Similarly, iterating \eqref{eq:formulederivS} and \eqref{eq:formulederiv2S}, we have 
\begin{equation}\label{bound-Sfree}
\begin{aligned}
\| \partial_t^n\partial_x^\alpha S(t)\|_{L^1_x} &\lesssim t^{-n-|\alpha|}  \sum_{|\beta|\le n + |\alpha|}\|  \langle v\rangle^{5+ n + |\alpha|} \partial_v^\beta g^0(\cdot,v)\|_{L^1_{x,v}}, \\
\| \partial_t^n\partial_x^\alpha S(t)\|_{L^\infty} &\lesssim t^{-3-n-|\alpha|} \sum_{|\beta|\le n + |\alpha|}\| \sup_v \, \langle v\rangle^{5+ n + |\alpha|} \partial_v^\beta g^0(\cdot,v)\|_{L^1_x},
\end{aligned}
\end{equation}
for all $n\ge 0, \alpha \in \mathbb{N}^3$. 
By interpolation, we obtain \eqref{bound-SfreeLp} for $p\in [1,\infty]$. The same holds for the current density $S^\bj(t,x)$ and higher order moments.
\end{proof}

\subsection{Bounds on $\nabla_x \phi$}

We now bound each term in the representation formula \eqref{rep-electricE}
for $\phi$ obtained in Proposition~\ref{prop-decompE}. 
\begin{prop}
\label{lem:phidecay}
We have for all $p \in [2,\infty)$
\begin{equation}
\label{eq:phioscdecay}
\begin{aligned}
\| \nabla_x \phi^{osc}(t) \|_{L^p_x}  &\lesssim \epsilon_0 \langle t\rangle^{-3(1/2 - 1/p)},  \\
\| \nabla_x \phi^{osc}(t) \|_{L^\infty_x}  &\lesssim \langle t\rangle^{-3/2} \left(\epsilon_0 + \| \nabla_x \Delta_x^{-1} \rho_0\|_{L^1_x}  + \| \nabla_x \Delta_x^{-1} \nabla_x \cdot \bj[f^0]\|_{L^{1}_x} \right),   
\end{aligned}
\end{equation}
and for all $p \in [1,\infty]$ and all $\alpha \in \mathbb{N}^3$, $| \alpha | \leq \lfloor (N_{0} - 19)/2 \rfloor$, 
\begin{equation}
\label{eq:phiregdecay}
\begin{aligned}
\| \partial^\alpha_x \nabla_x \phi^r(t)\|_{L^p_x} \lesssim \langle t\rangle^{-3(1-1/p)-|\alpha|}.
\end{aligned}
\end{equation}

\end{prop}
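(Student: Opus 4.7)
The strategy is to plug the decomposition of Proposition~\ref{prop-decompE} into $\nabla_x\phi=\sum_\pm\nabla_x\phi^{osc}_\pm+\nabla_x\phi^r$ and treat each piece using three ingredients: (i) the dispersive bounds for $G^{osc}_\pm$ and $G^r$ from Proposition~\ref{prop-Greenphysical} and Corollary~\ref{coro:decayLposc}; (ii) the free-transport decay of Lemma~\ref{lem:bound-SfreeLp} applied to $S$ and $S^\bj$; and (iii) the fact that $1/\lambda_\pm(i\partial_x)=\mp i/\tau_*(|i\partial_x|)$ acts as a bounded Fourier multiplier on the compact frequency support of $a_\pm$, since $\tau_*(|k|)\geq\tau_0>0$ by Theorem~\ref{theo-LangmuirE}. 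I will also use repeatedly the identifications $S(0,\cdot)=\rho[g^0]=\rho[f^0]$ (using the radial symmetry of $\mu$) and $\partial_t S(0,\cdot)=-\nabla_x\cdot\bj[g^0]$, which combined with~\eqref{shift-rho} and the Coulomb gauge $\nabla_x\cdot A^0=0$ link these data to $\rho[f^0]$ and $\nabla_x\cdot\bj[f^0]$ as controlled by $\epsilon_0$.

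For $\nabla_x\phi^{osc}_\pm$, first push $-\nabla_x\Delta_x^{-1}$ through the spatial convolution to write $\nabla_x\phi^{osc}_\pm=-G^{osc}_\pm\star_x\nabla_x\Delta_x^{-1}[\,\cdots\,]$. For $p\in[2,\infty)$, the $L^{p'}\to L^p$ bound of Corollary~\ref{coro:decayLposc} applied to each term produces the factor $\langle t\rangle^{-3(1/2-1/p)}$; the two homogeneous-data contributions are controlled by $\|\nabla_x\Delta_x^{-1}\rho_0\|_{L^{p'}}$ and $\|\nabla_x\Delta_x^{-1}\nabla_x\cdot\bj[f^0]\|_{L^{p'}}$, which are bounded by $\epsilon_0$ thanks to the Calder\'on--Zygmund theory recalled in Appendix~\ref{sec-potential}. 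For $p=\infty$, the same argument with the $L^1\to L^\infty$ bound retains these two $L^1$ norms \emph{explicitly}, which is exactly what produces the shape of~\eqref{eq:phioscdecay}. The time-convolution piece $G^{osc}_\pm\star_{t,x}\lambda_\pm^{-2}\partial_t^2 S$ is handled by Minkowski in time:
\begin{equation*}
\Bigl\|G^{osc}_\pm\star_{t,x}\nabla_x\Delta_x^{-1}\lambda_\pm^{-2}\partial_t^2 S\Bigr\|_{L^p_x}\lesssim\int_0^t\langle t-s\rangle^{-3(1/2-1/p)}\|\partial_t^2 S(s)\|_{L^{p'}_x}\,ds,
\end{equation*}
and Lemma~\ref{lem:bound-SfreeLp} gives $\|\partial_t^2 S(s)\|_{L^{p'}_x}\lesssim\langle s\rangle^{-3(1-1/p')-2}\epsilon_0$; splitting $[0,t]=[0,t/2]\cup[t/2,t]$ then yields the claimed rate, the two time derivatives on $S$ being precisely what ensures integrability near $s=0$ while providing enough decay at large $s$.

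For $\nabla_x\phi^r$, the two local multiplier terms $-\nabla_x\Delta_x^{-1}\cP_2(i\partial_x)S$ and $-\nabla_x\Delta_x^{-1}\cP_4(i\partial_x)\partial_t S$ use~\eqref{bounds-P24k}: the effective symbols $-ik|k|^{-2}\cP_j(k)$ are bounded on every $L^p$ (and compactly supported for $j=4$), so these are controlled by $\|\partial_x^\alpha S(t)\|_{L^p}+\|\partial_x^\alpha\partial_t S(t)\|_{L^p}$, which decays at the rate~\eqref{eq:phiregdecay} by Lemma~\ref{lem:bound-SfreeLp}. The remaining convolution $-\nabla_x\Delta_x^{-1}G^r\star_{t,x}S$ is split into low- and high-frequency parts using the cutoff $\chi_0$ of Proposition~\ref{prop-Greenphysical}: the low-frequency part applies~\eqref{est-HrLp1} with $n=|\alpha|+1$, yielding a kernel decay of order $\langle t-s\rangle^{-5+3/p-|\alpha|}$; the high-frequency part applies~\eqref{est-HrLp2} to gain decay faster than any fixed polynomial. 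A routine split of the time integral against $\|S(s)\|_{L^1_x}\lesssim 1$ (or a Lebesgue interpolate) then produces~\eqref{eq:phiregdecay}. The main technical obstacle is bookkeeping: one must balance the kernel decay against the sharp free-transport rates so as to reach precisely the exponents in~\eqref{eq:phioscdecay}--\eqref{eq:phiregdecay}, and must track which weighted velocity moments of $f^0$ (entering through $g^0$ via~\eqref{new-data}) are needed to close the estimates at the borderline values of $p$ and $|\alpha|$.
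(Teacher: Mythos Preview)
Your treatment of $\nabla_x\phi^r$ and of the initial-data contributions to $\nabla_x\phi^{osc}_\pm$ follows the paper's line. There is, however, a genuine gap in your handling of the time-convolution piece $G^{osc}_\pm\star_{t,x}\nabla_x\Delta_x^{-1}\lambda_\pm^{-2}\partial_t^2 S$.

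In your displayed Minkowski estimate you silently replace $\nabla_x\Delta_x^{-1}\partial_t^2 S$ by $\partial_t^2 S$. This is not legitimate: the symbol of $\nabla_x\Delta_x^{-1}$ is $-ik/|k|^2$, which is singular at $k=0$, and since $a_\pm(0)=\pm i\tau_0/2\neq 0$ it cannot be absorbed into the amplitude of $G^{osc}_\pm$ without destroying the hypotheses of the stationary-phase estimates behind Proposition~\ref{prop-Greenphysical}. If instead you write $\partial_t^2 S=\sum_{i,j}\partial_{x_ix_j}^2 S^{ij}$ and invoke Calder\'on--Zygmund for $\nabla_x\partial_{x_i}\Delta_x^{-1}$, you get $\|\nabla_x\Delta_x^{-1}\partial_t^2 S(s)\|_{L^{p'}}\lesssim\|\partial_x S^{ij}(s)\|_{L^{p'}}\lesssim\langle s\rangle^{-3/p-1}$ for $p'\in(1,2]$; this is one power short of your claimed $\langle s\rangle^{-3/p-2}$, and the resulting time integral fails to deliver $\langle t\rangle^{-3(1/2-1/p)}$ once $p>12$ (check the $[t/2,t]$ half), while at $p=\infty$ the CZ step is unavailable altogether.

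The paper closes this by integrating by parts in time \emph{once more}, splitting $G^{osc}_\pm\star_{t,x}\nabla_x\Delta_x^{-1}\partial_t^2 S$ into a regular boundary term $\lambda_\pm^{-1}G^{osc}_\pm(0)\star_x\nabla_x\Delta_x^{-1}\partial_t^2 S(t)$, a data term at $s=0$, and a new convolution against $\partial_t^3 S$. After writing $\partial_t^2 S=\partial_{x_ix_j}^2 S^{ij}$ and $\partial_t^3 S=\partial_{x_ix_jx_k}^3 S^{ijk}$, the singular zeroth-order multiplier $\chi(k)k_ik_j/|k|^2$ is controlled \emph{for all} $p\in[1,\infty]$ via the interpolation inequality of Lemma~\ref{lem:fouriermult}, yielding $\|\chi(i\partial_x)\nabla_x\Delta_x^{-1}\partial_t^3 S\|_{L^{p'}}\lesssim\langle s\rangle^{-3/p-3/2}$; with this extra half-power the Minkowski split closes at the sharp rate for every $p\in[2,\infty]$.

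A smaller point on the $S(0)$ data term: $\nabla_x\Delta_x^{-1}$ is order $-1$, not a Calder\'on--Zygmund operator, so your appeal to CZ for $\|\nabla_x\Delta_x^{-1}\rho_0\|_{L^{p'}}$ is misplaced. The paper uses instead the zero-mean assumption $\int\rho[f^0]\,dx=0$ together with Lemma~\ref{lem-potential}(iv), which gives $\|\chi(i\partial_x)\nabla_x\Delta_x^{-1}\rho[f^0]\|_{L^{p'}}\lesssim\|\langle x\rangle\rho[f^0]\|_{L^1\cap L^\infty}$ for $p'>1$; this is why that weighted norm sits inside $\epsilon_0$, and also why the $L^\infty$ estimate in~\eqref{eq:phioscdecay} must keep $\|\nabla_x\Delta_x^{-1}\rho_0\|_{L^1}$ explicitly.
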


\begin{proof}

Let us start with the regular part, namely we study $\nabla_x \phi^r$,  for which we recall from \eqref{rep-phiosc} that 
$$
\begin{aligned}
\nabla_x \phi^r(t,x) &= -\nabla_x \Delta_x^{-1} \Big[G^r \star_{t,x} S+  \cP_2(i\partial_x)S  + \cP_4(i \partial_x) \partial_t S\Big].
\end{aligned}
$$
Recall that $\cP_2$ and $\cP_4$ are defined as in Proposition \ref{prop-decompE} with $\cP_2(k) = \cO(|k|^2\langle k\rangle^{-2})$ and $\cP_4(k) = \cO(|k|^2\langle k\rangle^{-2})$ (the latter is compactly supported in $|k|\le 1$). 
Therefore, both $\cP_2(i\partial_x) \Delta_x^{-1}\nabla_x$ and $\cP_4(i\partial_x) \Delta_x^{-1}\nabla_x$ are bounded in $L^p$ for $1\le p\le \infty$, see Lemma \ref{lem-potential}. This proves  that for all $p\in [1,\infty]$,
$$ \| \cP_2(i\partial_x) \Delta_x^{-1}\nabla_x S(t)\|_{L^p_x} \lesssim \| S(t)\|_{L^p_x}  \lesssim   \langle t\rangle^{-3(1-1/p)} \left(\| \langle v\rangle^{5} \partial_v^\beta f^0(\cdot,v)\|_{L^{1}_x L^p_v}  + \|  \Delta_x^{-1} \nabla_x \times B^0 \|_{L^{1}_x} \right).$$
The estimates for $ \cP_4(i \partial_x) \Delta_x^{-1} \nabla_x \cdot \partial_t S$ also follow identically. On the other hand, using Proposition \ref{prop-Greenphysical}, we have 
\begin{equation}\label{est-Grphysical} 
\|\partial_x^\alpha \Delta_x^{-1} G^r(t)\|_{L^p_x} \lesssim \epsilon_0 \langle t\rangle^{-4+3/p-|\alpha|} ,
\end{equation}
for any $p\in [1,\infty]$, noting that the high frequency part satisfies better decay estimates. 
Therefore, we bound 
$$  
\begin{aligned}
\| \Delta_x^{-1}G^r \star_{t,x} \partial_x S\|_{L^p_x} 
&\lesssim
 \int_0^{t/2}  \|\partial_x\Delta_x^{-1}G^r(t-s)\|_{L^p_x}\|S(s)\|_{L^1_x} \; ds
+  \int_{t/2}^t  \| \Delta_x^{-1}G^r(t-s)\|_{L^1_x}\|\partial_x S(s)\|_{L^p_x} \; ds
\\
&\lesssim 
 \epsilon_0\int_0^{t/2} \langle t-s\rangle^{-5+3/p} \; ds 
+ \epsilon_0 \int_{t/2}^t  \langle t-s\rangle^{-1}\langle s\rangle^{-3(1-1/p)-1}\; ds
\\
&\lesssim \epsilon_0
\langle t\rangle^{-3+3/p} .
\end{aligned} $$
As a result, we obtain $\| \nabla_x \phi^r(t)\|_{L^p_x} \lesssim \epsilon_0 \langle t\rangle^{-3+3/p}$ for $1\le p\le \infty$. The higher derivative estimates follow similarly. 

\bigskip

Next, we tackle the singular part.
Recall first that by definition, $S(0) = \rho[g^0]$. 
Thanks to the bound \eqref{est-HoscLp} of Corollary~\ref{coro:decayLposc}, we 
first have for all $p\in [2,\infty]$
$$\begin{aligned}
 \| \frac{1}{\nu_\pm(i\partial_x)} G_{\pm}^{osc}(t) \star_{x} \nabla_x \Delta_x^{-1}S(0)\|_{L^p_x} & \lesssim  t^{- 3(1/2-1/p)} \| \nabla_x \Delta_x^{-1}\rho[g^0]\|_{L^{p'}_x} \\
 &\lesssim  t^{- 3(1/2-1/p)} \| \nabla_x \Delta_x^{-1}\rho[f^0]\|_{L^{p'}_x}.
 \end{aligned}$$
 since $ \rho[g^0] =  \rho[f^0]$ (see \eqref{new-data}). For $p \in [2,\infty)$, using the assumption that $ \int \rho[f^0]\; dx = \iint f^0 \; dxdv =0$, we can apply the estimate \eqref{aLp-elliptic} of Lemma~\ref{lem-potential} to get
  $$
 \|  \frac{1}{\nu_\pm(i\partial_x)^2} \frac{1}{\nu_\pm(i\partial_x)} G_{\pm}^{osc}(t) \star_{x} \nabla_x \Delta_x^{-1}S(0)\|_{L^p_x}  \lesssim  t^{- 3(1/2-1/p)} \| \langle x\rangle \rho[f^0]\|_{L^1_x \cap L^\infty_x}.
$$
\begin{rem}
Observe that the above estimate in general fails for $p=\infty$, unless some additional assumption on the vanishing of higher moments of $\rho[g^0]$.
\end{rem}
 Similarly, using $\partial_t S= - \nabla_x \cdot S^\bj$, we get for all $p \in [2,\infty]$,
 $$
\begin{aligned}
 \| \frac{1}{\nu_\pm(i\partial_x)} G_{\pm}^{osc}(t) \star_{x} \nabla_x \Delta_x^{-1} \partial_t S(0)\|_{L^p_x} & \lesssim  t^{- 3(1/2-1/p)} \| \nabla_x \Delta_x^{-1} \nabla_x \cdot \bj[g^0]\|_{L^{p'}_x} 
 \\&\lesssim  t^{- 3(1/2-1/p)} \| \nabla_x \Delta_x^{-1} \nabla_x \cdot \bj[f^0]\|_{L^{p'}_x}
 \end{aligned}$$
 upon noticing that $ \bj[g^0] =  \bj[f^0] + 3 n_0 A^0$ and $\nabla_x \cdot A^0 =0$ (see again \eqref{new-data}). 
 Thanks to the boundedness of the operator $\nabla^2_x \Delta^{-1}_x$ in $L^p_x$ for $p\in (1,\infty)$, we have for all $p \in [2,\infty)$  
$$
\begin{aligned}
 \| G_{\pm}^{osc}(t) \star_{x} \nabla_x \Delta_x^{-1} \partial_t S(0)\|_{L^p_x} & \lesssim  t^{- 3(1/2-1/p)} \| \nabla_x \Delta_x^{-1} \nabla_x \cdot \bj[f^0]\|_{L^{p'}_x} 
 \\&\lesssim  t^{- 3(1/2-1/p)} \| \bj[f^0]\|_{L^{p'}_x}.
 \end{aligned}$$
Finally, we treat the space-time convolution term $G^{osc}_{\pm} \star_{t,x} \nabla_x \Delta_x^{-1} \partial^2_t S$. Observe that we can further integrate by part in time, yielding  
\begin{equation}
\label{eq:expGosc}
\begin{aligned}
G^{osc}_{\pm} \star_{t,x} \nabla_x \Delta_x^{-1} \partial^2_t S &=   \frac{1}{\lambda_\pm(i\partial_x)} \Big[ - G^{osc}_{\pm}(0,\cdot)\star_x \nabla_x \Delta_x^{-1} \partial^2_t S(t,x)  +G^{osc}_{\pm}(t,\cdot)\star_x \nabla_x \Delta_x^{-1} \partial^2_t S(0,x) 
\\&\quad +  G^{osc}_{\pm} \star_{t,x} \nabla_x \Delta_x^{-1} \partial^3_t S(t,x) \Big].
\end{aligned}
\end{equation}
Note that we can write 
$$\partial_t^2S = \sum_{i,j} c_{j,k} \partial_{x_ix_j}^2S^{ij}, \qquad \partial_t^3S = \sum_{i,j,k}  c_{i,j,k} \partial_{x_ix_jx_k}^3S^{ijk},$$ 
for some coefficients $c_{j,k}, c_{i,j,k}$, where $S^{ij} = \int \hv_i \hv_jg^0(x-\hv t)\; dv$ and $S^{ijk} = \int \hv_i \hv_j\hv_kg^0(x-\hv t)\; dv$. Let us observe that since $\mu$ is even,
$$
S^{ij} (0)= \int \hv_i \hv_j f^0(x)\; dv.
$$
Now, taking a smooth cutoff Fourier symbol $ \chi(k)$ with bounded support
and using Lemma \ref{lem:fouriermult} with $\sigma(k) = \chi(k)\frac{k_i k_j}{|k|^2}$, we bound 
for all $p \in[1, \infty]$,
$$
\begin{aligned}
\| \chi(i\partial_x)\nabla_x\Delta_x^{-1}\partial_t^2S(t)\|_{L^p_x} &\lesssim  \| S^{ij}(t)\|_{L^p_x}^{1/2} \| \partial_x S^{ij}(t)\|_{L^p_x}^{1/2}, 
\\
\| \chi(i\partial_x)\nabla_x\Delta_x^{-1}\partial_t^3S(t)\|_{L^p_x} &\lesssim  \| \partial_x S^{ijk}(t)\|_{L^p_x}^{1/2} \| \partial_x^{2} S^{ijk}(t)\|_{L^p_x}^{1/2}. 
\end{aligned}
$$ 
As a result, thanks to the dispersive estimates \eqref{bound-SfreeLp} of Lemma~\ref{lem:bound-SfreeLp}, we infer that for all $p \in[1, \infty]$,
$$
\begin{aligned}
\| \chi(i\partial_x)\nabla_x\Delta_x^{-1}\partial_t^2S(t)\|_{L^p_x} & \lesssim  \langle t\rangle^{-3(1-1/p)-1/2} \left(\sum_{|\beta|\le  1}\|  \langle v\rangle^{6} \partial_v^\beta f^0(\cdot,v)\|_{L^{1}_x L^p_v}  + \|  \Delta_x^{-1} \nabla_x \times B^0 \|_{L^{1}_x}\right),
\\
\| \chi(i\partial_x)\nabla_x\Delta_x^{-1}\partial_t^3S(t)\|_{L^p_x} &\lesssim   \langle t\rangle^{-3(1-1/p)-3/2} \left(\sum_{|\beta|\le 2}\|  \langle v\rangle^{7} \partial_v^\beta f^0(\cdot,v)\|_{L^{1}_x L^p_v} + \|  \Delta_x^{-1} \nabla_x \times B^0 \|_{L^{1}_x}\right).
\end{aligned}
$$ 
The first term in the right-hand side of~\eqref{eq:expGosc} is actually a regular term; arguing as above thanks to  Lemma~\ref{lem:fouriermult}, we  obtain
 $$
\begin{aligned}
 \| \frac{1}{\nu_\pm(i\partial_x)^3} G_{\pm}^{osc}(0) \star_{x} \nabla_x \Delta_x^{-1} \partial^2_t S(t)\|_{L^p_x}  \lesssim   \epsilon_0 \langle t\rangle^{-3(1-1/p)-1/2}.
 \end{aligned}$$
On the other hand, using \eqref{est-HoscL2inf}, \eqref{est-HoscLp}, we can bound 
the contribution of the  second  term in the  expansion~\eqref{eq:expGosc}, which yields
$$
\begin{aligned}
 \| \frac{1}{\nu_\pm(i\partial_x)^3} G_{\pm}^{osc}(t) \star_{x} \nabla_x \Delta_x^{-1} \partial^2_t S(0)\|_{L^p_x} & \lesssim \langle t \rangle^{- 3(1/2-1/p)} \|   \int \hv_i \hv_j \partial_x f^0(x)\; dv\|_{L^{p'}_x}^{1/2} \|   \int \hv_i \hv_j f^0(x)\; dv\|_{L^{p'}_x}^{1/2}.
 \end{aligned}$$
 
Finally, we bound the contribution of the last term of the expansion~\eqref{eq:expGosc} for all $p \in [2,\infty]$ by 
$$
\begin{aligned}
 \| \frac{1}{\nu_\pm(i\partial_x)^3} G^{osc}_{\pm} \star_{t,x} \nabla_x \Delta_x^{-1} \partial^3_t S\|_{L^p_x} 
 &\lesssim 
 \int_0^{t} (t-s)^{-3(1/2-1/p)} \| \chi(i\partial_x)\partial_x \Delta_x^{-1}\partial_t^3 S(s)\|_{L^{p'}_x} \; ds
\\ &\lesssim \epsilon_0
 \int_0^{t} (t-s)^{-3(1/2-1/p)} \langle s\rangle^{- 3/p-3/2} \; ds
\\ &\lesssim  \epsilon_0\langle t\rangle^{- 3(1/2-1/p)} . 
\end{aligned}$$
By \eqref{rep-phiosc} and combining the above estimates, we have thus proved that for all $p\in [2,\infty)$
$$\| \nabla_x\phi^{osc}_\pm(t)\|_{L^p_x} \lesssim \epsilon_0 \langle t\rangle^{-3(1/2 - 1/p)} $$
 and
 $$
 \| \nabla_x\phi^{osc}_\pm(t)\|_{L^\infty_x} \lesssim \epsilon_0 \langle t\rangle^{-3/2} \left(\epsilon_0 + \| \nabla_x \Delta_x^{-1} \rho_0\|_{L^1_x}  + \| \nabla_x \Delta_x^{-1} \nabla_x \cdot \bj[f^0]\|_{L^{1}_x} \right).
 $$
 The proof is finally complete.
\end{proof}

\subsection{Bounds on $\nabla_x \times A$ and $\partial_t A$}

We now bound each term in the representation formula \eqref{rep-magneticA} for $A$ obtained in Proposition~\ref{prop-decompB}. 

\begin{prop}
\label{lem:Adecay}
We have for all $p \in [2,\infty)$,
\begin{equation}
\label{eq:Aoscdecay}
\begin{aligned}
\| \partial_t A^{osc}(t) \|_{L^p_x}  + \| \nabla_x \times A^{osc}(t) \|_{L^p_x}  &\lesssim \epsilon_0 \langle t\rangle^{-3(1/2 - 1/p)},  \\
\| \partial_t A^{osc}(t) \|_{B^0_{\infty,2}}  + \| \nabla_x \times A^{osc}(t) \|_{B^0_{\infty,2}}  &\lesssim \epsilon_0 \langle t\rangle^{-3/2} ,   
\end{aligned}
\end{equation}
and for all $p \in [1,\infty]$ and all $\alpha \in \mathbb{N}^3$, $| \alpha | \leq \lfloor (N_{0} - 19)/2 \rfloor$,  and any $\delta>0$,
\begin{equation}
\label{eq:Aregdecay}
\begin{aligned}
\| \partial^\alpha_x \partial_t  A^r(t)\|_{L^p_x} + \| \partial^\alpha_x \nabla_x \times A^r(t)\|_{L^p_x} \lesssim \epsilon_0  \langle t\rangle^{-4/3 - |\alpha|/3+1/p+\delta}.
\end{aligned}
\end{equation}

\end{prop}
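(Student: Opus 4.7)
The plan is to follow the scheme of the proof of Proposition~\ref{lem:phidecay}, treating each summand of the representation~\eqref{rep-magneticA}--\eqref{rep-Aosc} individually and combining Proposition~\ref{prop-Greenphysical} (decay of $H^{osc}_\pm$ and $H^r$) with Lemma~\ref{lem:bound-SfreeLp} (free transport dispersion of $S^\bj$).

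For the oscillatory part $A^{osc}_\pm$, applying $\partial_t$ (resp.\ $\nabla_x\times$) to~\eqref{rep-Aosc} amounts to convolving with the Fourier multiplier $\pm i\nu_*(i\partial_x)$ (resp.\ $ik\times$), each dominated by $\langle k\rangle$ and compatible with the $\nu_\pm(i\partial_x)^{-1}$ factors in front of the source terms. Invoking the dispersive bounds~\eqref{est-HoscL2inf} for $H^{osc}_\pm$ together with the interpolation of Corollary~\ref{coro:decayLposc} yields the claimed $L^p$ and $B^0_{\infty,2}$ rates on the initial-data contribution, the required norms being controlled by the hypotheses~\eqref{data-assumptions}. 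For the space-time convolution against $\nu_\pm^{-1}\partial_t\mP S^\bj$, the time-integrated dispersive bound together with the extra power of decay of $\partial_t \mP S^\bj$ given by Lemma~\ref{lem:bound-SfreeLp} is sufficient to reach $\langle t\rangle^{-3(1/2-1/p)}$ and $\langle t\rangle^{-3/2}$; if needed one further integrates by parts in time as in~\eqref{eq:expGosc} to convert one $\partial_t$ into a spatial divergence acting on the higher moment $\int \hv_i\hv_j g^0(x-\hv t,v)\,dv$.

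For the regular part $A^r$, the initial-data terms $\partial_t H^r\star_x A^0$ and $H^r\star_x A^1$ are bounded by direct Young convolution using~\eqref{est-HrLp1}--\eqref{est-HrLp2} and the regularity of $A^0,A^1$ from~\eqref{new-data}--\eqref{data-assumptions}, while the bounded Fourier multiplier term $\sum_\pm b_\pm(i\partial_x)/\nu_\pm(i\partial_x)\,\mP S^\bj$ simply inherits the faster free transport rate $\langle t\rangle^{-3(1-1/p)}$ of Lemma~\ref{lem:bound-SfreeLp}. The only difficult term is the space-time convolution $H^r\star_{t,x}\mP S^\bj$, which governs the rate $\langle t\rangle^{-4/3-|\alpha|/3+1/p+\delta}$. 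Since $\partial_t$ and $\nabla_x\times$ commute with the convolution (in the case of $\partial_t$ up to a boundary term involving $\FH^r_k(0)=0$), I would redistribute the $|\alpha|+1$ derivatives between $H^r$ and $\mP S^\bj$ via integration by parts. After splitting via the frequency cutoff $\chi_0$, so that the high-frequency piece is controlled by~\eqref{est-HrLp2}, I would apply on the low-frequency piece the Young inequality
\[
\|\partial_x^{n_1}H^r(t-s)\star_x\partial_x^{n_2}\mP S^\bj(s)\|_{L^p}\les \|\partial_x^{n_1}H^r(t-s)\|_{L^r}\|\partial_x^{n_2}\mP S^\bj(s)\|_{L^q}
\]
with $1/r+1/q=1+1/p$ and $n_1+n_2=|\alpha|+1$, splitting the time integral at $s=t/2$ and choosing $q$ slightly larger than $3/2$ on $[0,t/2]$ so that $\int\langle s\rangle^{-3(1-1/q)-n_2}\,ds$ converges, and $r$ close to $1$ on $[t/2,t]$ so that the singularity of $H^r$ at small times is integrable. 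Optimizing $(n_1,n_2,r,q)$ should produce the rate $\langle t\rangle^{-4/3-|\alpha|/3+1/p+\delta}$, the arbitrarily small $\delta>0$ absorbing the logarithmic loss at the borderline $q=3/2$.

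The main obstacle is precisely this Young-inequality optimization. The difficulty is twofold: first, the anomalous $\langle k^3 t\rangle^{-N}$ scaling of the low-frequency magnetic Green function in~\eqref{bound-Hrk} only produces the slow decay $\langle t\rangle^{-4/3-n/3}$ of $\partial_x^n H^r$ (cf.\ Proposition~\ref{prop-Green}); second, because the Leray projector $\mP$ is a singular integral, $\mP S^\bj$ cannot be expected to belong to $L^1$ uniformly in time even though $S^\bj$ does, which forces the use of $L^q$ estimates with $q>1$ and is what prevents reaching the sharp rate $\langle t\rangle^{-4/3+1/p-|\alpha|/3}$. Handling the endpoint $p=1$ will in particular require carefully moving derivatives onto $\mP S^\bj$ so that its increased integrability offsets the non-decay of $\mP S^\bj$ in $L^1$.
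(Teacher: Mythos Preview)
Your overall plan mirrors the paper's proof: decompose via~\eqref{rep-Aosc}, apply the dispersive bounds of Proposition~\ref{prop-Greenphysical} and Corollary~\ref{coro:decayLposc} for the oscillatory part, and treat $H^r\star_{t,x}\mP S^\bj$ by splitting the time integral at $t/2$. You have also correctly isolated the main obstacle, namely that the Leray projector $\mP$ is not bounded on $L^1$ (nor on $L^\infty$). However, your proposed remedy --- Young's inequality with an exponent $q>1$ on the $\mP S^\bj$ factor and an ``optimization'' of $(n_1,n_2,r,q)$ --- cannot close at the endpoint $p=1$: Young's constraint $1/r+1/q=1+1/p$ forces $r=q=1$ when $p=1$, so there is no room to take $q>1$. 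Your fallback (``moving derivatives onto $\mP S^\bj$ so that its increased integrability offsets the non-decay in $L^1$'') is too vague; extra $x$-derivatives do not by themselves make $\mP$ bounded on $L^1$. The same issue recurs for the $B^0_{\infty,2}$ estimate on the oscillatory convolution, where the source must be placed in an $L^1$-based Besov space.

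The paper resolves this with Lemma~\ref{lem:fouriermult}: since the low-frequency multiplier $\chi(k)\mP_k$ (or $b_\pm(k)\mP_k/\nu_\pm(k)$) satisfies $|\partial^\alpha\sigma(k)|\lesssim |k|^{-|\alpha|}\langle k\rangle^{-\delta}$, one has
\[
\|\sigma(i\partial_x)\nabla f\|_{L^p}\lesssim \|f\|_{L^p}^{\delta/(1+\delta)}\|\nabla f\|_{L^p}^{1/(1+\delta)}\qquad\text{for all }p\in[1,\infty],
\]
which yields $\|\chi(i\partial_x)\partial_x\mP S^\bj(s)\|_{L^1}\lesssim\langle s\rangle^{-1+\delta}$ directly (this is the content of~\eqref{bd-DPSj}). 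The paper then uses the simplest form of Young, $L^p\times L^1$ on $[0,t/2]$ and $L^1\times L^p$ on $[t/2,t]$, to obtain~\eqref{eq:Aregdecay}. So the $\delta$-loss in the final rate stems from the interpolation exponent in Lemma~\ref{lem:fouriermult}, not from a borderline $q=3/2$ as you suggest. A minor side remark: your claim $\FH^r_k(0)=0$ is not true in general (one checks $\FH^r_k(0)=-(b_+(k)+b_-(k))=-2\Re b_+(k)$, which need not vanish), but this is harmless since the paper uses instead the identity $\partial_t(H^r\star_{t,x}F)=H^r(t)\star_x F(0)+H^r\star_{t,x}\partial_t F$, which avoids the value $H^r(0)$.
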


\begin{proof}

We start with the oscillation component $A^{osc}_\pm$, which reads
$$
\begin{aligned}
A^{osc}_\pm(t,x) &= H_{\pm}^{osc}(t) \star_{x} \Big[A^1 \pm i \nu_*(i\partial_x)A^0 +  \frac{1}{\nu_\pm(i\partial_x)}  \mP S^\bj(0) \Big]
+ H^{osc}_{\pm} \star_{t,x}  \frac{1}{\nu_\pm(i\partial_x)} \partial_t\mP S^\bj(t).
\end{aligned}
$$
Note that $S^\bj(0) = \bj[g^0]$. Let us first study $\nabla_x \times A^{osc}_\pm$.
Using the estimate  \eqref{est-HoscL2inf} of Proposition~\ref{prop-Greenphysical} and recalling the definition of $A^0$ and $A^1$ in \eqref{new-data}, we  bound
$$
\begin{aligned}
 &\Big \| H_{\pm}^{osc}(t) \star_{x} \Big[\nabla_x \times A^1 \pm i \nu_*(i\partial_x)\nabla_x \times A^0 +  \frac{1}{\nu_\pm(i\partial_x)}  \nabla_x \times  \mP S^\bj(0) \Big]\Big\|_{B^0_{\infty,2}}\\
 & \lesssim \langle t\rangle^{-\frac 32} \left(\| b_\pm(i\partial_x) \nabla_x \times A^1 \|_{B^3_{1,2}}  + \| b_\pm(i\partial_x) \nu_\pm(i\partial_x) \nabla_x \times A^0\|_{B^3_{1,2}}  +  \| \frac{b_\pm(i\partial_x)}{\nu_\pm(i\partial_x)}  \nabla_x \times \mP S^\bj(0) \|_{B^3_{1,2}}\right) \\
  &\lesssim \langle t\rangle^{-\frac 32} \Bigg(\| \nabla_x \times E^0 \|_{B^2_{1,2}}  + \| B^0\|_{B^3_{1,2}}  +  \|  \nabla_x \times \mP \bj[f^0] \|_{B^1_{1,2}} \Bigg),  
    \end{aligned}
$$
where we have used that $\nabla_x \times \nabla_x =0$, that since $\nabla_x \cdot B^0=0$, $ \nabla_x \times \nabla_x B^0=  - \Delta_x B^0$ and that $\mP A^0= A^0$.
Recall that the symbol $b_\pm(k)$ is sufficiently regular and bounded by $C_0 \langle k\rangle^{-1}$.
Similarly, by the estimate \eqref{est-HoscLp} of Corollary~\ref{coro:decayLposc}, we have for all $p \in [2,+\infty)$,
$$
\begin{aligned}
 &\Big \| H_{\pm}^{osc}(t) \star_{x} \Big[\nabla_x \times A^1 \pm i \nu_*(i\partial_x)\nabla_x \times A^0 +  \frac{1}{\nu_\pm(i\partial_x)}  \nabla_x \times  \mP S^\bj(0) \Big]\Big\|_{L^p_x}\\
  &\qquad\lesssim \langle t\rangle^{-3\left(\frac 12- \frac1p\right)}  \Bigg(\| \nabla_x \times E^0 \|_{B^{3\left(1-\frac{2}{p}\right)-1}_{1,2}}  + \| B^0\|_{B^{3\left(1-\frac{2}{p}\right)}_{1,2}}  +  \|  \nabla_x \times \mP \bj[f^0] \|_{B^{3\left(1-\frac{2}{p}\right)-2}_{1,2}} \Bigg).
  \end{aligned}
$$
Similar bounds hold for $\partial_t A^{osc}_\pm$, noting that $\partial_tH^{osc}_\pm = \pm i \nu_*(i\partial_x) H^{osc}_\pm$; however a few cancellations due to the curl operator do not happen. This results in the following estimates
$$
\begin{aligned}
 &\Big \| \partial_t  H_{\pm}^{osc}(t) \star_{x} \Big[ A^1 \pm i \nu_*(i\partial_x) A^0 +  \frac{1}{\nu_\pm(i\partial_x)}    \mP S^\bj(0) \Big]\Big\|_{B^0_{\infty,2}}\\
  &\qquad\lesssim \langle t\rangle^{-\frac 32} \Bigg(\| \nabla_x  E^0 \|_{B^2_{1,2}}  + \| \nabla_x \Delta_x^{-1} \nabla_x \times B^0\|_{B^3_{1,2}}  +  \|  \nabla_x \mP \bj[f^0] \|_{B^1_{1,2}} \Bigg),  \\
   &\Big \|  \partial_t  H_{\pm}^{osc}(t) \star_{x} \Big[ A^1 \pm i \nu_*(i\partial_x) A^0 +  \frac{1}{\nu_\pm(i\partial_x)}   \mP S^\bj(0) \Big]\Big\|_{L^p_x}\\
  &\qquad\lesssim \langle t\rangle^{-3\left(\frac 12- \frac1p\right)}  \Bigg(\| \nabla_x  E^0 \|_{B^{3\left(1-\frac{2}{p}\right)-1}_{1,2}}  + \| \nabla_x \Delta_x^{-1} \nabla_x  B^0\|_{B^{3\left(1-\frac{2}{p}\right)}_{1,2}}  +  \|  \nabla_x \mP \bj[f^0] \|_{B^{3\left(1-\frac{2}{p}\right)-2}_{1,2}} \Bigg).
    \end{aligned}
$$

\bigskip

Next, we treat the space-time convolution term 
$H^{osc}_{\pm} \star_{t,x} \partial_t\mP S^\bj(t)$
for which we may write 
$$\partial_t\mP S^\bj(t) = -\sum_k \partial_{x_k} \mP S^{jk},$$ 
where $S^{jk} = \int \hv_j \hv_k g^0(x-\hv t)\; dv$ satisfies the same decay estimates as $S^\bj$. 
Applying Lemma~\ref{lem:fouriermult} with $\sigma(i\partial_x) = \frac{\mathbb{P}}{\nu_\pm(i\partial_x)}$, we have for all $ p\in [1, \infty]$, for all $\delta \in (0,1)$,
$$
\begin{aligned}
 \| \frac{b_\pm(i\partial_x)}{\nu_\pm(i\partial_x)}\partial_t \partial_x^\alpha \mP S^\bj(t)\|_{B^{3(1-2/p)}_{p',2}} &\lesssim  \|   \partial_x^\alpha S^{jk}\|_{L^p_x}^{\frac{\delta}{1+\delta}} \|  \partial_x  \partial_x^\alpha S^{jk}\|_{L^{p'}_x}^{\frac{1}{1+\delta}} + \| \nabla_x^{1+ \lfloor 3(1-2/p) \rfloor}    \partial_x^\alpha S^{jk}\|_{L^{p'}_x}.
 \end{aligned}
 $$
 Hence, by the estimate~\eqref{bound-SfreeLp} of Lemma~\ref{lem:bound-SfreeLp} (and \eqref{bound-SfreeLpinitialdata}), we obtain  for all $ p\in [1, \infty]$, for all $\delta \in (0,1]$,
\begin{equation}\label{bd-DPSj}
\begin{aligned}
 &\| \frac{b_\pm(i\partial_x)}{\nu_\pm(i\partial_x)}\partial_t \mP S^\bj(t)\|_{B^{3(1-2/p)}_{p',2}}\\
 &\lesssim \langle t\rangle^{-3/p -1 -|\alpha| +{\frac{\delta}{1+\delta}}} \left(\sum_{|\beta|\le 1+|\alpha|+ \lfloor 3(1-2/p) \rfloor}\|  \langle v\rangle^{6+|\alpha|+\lfloor 3(1-2/p) \rfloor} \partial_v^\beta f^0(\cdot,v)\|_{L^{1}_x L^{p'}_v} + \|  \Delta_x^{-1} \nabla_x \times B^0 \|_{L^{1}_x} \right),
 \end{aligned}\end{equation}
On the other hand, by Proposition~\ref{prop-Greenphysical}, 
$$
\begin{aligned}
 \| \nabla_x \times  \Big(H^{osc}_{\pm} \star_{t,x} \frac{1}{\nu_\pm(i\partial_x)}\partial_t\mP S^\bj(t)\Big)\|_{B^0_{\infty,2}} 
 &\lesssim 
 \int_0^{t} \frac{1}{(t-s)^{3/2}} \Big \|\frac{b_\pm(i\partial_x)}{\nu_\pm(i\partial_x)}\partial_t \partial_x\mP S^\bj(s) \Big\|_{B^3_{1,2}} \; ds.
 \end{aligned}$$
As a consequence, applying~\eqref{bd-DPSj}, we have by taking $\delta=1$,
 $$
\begin{aligned}
 \| \nabla_x \times \Big(H^{osc}_{\pm} \star_{t,x} \frac{1}{\nu_\pm(i\partial_x)}\partial_t\mP S^\bj(t)\Big)\|_{B^0_{\infty,2}}
&\lesssim 
\epsilon_0 \int_0^{t} (t-s)^{-3/2} \langle s\rangle^{-3/2}\; ds
\\ &\lesssim \epsilon_0 \langle t\rangle^{-3/2}, 
\end{aligned}$$
Likewise, by estimate \eqref{est-HoscLp} of Corollary~\ref{coro:decayLposc}, we bound for all $p \in [2,\infty)$,
$$ 
\begin{aligned}
\| \nabla_x \times \left(H^{osc}_{\pm} \star_{t,x} \frac{1}{\nu_\pm(i\partial_x)}\partial_t\mP S^\bj(t) \right)\|_{L^p_x} &\lesssim \int_0^t \langle t-s\rangle^{-3(1/2 - 1/p)} \|\frac{b_\pm(i\partial_x)}{\nu_\pm(i\partial_x)}\partial_t \partial_x\mP S^\bj(s) \|_{B^{3(1-2/p)}_{p',2}}\; ds \\ 
 &\lesssim \epsilon_0 \int_0^t \langle t-s\rangle^{-3(1/2 - 1/p)}  \langle s \rangle^{-3/2}\; ds \\&\lesssim \epsilon_0 \langle t\rangle^{-3(1/2 - 1/p)}.
 \end{aligned}
$$ 
The estimates for the time derivative follow similarly, recalling that $\partial_tH^{osc}_\pm = \pm i \nu_*(i\partial_x) H^{osc}_\pm$. This proves 
$$ \| \nabla_x \times A^{osc}_\pm(t)\|_{L^p_x} +  \| \partial_t A^{osc}_\pm(t)\|_{L^p_x} \lesssim \epsilon_0 \langle t\rangle^{-3(1/2-1/p)},$$
for any $2\le p\le \infty$.

Finally, we study the regular part, namely $A^r$, which is defined by 
$$
\begin{aligned}
A^r(t,x) &= \partial_t H^r(t) \star_x A^0 + H^r(t) \star_x A^1 + H^r \star_{t,x} \mP S^\bj(t)  -  \sum_\pm \frac{b_\pm(i\partial_x)}{\nu_\pm(i\partial_x)} \mP S^\bj(t)  .
\end{aligned}
$$
The last term can be treated as in \eqref{bd-DPSj}, which gives for all $p\in [1,\infty]$, for all $\delta>0$,
$$
\|  \frac{b_\pm(i\partial_x)}{\nu_\pm(i\partial_x)} \partial^\alpha_x \mP S^\bj(t)\|_{L^p_x} \lesssim \epsilon_0  \langle t\rangle^{-3(1-1/p)-|\alpha| +\delta} \left(\sum_{|\beta|\le 1+|\alpha|}\|  \langle v\rangle^{6+|\alpha|} \partial_v^\beta f^0(\cdot,v)\|_{L^{1}_x L^{p'}_v} + \|  \Delta_x^{-1} \nabla_x \times B^0 \|_{L^{1}_x} \right).
$$
The terms involving the initial data terms satisfy the same decay estimates as those for $H^r(t)$. As for the space-time convolution term $ H^r \star_{t,x} \mP S^\bj(t)$, we recall from Proposition \ref{prop-Greenphysical} that 
$$\|\chi(i\partial_x)\partial_x^nH^r(t) \|_{L^p_x} \lesssim \epsilon_0\langle t\rangle^{-4/3+1/p - n/3},
$$
for $n\ge 0$ and for some cutoff symbol $\chi(k)$ with compact support. Note that it suffices to bound the low frequency part, as the other component decays rapidly in $t$. We compute, using~\eqref{bd-DPSj},
$$  
\begin{aligned}
\| \chi(i\partial_x)\partial_x H^r \star_{t,x} \mP S^\bj(t)\|_{L^p_x} 
&\lesssim
 \int_0^{t/2}  \| H^r(t-s)\|_{L^p_x}\|\chi(i\partial_x)\partial_x\mP S^\bj(s)\|_{L^1_x} \; ds
\\&\quad +  \int_{t/2}^t  \|H^r(t-s)\|_{L^1_x}\|\chi(i\partial_x) \partial_x \mP S^\bj(s)\|_{L^p_x} \; ds
\\
&\lesssim 
\epsilon_0   \int_0^{t/2} \langle t-s\rangle^{-4/3+1/p} \langle s \rangle^{-1+\delta} \; ds 
+\epsilon_0   \int_{t/2}^t  \langle t-s\rangle^{-1/3}\langle s\rangle^{-3(1-1/p)-1+\delta}\; ds
\\
&\lesssim
\epsilon_0  \langle t\rangle^{-4/3 + 1/p + \delta} ,
\end{aligned} $$
for $1\le p\le \infty$. The time derivative estimates follow similarly, since 
$$\partial_t( H^r \star_{t,x} \mP S^\bj(t)) =  H^r(t) \star_{x} \mP S^\bj(0) +  H^r \star_{t,x} \partial_t\mP S^\bj(t)$$
with $ \partial_t\mP S^\bj(t) =  \partial_x\mP S^{jk}(t)$ as treated previously. This proves that  
$$ \| \partial_x A^r_\pm(t)\|_{L^p_x} +  \| \partial_t A^r_\pm(t)\|_{L^p_x} \lesssim \epsilon_0 
\langle t\rangle^{-4/3 + 1/p + \delta} ,$$
for any $1\le p\le \infty$ and for any fixed $\delta>0$. The higher derivative estimates follow similarly.

\end{proof}

We are finally in position to conclude the analysis. Theorem~\ref{theo-main} indeed follows from a combination of Propositions~\ref{prop-decompE} , \ref{prop-decompB}, \ref{lem:phidecay} and \ref{lem:Adecay}, setting
\begin{align*}
&E^{osc,(1)} = -\nabla_x \phi^{osc}, \quad E^{osc,(2)} = - \partial_t   A^{osc},\quad E^r = -\nabla_x \phi^r - \partial_t A^r,\\
&B^{osc} = \nabla_x \times A^{osc}, \quad B^{r} = \nabla_x \times A^{r}.
\end{align*}

\appendix

\section{Proof of Proposition~\ref{prop-GreenG}}
\label{sec:proof1}
 
 Let $\chi$ be a smooth cutoff function with support included in $B(0,\kappa_0 + \delta)$,  with values in $[0,1]$, such that $\chi \equiv 1 $ on $B(0,\kappa_0 + \delta/2)$. We then write
$$
\TG_k(\lambda ) =\chi(k) \TG_k(\lambda ) + (1-\chi(k)) \TG_k(\lambda ).
$$

\begin{itemize}

\item  {\sc The regular part.}
Let us directly study the Green function $\FG_{k}(t)$ via the representation \eqref{int-FGdecomp-n}. 
By definition, the Green kernel $\TG_k(\lambda ) $ is of the form 
$$
\begin{aligned}
  \TG_k(\lambda ) &=  1 + \frac{1-D(\lambda,k)}{D(\lambda,k)}  
= 1  -  \frac{ \cL[K_k(t)](\lambda)}{1+  \cL[K_k(t)](\lambda)} .
\end{aligned}
$$
We accordingly set for  the part supported in $|k|\geq \kappa_0 + \delta$ 
$$
\begin{aligned}
  \FG^{r, \text{high}}_k(t) &= -  \frac{1}{2\pi i}\int_{\{\Re \lambda = \gamma_0\}}e^{\lambda t}  (1-\chi(k))  \frac{ \cL[K_k(t)](\lambda)}{1+  \cL[K_k(t)](\lambda)} \; d\lambda \\
  &= -   \frac{e^{\gamma_0 t} }{2\pi }\int_{-\infty}^{+\infty} e^{i \tau t}  (1-\chi(k))  \frac{ \cL[K_k(t)](\gamma_0 + i \tau)}{1+  \cL[K_k(t)](\gamma_0 + i \tau)} \; d\tau.
  \end{aligned}
$$
We have from  Proposition  \ref{prop-nogrowth} and  Theorem \ref{theo-LangmuirE} that there is $c_0>0$ such that for all  $|k|\geq \kappa_0 + \delta/2$ and all $\Re \lambda \geq 0$,
\begin{equation}
\label{eq:boundbelow}
|1+  \cL[K_k](\lambda)| \geq c_0.
\end{equation}

The region  $|k|\geq \kappa_0 + \delta$ is thus the regular part  for which, there is no zero of the electric dispersion function $D(\lambda,k)$ in $\{ \Re \lambda \geq 0\}$.
This case is similar to the torus case, or to the screened potential case in whole space, and which was already much studied, see \cite{MV,Young, HKNR2, GNR1}. Namely the resolvent kernel is not singular and there is no oscillatory component.

By Corollary~\ref{coro:LKK}, the Leibnitz formula and~\eqref{eq:boundbelow}, we deduce that for every  $n$,
$$
\begin{aligned}
\left| (1-\chi(k))  \partial_\tau^{n} \frac{ \cL[K_k](\gamma_0 + i \tau)}{1+  \cL[K_k](\gamma_0 + i \tau)} \right| \lesssim   \frac{1}{(|k|^2 + \tau^2)^{ n/2 +1}}.
  \end{aligned}
$$
Therefore, taking first $n=0$, since $\int_{-\infty}^{+\infty}   \frac{1}{|k|^2 + \tau^2} \, d \tau \lesssim \frac{1}{|k|}$, we deduce that 
$$
  |\FG^{r,\text{high}}_k(t)|  \lesssim \frac{e^{\gamma_0 t}}{|k|}.
$$
On the other hand, by successive integrations by parts with respect to $\tau$, noting that there is no contribution 
from infinity, we obtain 
$$
\begin{aligned}
  |\FG^{r,\text{high}}_k(t)| 
    &\lesssim  e^{\gamma_0 t} \frac{1}{t^{N}}  \int_{-\infty}^{+\infty}   \frac{ 1}{(|k|^2 + \tau^2)^{ N/2 +1}}  \; d\tau \\
  &\lesssim  e^{\gamma_0 t} \frac{1}{|k|} \frac{1}{(|k|t)^{N}}
  \end{aligned}
$$
for $N \leq (N_{0}-9))/2$.
Letting $\gamma_0 \to 0^+$ (using Cauchy's residue theorem and Lebesgue's domination theorem), we end up with~\eqref{bd-Gr} for $\alpha=0$.

\item {\sc The singular case $|k|< \kappa_0 + \delta$}, which we shall focus on.

\end{itemize}

The analysis in the singular case is reminiscent of that of  \cite{HKNR3,BMM-lin}, up to a major difference: it is not possible to directly extend the resolvent kernel to a meromorphic function on a domain of the form $\{ \Re \lambda > - \delta |k|\}$. Using Corollary~\ref{coro:extenD}, we have two different continuations for $|\Im \lambda|  <|k|$ and $ |\Im \lambda|>|k|$, 
moreover, we still have a singularity at  the threshold $|k|=\kappa_0$. For this reason, we shall rely on a different strategy,  we refer to \cite{Toan} for closely related proofs.. 
Thanks to the precise description of the poles obtained in Theorem \ref{theo-LangmuirE}, we can resort to a Taylor expansion to isolate the leading singular contribution which appears as a rational function to which we can apply the residue formula.
The remainder in the Taylor expansion can be controlled by deforming the contour only up to the imaginary axis.

\bigskip

\noindent {\bf The case $|k|\leq \eps$.}
Given a small parameter $\eps >0$, we first focus on the low frequency regime $|k|\leq \eps$.  It follows from Theorem \ref{theo-LangmuirE} that the resolvent kernel $\TG_k(\lambda) = \frac{1}{D(\lambda,k)}$ is holomorphic in $\{\Re \lambda >0\}$. Hence, by Cauchy's residue theorem, we may move the contour of integration $ \{\Re \lambda = \gamma_0\}$ towards the imaginary axis so that 
\begin{equation}\label{int-FGdecomp}
\FG_{k}(t) =  \frac{1}{2\pi i}\int_{\Gamma}e^{\lambda t}  \TG_{k}(\lambda)\; d\lambda,
\end{equation}
where we have decomposed $\Gamma$ 
as 
\begin{equation}\label{def-Gamma} 
\Gamma = \Gamma_1 \cup \Gamma_2 \cup \cC_\pm
\end{equation}
having set 
$$
\begin{aligned}
 \Gamma_1 &= \{ \lambda = i\tau, \quad |\tau \pm \tau_*(|k|)|\ge  |k|, \quad |\tau| > |k|\} ,
 \\
  \Gamma_2 &= \{\lambda = i\tau, \quad |\tau \pm \tau_*(|k|)|\ge  |k|, \quad |\tau| \le |k|\},
   \\
  \cC_\pm &= \{\Re \lambda \ge 0, \quad |\lambda \mp i \tau_*(|k|)| =  |k|\} .
    \end{aligned}$$

Note that the semicircle $\cC_\pm$ is to avoid the singularity due to the poles at $\lambda_\pm = \pm i \tau_*(|k|)$ of $\TG_{k}(\lambda)$ established in Theorem \ref{theo-LangmuirE}, while the integrals over $\Gamma_1$ and $\Gamma_2$ are understood as taking the limit of $\Re \lambda \to 0^+$, using Lebesgue's domination theorem.

To establish decay in $t$ as claimed in \eqref{bd-Gr}, we need to first integrate by parts repeatedly with respect to $\tau$, for $\Re \lambda=\gamma_0>0$ (using Corollary~\ref{coro:LKK} to ensure that there is no boundary term) to get for all $n=0, \ldots, N$,
\begin{align*} 
\FG_{k}(t) &=  \frac{1}{2\pi }\int_{-\infty}^{+\infty} e^{(\gamma_0+i\tau) t} \TG_{k}(\gamma_0 + i\tau)\; d\tau \\
&=  \frac{1}{2\pi (it)^n }\int_{-\infty}^{+\infty} e^{(\gamma_0+i\tau) t} \partial_{\tau}^n \TG_{k}(\gamma_0 +i \tau)\; d\tau \\
&= \frac{1}{2i \pi t^n }\int_{\Re \lambda= \gamma_0} e^{\lambda t} \partial_{\lambda}^n \TG_{k}(\lambda)\; d\lambda,
\end{align*}
then use the same change of contours as above and take the limit of $\Re \lambda \to 0^+$, to get 
\begin{equation}\label{int-FGdecomp-n}
\FG_{k}(t) =  \frac{(-1)^n}{2\pi i (|k|t)^n }\int_{\Gamma}e^{\lambda t} |k|^n \partial_\lambda^n \TG_{k}(\lambda)\; d\lambda.
\end{equation}
 In what follows, we shall use the formulation \eqref{int-FGdecomp-n}, instead of \eqref{int-FGdecomp}, to bound the Green function $\FG_{k}(t) $.

\bigskip \noindent{\it Further decomposition of $\FG_k(t)$.}
We start again from
$$
\TG_k(\lambda ) 
= 1 - \frac{ \cL[K_k(t)](\lambda)}{1+  \cL[K_k(t)](\lambda)}.
$$
Using the expansion \eqref{expansion-DL}--\eqref{comp-dtjKprop} of $ \cL[K_k(t)](\lambda)$ in Proposition~\ref{prop:D}, we write 
\begin{equation}
\label{eq:Gk}
\begin{aligned}
\TG_k(\lambda ) 
&=1 +  \frac{ - \lambda^2 \tau_0^2+  |k|^2 e_0+ \mathcal{R}(\lambda,k)}{\lambda^4 +\lambda^2 \tau_0^2 -  |k|^2e_0
- \mathcal{R}(\lambda,k)} 
\end{aligned}
\end{equation}
where we have set 
\begin{equation}\label{def-remainderRG}
 \mathcal{R}(\lambda,k) : = - \int_0^\infty e^{-\lambda t} \partial_t^4K_k(t) \; dt.
\end{equation}
Using \eqref{bd-RK} for $m=1$, we note that 
\begin{equation}\label{good1st-bdR}|\mathcal{R}(\lambda,k) |\lesssim |k|^2,\end{equation}
uniformly for all $\Re \lambda \ge 0$.
Differentiating~\eqref{def-remainderRG} with respect to $\lambda$ and using the fast decay of $\mu$, we get
\begin{equation}
\label{good1st-bdRderiv}
|\partial_\lambda^n \mathcal{R}(\lambda,k) |\lesssim {|k|^{2-n}}.
\end{equation}
In fact, upon integrating by parts repeatedly in $\lambda$ as carried out in the proof of \eqref{expansion-DL} and noting $\partial_t^{2j}K_k(0) =0$ for $j\ge 0$, we also obtain
\begin{equation}\label{good-Rb}
|\partial_\lambda^n \mathcal{R}(\lambda,k) |\lesssim |k|^4 |\lambda|^{2-n}, 
\end{equation}
for $n\ge 0$, where we stress that the bounds hold uniformly for $\Re \lambda \ge 0$.

We further write
\begin{equation}\label{TG-decompose}
\begin{aligned}
\TG_k(\lambda) 
&=1 + \TG_{k,0}(\lambda) + \TG_{k,1}(\lambda ) 
\end{aligned}
\end{equation}
where 
\begin{equation}\label{def-TGplus}
\begin{aligned}
\TG_{k,0}(\lambda) & =   \frac{-\lambda^2 \tau_0^2+  |k|^2 e_0}{\lambda^4 + \lambda^2\tau_0^2 - |k|^2e_0},
\\
\TG_{k,1}( \lambda ) 
&= \frac{\lambda^4  \mathcal{R}(\lambda,k) }{(\lambda^4 +\lambda^2 \tau_0^2-  |k|^2e_0)(\lambda^4 +\lambda^2\tau_0^2 -  |k|^2e_0
- \mathcal{R}(\lambda,k))} .
\end{aligned}
\end{equation}
We also denote by $\FG_{k,0}(t)$ and $\FG_{k,1}(t)$ the corresponding Green function, see \eqref{int-FGdecomp}. Denote by $\lambda_{\pm,0}(k)$ and $\lambda_\pm(k)$ the purely imaginary poles of $\TG_{k,0}(\lambda)$ and $\TG_{k}(\lambda)$, respectively. 
Via the representations \eqref{int-FGdecomp} and \eqref{int-FGdecomp-n}, we shall prove   
\begin{lem}
\label{lem:residue01}
We have the decomposition
\begin{equation}\label{bd-Grk0}
\Big| \FG_{k,0}(t)  -  \sum_\pm e^{\lambda_{\pm,0}t} \mathrm{Res} (\TG_{k,0}(\lambda_{\pm,0}))  \Big| \lesssim |k|^3 e^{- \sqrt{e_0} |kt|} ,
\end{equation}
\begin{equation}\label{bd-Grk1}
\Big| \FG_{k,1}(t)  -  \sum_\pm e^{\lambda_{\pm}t}\mathrm{Res} (\TG_{k}(\lambda_{\pm}))  +  \sum_\pm e^{\lambda_{\pm,0}t} \mathrm{Res} (\TG_{k,0}(\lambda_{\pm,0}))   \Big| \le C_1 |k|^3 \langle kt\rangle^{-N} ,
\end{equation}
where $\lambda_\pm(k)$ and $\lambda_{\pm,0}(k)$ are purely imaginary poles of $\TG_{k}(\lambda)$ and $\TG_{k,0}(\lambda)$, respectively. 

\end{lem}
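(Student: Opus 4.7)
The plan is to evaluate $\FG_{k,0}(t)$ and $\FG_{k,1}(t)$ by closing the contour $\Gamma$ with a large leftward arc $\{|\lambda|=R,\ \Re\lambda\le 0\}$ (letting $R\to\infty$) and applying the residue theorem. The poles of $\TG_{k,0}$ are the four roots of $(\lambda^2-u_1)(\lambda^2-u_2)$, where $u_1>0$ and $u_2<0$ are the real roots of $u^2+u\tau_0^2-|k|^2 e_0=0$; for small $|k|$, $u_1=|k|^2 e_0/\tau_0^2+O(|k|^4)$ and $u_2=-\tau_0^2+O(|k|^2)$. The imaginary roots $\lambda_{\pm,0}=\pm i\sqrt{|u_2|}$ lie inside the bumps $\mathcal{C}_\pm$ (since $|\sqrt{|u_2|}-\tau_*(|k|)|=O(|k|^2)\ll|k|$), the root $-\sqrt{u_1}$ lies in $\{\Re\lambda<0\}$, and only $+\sqrt{u_1}$ falls outside the enclosed region.

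For part (a), since $\TG_{k,0}(\lambda)=O(|\lambda|^{-2})$ at infinity, the leftward arc contributes zero and the residue theorem gives exactly
$$\FG_{k,0}(t)=\sum_\pm e^{\lambda_{\pm,0}t}\mathrm{Res}(\TG_{k,0},\lambda_{\pm,0})+e^{-\sqrt{u_1}t}\mathrm{Res}(\TG_{k,0},-\sqrt{u_1}).$$
A short computation using the identity $|k|^2 e_0=u_j^2+u_j\tau_0^2$ gives $\mathrm{Res}(\TG_{k,0},\pm\sqrt{u_1})=\pm u_1^{3/2}/[2(u_1-u_2)]=O(|k|^3)$, which combined with $\sqrt{u_1}\gtrsim|k|$ yields \eqref{bd-Grk0}.

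For part (b), $\TG_{k,1}$ is not rational but, writing $\TG_{k,1}=\TG_k-1-\TG_{k,0}$ and using that $\TG_k=1/D$ is meromorphic on $\{\Re\lambda>0\}$ with its only poles at $\pm i\tau_*(|k|)$, the six poles of $\TG_{k,1}$ are $\pm\sqrt{u_1}$, $\lambda_{\pm,0}$, $\pm i\tau_*(|k|)$, with $\mathrm{Res}(\TG_{k,1},\pm i\tau_*(|k|))=\mathrm{Res}(\TG_k,\lambda_\pm)$ and $\mathrm{Res}(\TG_{k,1},\mu)=-\mathrm{Res}(\TG_{k,0},\mu)$ at the other four. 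I will deform $\Gamma$ across the imaginary axis into $\{\Re\lambda<0\}$, using the holomorphic extension $\widetilde D$ of Corollary~\ref{coro:extenD} (and its smooth patch $\widetilde{\widetilde D}$ when $|k|$ is close to $\kappa_0$) to extend $\TG_{k,1}$ along a new path $\widetilde\Gamma$ that still encloses the five relevant poles $\pm i\tau_*(|k|)$, $\lambda_{\pm,0}$, $-\sqrt{u_1}$ but stays a uniform distance away from all singularities. The residue theorem then identifies the left-hand side of \eqref{bd-Grk1} with
$$-e^{-\sqrt{u_1}t}\mathrm{Res}(\TG_{k,0},-\sqrt{u_1})+\frac{1}{2\pi i}\int_{\widetilde\Gamma}e^{\lambda t}\TG_{k,1}(\lambda)\,d\lambda.$$
The first term is again $O(|k|^3 e^{-c|k|t})\lesssim|k|^3\langle kt\rangle^{-N}$; for the contour integral, I will iterate integration by parts in $\lambda$ following \eqref{int-FGdecomp-n} to gain the factor $(|k|t)^{-N}$, and combine this with a pointwise bound of the form $|k|^n|\partial_\lambda^n\TG_{k,1}|\lesssim|k|^3/(1+|\lambda|)^2$ along $\widetilde\Gamma$ — a consequence of the $|k|^4$-estimate \eqref{good-Rb} on $\mathcal R$ together with the uniform non-vanishing of the denominators along $\widetilde\Gamma$ — to reach the required $|k|^3\langle kt\rangle^{-N}$.

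The main technical obstacle will be to make the deformation work uniformly across the threshold $|k|=\kappa_0$, where the curve $\pm i\tau_*(|k|)$ leaves the imaginary axis and the holomorphic extension $\widetilde D$ is no longer defined along all of $\widetilde\Gamma$: the deformed path must lie in the region where the smooth extension $\widetilde{\widetilde D}$ coincides with a genuine holomorphic continuation of $D$ (cf.\ Remark~\ref{remarkholo}, relying on hypothesis (H1)), so that Cauchy's theorem still applies, while simultaneously preserving the pointwise bounds on $\TG_{k,1}$ and its $\lambda$-derivatives needed for the integration by parts step.
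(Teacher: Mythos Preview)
Your treatment of \eqref{bd-Grk0} is correct and essentially identical to the paper's.

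For \eqref{bd-Grk1} there is a genuine gap. First, this lemma lives in the regime $|k|\le\eps\ll\kappa_0$, so the threshold $|k|=\kappa_0$ is not the relevant obstacle here; that difficulty enters only in a later, separate part of the argument. The real obstruction is that the deformation you propose cannot be carried out. The extension $\widetilde D$ you invoke is, by Corollary~\ref{coro:extenD}, holomorphic only on $\{\Re\lambda\le 0,\ |\Im\lambda|^2<|k|^2+|\Re\lambda|^2\}\cup\{\Re\lambda>0\}$; near the imaginary axis this forces $|\Im\lambda|<|k|$, nowhere near the poles $\pm i\tau_*(|k|)\approx\pm i\tau_0$. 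The other extension of $D$ (via $\Phi$, holomorphic on $\mathbb C\setminus i[-|k|,|k|]$) does cover those heights, but it is a \emph{different} analytic continuation: across the segment $i[-|k|,|k|]$ the two differ by the Plemelj jump $2\pi i\,z\kappa(z)/|k|^2$. Hence $\TG_k$ admits no single meromorphic extension to any strip $\{\Re\lambda>-\delta\}$; the segment $\Gamma_2=i[-|k|,|k|]$ is a genuine branch cut, and pushing $\Gamma$ leftward to your $\widetilde\Gamma$ would pick up an uncontrolled jump contribution (and potentially further zeros of the $\Phi$-extension of $D$ in $\{\Re\lambda<0\}$, which are not classified anywhere in the paper). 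This is exactly why the paper emphasizes that ``it is not possible to directly extend the resolvent kernel to a meromorphic function on a domain of the form $\{\Re\lambda>-\delta|k|\}$''.

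The paper therefore never leaves $\{\Re\lambda\ge 0\}$. On $\Gamma_1$ and $\Gamma_2$ it bounds $|k|^n\partial_\lambda^n\TG_{k,1}(i\tau)$ pointwise and integrates directly (Lemmas~\ref{lem:gamma1} and~\ref{lem:gamma2}). On the bumps $\cC_\pm$ it Taylor-expands both denominators of $\TG_{k,1}$ to order $N$ about their respective zeros $\lambda_{\pm,0}$ and $\lambda_\pm$; the resulting rational approximants \emph{are} meromorphic on all of $\mathbb C$, so Cauchy's theorem applied to them yields exactly the residue terms, while the Taylor remainders are controlled on $\cC_\pm$ and its reflection $\cC_\pm^*$ via $|\lambda_\pm-\lambda_{\pm,0}|\lesssim|k|^4$ together with $|\partial_\lambda^j\mathcal R|\lesssim|k|^4$ there (Lemma~\ref{lem:nearsing}). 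The integration by parts in $\lambda$ is performed on the original contour through \eqref{int-FGdecomp-n}, before any splitting, not along a deformed path.
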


In view of the decomposition \eqref{TG-decompose}, this would complete the proof of the decomposition \eqref{decomp-FG} and the remainder bounds \eqref{bd-Gr}, noting the residue $\mathrm{Res} (\TG_{k,0}(\lambda_{\pm,0}))$ in \eqref{bd-Grk1} is cancelled out with that in \eqref{bd-Grk0}.

\bigskip \noindent{\it Decomposition of $\FG_{k,0}(t)$.}
Let us start with bounds on $\FG_{k,0}(t)$ via the representation \eqref{int-FGdecomp}. 
Note that the polynomial $\lambda^4 + \lambda^2\tau_0^2 - |k|^2e_0$ has four distinct roots: 
\begin{equation}\label{def-lambda1234}\lambda_{\pm,0} = \pm i \Big(\frac{\tau_0^2+\sqrt{\tau_0^4+ 4 e_0 |k|^2}}{2}\Big)^{1/2} , \qquad \mu_{\pm,0} = \pm \Big( \frac{2e_0 |k|^2}{\tau_0^2 + \sqrt{\tau_0^4 + 4 e_0 |k|^2}}\Big)^{1/2} .\end{equation}
Note that $\lambda_{\pm,0}$ are purely imaginary roots, while $\mu_{\pm,0}$ are real valued. Note in particular that 
\begin{equation}\label{fake-roots}|\lambda_{\pm,0}(k) \mp i \tau_*(|k|)| \lesssim |k|^2 , \qquad |\mu_{\pm,0}(k) \mp \sqrt{e_0} |k|| \lesssim |k|^2.\end{equation}
Except $\mu_{+,0} \sim \frac{\sqrt{e_0}}{\tau_0} |k|$, the other three roots lie to the left of $\Gamma$, and thus, by Cauchy's residue theorem, we compute 
\begin{equation}
\label{eq:G00}
\begin{aligned} 
\FG_{k,0}(t)& =\frac{1}{2\pi i } \int_\Gamma e^{\lambda t} \TG_{k,0}(\lambda) \; d\lambda
\\&= \sum_\pm e^{\lambda_{\pm,0}t} \mathrm{Res} (\TG_{k,0}( \lambda_{\pm,0}))  + e^{\mu_{-,0}t} \mathrm{Res} (\TG_{k,0}(\mu_{-,0}))  
+ \frac{1}{2\pi i}\lim_{\gamma_0 \to -\infty}\int_{\Re \lambda = \gamma_0}
e^{\lambda t} \TG_{k,0}(\lambda)\; d\lambda
\\&=  \sum_\pm e^{\lambda_{\pm,0}t} \mathrm{Res} (\TG_{k,0}(\lambda_{\pm,0}))   + e^{\mu_{-,0}t} \mathrm{Res} (\TG_{k,0}(\mu_{-,0}))  ,
\end{aligned}
\end{equation}
upon using the fact that the last integral vanishes in the limit of $\gamma_0 \to -\infty$, noting that $\TG_{k,0}(\lambda) $ decays at order $\lambda^{-2}$ for $|\lambda|\to \infty$. It remains to bound $ e^{\mu_{-,0}t} \mathrm{Res} (\TG_{k,0}(\mu_{-,0})) $. A direct calculation yields 
\begin{equation}\label{fakeres-G0}
\begin{aligned}
e^{\mu_{-,0}t} \mathrm{Res} (\TG_{k,0}(\mu_{-,0})) 
&=  \frac{\mu_{-,0}^4 e^{\mu_{-,0}t} }{(\mu_{-,0} - \lambda_{+,0})(\mu_{-,0} - \lambda_{-,0})(\mu_{-,0} - \mu_{+,0})}
\\
& = \frac{\mu_{-,0}^3 e^{\mu_{-,0}t} }{2(\mu_{-,0} - \lambda_{+,0})(\mu_{-,0} - \lambda_{-,0})},
\end{aligned}\end{equation}
upon using $\mu_{+,0} = -\mu_{-,0}$. By definition, we note that $\lambda_{\pm,0} = \pm i \tau_0(1 + \cO(|k|^2))$ and $\mu_{\pm,0} = \pm \sqrt{ e_0} |k| (1 + \cO(|k|^2))$. 
Hence, 
\begin{equation}
\label{eq:G001}
e^{\mu_{-,0}t} |\mathrm{Res} (\TG_{k,0}(\mu_{-,0})) | \lesssim |k|^3 e^{- \sqrt{e_0} |kt|},
\end{equation}
giving \eqref{bd-Grk0}.

\bigskip 

Next, we prove \eqref{bd-Grk1} via the representation \eqref{int-FGdecomp-n}. 

\bigskip \noindent{\it Bounds on $\TG_{k,1}(\lambda)$ on $\Gamma_1$.}
We start with the integral of $\TG_{k,1}(\lambda)$ on $\Gamma_1$. We shall prove

\begin{lem}
\label{lem:gamma1}
We have
\begin{equation}
\label{eq:gamma1}
\Big|\int_{\{|\tau\pm \tau_*|\ge |k|, \;|\tau| > |k|\}} e^{i \tau t} |k|^n \partial_\lambda^n  \TG_{k,1}(i\tau) \; d\tau\Big|  \lesssim |k|^3,
\end{equation}
for $0\le n\le N$. 
\end{lem}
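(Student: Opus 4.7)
The plan is to exploit the cancellation $C-B=\lambda^4 D$, where $C(\lambda) = \lambda^4+\lambda^2\tau_0^2-|k|^2 e_0$ and $B(\lambda)=\mathcal{R}(\lambda,k)$, which rewrites $\TG_{k,1}(\lambda) = \mathcal{R}(\lambda,k)/(C(\lambda)\,D(\lambda,k))$. In this form, the $\lambda^4$ of the numerator in \eqref{def-TGplus} has disappeared, and all the singular behaviour on the imaginary axis localises near the zeros $\pm i\sqrt{x_+}$ of $C$ (within $O(|k|^2)$ of $\pm i\tau_*$ by \eqref{fake-roots}) and at the zeros $\pm i\tau_*$ of $D$ from Theorem~\ref{theo-LangmuirE}. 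Both are excluded from $\Gamma_1$ by the constraint $|\tau\pm\tau_*|\ge |k|$.

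First I would establish three pointwise estimates for $\lambda=i\tau\in\Gamma_1$ and $0\le n\le N$. The first is the sharp bound $|\partial_\tau^n\mathcal{R}(i\tau,k)|\lesssim |k|^4\langle\tau\rangle^{-2-n}$, obtained by pushing the expansion~\eqref{expansion-DL} far enough in $m$ (depending on $n$), using the vanishing $\partial_t^{2j}K_k(0)=0$ and the decay estimates of Lemma~\ref{lem:Kk}. The second is the lower bound $|C(i\tau)|\gtrsim|\tau-\tau_*|$ near $\pm\tau_*$: this uses $\sqrt{x_+}-\tau_*=O(|k|^2)$ together with $|\tau-\tau_*|\ge|k|$ to deduce $|\tau-\sqrt{x_+}|\gtrsim|\tau-\tau_*|$, while the second factor $|\tau^2-x_-|\gtrsim 1$ is immediate. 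The third is the lower bound $|D(i\tau,k)|\gtrsim|\tau-\tau_*|$ near $\pm\tau_*$, which follows from Theorem~\ref{theo-LangmuirE} combined with the explicit formula $\partial_\tau D(i\tau_*,k)=2\Omega'(|k|^2/\tau_*^2)/\tau_*^3$, which is bounded below uniformly in small $|k|$ since $\Omega'(0)=\tau_0^2>0$. The corresponding Leibniz-type bounds $|\partial_\tau^{n_2}(1/C)|\lesssim|\tau-\tau_*|^{-n_2-1}$ and $|\partial_\tau^{n_3}(1/D)|\lesssim|\tau-\tau_*|^{-n_3-1}$ follow. Away from $\pm\tau_*$, $|C|$ and $|D|$ are $\gtrsim 1$ for moderate $\tau$ and behave like $\tau^4$ and $1$ respectively for $|\tau|\ge 2\tau_0$.

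Applying the Leibniz rule to $\mathcal{R}/(CD)$, the worst term in the critical neighbourhood $|\tau-\tau_*|\in[|k|,c_0]$ obeys
\begin{equation*}
|\partial_\tau^n\TG_{k,1}(i\tau)|\lesssim \frac{|k|^4}{|\tau-\tau_*|^{n+2}},
\end{equation*}
and multiplying by $|k|^n$ and integrating in $\tau$ over $[|k|,c_0]$ gives exactly $|k|^{n+4}\cdot|k|^{-(n+1)}=|k|^3$; the symmetric neighbourhood of $-\tau_*$ contributes identically. The complementary regions, namely moderate $\tau$ at distance $\gtrsim 1$ from $\pm\tau_*$ and $|\tau|\ge 2\tau_0$, use the simpler bounds above and produce integrands of order $|k|^{n+4}$ or $|k|^{n+4}/\tau^{6}$, which integrate to $O(|k|^{n+4})\le|k|^3$.

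The main difficulty is the sharp pointwise estimate on $\partial_\tau^n\mathcal{R}$: a naive differentiation in $\tau$ brings down factors of $t$ that only yield the weaker bound $|\partial_\tau^n\mathcal{R}|\lesssim|k|^{2-n}$, which would lose the crucial factor of $|k|^4$ and spoil the final answer. The remedy is to work from the expansion of $\mathcal{R}$ as a finite sum of terms of the form $|k|^{2j}/\lambda^{2j-2}$ (plus a high-order remainder) obtained by iterated integration by parts in $t$, with $m$ chosen large enough in terms of $n$; differentiating this expansion in $\tau$ distributes derivatives on $1/\lambda^{2j-2}$ while keeping the gain of $|k|^4$ from the leading $j=2$ term. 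The sharp matching $|k|^{n+4}/|k|^{n+1}=|k|^3$ then makes the final bound in Lemma~\ref{lem:gamma1} independent of $n$, at the cost of assuming $N_0$ sufficiently large to cover all $n\le N$.
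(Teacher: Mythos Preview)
Your rewriting $\TG_{k,1}(\lambda)=\mathcal{R}(\lambda,k)/(C(\lambda)D(\lambda,k))$ via the identity $C-\mathcal{R}=\lambda^4 D$ is correct and is essentially the decomposition the paper uses: they keep the denominator as $C\cdot(C-\mathcal{R})$ and the numerator as $\lambda^4\mathcal{R}$, which is the same thing. Your treatment of the critical window $|\tau-\tau_*|\in[|k|,c_0]$ and of the large-$\tau$ tail matches the paper's Lemma~\ref{lem:TG1gamma1}.

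There is, however, a gap in the region $|k|<|\tau|\le \tfrac12\tau_0$. Two of your intermediate claims fail there. First, the bound $|\partial_\tau^n\mathcal{R}(i\tau,k)|\lesssim|k|^4\langle\tau\rangle^{-2-n}$ should have $|\tau|^{-2-n}$ in place of $\langle\tau\rangle^{-2-n}$: the repeated integrations by parts in $t$ produce inverse powers of $|\lambda|$, not of $\langle\lambda\rangle$, so for small $|\tau|$ the correct bound is $|k|^4/|\tau|^{n+2}$, which can be as large as $|k|^{2-n}$ at the endpoint $|\tau|=|k|$. Second, the claim $|C(i\tau)|\gtrsim 1$ is false: $C(i\tau)=\tau^4-\tau^2\tau_0^2-|k|^2e_0$, so $|C(i\tau)|\sim \tau_0^2|\tau|^2+e_0|k|^2$, which is only $O(|k|^2)$ near $|\tau|\sim|k|$. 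What saves you is that $|D(i\tau,k)|=|1-\omega(|k|^2/\tau^2)/\tau^2|\gtrsim 1/|\tau|^2$ in this region (since $\omega\ge\tau_0^2$ and $|\tau|$ is small), so the product satisfies $|CD|\gtrsim 1$ even though $|C|$ alone does not. With the corrected bounds one gets $|\TG_{k,1}(i\tau)|\lesssim |k|^4/(|\tau|^2+|k|^2)$, which integrates to $|k|^3$ over $[|k|,\tfrac12\tau_0]$, and one checks that $|k|\partial_\lambda$ preserves this pointwise bound (each extra $|k|$ absorbs a $1/|\tau|$ from differentiation, using $|\tau|>|k|$). The paper avoids analysing $D$ separately in this region: it works directly with $C-\mathcal{R}=\lambda^4 D$ and exploits the sign $\mathcal{R}(i\tau,k)\ge 0$ (visible from \eqref{eq:formulaDtauleqk}) to get $|C-\mathcal{R}|\ge \tfrac34\tau_0^2\tau^2+e_0|k|^2$ in one step; this is the content of \eqref{low-Rsmall}.
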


To this end, we shall obtain

\begin{lem} 
\label{lem:TG1gamma1}
For all $0\leq n\leq N$, for $|\tau|>|k|$ and $|\tau \pm \tau_*|\ge |k|$
\begin{equation}\label{bd-TGk10}
|(|k|\partial_\lambda)^n\TG_{k,1}(i\tau)|
\lesssim \left \{ \begin{aligned} |k|^4 |\tau|^2(1 + |\tau|^4)^{-2} \quad& \quad \mbox{if}\quad |\tau|\ge \frac32 \tau_0,
\\
 |k|^4 |\tau|^2  (|\tau \pm \tau_*|^2 + |k|^2)^{-1}  \quad& \quad \mbox{if}\quad \frac12\tau_0\le |\tau|\le \frac32 \tau_0,
 \\
 |k|^4 |\tau|^2( |\tau|^2 + |k|^2)^{-2} \quad& \quad \mbox{if}\quad |k|< |\tau| \le \frac12  \tau_0. 
  \end{aligned}\right.
\end{equation}

\end{lem}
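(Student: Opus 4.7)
We write $\TG_{k,1}(\lambda) = \frac{\lambda^4 \mathcal{R}(\lambda,k)}{P_0(\lambda, k)\, P_1(\lambda,k)}$ with $P_0(\lambda, k) := \lambda^4 + \lambda^2 \tau_0^2 - |k|^2 e_0$ and $P_1 := P_0 - \mathcal{R}$; note that $P_0(\lambda,k) = (\lambda^2 + \tau_{*,0}^2)(\lambda^2 - \mu_{+,0}^2)$, where $\pm i\tau_{*,0}$ and $\pm \mu_{+,0}$ are the four simple roots of $P_0$ given in~\eqref{def-lambda1234}, and $P_1 = \lambda^4 D(\lambda,k)$. The plan is to derive sharp pointwise estimates on $\{\Re\lambda = 0\}$ for each of $\mathcal{R}$, $1/P_0$, $1/P_1$ and all their $\lambda$-derivatives, then apply the Leibnitz rule, plugging in the bounds case by case.

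The key estimate on the numerator exploits that $\partial_\lambda^j \mathcal{R}(\lambda,k) = (-1)^{j+1}\int_0^\infty e^{-\lambda t} t^j\partial_t^4 K_k(t)\,dt$; since $K_k$ is odd in $t$ so that all even derivatives $\partial_t^{2\ell}K_k(0)$ vanish, no boundary contribution appears upon integrating by parts $m$ times in $t$. Combined with Lemma~\ref{lem:Kk}, this yields
\begin{equation*}
|\partial_\lambda^j \mathcal{R}(i\tau,k)| \lesssim |k|^{2+m-j}/|\tau|^m
\end{equation*}
for any admissible $m$. Choosing $m = j+2$ and using $|\tau| > |k|$ gives $|\partial_\lambda^j \mathcal{R}(i\tau,k)| \lesssim |k|^4/|\tau|^{j+2}$ throughout the range. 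For the denominator factor $1/P_0$, the explicit factorization together with $\tau_{*,0} = \tau_0 + \cO(|k|^2)$, $|\mu_{+,0}|\sim|k|$ (consequences of~\eqref{def-lambda1234}) furnishes the lower bounds on $|P_0(i\tau)|$: $\gtrsim 1+\tau^4$ for $|\tau|\geq 3\tau_0/2$ (Case~(i)); $\gtrsim |\tau\mp\tau_{*,0}|$ in the intermediate regime (Case~(ii)); and $\gtrsim \tau^2+|k|^2$ for $|\tau|\leq \tau_0/2$ (Case~(iii)). Corresponding bounds for $|\partial_\lambda^m(1/P_0)(i\tau)|$ follow via partial fractions over the four simple roots, exploiting that the residues at $\pm\mu_{+,0}$ sum to zero to absorb the apparent $|k|^{-1}$ singularity of the individual contributions. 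Combining $|\mathcal{R}(i\tau,k)|\ll |P_0(i\tau)|$ (which holds in every regime thanks to~\eqref{good1st-bdR} together with the case-by-case lower bound on $|P_0|$) with the triangle inequality transfers all these bounds to $P_1 = P_0 - \mathcal{R}$ and $1/P_1$.

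The proof concludes by expanding $\partial_\lambda^n \TG_{k,1}$ via Leibnitz into a finite sum over $(j_1,j_2,j_3,j_4)$ with $j_1+j_2+j_3+j_4=n$ and $j_1\leq 4$, estimating each product $|\partial_\lambda^{j_1}\lambda^4|\,|\partial_\lambda^{j_2}\mathcal{R}|\,|\partial_\lambda^{j_3}(1/P_0)|\,|\partial_\lambda^{j_4}(1/P_1)|$ by the ingredients above, and multiplying by the outer factor $|k|^n$. A direct bookkeeping, using $|k|\leq|\tau|$, $|k|\leq \sqrt{\tau^2+|k|^2}$, and $|k|\leq|\tau\mp\tau_*|$ in the three cases respectively to redistribute the $|k|^n$ factor against the negative powers of $|\tau|$, $\sqrt{\tau^2+|k|^2}$ or $|\tau\mp\tau_*|$ appearing in the estimates, shows that each such term fits into the target bound~\eqref{bd-TGk10}. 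The main obstacle lies in Case~(ii), where the \emph{unphysical} pole $i\tau_{*,0}$ of $1/P_0$ does not coincide with the true electric dispersion pole $i\tau_*(|k|)$ of $1/P_1$; this is resolved by the observation that $|\tau_*(|k|) - \tau_{*,0}| = \cO(|k|^2)$ (a consequence of Theorem~\ref{theo-LangmuirE}, since $P_0$ captures the leading behavior of $\lambda^4 D$ at small $|k|$) is much smaller than the cutoff distance $|k|$ imposed by the definition of $\Gamma_1$, so that both $1/P_0$ and $1/P_1$ enjoy identical pole structure on the scale relevant to the estimate.
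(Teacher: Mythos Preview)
Your overall architecture is essentially the same as the paper's: factor $\TG_{k,1}=\lambda^4\mathcal{R}/(P_0P_1)$, bound the numerator via integration by parts, get case-by-case lower bounds on $|P_0(i\tau)|$, transfer them to $|P_1(i\tau)|$, and apply Leibnitz. The estimates on $\mathcal{R}$ and $1/P_0$ are handled correctly.

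There is, however, a genuine gap in the transfer step for Case~(iii). You write that ``$|\mathcal{R}(i\tau,k)|\ll |P_0(i\tau)|$ holds in every regime thanks to~\eqref{good1st-bdR} together with the case-by-case lower bound on $|P_0|$.'' In Case~(iii) this fails: \eqref{good1st-bdR} (or even the sharper $|\mathcal{R}|\lesssim |k|^4/\tau^2$) only gives $|\mathcal{R}|\lesssim |k|^2$, while the lower bound $|P_0(i\tau)|\gtrsim \tau^2+|k|^2$ is itself of order $|k|^2$ when $|\tau|$ is just above $|k|$. The ratio $|\mathcal{R}|/|P_0|$ is therefore $O(1)$ with a constant you have no control over, and the triangle inequality $|P_1|\ge |P_0|-|\mathcal{R}|$ does not yield the needed lower bound on $|P_1|$.

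The paper closes this gap by a \emph{sign argument}: for $|k|<|\tau|\le \tau_0/2$, the explicit formula $D(i\tau,k)=1-\tau^{-2}\omega(|k|^2/\tau^2)$ shows that $\mathcal{R}(i\tau,k)$ is real and \emph{positive}, while $P_0(i\tau)=\tau^4-\tau^2\tau_0^2-|k|^2e_0$ is real and \emph{negative} (since $\tau^4\le \frac14\tau_0^2\tau^2$). Hence $P_1(i\tau)=P_0(i\tau)-\mathcal{R}(i\tau,k)$ is even more negative, giving $|P_1(i\tau)|\ge |P_0(i\tau)|\gtrsim \tau^2+|k|^2$ directly, without any smallness. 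Once this is inserted, the rest of your Leibnitz argument goes through.
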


We observe that Lemma~\ref{lem:gamma1} is a direct consequence of Lemma~\ref{lem:TG1gamma1}, using
$$\int_{\RR} (x^2 + |k|^2)^{-1}dx \le 4|k|^{-1}.$$

\begin{proof}[Proof of Lemma~\ref{lem:TG1gamma1}]

Recall that the polynomial $\tau^4 -\tau^2 \tau_0^2- |k|^2e_0$ has four distinct roots $\tau_{\pm,0} \sim \pm \tau_0 $ and $\tau_{\pm,1} \sim \pm i \frac{\sqrt{e_0}}{\tau_0} |k|$. Note also that $|\tau_* - \tau_0|\lesssim |k|^2$. Therefore, when $|\tau \pm \tau_*|\ge |k|$, we have 
\begin{equation}\label{lowbd-tau} 
\Big| \frac{1}{\tau^4 -\tau^2\tau_0^2 - |k|^2e_0} \Big |\lesssim \left \{ \begin{aligned} (1+|\tau|^4)^{-1}
\qquad& \quad \mbox{if}\quad |\tau|\ge \frac32   \tau_0,
\\
(|\tau \pm \tau_*| + |k|)^{-1} \qquad& \quad \mbox{if}\quad \frac12  \tau_0\le |\tau|\le \frac32  \tau_0,
 \\
 ( |\tau|^2 + |k|^2 )^{-1}\qquad& \quad \mbox{if}\quad |k|< |\tau| \le \frac12   \tau_0.
  \end{aligned}\right.
\end{equation}
We now show that the exact same upper bounds hold for $(\tau^4 - \tau^2 \tau_0^2-  |k|^2e_0
- \mathcal{R}(i\tau,k))^{-1}$ in the region when $|\tau|> |k|$ and $|\tau \pm \tau_*| \ge  |k|$. Indeed, the bound is clear when $|\tau| \ge \tau_0/2$, for which $|\mathcal{R}(i\tau,k)| \lesssim |k|^4$, upon using \eqref{good-Rb}.
On the other hand, in the case when $|k|< |\tau|\le \tau_0/2$, we note from \eqref{eq:formulaDtauleqk} that 
$$D(i\tau ,k) = 1 - \frac{1}{\tau^2} \omega(|k|^2/\tau^2),\qquad \omega(y) =  -\int_{-1}^1 \frac{u^2}{1-y u^2}  \kappa(u) \; du,
$$
where $\omega(y)$ and all its derivatives are real-valued and nonnegative (recalling $\kappa(u)\le 0$). As a result, the remainder $\mathcal{R}(i\tau,k)$ is real and positive.\footnote{In fact, a direct calculation yields
$
\mathcal{R}(i\tau,k)
= - \frac{|k|^4 }{\tau^2}\int_{-1}^1 \frac{ u^6 \kappa(u)}{1 - |k|^2 u^2 / \tau^2} \; du
$. }
In addition, since $|\tau|\le \tau_0/2$, we have $\tau^4 \le \frac{\tau_0^2}4 \tau^2$ and so 
\begin{equation}\label{low-Rsmall} 
\Big |\tau^4 - \tau^2 \tau_0^2-  |k|^2e_0
- \mathcal{R}(i\tau,k) \Big | \ge \frac 34 \tau^2\tau_0^2+  |k|^2e_0
\gtrsim |\tau|^2 + |k|^2,\end{equation}
as claimed.

Next, recalling \eqref{good-Rb}, we have $|\tau^4 \mathcal{R}(i\tau,k) | \lesssim |\tau|^2 |k|^4$. 
Putting these into \eqref{def-TGplus} yields 
\begin{equation}\label{bd-TGk10proof}
|\TG_{k,1}(i\tau)|
\lesssim \left \{ \begin{aligned} |k|^4 |\tau|^2(1 + |\tau|^4)^{-2} \quad& \quad \mbox{if}\quad |\tau|\ge \frac32  \tau_0,
\\
 |k|^4 |\tau|^2  (|\tau \pm \tau_*|^2 + |k|^2)^{-1}  \quad& \quad \mbox{if}\quad \frac12  \tau_0\le |\tau|\le \frac32  \tau_0,
 \\
 |k|^4 |\tau|^2( |\tau|^2 + |k|^2)^{-2} \quad& \quad \mbox{if}\quad |k|< |\tau| \le \frac12  \tau_0,
  \end{aligned}\right.
\end{equation}
whenever $|\tau \pm \tau_*|\ge |k|$.

Similarly, we next bound the integral of $\partial_\lambda^n\TG_{k,1}(\lambda) $ on $\Gamma_1$. It suffices to show that  $|k|^n \partial_\lambda^n \TG_{k,1}(i\tau)$ satisfies the same bounds as those for $\TG_{k,1}(i\tau)$. Indeed, in view of \eqref{lowbd-tau}, we have
$$
\Big | \partial_\lambda \Big(\frac{1}{\lambda^4+ \lambda^2\tau_0^2 - |k|^2e_0}\Big)_{\lambda = i\tau}\Big| 
\lesssim \left \{ \begin{aligned} \langle \tau\rangle^3(1 + |\tau|^4)^{-2} 
\qquad& \quad \mbox{if}\quad |\tau|\ge \frac32  \tau_0,
\\
(|\tau \pm \tau_*| + |k|)^{-2} \qquad& \quad \mbox{if}\quad \frac12  \tau_0\le |\tau|\le \frac32  \tau_0,
 \\
|\tau| (  |\tau|^2 + |k|^2)^{-2} \qquad& \quad \mbox{if}\quad |k| < |\tau| \le \frac12  \tau_0.
  \end{aligned}\right.
$$
Note that $(|\tau \pm \tau_*| + |k|)^{-1} \le |k|^{-1} $ and $|\tau| (  |\tau|^2 + |k|^2)^{-1} \le |k|^{-1}$. This proves that the $|k|\partial_\lambda$ derivative of $(\tau^4 -\tau^2 \tau_0^2 - |k|^2e_0)^{-1}$ satisfies the same bounds as those for $(\tau^4 -\tau^2 \tau_0^2- |k|^2e_0)^{-1}$, see \eqref{lowbd-tau}. The $|k|\partial_\lambda$ derivatives of $(\lambda^4 +\lambda^2\tau_0^2 -  |k|^2e_0
- \mathcal{R}(\lambda,k))^{-1}$ follow similarly, upon using \eqref{good-Rb} and the lower bound \eqref{low-Rsmall}.
\end{proof}

\bigskip \noindent{\it Bounds on $\TG_{k,1}(\lambda)$ on $\Gamma_2$.}
We next consider the integral on $\Gamma_2$. 

\begin{lem}
\label{lem:gamma2} We have
\begin{equation}
\label{eq:gamma2}
\Big|\int_{\Gamma_2} e^{ i\tau t}  |k|^n \partial_\lambda^n \TG_{k,1}(i\tau) \; d\tau \Big|\lesssim |k|^3,
\end{equation}
for $0\le n\le (N_{0}-9)/2$. 
\end{lem}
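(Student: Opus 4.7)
The plan is to mirror the proof of Lemma~\ref{lem:gamma1} and establish a uniform pointwise bound $|(|k|\partial_\lambda)^n \TG_{k,1}(i\tau)| \lesssim |k|^2$ on $\Gamma_2$, which integrated over an interval of length $\lesssim |k|$ produces the claimed $|k|^3$. Since $|k|$ is small in this regime and $\tau_*(|k|) \sim \tau_0 \gg |k|$, the condition $|\tau \pm \tau_*(|k|)| \ge |k|$ is automatic on $\{|\tau|\le |k|\}$, so $\Gamma_2$ is essentially the interval $\{|\tau|\le |k|\}$.

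The key observation is that, on the imaginary axis, the algebraic identity $Q(\lambda, k) = \lambda^4 D(\lambda, k)$ (which follows from $\TG_k = 1/D = \lambda^4/Q$) simplifies
\begin{equation*}
\TG_{k,1}(\lambda) = \frac{\lambda^4 \mathcal{R}(\lambda, k)}{P(\lambda, k) Q(\lambda, k)} = \frac{\mathcal{R}(\lambda, k)}{P(\lambda, k)\, D(\lambda, k)},
\end{equation*}
cancelling the $\lambda^4$ factor. I would then bound each factor separately at the scale $|k|\partial_\lambda$, which is the natural scale since $\lambda \sim |k|$ on $\Gamma_2$. The lower bounds $|P(i\tau, k)| \gtrsim |k|^2$ and $|D(i\tau, k)| \gtrsim |k|^{-2}$ follow respectively from the dominance of the constant term $-|k|^2 e_0$ in $P$ and from the spectral stability estimate \eqref{lowbd-Ditau2} (supplemented by the explicit formula $D(\pm i|k|, k) = 1 - \kappa_0^2/|k|^2$ at the boundary $|\tau| = |k|$).

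For the smoothness estimates, I would use the identity
\begin{equation*}
\mathcal{R}(\lambda,k) = \lambda^2 \tau_0^2 - |k|^2 e_0 - \lambda^4 \mathcal{L}[K_k](\lambda),
\end{equation*}
obtained by four integrations by parts using $K_k^{(2j)}(0) = 0$, $K_k'(0) = \tau_0^2$, and $K_k'''(0) = -|k|^2 e_0$; combined with the bounds $|\partial_\lambda^n \mathcal{L}[K_k](\lambda)| \lesssim (|k|^2 + |\lambda|^2)^{-n/2-1}$ from Corollary~\ref{coro:LKK}, this yields $|(|k|\partial_\lambda)^n \mathcal{R}(i\tau, k)| \lesssim |k|^2$. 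Similarly, $|(|k|\partial_\lambda)^n D(i\tau, k)| \lesssim |k|^{-2}$ and $|(|k|\partial_\lambda)^n P(i\tau, k)| \lesssim |k|^2$ are straightforward. To propagate these to $1/P$ and $1/D$, I would use an induction based on the recursion $F \cdot \partial_\lambda^n(1/F) = -\sum_{i=1}^n \binom{n}{i} \partial_\lambda^i F \cdot \partial_\lambda^{n-i}(1/F)$, giving $|(|k|\partial_\lambda)^n(1/P)| \lesssim |k|^{-2}$ and $|(|k|\partial_\lambda)^n(1/D)| \lesssim |k|^2$.

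Combining the four estimates via the Leibniz formula yields $|(|k|\partial_\lambda)^n \TG_{k,1}(i\tau)| \lesssim |k|^2 \cdot |k|^{-2} \cdot |k|^2 = |k|^2$, and integrating over $\Gamma_2$ gives \eqref{eq:gamma2}. The main technical obstacle is the derivative transfer to $1/D$: since $|D|$ itself can be of order $|k|^{-2}$, a naive quotient rule would produce unbalanced powers of $|k|$ at each order. The uniform-in-$n$ bound relies crucially on the observation that each factor $F$ (be it $P$, $D$ or $\mathcal{R}$) and its reciprocal $1/F$ satisfy matched scale-invariant estimates at the scale $|k|\partial_\lambda$, which is the natural low-frequency scaling in this regime.
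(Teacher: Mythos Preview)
Your proposal is correct and follows essentially the same route as the paper: rewrite $\TG_{k,1}(i\tau)=\mathcal{R}(i\tau,k)\big/\big(P(i\tau,k)\,D(i\tau,k)\big)$ with $P=\lambda^4+\lambda^2\tau_0^2-|k|^2e_0$, use the spectral lower bound $|D|\gtrsim|k|^{-2}$ from \eqref{lowbd-Ditau2} together with $|P|\gtrsim|k|^2$, show that $|k|\partial_\lambda$ preserves all the relevant bounds, and integrate over an interval of length $|k|$. The only (harmless) variations are that you obtain $|(|k|\partial_\lambda)^n\mathcal{R}|\lesssim|k|^2$ via the algebraic identity $\mathcal{R}=\lambda^2\tau_0^2-|k|^2e_0-\lambda^4\cL[K_k]$ combined with Corollary~\ref{coro:LKK}, while the paper differentiates the integral representation \eqref{def-remainderRG} directly; and you spell out the Leibniz induction for $1/F$ explicitly, whereas the paper only writes the $n=1$ step.
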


\begin{proof}[Proof of Lemma~\ref{lem:gamma2}] In this case, $\mathcal{R}(i\tau,k)$ is of order $|k|^2$ and thus no longer a remainder in the expansion of $ \cL[K_k(t)](i\tau)$.  Recall by definition that 
$$
\TG_{k,1}( i\tau) 
= \frac{\mathcal{R}(i\tau,k) }{(\tau^4 -\tau^2\tau_0^2 -  |k|^2e_0)(1 +  \cL[K_k(t)](i\tau))} .
$$
Using \eqref{lowbd-Ditau2},  we have
\begin{equation}\label{low-tau4} 
| 1+ \cL[K_k(t)](i\tau)| \gtrsim |k|^{-2}.
\end{equation}
Therefore, recalling $|\mathcal{R}(i\tau,k)|\lesssim |k|^2$ and using $ |\tau^4 -\tau^2 \tau_0^2-  |k|^2e_0| \gtrsim |\tau|^2 + |k|^2$ since $|\tau|\le |k|$, we obtain 
$$
\begin{aligned}
| \TG_{k,1}(i\tau)|
& \lesssim 
\frac{ |k|^4}{ |\tau|^2 + |k|^2} 
\lesssim |k|^2,
\end{aligned}$$
which gives 
$$
\Big|\int_{\{ |\tau| \le |k|\}} e^{ i\tau t}  \TG_{k,1}(i\tau) \; d\tau \Big|\lesssim |k|^3.
$$

Similarly, we now bound the $|k|\partial_\lambda$ derivatives. First, by definition, we compute 
$$
\partial^n_\lambda \cL[K_k(t)](i\tau ) 
 = \int_0^\infty e^{-i\tau t} (-it)^n K_k(t)\; dt  .
$$
Using the decay estimates \eqref{bounds-Kkt} on $K_k(t)$, we have $|k^n\partial^n_\lambda \cL[K_k(t)](i\tau )|\lesssim |k|^{-2}$. Therefore, together with \eqref{low-tau4}, 
$$
 \Big| k\frac{d}{d\lambda} \Big( \frac{1}{1 +  \cL[K_k(t)](\lambda)) } \Big)_{\lambda = i\tau} \Big| \lesssim |k|^4| k\partial_\lambda \cL[K_k(t)](i\tau )| \lesssim |k|^2.$$  
That is, the $|k|\partial_\lambda$ derivative of $(1 +  \cL[K_k(t)](i\tau))^{-1}$ satisfies the same bound as that for $(1 +  \cL[K_k(t)](i\tau))^{-1}$, that is bounded by $C_0 |k|^2$. Similarly, recalling \eqref{def-remainderRG}, we also have $| k^n \partial_\lambda^n \mathcal{R}(i\tau,k)|\lesssim |k|^{2}$. Therefore, $|k|^n \partial_\lambda^n \TG_{k,1}(i\tau)$ satisfies the same bounds as in \eqref{bd-TGk10proof} for $\TG_{k,1}(i\tau)$, yielding~\eqref{eq:gamma2}. 

\end{proof}

\bigskip \noindent{\it Bounds on $\TG_{k,1}(\lambda)$ on $\cC_\pm$.}
Finally, we study the case when $\lambda$ near the singularity of $\TG_k(\lambda)$: namely, when $\lambda$ is on the semicircle $|\lambda \mp i\tau_*(|k|)| = |k|$ with $\Re \lambda \ge 0$. 

\begin{lem}
\label{lem:nearsing}
We have for all $0\le n\le N$, the decomposition
\begin{equation}
\label{eq:nearsing}
\begin{aligned}
 \frac{(-1)^n}{2\pi i (|k|t)^n}\int_{\cC_\pm}e^{\lambda t} |k|^n\partial_\lambda^n\TG_{k,1}(\lambda)  \; d\lambda 
 &=  \sum_\pm  e^{\lambda_\pm t} \mathrm{Res} (\TG_{k}(\lambda_{\pm}))  -  \sum_\pm e^{\lambda_{\pm,0} t}  \mathrm{Res} (\TG_{k,0}(\lambda_{\pm,0}))  \\
&+ \cO(|k|^3 \langle kt\rangle^{-n}).
 \end{aligned}
 \end{equation}
 \end{lem}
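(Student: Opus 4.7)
The strategy is to close the right semicircle $\cC_+$ into a full circle around $\lambda_+$ by attaching its mirror image $\cC_+^L$ in the left half-plane, apply Cauchy's residue theorem on this closed circle, and then control the error from the added arc by a direct pointwise estimate. The case of $\cC_-$ follows by complex conjugate symmetry. The enabler is Corollary~\ref{coro:extenD}: $D$ has a holomorphic extension to $\mathbb{C}\setminus i[-|k|,|k|]$, and since $\tau_*(|k|)>|k|$ by~\eqref{lowerbound-taustarE}, the open disk $B(\lambda_+,|k|)$ lies strictly inside this domain. Therefore $\TG_k=1/D$, the rational function $\TG_{k,0}=Q/P$, and $\TG_{k,1}=\TG_k-1-\TG_{k,0}$ all extend meromorphically to $B(\lambda_+,|k|)$.

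First I would identify the poles of $\TG_{k,1}$ inside $\overline{B(\lambda_+,|k|)}$ and compute their residues. Since $|\lambda_+-\lambda_{+,0}|\lesssim|k|^2\ll|k|$ by~\eqref{fake-roots}, both $\lambda_+$ (the simple zero of $D$) and $\lambda_{+,0}$ (the closest simple zero of $P$) sit inside this disk, while $\mu_{\pm,0}$, $\lambda_-$ and the remaining zeros of $P-\mathcal{R}$ are far away (their real part is $\sim|k|$ or their imaginary part differs by $\sim\tau_0$). Using the regularity of $\TG_{k,0}$ at $\lambda_+$ and of $\TG_k$ at $\lambda_{+,0}$, the decomposition $\TG_{k,1}=\TG_k-1-\TG_{k,0}$ gives
$$
\mathrm{Res}_{\lambda_+}\TG_{k,1}=\mathrm{Res}_{\lambda_+}\TG_k,\qquad \mathrm{Res}_{\lambda_{+,0}}\TG_{k,1}=-\mathrm{Res}_{\lambda_{+,0}}\TG_{k,0}.
$$
The elementary identity $\mathrm{Res}_{\lambda_0}[e^{\lambda t}\partial_\lambda^n F]=(-t)^n e^{\lambda_0 t}\mathrm{Res}_{\lambda_0}F$ (for $F$ with a simple pole at $\lambda_0$, verified by Laurent expansion), combined with the residue theorem on the counterclockwise loop $\cC_+\cup\cC_+^L$, then produces the residue sum in~\eqref{eq:nearsing} after dividing by $2\pi i(|k|t)^n$ and inserting the sign $(-1)^n$.

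It remains to bound the integral over the added arc $\cC_+^L$. Since $\Re\lambda\in[-|k|,0]$ there, one has $|e^{\lambda t}|\le 1$ for $t\ge 0$, and every point of $\cC_+^L$ stays at distance $\sim|k|$ from both poles. From the explicit formula $\TG_{k,1}=\lambda^4\mathcal{R}/(P(P-\mathcal{R}))$ with $|\lambda|\sim\tau_0$, $|\mathcal{R}|\lesssim|k|^4$ from~\eqref{good-Rb}, and $|P|,|P-\mathcal{R}|\gtrsim|k|$, I would deduce the pointwise bound $|\TG_{k,1}|\lesssim|k|^2$ on $\cC_+^L$. Cauchy's inequality on a disk of radius $\sim|k|$ around each point then yields $|k|^n|\partial_\lambda^n\TG_{k,1}|\lesssim|k|^2$ for $n\le N$, and since $\cC_+^L$ has length $\pi|k|$ the error integral is $\lesssim|k|^3$. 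Dividing by $(|k|t)^n$ gives the bound $\mathcal{O}(|k|^3\langle kt\rangle^{-n})$ in the regime $|kt|\gtrsim 1$; for $|kt|\lesssim 1$ the same conclusion is obtained from the $n=0$ version, since $\langle kt\rangle^{-n}\sim 1$ and the bound $|k|^3$ is already achieved.

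The main technical obstacle is making sure that the analytic extension of the Laplace-type object $\mathcal{R}(\lambda,k)$ (originally defined only for $\Re\lambda>0$) is available throughout $B(\lambda_+,|k|)$; this is exactly where the gap $\tau_*(|k|)>|k|$ and the low-frequency restriction $|k|\le\varepsilon$ are indispensable, since near the threshold $|k|=\kappa_0$ this gap closes and only the smooth (non-holomorphic) extension from Corollary~\ref{coro:extenD} is available. A secondary point is verifying that no unexpected zero of $P-\mathcal{R}$ appears inside $B(\lambda_+,|k|)$ beyond $\lambda_+$ itself, which follows from the non-degeneracy $\partial_\lambda D(\lambda_+,k)\ne 0$ guaranteed by Theorem~\ref{theo-LangmuirE} together with a Rouché-type comparison of $P-\mathcal{R}$ with its leading part $P$ on a suitable slightly smaller circle.
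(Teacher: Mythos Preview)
Your approach is correct but takes a genuinely different route from the paper. The paper deliberately \emph{avoids} invoking the holomorphic extension of $D$ into $\{\Re\lambda<0\}$: it Taylor-expands both $\lambda^4/(P-\mathcal{R})$ and $\lambda^4/P$ around their respective poles using only data on $\{\Re\lambda\ge 0\}$, obtains a finite sum of explicit rational functions (which extend trivially to all of $\mathbb{C}$) plus a Taylor remainder, applies the residue theorem to the rational part, and bounds the remainder on the chord $\{\lambda=i\tau:|\tau\mp\tau_*|\le|k|\}$. You instead use the first bullet of Corollary~\ref{coro:extenD} to extend $\TG_{k,1}$ meromorphically to the full disk $B(i\tau_*,|k|)$ and close the contour with the left arc $\cC_+^L$. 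Your route is shorter and conceptually cleaner for this particular lemma; the paper's route is chosen because it is ``extension-free'' and therefore transplants verbatim to the threshold regime $|k|\approx\kappa_0$ (where only a $C^K$, non-holomorphic extension $\widetilde{\widetilde D}$ is available) and to the magnetic Green function in Appendix~\ref{sec:proof2}.

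There is one genuine gap in your write-up. You invoke \eqref{good-Rb} for the bound $|\mathcal{R}(\lambda,k)|\lesssim|k|^4$ on $\cC_+^L$, but \eqref{good-Rb} is proved only for $\Re\lambda\ge 0$ (it comes from integration by parts in the Laplace integral, which diverges for $\Re\lambda<0$). To get the bound on the left arc you must instead use the explicit formula for the extension, $D(\lambda,k)=1-|k|^{-2}\Phi(-i\lambda/|k|)$ with $\Phi(z)=\int_{-1}^1\frac{u\kappa(u)}{u+z}\,du$, and expand $\Phi$ in a geometric series in $u/z$ (convergent since $|z|=|\lambda|/|k|\sim\tau_0/|k|\gg1$ on the relevant annulus); the first two terms reproduce $\tau_0^2/\lambda^2$ and $-|k|^2e_0/\lambda^4$, and the tail is the extended $\mathcal{R}$, manifestly $\cO(|k|^4)$. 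Once this is in hand, your Cauchy-inequality step goes through: a disk of radius $|k|/2$ centered at any point of $\cC_+^L$ stays in the annulus $\{|k|/2\le|\lambda-i\tau_*|\le 3|k|/2\}$, which avoids both poles (since $|\lambda_+-\lambda_{+,0}|\lesssim|k|^4\ll|k|/2$), and on that annulus $|P|\gtrsim|k|$, $|P-\mathcal{R}|\ge|P|-|\mathcal{R}|\gtrsim|k|$, hence $|\TG_{k,1}|\lesssim|k|^2$ and $|k|^n|\partial_\lambda^n\TG_{k,1}|\lesssim|k|^2$ as you claim.
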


\begin{proof}[Proof of Lemma~\ref{lem:nearsing}]

On $\cC_\pm$, since $|\lambda|\ge \tau_0/2$, we note from \eqref{good-Rb} that 
\begin{equation}\label{good-GRb1}
|\partial_\lambda^n \mathcal{R}(\lambda,k) |\lesssim |k|^4 ,
\end{equation}
uniformly in $\Re \lambda \ge 0$ for $0\le n\le N$. Recall from the decomposition \eqref{TG-decompose} that $\TG_{k,1}(\lambda ) = \TG_{k}(\lambda ) - \TG_{k,0}(\lambda )$, 
which reads
\begin{equation}\label{recomp-G1}
\begin{aligned}
\TG_{k,1}(\lambda ) = \frac{ \lambda^4 }{\lambda^4 +\lambda^2\tau_{0}^2 -  |k|^2e_0
- \mathcal{R}(\lambda,k)}  - \frac{ \lambda^4}{\lambda^4 +\lambda^2\tau_{0}^2 -  |k|^2e_0} .
\end{aligned}\end{equation}
Let $\lambda_\pm(k)$ and $\lambda_{\pm,0}(k)$ be the poles of the Green kernels 
$\TG_k(\lambda)$ and $\TG_{k,0}(\lambda) $, respectively, that lie on the imaginary axis, see Theorem~\ref{theo-LangmuirE}  and \eqref{def-lambda1234}. Clearly, they are isolated zeros of $\lambda^4 +\lambda^2\tau_0^2 -  |k|^2e_0
- \mathcal{R}(\lambda,k)$ and $\lambda^4 +\lambda^2\tau_0^2 -  |k|^2e_0$, respectively. By construction, we have
\begin{equation}\label{diff-lambdaG} 
 |\lambda_\pm(k) - \lambda_{\pm,0}(k)|\lesssim |k|^4.
\end{equation}
In addition, we have by a Taylor expansion, 
$$
\begin{aligned}
\lambda^4 +\lambda^2\tau_{0}^2 -  |k|^2e_0
& = \sum_{j=1}^{N} a_{\pm,j,0}(k) (\lambda - \lambda_{\pm,0}(k))^j, 
\\
\lambda^4 +\lambda^2 \tau_{0}^2 -  |k|^2e_0
- \mathcal{R}(\lambda,k)
 &=
\sum_{j=1}^{N} a_{\pm,j}(k) (\lambda - \lambda_\pm(k))^j+  \mathcal{R}_{\pm}(\lambda,k),
\end{aligned}$$
for any $\lambda \in \{|\lambda \mp i\tau_*| \le |k|\}$ with $\Re \lambda \ge 0$, where  by convention $a_{\pm, j,0}= 0$ for $j>4$ and $\partial_\lambda^j  \mathcal{R}_{\pm}(\lambda_{\pm},k) =0$ for $0\le j\le N$. 
Note that in the Taylor formula of the above second line, we are using that $D$ is holomorphic with a smooth extension
up to the boundary so that there is no term involving $\overline{\lambda}$.
In view of \eqref{good-GRb1} and \eqref{diff-lambdaG}, it is direct to deduce 
\begin{equation}\label{coeff-RaG} |a_{\pm,j,0}(k)  - a_{\pm,j}(k)| \lesssim |k|^4, \qquad  |\partial_\lambda^j \mathcal{R}_{\pm}(\lambda,k)|\lesssim |k|^4,\end{equation}
for $0\le j\le N$. Since the leading coefficients $a_{\pm,1,0}(k)$ and $a_{\pm,1}(k)$ never vanish, we obtain 
\begin{equation}\label{exp-inverseGR}
\begin{aligned}
\frac{\lambda^4}{\lambda^4 +\lambda^2 -  |k|^2e_0} 
& = \frac{1}{\lambda - \lambda_{\pm,0}(k)}
\sum_{j=0}^{N} b_{\pm,j,0}(k) (\lambda - \lambda_{\pm,0}(k))^j+  \widetilde{\mathcal{R}}_{\pm,0}(\lambda,k),
\\
\frac{\lambda^4}{\lambda^4 +\lambda^2 -  |k|^2e_0
- \mathcal{R}(\lambda,k)} 
 &= 
 \frac{1}{\lambda - \lambda_\pm(k)}
\sum_{j=0}^{N} b_{\pm,j}(k) (\lambda - \lambda_\pm(k))^j+  \widetilde{\mathcal{R}}_{\pm}(\lambda,k),
\end{aligned}\end{equation}
for $\lambda \in \{|\lambda \mp i \tau_*(|k|)| \le |k|\}$ with $\Re \lambda \ge 0$, where the coefficients $b_{\pm,j,0}(k)$ and $b_{\pm,j}(k)$ can be computed in terms of $a_{\pm,j,0}(k)$ and $a_{\pm,j}(k)$, together with $\partial_\lambda^j \widetilde{\mathcal{R}}_{\pm,0}(\lambda_{\pm,0},k) =0$ and $\partial_\lambda^j  \widetilde{\mathcal{R}}_{\pm}(\lambda_{\pm},k) =0$ for $0\le j\le N$. Importantly, it follows from \eqref{diff-lambdaG} and \eqref{coeff-RaG} that 
\begin{equation}\label{coeff-GRa1}
 |b_{\pm,j,0}(k)  - b_{\pm,j}(k)| \lesssim |k|^4, \qquad  |\partial_\lambda^j \widetilde{\mathcal{R}}_{\pm,0}(\lambda,k) - \partial_\lambda^j \widetilde{\mathcal{R}}_{\pm}(\lambda,k)|\lesssim |k|^4,\end{equation}
for $0\le j\le N$. Note in particular that by construction, 
\begin{equation}\label{def-Gbn0}
b_{\pm,0,0}(k) =  \mathrm{Res} (\TG_{k,0}(\lambda_{\pm,0}))  , \qquad b_{\pm,0}(k) = \mathrm{Res} (\TG_{k}(\lambda_{\pm})) 
\end{equation}
for each $\pm$.

We are now ready to bound $\TG_{k,1}(\lambda ) $, using \eqref{recomp-G1} and the above expansions. Indeed, using \eqref{exp-inverseGR}, we first write
\begin{equation}\label{exp-Glambdak1}
\begin{aligned}
\TG_{k,1}(\lambda )  &= 
\sum_{j=0}^{N} \Big[ \frac{b_{\pm,j}(k)}{ (\lambda - \lambda_\pm(k))^{-j+1}} -  \frac{b_{\pm,j,0}(k)}{ (\lambda - \lambda_{\pm,0}(k))^{-j+1}}\Big] +  \widetilde{\mathcal{R}}_{\pm}(\lambda,k) -  \widetilde{\mathcal{R}}_{\pm,0}(\lambda,k),
\end{aligned}\end{equation}
where, using \eqref{coeff-GRa1}, the remainder satisfies $|  \widetilde{\mathcal{R}}_{\pm}(\lambda,k) -  \widetilde{\mathcal{R}}_{\pm,0}(\lambda,k) |\lesssim |k|^4$ for $\lambda \in \{|\lambda \mp i \tau_*(|k|)| \le |k|\}$ with $\Re \lambda \ge 0$. Since the terms in the summation are meromorphic in $\mathbb{C}$, we may apply the Cauchy's residue theorem to deduce 
$$
\begin{aligned}
 \frac{1}{2\pi i}\int_{\cC_\pm}e^{\lambda t} \TG_{k,1}(\lambda)  \; d\lambda &=  \sum_\pm  e^{\lambda_\pm t} b_{\pm,0}(k)  -  \sum_\pm e^{\lambda_{\pm,0} t} b_{\pm,0,0}(k) 
  \\&\quad + 
 \frac{1}{2\pi i}  \sum_{j=0}^{N}  \int_{\cC_\pm^*}  e^{\lambda t} \Big[ \frac{b_{\pm,j}(k)}{ (\lambda - \lambda_\pm(k))^{-j+1}} -  \frac{b_{\pm,j,0}(k)}{ (\lambda - \lambda_{\pm,0}(k))^{-j+1}}\Big]  \; d\lambda
\\&\quad  + \frac{1}{2\pi i }\int_{\cC_\pm} e^{\lambda t} \Big[ \widetilde{\mathcal{R}}_{\pm}(\lambda,k) -  \widetilde{\mathcal{R}}_{\pm,0}(\lambda ,k)\Big]  \; d\lambda,
 \end{aligned}$$
 where $\cC_\pm^*$ denotes the semicircle $|\lambda \mp i \tau_*| = |k|$ with $\Re \lambda \le 0$.
 
 Let us first treat the last term of the above sum. By a further use of Cauchy's residue theorem and a continuity argument, we have
 $$
 \frac{1}{2\pi i }\int_{\cC_\pm} e^{\lambda t} \Big[ \widetilde{\mathcal{R}}_{\pm}(\lambda,k) -  \widetilde{\mathcal{R}}_{\pm,0}(\lambda ,k)\Big]  \; d\lambda=  \frac{1}{2\pi  }\int_{|\tau\mp i \tau_*|\leq |k|} e^{i\tau t} \Big[ \widetilde{\mathcal{R}}_{\pm}(i\tau,k) -  \widetilde{\mathcal{R}}_{\pm,0}(i \tau ,k)\Big]  \; d\tau.
$$
 Since $| \widetilde{\mathcal{R}}_{\pm}(i\tau,k) -  \widetilde{\mathcal{R}}_{\pm,0}(i\tau ,k) |\lesssim |k|^4$, the last integral term is clearly bounded by $C_0 |k|^5$. As for the integral on $\cC_\pm^*$, recalling \eqref{diff-lambdaG}, we have 
 $|\lambda- \lambda_\pm(k)| \ge |k|/2$ and $|\lambda- \lambda_{\pm,0}(k)| \ge |k|/2$ on $\cC_\pm^*$. Therefore, together with \eqref{diff-lambdaG} and \eqref{coeff-GRa1}, we bound for $\lambda \in \cC_\pm^*$, 
$$\Big |  \frac{b_{\pm,0}(k) }{\lambda - \lambda_\pm(k)}  - \frac{b_{\pm,0,0}(k)}{\lambda - \lambda_{\pm,0}(k)} \Big | \lesssim \frac{|k|^4}{|\lambda - \lambda_\pm(k) ||\lambda - \lambda_{\pm,0}(k)|} \lesssim |k|^2,$$
and 
$$  \sum_{j=1}^{N} \Big | \frac{b_{\pm,j}(k)}{ (\lambda - \lambda_\pm(k))^{-j+1}} -  \frac{b_{\pm,j,0}(k)}{ (\lambda - \lambda_{\pm,0}(k))^{-j+1}}\Big | \lesssim |k|^4.
$$
This gives
 $$
\Big|  \frac{1}{2\pi i} \sum_{j=0}^{N} \int_{\cC_\pm^*}  e^{\lambda t}  \Big[  \frac{b_{\pm,j}(k) }{(\lambda - \lambda_\pm(k))^{-j+1}}  - \frac{b_{\pm,j,0}(k)}{(\lambda - \lambda_{\pm,0}(k))^{-j+1}} \Big] \; d\lambda\Big| \lesssim  \int_{\cC_\pm^*}  |k|^2 \, d \lambda \lesssim |k|^3$$ 
as desired. 

Similarly, we now bound the integral of $|k|^n \partial_\lambda^n\TG_{k,1}(\lambda ) $, using the higher-order expansions in \eqref{exp-inverseGR}. We first compute 
\begin{equation}\label{exp-DGlambdak1}
\begin{aligned}
\partial_\lambda^n\TG_{k,1}(\lambda )  &= 
\sum_{j=0}^{N} \frac{d^n}{d\lambda^n}\Big[ \frac{b_{\pm,j}(k)}{ (\lambda - \lambda_\pm(k))^{-j+1}} -  \frac{b_{\pm,j,0}(k)}{ (\lambda - \lambda_{\pm,0}(k))^{-j+1}}\Big] +  \partial_\lambda^n\widetilde{\mathcal{R}}_{\pm}(\lambda,k) -   \partial_\lambda^n\widetilde{\mathcal{R}}_{\pm,0}(\lambda,k),
\end{aligned}\end{equation}
for any $1\le n\le N$. Using \eqref{coeff-GRa1}, we have 
$$
\Big| \frac{1}{2\pi }\int_{\cC_\pm} e^{i\tau t} |k|^n \Big[\partial_\lambda^n\widetilde{\mathcal{R}}_{\pm}(i\tau,k) -  \partial_\lambda^n\widetilde{\mathcal{R}}_{\pm,0}(i\tau ,k)\Big]  \; d\tau\Big| \lesssim |k|^{n+5}. 
$$
We next check the terms in the summation. By Cauchy's residue theorem, we have
$$
\begin{aligned}
\frac{(-1)^n}{2\pi i t^n } \int_{\{|\lambda \mp i \tau_*| =  |k|\}} e^{\lambda t}
\frac{d^n}{d\lambda^n}\Big( \sum_{j=0}^{N}\frac{b_{\pm,j}(k)}{ (\lambda - \lambda_\pm(k))^{-j+1}} \Big) \; d\lambda = e^{\lambda_\pm t} b_{\pm,0}(k)  
\\
\frac{(-1)^n}{2\pi i t^n } \int_{\{|\lambda \mp i \tau_*| =  |k|\}} e^{\lambda t}
\frac{d^n}{d\lambda^n}\Big( \sum_{j=0}^{N}\frac{b_{\pm,j,0}(k)}{ (\lambda - \lambda_{\pm,0}(k))^{-j+1}} \Big) \; d\lambda = e^{\lambda_{\pm,0} t} b_{\pm,0,0}(k) . 
\end{aligned}$$
Therefore, it remains to prove that 
\begin{equation}\label{Dl-gamma3aG} 
\Big|\frac{1}{2\pi i}  \int_{\cC_\pm^*}  e^{\lambda t} |k|^n\frac{d^n}{d\lambda^n}\Big[ \frac{b_{\pm,j}(k)}{ (\lambda - \lambda_\pm(k))^{-j+1}} -  \frac{b_{\pm,j,0}(k)}{ (\lambda - \lambda_{\pm,0}(k))^{-j+1}}\Big] \; d\lambda \Big| \lesssim |k|^3,
\end{equation}
for any $0\le j,n \le N$. We focus on the most singular term: namely, the term with $j=0$; the others are similar. Since $|\lambda \mp i \tau_*| = |k|$, we have $|\lambda- \lambda_\pm(k)| \ge |k|/2$ and $|\lambda- \lambda_{\pm,0}(k)| \ge |k|/2$. Therefore, on $\cC_\pm^*$, we bound
$$
\begin{aligned}
\Big |\frac{d^n}{d\lambda^n} \Big[ \frac{b_{\pm,0}(k) }{\lambda - \lambda_\pm(k)}  - \frac{b_{\pm,0,0}(k)}{\lambda - \lambda_{\pm,0}(k)} \Big] \Big | & =  n! 
\Big | \frac{b_{\pm,0}(k) }{(\lambda - \lambda_\pm(k))^{n+1}}  - \frac{b_{\pm,0,0}(k)}{(\lambda - \lambda_{\pm,0}(k))^{n+1}} \Big | 
\\& \lesssim \frac{ |b_{\pm,0}(k) - b_{\pm,0,0}(k)|}{|\lambda - \lambda_\pm(k) |^{n+1}}  +  \frac{ |\lambda_\pm(k) - \lambda_{\pm,0}(k)| }{|\lambda - \tilde\lambda_\pm(k)|^{n+2}} 
\end{aligned}$$
for some $\tilde\lambda_\pm(k)$ in between $\lambda_\pm(k)$ and $\lambda_{\pm,0}(k)$. Using \eqref{diff-lambdaG} and \eqref{coeff-GRa1}, the above fraction is bounded by $|k|^{-n+2}$ on $\cC_\pm^*$, and the estimates \eqref{Dl-gamma3aG} thus follow.  Combining the previous estimates, we have therefore obtained~\eqref{eq:nearsing}.

\end{proof}

\bigskip

At this stage of the proof, combining \eqref{int-FGdecomp-n} with \eqref{eq:G00}, \eqref{eq:G001}, Lemmas~\ref{lem:gamma1}, \ref{lem:gamma2} and \ref{lem:nearsing},
 we obtain \eqref{bd-Grk1} as claimed. The proof of Lemma~\ref{lem:residue01} is therefore complete.

\bigskip \noindent{\it Residue of $\TG_{k}(\lambda)$.}
Owing to the previous analysis, we can see that $b_{\pm,0}(k)$ is a function of $|k|^2$, which we denote as $\alpha_\pm(|k|^2)$. To ease readability, we set 
$a_\pm (k) := b_{\pm,0}(k)$, which gives the oscillatory term as stated in \eqref{decomp-FG}. 
We can see as well that
$$ 
a_\pm (|k|^2)  = \pm {2 \over i \tau_0}  + {\mathcal{O}_{k \to 0}}(|k|^2).
$$

\bigskip \noindent{\it Regularity of $\TG_{k}(\lambda)$ in $k$.}
Finally, we study the regularity of $\TG_{k}(\lambda)$ in $k$. In view of \eqref{fakeres-G0}, it follows that
\begin{equation}\label{est-dkFGtr0k}
\begin{aligned} 
\Big |\partial_k^\alpha [ |k|^{-2} e^{\mu_{-,0}t} \mathrm{Res} (\TG_{k,0}(\mu_{-,0})) ] \Big| \lesssim \langle t\rangle^{|\alpha|-1} e^{-\theta_0 |kt|}, \qquad \forall |\alpha|\ge 1,
\end{aligned}\end{equation}
for any $\theta_0<\sqrt{e_0}$. On the other hand, by definition \eqref{def-Kt}, we 
note that 
$$
\begin{aligned}| \partial_k^\alpha \partial_t^n [ |k|^2K_k(t)] | &\le \Big| \partial_k^\alpha \int e^{-ikt \cdot \hv } (k\cdot \hv)^{n+1} \varphi'(\langle v\rangle)\; dv \Big|
\\&\lesssim |k|^{n+1} \langle t\rangle^{|\alpha|} \langle kt\rangle^{-N}
\end{aligned}
$$
for any $n,\alpha$. Therefore, recalling \eqref{def-remainderRG} and using the above estimate with $n=4$, we bound 
\begin{equation}\label{dk-Rlambda1}
|\partial_k^\alpha [|k|^2\mathcal{R}(\lambda,k)]|  \lesssim \int_0^\infty |k|^5 \langle t\rangle^{|\alpha|} \langle kt\rangle^{-N} dt \lesssim \langle t\rangle^{|\alpha|}|k|^{4}.
\end{equation}
That is, the derivatives $\partial_k^\alpha$ cost a factor of $t^\alpha$. Hence, as above, we obtain the bounds on $\partial_k^\alpha \FG^r_k(t)$ as claimed.

\bigskip

\noindent {\bf The case $\eps< |k|\leq \kappa_0$.}
The analysis is mostly similar and as a matter of fact slightly simpler than in the low frequency case. Given a small parameter $\eta>0$, using again Cauchy's residue theorem, we have
 \begin{equation*}\label{int-FGdecomp**}
\FG_{k}(t) =  \frac{1}{2\pi i}\int_{\widetilde\Gamma}e^{\lambda t} \TG_{k}(\lambda)\; d\lambda,
\end{equation*}
where 
\begin{equation*}
\widetilde\Gamma = \widetilde\Gamma_0 \cup \widetilde\cC_\pm
\end{equation*}
having set 
$$
\begin{aligned}
 \widetilde\Gamma_0 &= \{ \lambda = i\tau, \quad |\tau \pm \tau_*(|k|)|\ge  \eta\} ,
 \\
  \widetilde\cC_\pm &= \{\Re \lambda \ge 0, \quad |\lambda \pm i\tau_*(|k|)| =  \eta \} .
    \end{aligned}$$
The parameter $\eta$ is chosen small enough so that for all  $\eps\leq |k|\leq \kappa_0 $,  there is no pole of $\TG_{k}(\lambda)$ other than $z_\pm(k)$  on the semi-disk $ \{\Re z\geq 0, |\lambda - z_\pm(k)| \leq \eta \}$. 

Recalling the definition of $\TG_k$ in~\eqref{eq:Gk}, we note that 
$ |\partial_\lambda^n\TG_k(i \tau) |\lesssim_{k} \frac{1}{\langle \tau\rangle^2}$ for $0\le n\le N$. Therefore, we have
$$
\Big|\int_{ \widetilde\Gamma_0} e^{i \tau t}  \partial_\lambda^n  \TG_k(i\tau) \; d\tau\Big|  \lesssim_k 1,$$
for $0\le n\le N$.  For what concerns the analysis on $\widetilde\cC_\pm $, we argue as  for $\cC_\pm $ in the previous low frequency analysis, by extracting the main contribution of the pole thanks to a Taylor expansion of $D$ at the points  $\pm i\tau_{*}(k)$
Note that since the partial derivative have continuous extension up to the boundaries this Taylor expansion exists 
and that it involves only powers of $(\lambda -  \pm i\tau_{*}(k))$ since the function is holomorphic for $\Re \lambda >0$.
 The estimates are however more straightforward since we do not need to track the dependence with respect to $k$. In particular we do not need to write the decomposition~\eqref{TG-decompose}.
The outcome is
$$
\int_{ \widetilde\cC_\pm} e^{\lambda t}  \partial_\lambda^n  \TG_k(\lambda) \; d\lambda = 
e^{z_\pm(k) t} \mathrm{Res}(\TG_k(z_\pm(k))) + \mathfrak{R}^{(n)}_k(t),$$
for $0\le n\le N$, where
$$
\left|\mathfrak{R}^{(n)}_k(t)\right| \lesssim_k 1.
$$

\bigskip

\noindent {\bf The case $\kappa_0\leq|k|<\kappa_0 +\delta$.}
In this region, using Corollary~\ref{coro:extenD}, we work with the extension non-holomorphic  extension of $D$ on the whole plane, which is denoted by $\widetilde{\widetilde{D}}$. We argue mostly as in the previous case.  For $\eta>0$ chosen small enough, using again Cauchy's residue theorem, we have
 \begin{equation*}\label{int-FGdecomp***}
\FG_{k}(t) =  \frac{1}{2\pi i}\int_{\widetilde\Gamma}e^{\lambda t} \TG_{k}(\lambda)\; d\lambda,
\end{equation*}
where 
\begin{equation*}
\widetilde{\widetilde\Gamma} = \widetilde{\widetilde\Gamma}_0 \cup \widetilde{\widetilde\cC}_\pm
\end{equation*}
having set 
$$
\begin{aligned}
\widetilde{\widetilde\Gamma}_0 &= \{ \lambda = i\tau, \quad |i\tau -\lambda^{\text{elec}}_\pm(k)|\ge  \eta\} ,
 \\
  \widetilde{\widetilde\cC}_\pm &= \{\Re \lambda \ge 0, \quad |\lambda -\lambda^{\text{elec}}_\pm(k)| =  \eta \},
    \end{aligned}$$
    and we argue as before. 
    
    To extract the poles by using a Taylor formula as previously, we observe that 
    originally, $\lambda_{\pm}^{elec}(k)$ are defined by studying the function $\widetilde{\widetilde D}$ which is not holomorphic, see Theorem \ref{theo-LangmuirE}, which means that the Taylor formula on the lign  between $\lambda \in   \widetilde{\widetilde\cC}_\pm$  
    and $\lambda_{\pm}^{elec}(k)$ should involve a polynomial expansion in terms of $(\lambda -  \lambda_{\pm}^{elec})^\alpha
    \overline{(\lambda -  \lambda_{\pm}^{elec})^\beta}$.
    Nevertheless, as soon as $|k| \neq \kappa_{0}$ we have that $\lambda_{\pm}^{elec}(k)$
    are in the domain where $\widetilde{\widetilde D}$ coincides with  $\tilde D$ which is an holomorphic extension of $D$, 
    see Remark \ref{remarkholo}.  By continuity of the derivatives  up to the boundary as stated in Corollary  \ref{coro:extenD}, 
    we thus  still have a Taylor formula around  $\lambda_{\pm}^{elec}(k)$ for every $k$, $\kappa_{0} \leq |k| \leq \kappa_{0} + \delta$
    which involves only powers of $(\lambda-\lambda_{\pm}^{elec}(k))$ (without $\overline{\lambda}$).  Moreover, since we start from an holomorphic function
    in $\Re \lambda >0$,  the remainder will be also an holomorphic function in $\Re \lambda >0$ so that we can indeed control it by moving the circles
    $ \widetilde{\widetilde\cC}_\pm$ onto the imaginary axis in the end.

\bigskip

\noindent {\bf Smoothness of $a_\pm(k)$.}
There remains to prove that $a_\pm(k)$ is a  $ \mathcal{C}^{K}$, $K:= \lfloor N_{0}/2 - 4 \rfloor$ function. This is  clear on the regions $|k|< \kappa_0$ and $|k|>\kappa_0$, separately. To conclude, we use the fact that  $a_\pm(k) = \alpha_\pm(|k|^2)$ is a radial function and it follows, by the previous constructions and by the continuity properties of $\widetilde{\widetilde{\Phi}}$ at the point $z= \pm 1$ (see the third item of Proposition~\ref{prop:D} and its proof) that
$$
\lim_{r \to \kappa_0^-} \alpha_\pm^{(k)} (r^2) = \lim_{r \to \kappa_0^+} \alpha_\pm^{(k)} (r^2) 
$$
for $ k=0, \ldots, K$. Therefore we deduce that $a_\pm(k)$ is a $\ \mathcal{C}^{K}$ function.

This concludes the proof of Proposition~\ref{prop-GreenG}.

\section{Proof of Proposition~\ref{prop-Green}}
\label{sec:proof2}

As in the proof of Proposition~\ref{prop-GreenG}, it is not possible to directly extend the resolvent kernel to a meromorphic function on a domain of the form $\{ \Re z > - \delta |k|\}$. However, it would be possible here to rely on the holomorphic extension of Corollary~\ref{coro:extenM}. The proof we present here does not use it, but rather follows the approach previously described for Proposition~\ref{prop-GreenG}.

Recall that 
\begin{equation}\label{rep-Mlambdas1}
M(\lambda,k) =  \lambda^2 + |k|^2  +\tau_0^2  +  \cL[N_k(t)](\lambda) ,
\end{equation}
where $N_k(t)$ decays rapidly in $t$, $|\partial_t^n N_k(t)|\le C_{n,N} |k|^{n+1} \langle kt \rangle^{-N}$, see \eqref{bounds-Nkt} in Lemma~\ref{lem:Nk}. Since $M(\lambda,k)$ is holomorphic in $\{\Re \lambda >0\}$, by Cauchy's residue theorem, we may move the contour of integration $\Gamma = \{\Re \lambda = \gamma_0\}$ towards the imaginary axis so that 
\begin{equation}\label{int-FHdecomp}
\FH_{k}(t) =  \frac{1}{2\pi i}\int_{\Gamma}e^{\lambda t} \TH_{k}(\lambda)\; d\lambda,
\end{equation}
for each $k\in \RR^3$, where we have decomposed $\Gamma$ into 
\begin{equation}\label{def-GammaM} 
\Gamma = \Gamma_1 \cup \Gamma_2 \cup \cC_\pm,
\end{equation}
having set 
$$
\begin{aligned}
 \Gamma_1 &= \{ \lambda = i\tau, \quad |\tau \pm  \nu_*(|k|)|\ge  |k|, \quad |\tau| > |k|\} ,
 \\
  \Gamma_2 &= \{\lambda = i\tau, \quad |\tau \pm  \nu_*(|k|)|\ge   |k|,\quad |\tau| \le  |k|\},
   \\
  \cC_\pm &= \{\Re \lambda \ge 0, \quad |\lambda \mp i  \nu_*(|k|)| =  |k|\}.
    \end{aligned}$$
Note that the semicircle $\cC_\pm$ is to avoid the singularity due to the poles $\nu_\pm= \pm i \nu_*(|k|)$ of $\TG_{k}(\lambda)$ established in Theorem \ref{theo-LangmuirB}, while the integrals over $\Gamma_1$ and $\Gamma_2$ are understood as taking the limit of $\Re \lambda \to 0^+$. To establish decay in $t$, we argue as for~\eqref{int-FGdecomp-n} to get for all $n=0,\ldots, N$
\begin{equation}\label{int-FHdecomp-n}
\FH_{k}(t) =  \frac{(-1)^n}{2\pi i t^n }\int_{\Gamma}e^{\lambda t} \partial_\lambda^n \TH_{k}(\lambda)\; d\lambda,
\end{equation}
relying on Corollary~\eqref{coro:LNK} to ensure that there is no boundary term.

\bigskip \noindent{\it Bounds on $ \cL[N_k(t)](\lambda)$.}
By Corollary~\ref{coro:LNK}, we have
\begin{equation}\label{bound-cLM1}
|\partial_\lambda^n \cL[N_k](\lambda)|\lesssim  \lesssim |k|^{-n},
\end{equation}
uniformly in $k\in \RR^3$ and $\Re \lambda \ge 0$. Clearly, the above estimate is not good for small $k$. However, by a small variant of Corollary~\ref{coro:LNK}, we also have for all $n \in \mathbb{N}$, 
\begin{equation}\label{bound-cLM2}
\begin{aligned}
|\partial_\lambda^n \cL[N_k(t)](\lambda) |
  &\lesssim |k|^2 |\lambda|^{-n-2},
\end{aligned}
\end{equation}
uniformly in $k\in \RR^3$ and $\Re \lambda \ge 0$. Observe that the above estimate is better than that of \eqref{bound-cLM1} in the case when $|\lambda| \gg |k|$. In addition, recalling \eqref{rep-Mlambdas1}, \eqref{bound-cLM2} shows that in this regime $\cL[N_k](\lambda) $ may be treated as a perturbation in $M(\lambda,k)$. For instance, when $\lambda$ is near the singularity $\pm i \nu_*(|k|)$ of the resolvent kernel $\TH_{k}(\lambda)$, where $\nu_*(|k|) \sim \sqrt{\tau_0^2 + |k|^2}$, $\cL[N_k(t)](\lambda) $ is of order $|k|^2 |\lambda|^{-2}$, which is of order $|k|^2$ for small $k$ and of order one for large $k$, both of which are a perturbation of $\lambda^2 + \tau_0^2 + |k|^2$, the leading term in $M(\lambda,k)$.

\bigskip \noindent{\it Further decomposition of $\FH_k(t)$.}
Proceeding as for the electric Green function, we further write 
\begin{equation}\label{TH-decompose}
\begin{aligned}
\TH_k(\lambda) 
&=\TH_{k,0}(\lambda) + \TH_{k,1}(\lambda ),
\end{aligned}
\end{equation}
where 
\begin{equation}\label{def-THplus}
\begin{aligned}
\TH_{k,0}(\lambda) & =   \frac{1}{ \lambda^2 + |k|^2  +\tau_0^2 },
\\
\TH_{k,1}( \lambda ) 
&= - \frac{ \cL[N_k(t)](\lambda)  }{( \lambda^2 + |k|^2  +\tau_0^2  )( \lambda^2 + |k|^2  +\tau_0^2  +  \cL[N_k(t)](\lambda) )} .
\end{aligned}
\end{equation}
We also denote by $\FH_{k,0}(t)$ and $\FH_{k,1}(t)$ the corresponding Green function, see \eqref{int-FHdecomp}. In what follows we denote by $\nu_{\pm,0}(k)$ and $\nu_{\pm}(k)$ the poles of $\TH_{k,0}(\lambda)$ and $\TH_{k}(\lambda)$, respectively. It follows that 
$\nu_{\pm,0}(k) = \pm i \nu_{*,0}(|k|)$ and $\nu_\pm (k)= \pm i \nu_*(|k|)$, with $\nu_{*,0}(|k|) =  \sqrt{\tau_0^2 + |k|^2}$ and $\nu_*(|k|)$ being defined as in Theorem \ref{theo-LangmuirB}.  We have

\begin{lem}
\label{lem:distance-poles}
There holds for all $k \in \RR^3$,
\begin{equation}\label{distance-poles}
\Big |\nu_\pm(k)- \nu_{\pm,0}(k)\Big| \le \frac{|k|}{2}.
\end{equation}
\end{lem}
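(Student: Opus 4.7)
Since both $\nu_\pm(k)=\pm i\nu_*(|k|)$ and $\nu_{\pm,0}(k)=\pm i\nu_{*,0}(|k|)$ lie on the imaginary axis, with $\nu_{*,0}(|k|)=\sqrt{|k|^2+\tau_0^2}$ and $\nu_*(|k|)$ the unique positive root of $x_*=|k|^2+\psi(|k|^2/x_*)$ given by Theorem~\ref{theo-LangmuirB}, the claim reduces to showing $|\nu_*(|k|)-\nu_{*,0}(|k|)|\le |k|/2$ for all $k\in\mathbb{R}^3$. The starting point is the factorization
\[
\nu_*(|k|)-\nu_{*,0}(|k|)=\frac{x_*(|k|)-(|k|^2+\tau_0^2)}{\nu_*(|k|)+\nu_{*,0}(|k|)}=\frac{\psi(|k|^2/x_*)-\psi(0)}{\nu_*(|k|)+\nu_{*,0}(|k|)},
\]
which is non-negative because $\psi\ge \tau_0^2$ and hence $x_*\ge |k|^2+\tau_0^2$, so in particular $\nu_*+\nu_{*,0}\ge 2\sqrt{|k|^2+\tau_0^2}$.

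The next step is to control the numerator. Writing
\[
\psi(|k|^2/x_*)-\tau_0^2=-\frac{|k|^2}{2x_*}\int_{-1}^{1}\frac{u^2\,q(u)}{1-|k|^2u^2/x_*}\,du
\]
and using $|k|^2/x_*\le 1$ to bound the denominator below by $1-u^2$, together with the explicit formula $q(u)/(1-u^2)=-4\pi\int_{1/\sqrt{1-u^2}}^{\infty}\varphi(s)s\,ds$ from \eqref{def-kqu}, gives
\[
0\le \psi(|k|^2/x_*)-\tau_0^2\le \frac{C_1\,|k|^2}{2x_*}, \qquad C_1:=4\pi\int_{-1}^{1}u^2\!\int_{1/\sqrt{1-u^2}}^{\infty}\!\!\varphi(s)s\,ds\,du,
\]
with $C_1<+\infty$ thanks to the decay of $\varphi$. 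Combined with $x_*\ge |k|^2+\tau_0^2$ and the denominator bound above, this yields
\[
0\le \nu_*(|k|)-\nu_{*,0}(|k|)\le \frac{C_1\,|k|^2}{4(|k|^2+\tau_0^2)^{3/2}}.
\]

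To conclude, I would study the scalar function $r\mapsto C_1r^2/(4(r^2+\tau_0^2)^{3/2})$. An elementary calculus argument shows the maximum of $r/(r^2+\tau_0^2)^{3/2}$ is attained at $r=\tau_0/\sqrt{2}$ with value $2/(3\sqrt{3}\tau_0^2)$, so the bound on the right is already $O(|k|)$, small compared to $|k|/2$ both as $|k|\to 0$ (where it behaves like $|k|^2$) and as $|k|\to\infty$ (where it behaves like $1/|k|$). The main obstacle is to check that the constant works uniformly at intermediate frequencies $|k|\sim\tau_0$: this amounts to verifying $C_1\le \tau_0^2$, which in turn reduces, via the identity $\tau_0^2=2\pi\int_{-1}^{1}(1-u^2)A(u)\,du$ with $A(u)=\int_{1/\sqrt{1-u^2}}^{\infty}\varphi(s)s\,ds$, to the positivity of $\int_{-1}^{1}(1-3u^2)A(u)\,du$. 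Since $A(u)$ is even and decreasing in $|u|$ on $[0,1]$ (the lower limit of the $s$-integral increases with $|u|$), Chebyshev's sum inequality applied to the comonotone pair $(A(u),\mathbf{1}_{[0,1/\sqrt{3}]}(|u|))$ against the measure $du$ delivers this positivity and closes the estimate. If the computed constant turns out to be slightly too large, one can tighten the bound on the numerator by additionally exploiting the convexity of $\psi$ (i.e.\ $\psi(y)-\psi(0)\le y\psi'(y)$) combined with the sharper lower bound $x_*\ge |k|^2+\tau_0^2$ inside $\psi'$, but no new ideas are needed.
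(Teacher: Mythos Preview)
Your proof is correct, but it takes a more roundabout route than the paper's. Both proofs start from the same factorization
\[
\nu_*-\nu_{*,0}=\frac{\psi(|k|^2/x_*)-\tau_0^2}{\nu_*+\nu_{*,0}},\qquad \nu_*+\nu_{*,0}\ge 2\sqrt{|k|^2+\tau_0^2},
\]
and differ only in how the numerator is bounded. The paper uses the cruder pointwise bound $\frac{u^2}{1-yu^2}\le \frac{1}{1-y}$, which gives $\psi(y)-\tau_0^2\le \tau_0^2\frac{y}{1-y}$; with $y=|k|^2/\nu_*^2$ this becomes $\tau_0^2|k|^2/(\nu_*^2-|k|^2)$, and since $\nu_*^2-|k|^2\ge \tau_0^2$ the factor $\tau_0^2$ cancels, yielding $\nu_*-\nu_{*,0}\le |k|^2/(2\sqrt{|k|^2+\tau_0^2})\le |k|/2$ in one line, with no constants to track. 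You instead bound $\frac{1}{1-yu^2}\le \frac{1}{1-u^2}$, which produces the equilibrium-dependent constant $C_1=\int_{-1}^1 u^2|q(u)|/(1-u^2)\,du$ and forces you into a separate Chebyshev-type argument to compare $C_1$ with $\tau_0^2$. Two minor comments on that last step: first, the condition actually needed is only $C_1\le 3\sqrt{3}\,\tau_0^2$, so your target $C_1\le \tau_0^2$ is sufficient but not equivalent; second, your formulation of Chebyshev with the indicator $\mathbf{1}_{[0,1/\sqrt{3}]}$ is somewhat garbled---the clean statement is that both $u\mapsto 1-3u^2$ and $u\mapsto A(u)$ are decreasing on $[0,1]$ with $\int_0^1(1-3u^2)\,du=0$, whence $\int_0^1(1-3u^2)A(u)\,du\ge 0$ by the integral Chebyshev inequality (or directly by subtracting the constant $A(1/\sqrt{3})$). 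Either way your argument closes; the paper's simply avoids the detour.
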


\begin{proof}[Proof of Lemma~\ref{lem:distance-poles}]
 From~\eqref{eqs-xstar}, we have
 $$
 \nu_*^2 = \nu_{*,0}^2 + \widetilde\psi(\frac{|k|^2}{\nu_*^2}),
 $$
 with  $\widetilde\psi :=\psi -1$, where $\psi$ is defined in~\eqref{eq:formulaMtauleqk}. It follows that for all $|x|<1$,
 $\widetilde\psi(x) \geq 0$ and $\widetilde\psi (x) \leq \tau_0^2 \frac{|x|}{1-|x|}$. We deduce that $\nu_* \geq \nu_{*,0}$ and
 \begin{align*}
 \nu_* -  \nu_{*,0} &\leq \frac{1}{\nu_* +  \nu_{*,0}} \tau_0^2 \frac{|k|^2}{\nu_*^2-|k|^2} \leq \frac{1}{2}\frac{|k|^2}{\sqrt{|k|^2+\tau^2}},
 \end{align*}
 thus~\eqref{distance-poles} follows.
\end{proof}

We claim that the following lemma holds.
\begin{lem}
\label{lem:decompmag} We have the decompositions, uniformly with respect to $k \in \mathbb{R}^3$,
\begin{equation}
\label{eq:Htilde0}
\FH_{k,0}(t) = \sum_\pm e^{\nu_{\pm,0}t} \mathrm{Res} (\TH_{k,0}( \nu_{\pm,0})),
\end{equation}
\begin{equation}\label{bd-Hrk1}
\begin{aligned}
\Big| \FH_{k,1}(t)  &-  \sum_\pm e^{\nu_{\pm}t}\mathrm{Res} (\TH_{k}(\nu_{\pm}))  +  \sum_\pm e^{\nu_{\pm,0}t} \mathrm{Res} (\TH_{k,0}(\nu_{\pm,0}))   \Big| 
\\& \le C_1 |k|\langle k\rangle^{-2}\langle kt\rangle^{-N} + |k| \langle k^3t\rangle^{-N} \chi_{\{|\partial_t| \ll |k|\ll1\}}.
\end{aligned}
\end{equation}

\end{lem}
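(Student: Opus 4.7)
The plan is to adapt the residue-based strategy used for the electric Green function in Proposition~\ref{prop-GreenG} to the present Klein--Gordon-type resolvent, the main novelty being the appearance of the additional low-frequency decay rate carried by the Fourier multiplier $\chi_{\{|\partial_t|\ll|k|\ll 1\}}$. The identity~\eqref{eq:Htilde0} is elementary: $\TH_{k,0}(\lambda)=(\lambda^2+|k|^2+\tau_0^2)^{-1}$ is a rational function with simple poles only at $\nu_{\pm,0}=\pm i\sqrt{\tau_0^2+|k|^2}$ on the imaginary axis and decays like $|\lambda|^{-2}$ at infinity, so deforming the contour $\{\Re\lambda=\gamma_0\}$ in~\eqref{int-FHdecomp} to $\Re\lambda\to-\infty$ and applying Cauchy's residue theorem yields the claim.

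For~\eqref{bd-Hrk1}, I will start from the representation~\eqref{int-FHdecomp-n} applied to $\TH_{k,1}$, the number of integrations by parts in $\tau$ being chosen to match the required power of $\langle kt\rangle$; the absence of boundary terms is guaranteed by Corollary~\ref{coro:LNK}. The contour $\Gamma=\Gamma_1\cup\Gamma_2\cup\cC_\pm$ is then treated piece by piece. On $\Gamma_1$, where $|\tau|\ge|k|$ and one is at distance $\gtrsim|k|$ from the poles of $\TH_k$, the factorisation~\eqref{def-THplus} combined with the two bounds~\eqref{bound-cLM1}--\eqref{bound-cLM2} on $\cL[N_k]$ yields, exactly as in Lemma~\ref{lem:TG1gamma1}, pointwise estimates on $|k|^n\partial_\lambda^n\TH_{k,1}(i\tau)$ that integrate to the contribution $|k|\langle k\rangle^{-2}\langle kt\rangle^{-N}$. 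On $\Gamma_2$, where $|\tau|\le|k|$, both denominators in~\eqref{def-THplus} are bounded below by a positive constant and $\cL[N_k](i\tau)$ is of size $|k|^2$; this is precisely the regime selected by the multiplier $\chi_{\{|\partial_t|\ll|k|\ll 1\}}$, and a careful rescaling of the variable $\tau$ together with Corollary~\ref{coro:LNK} converts each integration by parts into a gain of $(|k|^3 t)^{-1}$ rather than $(|k|t)^{-1}$, which produces the announced factor $|k|\langle k^3t\rangle^{-N}\chi_{\{|\partial_t|\ll|k|\ll 1\}}$.

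The bulk of the work concerns the semicircles $\cC_\pm$, where the two residue contributions are extracted. Following the strategy of Lemma~\ref{lem:nearsing}, I would Taylor-expand the denominator of $\TH_{k,0}$ around $\nu_{\pm,0}$ and the denominator of $\TH_k$ around $\nu_\pm$, using the smoothness of $\cL[N_k]$ from Corollary~\ref{coro:LNK} and the pole separation $|\nu_\pm-\nu_{\pm,0}|\le|k|/2$ from Lemma~\ref{lem:distance-poles}. The resulting partial-fraction expansions are meromorphic in $\CC$, so that closing the contour by the reflected semicircle $\cC_\pm^*\subset\{\Re\lambda\le 0\}$ and applying Cauchy's residue theorem reproduces exactly the two residue terms appearing in~\eqref{bd-Hrk1}. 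The main obstacle, as in the electric case, is to show that the difference between the two expansions is sufficiently small on $\cC_\pm^*$: the cancellation provided by Lemma~\ref{lem:distance-poles}, combined with the control on $\partial_\lambda^n$ obtained as on $\Gamma_1$, yields an estimate of the form $|k|^3\langle k\rangle^{-2}$ for the leading partial fractions, while the Taylor remainders are handled by deforming the relevant contours back to the imaginary axis and reusing the $\Gamma_1\cup\Gamma_2$ bounds, which assembles into~\eqref{bd-Hrk1}.
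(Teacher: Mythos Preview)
Your overall strategy matches the paper's: the identity \eqref{eq:Htilde0} via residues, the contour decomposition $\Gamma_1\cup\Gamma_2\cup\cC_\pm$, and the Taylor-expansion/residue-extraction on $\cC_\pm$ are all as in Appendix~\ref{sec:proof2}. The $\Gamma_1$ and $\cC_\pm$ outlines are essentially correct, though on $\cC_\pm$ the paper actually needs a refined pole separation $|\nu_\pm-\nu_{\pm,0}|\lesssim |k|^2\langle k\rangle^{-3}$ (see \eqref{diff-lambda}), much sharper than the $|k|/2$ of Lemma~\ref{lem:distance-poles}, in order to control the Taylor-coefficient differences \eqref{coeff-Ra}.

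However, your $\Gamma_2$ analysis misidentifies the key mechanism. On $\Gamma_2$ with $|k|\ll 1$ it is \emph{not} true that both denominators in \eqref{def-THplus} are bounded below by a positive constant: while $|\tau^2-|k|^2-\tau_0^2|\gtrsim 1$, one has only $|M(i\tau,k)|\gtrsim|k|^2+|\tau|/|k|$ (see \eqref{lower-Mstau} and \eqref{low-reMl}), which is of order $|k|^2$ at $\tau=0$. Likewise $\cL[N_k](i\tau)$ is of size $\sim 1$ there, not $|k|^2$: since $M(0,k)=|k|^2$ from \eqref{comp-Mstau}, one has $\cL[N_k](0)=-\tau_0^2$. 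Consequently $|\TH_{k,1}(i\tau)|\lesssim|k|/(|k|^3+|\tau|)$, and the reason each $\partial_\lambda$ costs $|k|^{-3}$ rather than $|k|^{-1}$ is that $|\partial_\lambda M|/|M|\sim |k|^{-1}/|k|^2$ at the worst point: the numerator $|\partial_\lambda M|\sim|k|^{-1}$ comes from the Plemelj term $\pi q(0)/|k|$, as explained in Remark~\ref{rem-k3t}. Your ``rescaling of $\tau$'' together with Corollary~\ref{coro:LNK} does not capture this, and with your stated sizes the argument would rest on false premises. The correct $\Gamma_2$ analysis is precisely the content of Lemmas~\ref{lem:H1gamma2} and~\ref{lem:H1gamma2bound}.
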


In view of the decomposition \eqref{TH-decompose}, this would complete the proof of Proposition \ref{prop-Green}, upon noting the residual $\mathrm{Res} (\TH_{k,0}(\nu_{\pm,0}))$ in \eqref{bd-Hrk1} is cancelled out with that in the above expression for $\FH_{k,0}(t)$.

\bigskip \noindent{\it Decomposition of $\FH_{k,0}$.}
By Lemma~\ref{lem:distance-poles}, the poles $\nu_{\pm,0}(k) $ are inside the disk $|\lambda \mp i \nu_*|\le  |k|$, and hence on the left of the contour of integration $\Gamma$. Therefore, since $\TH_{k,0}(\lambda)$ is meromorphic, it follows from Cauchy's residue theorem that 
$$
\begin{aligned} 
\FH_{k,0}(t)& =\frac{1}{2\pi i } \int_\Gamma e^{\lambda t} \TH_{k,0}(\lambda) \; d\lambda
\\&= \sum_\pm e^{\nu_{\pm,0}t} \mathrm{Res} (\TH_{k,0}( \nu_{\pm,0})) + \frac{1}{2\pi i}\lim_{\gamma_0 \to -\infty}\int_{\Re \lambda = \gamma_0}
e^{\lambda t} \TH_{k,0}(\lambda)\; d\lambda
\\&= \sum_\pm e^{\nu_{\pm,0}t} \mathrm{Res} (\TH_{k,0}( \nu_{\pm,0})) ,
\end{aligned}$$
upon noting that the last integral vanishes in the limit of $\gamma_0 \to -\infty$, since $\TH_{k,0}(\lambda) $ decays at order $\lambda^{-2}$ for $|\lambda|\to \infty$. We deduce that~\eqref{eq:Htilde0} holds.

\bigskip

Next, we prove \eqref{bd-Hrk1} via the representation \eqref{int-FHdecomp-n}. 

\bigskip \noindent{\it Bounds on $\TH_{k,1}(\lambda)$ on $\Gamma_1$.}
We start with the integral of $\TH_{k,1}(\lambda)$ on $\Gamma_1$: namely for $\lambda = i\tau$, where $|\tau|> |k|$ and $|\tau \pm \nu_*(|k|)| \ge |k|$. 

\begin{lem}
\label{lem:H1gamma1}
We have for all $0\le n \leq N$.
\begin{equation}
\label{eq:H1gamma1}
\Big| \frac{(-1)^n}{2\pi i t^n } \int_{\Gamma_1}e^{\lambda t} |k|^n \partial_\lambda^n \TH_{k,1}(\lambda)\; d\lambda \Big| \lesssim  |k| \langle k\rangle^{-4}. 
\end{equation}

\end{lem}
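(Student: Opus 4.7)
The plan is to derive pointwise estimates on $|k|^n \partial_\lambda^n \widetilde{H}_{k,1}(i\tau)$ on $\Gamma_1$, in the spirit of \eqref{bd-TGk10proof} in the electric case, and then integrate them against $d\tau$. The starting point is the factorization
$$
\widetilde{H}_{k,1}(i\tau) = -\frac{\mathcal{L}[N_k](i\tau)}{(\nu_{*,0}^2-\tau^2)\,M(i\tau,k)},
$$
combined with the bounds $|\partial_\lambda^n \mathcal{L}[N_k](i\tau)| \lesssim |k|^2(|k|^2+\tau^2)^{-(n+2)/2}$ provided by Corollary~\ref{coro:LNK}, together with the trivial upper bound $|\mathcal{L}[N_k](i\tau)| \lesssim |k|^2/(|k|^2+\tau^2)$.

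The first step is to derive lower bounds on each factor in the denominator. By Lemma~\ref{lem:distance-poles} we have $|\nu_* - \nu_{*,0}| \leq |k|/2$, so that on $\Gamma_1$ there holds $|\tau \mp \nu_{*,0}| \geq |k|/2$ as well as $|\tau \mp \nu_*| \geq |k|$; combining with the trivial bound $|\tau + \nu_{*,0}| \gtrsim \nu_{*,0} \sim \langle k \rangle$ for one choice of sign of $\tau$, we obtain
$$
|\tau^2 - \nu_{*,0}^2| \gtrsim (|\tau-\nu_{*,0}|+|k|)(|\tau+\nu_{*,0}|+|k|) \gtrsim (|\tau - \nu_{*,0}|+|k|)\,\langle k\rangle.
$$
For the second factor, writing
$$
M(i\tau,k) = -(\tau^2 - \nu_*^2) + \bigl(\mathcal{L}[N_k](i\tau) - \mathcal{L}[N_k](i\nu_*)\bigr),
$$
and controlling the difference via $|\partial_\lambda \mathcal{L}[N_k](i\xi)| \lesssim |k|^2/(|k|^2+\xi^2)^{3/2}$ from Corollary~\ref{coro:LNK}, one checks that the $\mathcal{L}[N_k]$-contribution is subdominant with respect to $|\tau^2 - \nu_*^2| \gtrsim (|\tau - \nu_*|+|k|)\langle k\rangle$; hence $|M(i\tau,k)| \gtrsim (|\tau-\nu_*|+|k|)\langle k\rangle$. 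Inserting everything yields
$$
|\widetilde{H}_{k,1}(i\tau)| \lesssim \frac{|k|^2(|k|^2+\tau^2)^{-1}}{(|\tau-\nu_{*,0}|+|k|)(|\tau-\nu_*|+|k|)\,\langle k\rangle^2}.
$$

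The second step is to integrate this bound and its $|k|\partial_\lambda$-variants over $\Gamma_1$, splitting according to the three regimes $|\tau|\ge \tfrac32 \nu_*$, \ $\tfrac12\nu_*\le|\tau|\le \tfrac32\nu_*$, and $|k|<|\tau|\le \tfrac12\nu_*$, exactly as in the proof of Lemma~\ref{lem:TG1gamma1}. The dominant contribution comes from the middle regime, where $r:=|\tau\mp\nu_*|$ ranges from $|k|$ to $\sim\nu_*$ and the integral contributes
$$
\int_{|k|}^{\nu_*}\frac{|k|^2/\langle k\rangle^2}{r^2\,\langle k\rangle^2}\,dr \;\lesssim\; \frac{|k|}{\langle k\rangle^4},
$$
while the other two regimes yield smaller contributions by the improved decay $|\mathcal{L}[N_k](i\tau)| \lesssim |k|^2/\tau^2$ for $|\tau|$ large and the improved separation $|\tau^2 - \nu_{*,0}^2| \gtrsim \langle k\rangle^2$ for $|\tau|$ small. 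For the $\lambda$-derivatives, each differentiation of one of the denominators produces an extra factor of $(|\tau-\nu_*|+|k|)^{-1}$ (or the $\nu_{*,0}$-analogue), both of which are bounded by $2/|k|$ on $\Gamma_1$, so they are absorbed by the prefactor $|k|^n$; each $\partial_\lambda$ falling on $\mathcal{L}[N_k]$ improves the decay in $\tau$ by one power at the cost of a $|k|^{-1}$, again absorbed by $|k|^n$. Hence the bound of the statement propagates to all $n\le N$.

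The main obstacle is the sharp lower bound on $|M(i\tau,k)|$ near the zero $\pm i \nu_*$: we cannot directly use $|\tau^2-\nu_{*,0}^2|$ as a lower bound since $\nu_*\neq \nu_{*,0}$, and must instead rely on the cancellation $M(i\tau,k)-M(i\nu_*,k)$ combined with the Lipschitz-type estimates on $\mathcal{L}[N_k]$, verifying that the remainder stays subdominant with respect to $\tau^2-\nu_*^2$ both for $|k|\ll 1$ (where $\nu_*\sim \tau_0$ and the perturbation is $O(|k|^2)$) and for $|k|\gg 1$ (where $\nu_*\sim |k|$ and the perturbation is $O(1)$, while $\tau^2-\nu_*^2$ is at least $|k|^2$).
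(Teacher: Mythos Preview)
Your overall strategy---pointwise bounds on $(|k|\partial_\lambda)^n\TH_{k,1}(i\tau)$ in three $\tau$-regimes, then integrating in $\tau$---is exactly the paper's route (see Lemma~\ref{lem:H1gamma1bound}). The difference lies in how you obtain the lower bound $|M(i\tau,k)|\gtrsim(|\tau-\nu_*|+|k|)\langle k\rangle$. The paper does \emph{not} use a Lipschitz estimate on $\cL[N_k]$; instead it exploits the explicit formula \eqref{new-cLN}, which shows $\cL[N_k](i\tau)\ge 0$ for $|\tau|>|k|$. This sign yields immediately $|M(i\tau,k)|\ge |\nu_{*,0}^2-\tau^2|$ for $|k|<\tau\le\nu_*-|k|$, and a similar direct computation handles $\tau\ge\nu_*+|k|$; no comparison of constants is needed.

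Your Lipschitz route has a genuine gap in the small-$\tau$ regime. For $|k|\lesssim 1$ and $|k|<\tau\le\nu_*/2$, the bound from Corollary~\ref{coro:LNK} gives only
\[
|\cL[N_k](i\tau)-\cL[N_k](i\nu_*)|\;\le\;C_1\int_\tau^{\nu_*}\frac{|k|^2}{(|k|^2+\xi^2)^{3/2}}\,d\xi\;\le\;C_1,
\]
a fixed constant depending on $\mu$, \emph{not} $O(|k|^2)$ as you assert. On the other hand $|\tau^2-\nu_*^2|\sim\tau_0^2$ in this range, so ``the remainder stays subdominant'' requires $C_1<\tfrac12\tau_0^2$, which is not guaranteed for a general equilibrium. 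Your claim that ``for $|k|\ll1$ the perturbation is $O(|k|^2)$'' is only valid in the middle regime $\tau\sim\nu_*$, where indeed $\xi\sim\tau_0$ forces the Lipschitz integrand to be $O(|k|^2)$; it fails near $\tau\sim|k|$. The fix is either to invoke the positivity of $\cL[N_k](i\tau)$ as the paper does, or to argue by continuity/compactness using Proposition~\ref{prop-nogrowth} on the bounded region $\{|k|\le 1,\,|k|<\tau\le\nu_*/2\}$ (checking the limit $M(i\tau,k)\to\psi(\ell)\ge\tau_0^2$ as $(|k|,\tau)\to(0,0)$).
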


This gives the desired decay estimates as stated in \eqref{bd-Hrk1} (recalling that we only consider the case when $|kt|\ge 1$; the other case is done directly without integration by parts in $\lambda$).

\begin{lem}
\label{lem:H1gamma1bound}We have on $\Gamma_1$
\begin{equation}\label{eq:H1gamma1bound}
\begin{aligned}
|(|k|\partial_\lambda)^n \TH_{k,1}(i\tau)| 
& \lesssim  \frac{|k|^2}{(k^2 + 1)^2}\Big[ \frac{ 1}{ |\tau - \nu_*(|k|)|^2 + |k|^2} +  \frac{ 1}{ \tau^2 + |k|^2} \Big].
\end{aligned}
\end{equation}

\end{lem}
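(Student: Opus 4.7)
The plan is to estimate each factor in
$$
\TH_{k,1}(i\tau) = \frac{-\cL[N_k](i\tau)}{(-\tau^2+\nu_{*,0}^2)\,M(i\tau,k)}
$$
separately, using clean factorizations that isolate the zeros of the two denominators. For the magnetic symbol, starting from the implicit relation \eqref{eqs-xstar} (i.e.\ $\nu_*^2 = |k|^2 + \psi(|k|^2/\nu_*^2)$) and applying a mean-value expansion of $\psi$ between $|k|^2/\tau^2$ and $|k|^2/\nu_*^2$, I would first record the factorization
$$
M(i\tau,k) = -(\tau-\nu_*(|k|))(\tau+\nu_*(|k|))\,g(\tau,k),\qquad g(\tau,k) = 1 + \psi'(\zeta)\,\frac{|k|^2}{\tau^2\,\nu_*^2},
$$
for some $\zeta \in (|k|^2/\nu_*^2,\,|k|^2/\tau^2)$. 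Thanks to the second item of Proposition~\ref{prop:M} (in particular $\psi'\ge 0$ and $\psi'$ bounded), $g$ is uniformly bounded above and below by positive constants on $\Gamma_1$, and by induction $(|k|\partial_\tau)^j g$ stays uniformly bounded on $\Gamma_1$ for $j\le N$ (using $|k|/|\tau|\le 1$ there). For the other denominator, the exact factorization $-\tau^2+\nu_{*,0}^2 = (\nu_{*,0}-\tau)(\nu_{*,0}+\tau)$ will be used, and Lemma~\ref{lem:distance-poles} ensures $|\tau-\nu_{*,0}|\ge |k|/2$ and $|\tau-\nu_{*,0}|\gtrsim |\tau-\nu_*|$ throughout $\Gamma_1$.

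With these factorizations in hand, I would apply the Leibniz rule to distribute $(|k|\partial_\lambda)^n = (-i|k|\partial_\tau)^n$ across the five scalar factors $(\tau-\nu_*)^{-1}$, $(\tau+\nu_*)^{-1}$, $g^{-1}$, $(\tau-\nu_{*,0})^{-1}$, $(\tau+\nu_{*,0})^{-1}$, and the numerator $\cL[N_k]$. Each derivative of $(\tau-\nu_*)^{-1}$ produces $|\tau-\nu_*|^{-a-1}$, but on $\Gamma_1$ the bound $|k|^a/|\tau-\nu_*|^a \le 1$ absorbs the singular gain; the same absorption works for the derivatives of $(\tau+\nu_*)^{-1}$, of $g^{-1}$, and of $(\lambda^2+\nu_{*,0}^2)^{-1}$ thanks to $|\tau+\nu_*|\gtrsim |k|$, $|\tau|\ge |k|$, and $|\tau-\nu_{*,0}|\ge |k|/2$. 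Combined with Corollary~\ref{coro:LNK}, which yields $(|k|\partial_\lambda)^n|\cL[N_k](i\tau)|\lesssim |k|^2/(|k|^2+\tau^2)$ (using $|k|\le \sqrt{|k|^2+\tau^2}$), this produces the pointwise upper bound
$$
\bigl|(|k|\partial_\lambda)^n \TH_{k,1}(i\tau)\bigr| \lesssim \frac{|k|^2}{(|k|^2+\tau^2)\,|\tau-\nu_*|\,|\tau+\nu_*|\,|\tau-\nu_{*,0}|\,|\tau+\nu_{*,0}|},
$$
uniform in $n$ on $\Gamma_1$ and of the same form as the $n=0$ case.

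The last step is to convert this estimate into the claimed form~\eqref{eq:H1gamma1bound} by a case analysis on the size of $|\tau|$ relative to $\nu_*(|k|)\sim \langle k\rangle$ (Theorem~\ref{theo-LangmuirB}). In the regime $|\tau|\le \nu_*/2$, one has $|\tau\pm \nu_*|\sim \nu_*\sim \langle k\rangle$ and the second bracketed term $1/(\tau^2+|k|^2)$ of the target dominates; in the transitional regime $|\tau|\sim \nu_*$, one has $|\tau+\nu_*|\sim \langle k\rangle$ and, because $|\tau-\nu_*|\ge |k|$ forces $|\tau-\nu_*|^2+|k|^2\sim |\tau-\nu_*|^2$, the first bracketed term $1/(|\tau-\nu_*|^2+|k|^2)$ dominates; finally, in the far regime $|\tau|\ge 2\nu_*$, $|\tau\pm\nu_*|\sim |\tau|\gtrsim \langle k\rangle$, so both terms are controlled by a power of $|\tau|$. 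In each regime, replacing $\nu_{*,0}$ factors by $\nu_*$ via Lemma~\ref{lem:distance-poles} and collecting powers of $\langle k\rangle$ and $|\tau|$ shows that the produced bound is dominated by the appropriate piece of the right-hand side of~\eqref{eq:H1gamma1bound}. I expect the main obstacle to be the careful bookkeeping of powers of $|k|,\langle k\rangle,|\tau|,|\tau-\nu_*|$ in the transitional regime, together with the uniform control of the $|k|$-weighted derivatives of $g$ on $\Gamma_1$.
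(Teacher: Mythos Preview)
Your proposal is correct and leads to the same bound, but the route differs from the paper's in one essential respect. The paper does \emph{not} factorize $M(i\tau,k)$; instead it proves the lower bound $|M(i\tau,k)|\gtrsim|\tau^2-|k|^2-\tau_0^2|$ directly by a sign-based case analysis: for $|k|<\tau\le \nu_*-|k|$ it uses $\cL[N_k](i\tau)\ge 0$ to drop the perturbation, while for $\tau\ge \nu_*+|k|$ it uses the crude upper bound $\cL[N_k](i\tau)\le \tau_0^2|k|^2/(\tau^2-|k|^2)$ and checks that this is absorbed by $|\tau^2-\nu_{*,0}^2|$. It then combines this with the analogous lower bound on $|\tau^2-\nu_{*,0}^2|$ and the numerator estimate $|\cL[N_k](i\tau)|\lesssim |k|^2\tau^{-2}$, and for derivatives verifies term by term that $|k|\partial_\lambda$ of each factor is bounded by the factor itself. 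Your factorization $M(i\tau,k)=(\nu_*^2-\tau^2)\,g(\tau,k)$ with $g\sim 1$ is a more structural way of encoding exactly the same information $|M|\sim|\tau^2-\nu_*^2|$, and it cleanly avoids the $\tau$ versus $\nu_*$ split; the paper's argument, in exchange, never needs to differentiate an implicitly defined factor.

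One technical point to tighten: the intermediate $\zeta$ furnished by the mean value theorem need not depend smoothly on $\tau$, so the formula $g=1+\psi'(\zeta)\,|k|^2/(\tau^2\nu_*^2)$ is fine for the two-sided bound $1\le g\le C$ but is not the object you should differentiate. For the derivative step, either work with the quotient $g=M(i\tau,k)/(\nu_*^2-\tau^2)$ directly, or---cleaner---use the integral form
\[
g(\tau,k)=1+\frac{|k|^2}{\tau^2\nu_*^2}\int_0^1\psi'\!\Big(\tfrac{|k|^2}{\nu_*^2}+s\big(\tfrac{|k|^2}{\tau^2}-\tfrac{|k|^2}{\nu_*^2}\big)\Big)\,ds,
\]
which is manifestly $\mathcal C^K$ in $\tau$. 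Then each $|k|\partial_\tau$ hitting the argument produces a factor $|k|^3/\tau^3\le 1$ (since $|\tau|>|k|$ on $\Gamma_1$), and the boundedness of $\psi^{(j)}$ on $[0,1]$ from Proposition~\ref{prop:M} gives the uniform control of $(|k|\partial_\tau)^j g$ that your Leibniz argument needs. With this adjustment your proof goes through as written.
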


Lemma~\ref{lem:H1gamma1} is a consequence of Lemma~\ref{lem:H1gamma1bound}
using $\int_{\RR} (x^2 + |k|^2)^{-1}dx \le 4|k|^{-1}$; we indeed obtain 
$$
\Big|\int_{\{ |\tau| >|k|, \; |\tau \pm \nu_*| \ge  |k|\}} e^{i \tau t} \TH_{k,1}(i\tau) \; d\tau\Big| 
 \lesssim \frac{ |k|^2}{(|k|^2+ 1)^2} \int \frac{ 1}{ \tau^2 + |k|^2} \; d\tau \lesssim \frac{ |k|}{(|k|^2+ 1)^2}. 
$$

\begin{proof}[Proof of Lemma~\ref{lem:H1gamma1bound}]

By symmetry, it suffices to study the case when $\tau >0$. Recall that $\nu_{*,0}(k) =  \sqrt{\tau_0^2 + |k|^2}$. We recall that by \eqref{distance-poles}, we know that $|\nu_* - \nu_{*,0}|\le |k|/2$. Hence, for $|\tau - \nu_*| > |k|$, we have
$$
\begin{aligned} 
|\tau - \nu_{*,0}|
&\ge |\tau - \nu_*| - |\nu_* - \nu_{*,0}|
 \ge\frac12|\tau - \nu_*|
 \ge\frac14 (|\tau - \nu_*| + |k|).
\end{aligned}
$$
which yields 
\begin{equation}\label{low-M0}
\begin{aligned} 
\Big |\tau^2 - |k|^2  -\tau_0^2 \Big | 
\gtrsim (|\tau - \nu_*(|k|)| + |k|) (\tau+ |k|+ 1).
\end{aligned}
\end{equation}
We next derive a similar lower bound for $M(i\tau,k)$. Indeed, since $|\tau|> |k|$, we can use the formulation \eqref{eq:formulaMtauleqk} and \eqref{comp-tau0}, namely 
$$
M(i\tau,k) = -\tau^2 + |k|^2 +\tau_0^2 + \cL[N_k(t)](i\tau),
$$
where 
\begin{equation}\label{new-cLN} \cL[N_k(t)](i\tau) = - \frac12 \int_{-1}^1 \frac{|k|^2q(u)/\tau^2}{1- |k|^2 u^2/\tau^2}\;du = - \frac{|k|^2}{2} \int_{-1}^1 \frac{q(u)}{\tau^2- |k|^2 u^2}\;du.
\end{equation}
Clearly, $\cL[N_k(t)](i\tau) \ge 0$, and $\cL[N_k(t)](i\tau) \le  \frac{\tau_0^2|k|^2}{\tau^2 - |k|^2}$, recalling \eqref{comp-tau0}. In addition, for $|k| < \tau \le \nu_* - |k|$, it follows from \eqref{rangex} that $\tau^2 \le |k|^2 + \tau_0^2$. Therefore, 
$$
\begin{aligned}
|M(i\tau,k)| 
&= |k|^2  +\tau_0^2 + \cL[N_k(t)](i\tau) - \tau^2 
\ge  |k|^2  +\tau_0^2 - \tau^2 = \Big |\tau^2 - |k|^2  -\tau_0^2 \Big |.
\end{aligned}$$
On the other hand, for $|\tau|\ge \nu_* + |k|$, using again \eqref{rangex}, we note that $\tau^2 \ge \nu_*^2 + |k|^2 + 2 \nu_* |k|\ge 4|k|^2 + \tau_0^2$. Therefore, together with the above upper bound on $\cL[N_k(t)](i\tau)$, we have 
$$
\begin{aligned}
|M(i\tau,k)| 
&= \Big|\tau^2 - |k|^2  -\tau_0^2  -  \cL[N_k(t)](i\tau) \Big|
\ge  \Big |\tau^2 - |k|^2  -\tau_0^2 \Big |  -  \frac{\tau_0^2|k|^2}{\tau^2 - |k|^2}
\\
&\ge  \frac{2}{3}\Big |\tau^2 - |k|^2  -\tau_0^2 \Big | + |k|^2 -  \frac{\tau_0^2|k|^2}{\tau_0^2 + 3 |k|^2}
\ge  \frac{2}{3}\Big |\tau^2 - |k|^2  -\tau_0^2 \Big | .
\end{aligned}$$
This proves that $|M(i\tau,k)| $ satisfies the same lower bound as $|\tau^2 - |k|^2  -\tau_0^2|$ does, namely
\begin{equation}\label{low-M1}
\begin{aligned} 
|M(i\tau,k)| 
\gtrsim (|\tau - \nu_*(|k|)| + |k|) (\tau+ |k|+ 1).
\end{aligned}
\end{equation}
for $|\tau|> |k|$ and $|\tau \pm \nu_*(|k|)| \ge |k|$. Therefore, using \eqref{bound-cLM2} for $\lambda = i\tau$ with $|\tau|>|k|$, \eqref{low-M0}, and \eqref{low-M1}, we obtain 
$$ 
\begin{aligned}
|\TH_{k,1}(i\tau)| 
& \le \frac{ |\cL[N_k(t)](i\tau) | }{|(\tau^2 - |k|^2  -\tau_0^2)M(i\tau,k) |} \lesssim \frac{ |k|^2\tau^{-2} }{(\tau^2 - |k|^2  -\tau_0^2)^2} 
\\
& \lesssim \frac{ |k|^2}{ (|\tau - \nu_*(|k|)|^2 + |k|^2) (|k|^2+ 1)(\tau^2 + |k|^2)},
\end{aligned}$$
for $|\tau \pm \nu_*(|k|)| \ge |k|$. In addition, recalling $\nu_*(|k|)\ge \sqrt{|k|^2 + \tau_0^2} \ge \tau_0>0$, we must have either $|\tau - \nu_*|\gtrsim 1$ or $|\tau|\gtrsim 1$. This gives 
\begin{equation}\label{upbd-Hk1}
\begin{aligned}
|\TH_{k,1}(i\tau)| 
& \lesssim  \frac{|k|^2}{(k^2 + 1)^2}\Big[ \frac{ 1}{ |\tau - \nu_*(|k|)|^2 + |k|^2} +  \frac{ 1}{ \tau^2 + |k|^2} \Big].
\end{aligned}
\end{equation}

Similarly, we next bound the integral of $\partial_\lambda^n\TH_{k,1}(\lambda) $ on $\Gamma_1$. We shall prove that $|k| \partial_\lambda$ derivatives of $\TH_{k,1}(\lambda)$ satisfy the same estimates as does $\TH_{k,1}(\lambda)$. In view of  \eqref{def-THplus}, we shall check each term in the expression of $\TH_{k,1}(\lambda)$. Using \eqref{bound-cLM2}, we have $|k||\partial_\lambda\cL[N_k(t)](i\tau)|\lesssim |k|^2 \tau^{-2}$ for $|\tau|>|k|$, which is the same estimate that was used for $\cL[N_k(t)](i\tau)$. Next, we bound   
$$ |k| \Big|\frac{d}{d\lambda} \Big(\frac{1}{\lambda^2 + |k|^2  +\tau_0^2} \Big)_{\lambda = i\tau} \Big| \le \frac{2|\tau||k|}{(\tau^2- |k|^2  -\tau_0^2)^2} \lesssim  \frac{1}{|\tau^2- |k|^2  -\tau_0^2|} ,
$$
in which we used \eqref{low-M0} to bound $|\tau^2- |k|^2  -\tau_0^2|\gtrsim |k||\tau|$. Similarly, recalling \eqref{rep-Mlambdas1} and using \eqref{bound-cLM2}, we bound 
$$|k| |\partial_\lambda M(i\tau,k) | \lesssim |\tau| |k| + |k|^2\tau^{-2}$$
for $|\tau|>|k|$. Using this and \eqref{low-M1}, we bound 
$$ 
\frac{|k||\partial_\lambda M(i\tau,k)|}{|M(i\tau,k)|} \lesssim \frac{|\tau| |k| + |k|^2\tau^{-2}}{(|\tau - \nu_*(|k|)| + |k|) (\tau+ |k|+ 1)} \lesssim 1 + \frac{ |k|^2\tau^{-2}}{(|\tau - \nu_*(|k|)| + |k|) (\tau+ |k|+ 1)}.
$$
The last fraction is also bounded, since $|\tau|>|k|$ and either $|\tau - \nu_*|\gtrsim 1$ or $|\tau|\gtrsim 1$ (recalling $\nu_* \ge \tau_0>0$). 
This proves that $|k|\partial_\lambda \TH_{k,1}(\lambda) $ satisfies the same estimate as that for $\TH_{k,1}(\lambda)$. The estimates for higher derivatives follow similarly. This yields~\eqref{eq:H1gamma1}.

\end{proof}

\bigskip \noindent{\it Bounds on $\TH_{k,1}(\lambda)$ on $\Gamma_2$.}
Next, we consider the integral on $\Gamma_2$, where $\lambda = i\tau$ with $|\tau \pm  \nu_*(|k|)|\ge |k|$ and $|\tau | \le |k|$. We shall prove the following result.

\begin{lem}
\label{lem:H1gamma2}
We have for all $0\le n \leq N$.
\begin{equation}
\label{eq:H1gamma2}
\Big|\int_{\Gamma_2} e^{ i\tau t}  \TH_{k,1}(i\tau) \; d\tau \Big|\lesssim
\left \{ \begin{aligned} |k| \log (2 + |k|^{-2}) \quad& \quad \mbox{if}\quad |k|\le 1,
\\
 \langle k\rangle^{-3}
  \quad& \quad \mbox{if}\quad |k|\ge 1,
  \end{aligned}\right.
\end{equation}

\begin{equation}
\label{eq:H1gamma2deriv}
\begin{aligned}
\Big|\int_{\Gamma_2} e^{ i\tau t} |k|^{3n} \partial_\lambda^n \TH_{k,1}(i\tau) \; d\tau \Big|
&\lesssim  |k|  \quad& \quad \mbox{if}\quad |k|\le 1,
\\
\Big|\int_{\Gamma_2} e^{ i\tau t} |k|^{n} \partial_\lambda^n \TH_{k,1}(i\tau) \; d\tau \Big|
&\lesssim   \langle k\rangle^{-3}
  \quad& \quad \mbox{if}\quad |k|\ge 1.
  \end{aligned}
\end{equation}

\end{lem}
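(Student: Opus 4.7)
The strategy is parallel to the proof of Lemma~\ref{lem:gamma2} for the electric resolvent, but with an additional complication: on $\Gamma_2$ the factor $\cL[N_k](i\tau)$ in the numerator of $\TH_{k,1}$ is only of order $1$ in $|k|$ (instead of $O(|k|^2)$ as was the case for the electric remainder $\mathcal{R}$), which is the source of the logarithm appearing in~\eqref{eq:H1gamma2} when $|k|\le 1$. The three key inputs I will rely on are: the trivial bound $|\lambda^2+|k|^2+\tau_0^2|\ge \tau_0^2$; the lower bound $|M(i\tau,k)|\gtrsim |\tau|/|k|+|k|^2-\tau^2$ on $\Gamma_2$, which is~\eqref{lower-Mstau} from the proof of Theorem~\ref{theo-LangmuirB}; and Corollary~\ref{coro:LNK}, which yields $|\partial_\lambda^n\cL[N_k](i\tau)|\lesssim |k|^{-n}$ uniformly on $|\tau|\le |k|$, and hence $|\partial_\lambda^n M(i\tau,k)|\lesssim |k|^{-n}$ for every $n\ge 1$.

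For~\eqref{eq:H1gamma2}, combining these ingredients gives the pointwise estimate $|\TH_{k,1}(i\tau)|\lesssim 1/(|\tau|/|k|+|k|^2-\tau^2)$. When $|k|\le 1$, the change of variables $\tau=|k|\tilde\tau$ reduces the integral to $|k|\int_{-1}^1(|\tilde\tau|+|k|^2(1-\tilde\tau^2))^{-1}d\tilde\tau$; the contribution from $|\tilde\tau|\le |k|^2$ is $O(1)$ while the one from $|\tilde\tau|\ge |k|^2$ is $O(\log(1/|k|^2))$, matching the desired $|k|\log(2+|k|^{-2})$. When $|k|\ge 1$, the constraint $|\tau\pm \nu_*(|k|)|\ge |k|$ combined with $|\tau|\le |k|$ forces $|\tau|\le \nu_*(|k|)-|k|=O(|k|^{-1})$ by Theorem~\ref{theo-LangmuirB}; on this short interval both $|\lambda^2+|k|^2+\tau_0^2|$ and $|M|$ are $\asymp |k|^2$, so $|\TH_{k,1}|\lesssim |k|^{-4}$ and the integral is $\lesssim |k|^{-5}\lesssim \langle k\rangle^{-3}$.

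For the derivative estimates~\eqref{eq:H1gamma2deriv}, I expand $\partial_\lambda^n\TH_{k,1}$ via Leibniz together with a Faà di Bruno expansion for $\partial_\lambda^c(1/M)$, producing a sum of terms controlled by $|k|^{-(a+c)}(|\tau|/|k|+|k|^2)^{-(c+1)}$ with $a+c\le n$. In the regime $|k|\le 1$, multiplying by $|k|^{3n}$ and integrating after the change $\tau=|k|\tilde\tau$, the $\tilde\tau$-integral scales like $|k|^{1-2c}$ for $c\ge 1$ (since it is dominated by $|\tilde\tau|\sim |k|^2$), so the generic term is of order $|k|^{3n-a-3c+1}$. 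The extremum $(a,c)=(0,n)$ gives exactly $|k|$, and every other splitting is at least $|k|^3$, the single logarithmic factor appearing when $c=0$ being absorbed by the surplus $|k|^2$ that comes for free whenever $n\ge 1$. In the regime $|k|\ge 1$, the short length of $\Gamma_2$ together with the pointwise bounds $|1/F|,|1/M|\lesssim |k|^{-2}$ yields $|k|^n|\partial_\lambda^n\TH_{k,1}(i\tau)|\lesssim |k|^{-4}$, so that the integral is controlled by $|k|^{-5}\lesssim \langle k\rangle^{-3}$.

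The main subtle point will be the book-keeping in the Faà di Bruno expansion of $\partial_\lambda^n(1/M)$: one needs $|\partial_\lambda^n M(i\tau,k)|\lesssim |k|^{-n}$ uniformly on $\Gamma_2$, including near the endpoints $\tilde\tau=\pm 1$ where the explicit formula~\eqref{comp-Mstau} for $M(i|k|\tilde\tau,k)$ involves principal-value integrals with an apparent logarithmic behavior. Fortunately, this estimate is furnished directly by Corollary~\ref{coro:LNK} through the time-domain representation of $\cL[N_k]$, so the delicate analysis at the endpoints is entirely bypassed.
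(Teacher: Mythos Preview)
Your proposal is correct and follows essentially the same route as the paper. The paper packages the argument slightly differently: it first isolates a clean pointwise bound (Lemma~\ref{lem:H1gamma2bound}), namely $|\TH_{k,1}(i\tau)|\lesssim |k|\,[(|k|^2+1)(|k|^3+|\tau|)]^{-1}$ together with $|(|k|\partial_\lambda)^n\TH_{k,1}|\lesssim (1+|k|/(|k|^3+|\tau|))^n|\TH_{k,1}|$, and then integrates. To get there it refines~\eqref{lower-Mstau} to $|M(i\tau,k)|\gtrsim |k|^2+|\tau|/|k|$ uniformly on $\Gamma_2$ (your use of~\eqref{lower-Mstau} directly is equivalent, since the $|\tau|/|k|$ term already controls the endpoints $|\tau|\to|k|$). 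Your explicit Leibniz/Fa\`a di Bruno bookkeeping for the derivative estimates is exactly what underlies the paper's multiplicative-factor formulation, and your observation that the endpoint behavior of the Plemelj representation is bypassed via Corollary~\ref{coro:LNK} is precisely the paper's~\eqref{bound-cLM1}. Your large-$|k|$ estimate is in fact marginally sharper (you get $|k|^{-5}$ rather than $\langle k\rangle^{-3}$), but either suffices.
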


Lemma~\ref{lem:H1gamma2} is a consequence of the following  Lemma~\ref{lem:H1gamma2bound}. 

\begin{lem}
\label{lem:H1gamma2bound}
We have 
\begin{equation}
\label{eq:H1gamma2bound}
|\TH_{k,1}(i\tau)| 
 \lesssim \frac{ |k|}{ (|k|^2 + 1)(|k|^3 + |\tau|)},\end{equation}
 and for all $1\le n \leq N$,
\begin{equation}
\label{eq:H1gamma2bound2}
||k|^n\partial_\lambda^n\TH_{k,1}(i\tau)| 
 \lesssim \left(1+ \frac{ |k|}{ (|k|^3 + |\tau|)}\right)^n |\TH_{k,1}(i\tau)|.\end{equation}
\end{lem}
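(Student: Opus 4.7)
The plan is to bound each factor in the representation
$$
\TH_{k,1}(i\tau) = -\frac{\cL[N_k](i\tau)}{A(\tau,k)\, M(i\tau,k)}, \qquad A(\tau,k) := -\tau^2 + |k|^2 + \tau_0^2,
$$
separately on the range $\{|\tau|\le |k|,\ |\tau \pm \nu_*(|k|)|\ge |k|\}$, and then combine.

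First, for the denominators: since $|\tau|\le |k|$, one has $A(\tau,k)\ge \tau_0^2 + |k|^2 - \tau^2 \gtrsim 1 + |k|^2 - \tau^2$. For $|M(i\tau,k)|$, one invokes~\eqref{lower-Mstau} from the proof of Theorem~\ref{theo-LangmuirB}, which yields
$$
|M(i\tau,k)| \gtrsim |\tau|/|k| + |k|^2 - \tau^2.
$$
For the numerator, Corollary~\ref{coro:LNK} with $n=0$ gives $|\cL[N_k](i\tau)|\lesssim |k|^2/(|k|^2+\tau^2)\lesssim 1$ on this range. Multiplying these bounds, one separates two regimes. When $|\tau|\le |k|/2$, we have $|k|^2-\tau^2\gtrsim |k|^2$, so $|A|\gtrsim 1+|k|^2$ and $|M|\gtrsim (|k|^3+|\tau|)/|k|$; combining gives $|\TH_{k,1}(i\tau)|\lesssim |k|/[(1+|k|^2)(|k|^3+|\tau|)]$, which is~\eqref{eq:H1gamma2bound}. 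When $|k|/2<|\tau|\le|k|$, the constraint $|\tau-\nu_*(|k|)|\ge |k|$ together with~\eqref{KG-behave1} forces $|k|\lesssim 1$; in this range both $|A|$ and $|M|$ are $\gtrsim 1$, so $|\TH_{k,1}(i\tau)|\lesssim 1$, and this matches the claimed bound because for $|k|\lesssim 1$ and $|\tau|\gtrsim |k|$ the right-hand side of~\eqref{eq:H1gamma2bound} is itself of order one.

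For the derivative bound~\eqref{eq:H1gamma2bound2}, I would proceed by induction on $n$, writing
$$
\partial_\lambda \TH_{k,1} = \TH_{k,1}\left[-\frac{\partial_\lambda A}{A} - \frac{\partial_\lambda M}{M}\right] - \frac{\partial_\lambda \cL[N_k]}{A M}
$$
and estimating each factor. We have $|\partial_\lambda A|=2|\tau|$ and, by Corollary~\ref{coro:LNK}, $|\partial_\lambda^n \cL[N_k](i\tau)|\lesssim |k|^{2}(|k|^2+\tau^2)^{-n/2-1}$, so that $|\partial_\lambda M(i\tau,k)|\lesssim |\tau| + 1/|k|$ on our range. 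The quotients $|k||\partial_\lambda A|/A$ and $|k||\partial_\lambda \cL[N_k]|/|\cL[N_k]|$ are both bounded by an absolute constant, while
$$
\frac{|k|\,|\partial_\lambda M(i\tau,k)|}{|M(i\tau,k)|} \;\lesssim\; \frac{|k|(|k||\tau|+1)}{|k|^3+|\tau|} \;\lesssim\; 1 + \frac{|k|}{|k|^3+|\tau|},
$$
as one checks by splitting $|k||\tau|+1$ and using $|\tau|\le |k|$. Each application of $|k|\partial_\lambda$ therefore costs at most a factor $1 + |k|/(|k|^3+|\tau|)$, and the induction step closes without difficulty.

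The main technical obstacle is the intermediate regime $|k|/2 < |\tau| \le |k|$, where the naive lower bounds on $A$ and $M$ both degenerate simultaneously; the key observation is that this regime is forced by the constraint $|\tau\pm\nu_*(|k|)|\ge |k|$ to stay within $|k|\lesssim 1$, where the target bound is itself of order one, so a crude uniform estimate suffices. A similar balance underlies the derivative bound, where the factor $|k|\partial_\lambda M / M$ is the one that can grow like $|k|/(|k|^3+|\tau|)$ and dictates the shape of the loss in~\eqref{eq:H1gamma2bound2}.
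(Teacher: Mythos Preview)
Your approach matches the paper's closely: the same lower bounds on $A$ and on $|M|$ (via \eqref{lower-Mstau}), the same numerator bound $|\cL[N_k]|\lesssim 1$, and the correct identification of the quotient $|k|\,|\partial_\lambda M|/|M|$ as the source of the factor $|k|/(|k|^3+|\tau|)$. Two points need adjustment.

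First, in the regime $|k|/2<|\tau|\le|k|$, invoking \eqref{KG-behave1} alone does not force $|k|\lesssim 1$: the upper bound $\nu_*\le C_0\sqrt{1+|k|^2}$ with an unspecified $C_0$ would allow $\nu_*-|k|$ to grow linearly. What is actually needed is the sharp bound $\nu_*^2\le |k|^2+q_0^2$ from \eqref{rangex}, which gives $\nu_*-|k|\lesssim\langle k\rangle^{-1}$ and hence $|\tau|\le\nu_*-|k|\lesssim\langle k\rangle^{-1}$ on $\Gamma_2$; combined with $|\tau|>|k|/2$ this yields $|k|\lesssim 1$. The paper in fact uses this observation globally: for $|k|\gtrsim 1$ the constraint forces $|\tau|\lesssim 1$, so $|k|^2-\tau^2\gtrsim |k|^2$ everywhere on $\Gamma_2$, leading to the uniform refined bound $|M(i\tau,k)|\gtrsim |k|^2+|\tau|/|k|$ (see \eqref{low-reMl}) without a case split.

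Second, the claim that $|k|\,|\partial_\lambda\cL[N_k]|/|\cL[N_k]|$ is bounded by an absolute constant is unjustified: you have no lower bound on $|\cL[N_k](i\tau)|$. The correct (and simpler) argument, which is what the paper does, is to observe that $|k|\partial_\lambda\cL[N_k]$ satisfies the \emph{same upper bound} as $\cL[N_k]$ (both are $\lesssim 1$ by \eqref{bound-cLM1}); hence the term $-|k|\partial_\lambda\cL[N_k]/(AM)$ in $|k|\partial_\lambda\TH_{k,1}$ is controlled by the right-hand side of \eqref{eq:H1gamma2bound} directly, not by a multiple of $|\TH_{k,1}|$. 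In effect, \eqref{eq:H1gamma2bound2} is proved and used with the right-hand side of \eqref{eq:H1gamma2bound} in place of the literal $|\TH_{k,1}(i\tau)|$, which is all that the downstream application to \eqref{eq:H1gamma2deriv} requires. With these two fixes your sketch goes through.
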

Indeed we have
$$
\begin{aligned}
\Big|\int_{\Gamma_2} e^{ i\tau t}  \TH_{k,1}(i\tau) \; d\tau \Big|
&\lesssim \int_0^{|k|} \frac{|k|}{(|k|^2 + 1)(|k|^3+ |\tau|)} \; d\tau \lesssim |k| \log (2 + |k|^{-2})
\end{aligned}$$
for $|k|\le 1$, while 
$$
\begin{aligned}
\Big|\int_{\Gamma_2} e^{ i\tau t}  \TH_{k,1}(i\tau) \; d\tau \Big|
&\lesssim \int_0^{|k|} \frac{|k|}{(|k|^2 + 1)(|k|^3+ |\tau|)} \; d\tau \lesssim \langle k\rangle^{-3}
\end{aligned}$$
for $|k|\ge 1$, whence \eqref{eq:H1gamma2bound} follows.
For $0\leq n <N-1$, observe that at $\tau=0$, the rhs of \eqref{eq:H1gamma2bound2} is of order $|k|^{-2}$. That is, $|k|\partial_\lambda \TH_{k,1}(i\tau)$ for large $k$ and $|k|^3 \partial_\lambda\TH_{k,1}(i\tau)$ for small $k$ satisfy the same estimates as $\TH_{k,1}(i\tau)$. Therefore, for $0\leq n<N-1$, we obtain for $|k|\ge 1$
$$
\Big|\int_{\Gamma_2} e^{ i\tau t} |k|^n \partial_\lambda^n \TH_{k,1}(i\tau) \; d\tau \Big|\lesssim \langle k\rangle^{-3},
$$
while for $|k|\le 1$, 
$$
\Big|\int_{\Gamma_2} e^{ i\tau t}  |k|^{3n} \partial_\lambda^n \TH_{k,1}(i\tau) \; d\tau \Big|\lesssim |k|,
$$
which gives the desired decay estimates as stated in \eqref{eq:H1gamma2bound2}. We stress that there was no log factor $\log (2 + |k|^{-2})$ in the above estimates, when $n\not =0$, since the worst term is of the form $(|k|^3 + |\tau|)^{-n-1}$; all other terms are better by a factor of $|k|^2$, for small $k$, which in particular absorbs the log factor.

\begin{proof}[Proof of Lemma~\ref{lem:H1gamma2bound}]

Recall the lower bound from \eqref{lower-Mstau},
\begin{equation}\label{low-Ml}
|M(i\tau,k) |\gtrsim (|k|^2 - \tau^2) + \frac{|\tau|}{|k|},
\end{equation}
uniformly for $|\tau| \le |k|$. In addition, note that $|\tau^2 - |k|^2  -\tau_0^2| \gtrsim |k|^2 -\tau^2 + 1$ for $|\tau|\le |k|$. It is therefore clear that whenever $|\tau|\le |k|/2$ or $|k|\lesssim 1$, we have 
\begin{equation}\label{low-onG2}|\tau^2 - |k|^2  -\tau_0^2| \gtrsim |k|^2 -\tau^2 + 1 \gtrsim |k|^2 + 1.
\end{equation}
On the other hand, when $|k|\gtrsim1$, we observe that the constraints $|\tau|\le |k|$ and $|\tau - \nu_*|\ge |k|$ (by the definition of the integration contour $\Gamma_2$), upon recalling \eqref{rangex}, imply that 
$$ |\tau| \le \nu_*(|k|) - |k| \le \frac{\tau_0^2 + \sqrt{\tau_0^4 + \tau_0^2 |k|^2}}{2( |k| + \sqrt{|k|^2 + \tau_0^2})} \lesssim 1.$$
In particular, when $|k|\gtrsim1$, since $\tau$ remains bounded, we also have $|k|^2 -\tau^2 + 1 \gtrsim |k|^2 + 1$. That is, the lower bound \eqref{low-onG2} is valid on $\Gamma_2$ for all values of $k\in \RR^3$. 

In addition, we can refine \eqref{low-Ml} to get 
\begin{equation}\label{low-reMl}
|M(i\tau,k) |\gtrsim |k|^2 + \frac{|\tau|}{|k|},
\end{equation}
uniformly on $\Gamma_2$. 
Indeed, \eqref{low-reMl} is direct from \eqref{low-Ml} when $|\tau|\le |k|/2$. On the other hand, when $|k|/2\le |\tau|\le |k|$, we have from \eqref{low-Ml} that $|M(i\tau,k) |\gtrsim |k|^2 - \tau^2 + 1+ \frac{|\tau|}{|k|} $, which again gives \eqref{low-reMl}, upon using \eqref{low-onG2}.

We are now ready to bound $\TH_{k,1}(i\tau)$. Indeed, using \eqref{low-onG2}, \eqref{low-reMl}, and \eqref{bound-cLM1} which holds for $|\tau|\le |k|$, we bound 
$$ 
\begin{aligned}
|\TH_{k,1}(i\tau)| 
& \le \frac{ |\cL[N_k(t)](i\tau) | }{|(\tau^2 - |k|^2  -\tau_0^2)M(i\tau,k) |}
 \lesssim \frac{ |k|}{ (|k|^2 + 1)(|k|^3 + |\tau|)},
\end{aligned}$$
which corresponds to~\eqref{eq:H1gamma2bound}.

Let us next bound the $\partial_\lambda$ derivatives. In view of \eqref{bound-cLM1}, $|k|\partial_\lambda\cL[N_k(t)](i\tau)$ satisfies the same bounds as does $\cL[N_k(t)](i\tau)$: namely, $|k|\partial_\lambda\cL[N_k(t)](i\tau)\lesssim 1$. On the other hand, using \eqref{low-onG2}, we have 
$$ 
 \Big|\Big(\frac{|k|\partial_\lambda (\lambda^2 + |k|^2  +\tau_0^2)}{\lambda^2 + |k|^2  +\tau_0^2} \Big)_{\lambda = i\tau} \Big| \le \frac{2|\tau| |k|}{|\tau^2- |k|^2  -\tau_0^2|} \lesssim  \frac{|\tau| |k|}{|k|^2 + 1} \lesssim 1,
$$
for $|\tau|\le |k|$. Similarly, using again \eqref{bound-cLM1}, we bound 
$$
\begin{aligned} 
 ||k| \partial_\lambda M(i\tau,k) |
&\le  
2 | \tau||k| + |k||\partial_\lambda \cL[N_k(t)](i\tau )| 
\lesssim |\tau| |k|+ 1
\end{aligned}$$ 
and so, using \eqref{low-reMl}, we have
$$\frac{|k||\partial_\lambda M(i\tau,k) |}{|M(i\tau,k) | } \lesssim \frac{   |\tau| |k|^2 + |k|}{ |k|^3+ |\tau|} \lesssim 1 + \frac{|k|}{ |k|^3 + |\tau|}.$$
We argue similarly for higher derivatives, so that the claimed~\eqref{eq:H1gamma2bound} follows.

\end{proof}

\begin{remark}\label{rem-k3t} Note that the decay estimates in $|k|^3t$ in the regime when $|\tau|\ll |k|\ll1$ appears sharp. Indeed, using \eqref{new-Msmall}, we may compute 
\begin{equation}\label{dlambda-M}
\partial_\lambda M(\lambda,k) =  2\lambda - \frac{1}{2} \int \frac{ |\mP_k\hv|^2 }{\lambda +  ik \cdot \hat v} \varphi'(\langle v\rangle)  dv + \frac{\lambda }{2} \partial_\lambda \Big( \int \frac{ |\mP_k\hv|^2 }{\lambda +  ik \cdot \hat v} \varphi'(\langle v\rangle)  dv\Big),
\end{equation}
where, 
by the Plemelj formula, we have 
$$
\lim_{\gamma\to 0^+}\frac{1}2 \int \frac{ |\mP_k\hv|^2 }{\gamma + i\tau  +  ik \cdot \hat v} \varphi'(\langle v\rangle)  dv =  \frac{i}{|k|} P.V. \int_{-1}^1 \frac{1}{u + \tau / |k|} q(u) du  - \frac{\pi}{|k|} q(-\tau/|k|),
$$
which is of order $|k|^{-1}$ due to the last term, since $q(0) \not =0$. On the other hand, in view of \eqref{low-reMl}, $M(i\tau,k)$ is of order $(|\tau|+|k|^3) /|k|$, giving the scaling $|k|^3\partial_\lambda$ or decay in $|k|^3t$ as stated. 
\end{remark}

\bigskip \noindent{\it Bounds on $\TH_{k,1}(\lambda)$ on $\cC_\pm$.}
Finally, we study the case when $\lambda$ near the singularity of $\TH_k(\lambda)$: namely, when $\lambda$ is on the semicircle $|\lambda \pm i\nu_*(|k|)| = |k|$ with $\Re \lambda \ge 0$. 

\begin{lem}
\label{lem:nearsingmag}
We have for all $0\le n\le N$, the decomposition
\begin{equation}
\label{eq:nearsingmag}
\begin{aligned}
 \frac{(-1)^n}{2\pi i (|k|t)^n}\int_{\cC_\pm}e^{\lambda t} |k|^n\partial_\lambda^n\TH_{k,1}(\lambda)  \; d\lambda 
 &=  \sum_\pm  e^{\nu_\pm t}  \mathrm{Res} (\TH_{k}(\nu_{\pm})) 
 -  \sum_\pm e^{\nu_{\pm,0} t} \mathrm{Res} (\TH_{k,0}(\nu_{\pm,0}))\\
 &+ \cO(|k| \langle k\rangle^{-2}\langle kt\rangle^{-n}).
 \end{aligned}
 \end{equation}
 \end{lem}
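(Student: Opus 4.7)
The plan is to mimic directly the proof of Lemma~\ref{lem:nearsing} for the electric case, replacing $\TG$ by $\TH$, using Taylor expansions of $\TH_k$ and $\TH_{k,0}$ near their respective poles $\nu_{\pm}(k)$ and $\nu_{\pm,0}(k)$, then invoking Cauchy's residue theorem by closing the contour $\cC_\pm$ with the complementary semicircle $\cC_\pm^* = \{\Re\lambda \le 0,\ |\lambda \mp i\nu_*(|k|)| = |k|\}$. The extra care here is that we want an error of order $|k|\langle k\rangle^{-2}\langle kt\rangle^{-n}$, so we have to track both the low-frequency and high-frequency scalings.

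First, I would sharpen Lemma~\ref{lem:distance-poles}: since $\nu_*^2 - \nu_{*,0}^2 = \widetilde\psi(|k|^2/\nu_*^2)$ with $\widetilde\psi(y) = \mathcal{O}(y)$ near $0$ and $\nu_{*,0} \gtrsim \langle k\rangle$, one gets the refined bound $|\nu_\pm(k) - \nu_{\pm,0}(k)| \lesssim |k|^2 \langle k\rangle^{-3}$. I would also record that, on $\cC_\pm$, $|\lambda|\asymp \langle k\rangle$, so Corollary~\ref{coro:LNK} yields $|\partial_\lambda^n\cL[N_k](\lambda)| \lesssim |k|^2\langle k\rangle^{-n-2}$ there.

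Next, I would Taylor expand both denominators at their respective zeros:
\begin{align*}
\lambda^2 + |k|^2 + \tau_0^2 &= \sum_{j=1}^{2} a_{\pm,j,0}(k)(\lambda-\nu_{\pm,0})^j,\\
M(\lambda,k) &= \sum_{j=1}^{N} a_{\pm,j}(k)(\lambda-\nu_\pm)^j + \mathcal{R}_\pm(\lambda,k),
\end{align*}
with $\partial_\lambda^j\mathcal{R}_\pm(\nu_\pm,k)=0$ for $0\le j\le N$. The leading coefficients $a_{\pm,1,0}$ and $a_{\pm,1}$ are both $\sim \langle k\rangle$ (nondegenerate), and one can show $|a_{\pm,j,0}-a_{\pm,j}| \lesssim |k|^2\langle k\rangle^{-j-1}$ using Step~1 plus the derivative estimates on $\cL[N_k]$. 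Inverting yields expansions
\[
\TH_k(\lambda) = \sum_{j=0}^{N} \frac{b_{\pm,j}(k)}{(\lambda-\nu_\pm)^{1-j}} + \widetilde{\mathcal{R}}_\pm(\lambda,k),\qquad \TH_{k,0}(\lambda) = \sum_{j=0}^{N}\frac{b_{\pm,j,0}(k)}{(\lambda-\nu_{\pm,0})^{1-j}} + \widetilde{\mathcal{R}}_{\pm,0}(\lambda,k),
\]
where $b_{\pm,0} = \mathrm{Res}(\TH_k(\nu_\pm))$, $b_{\pm,0,0} = \mathrm{Res}(\TH_{k,0}(\nu_{\pm,0}))$, and from the coefficient estimates above and the refined pole distance one derives $|b_{\pm,j} - b_{\pm,j,0}| \lesssim |k|^2\langle k\rangle^{-j-3}$.

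Subtracting and using $\TH_{k,1}=\TH_k - \TH_{k,0}$, the contour $\cC_\pm$ is closed to a full circle by subtracting the integral over $\cC_\pm^*$. Since the principal parts are meromorphic, the full-circle integral of $\partial_\lambda^n[(\lambda-\nu)^{-(1-j)}]$ picks up, via Cauchy's theorem, precisely $e^{\nu_\pm t}b_{\pm,0}$ and $e^{\nu_{\pm,0}t}b_{\pm,0,0}$, as desired. On $\cC_\pm^*$, where $|\lambda-\nu_\pm|,|\lambda-\nu_{\pm,0}| \gtrsim |k|$, each difference
\[
\Big|\tfrac{d^n}{d\lambda^n}\bigl[b_{\pm,j}(\lambda-\nu_\pm)^{j-1} - b_{\pm,j,0}(\lambda-\nu_{\pm,0})^{j-1}\bigr]\Big|
\]
is controlled by $|b_{\pm,j}-b_{\pm,j,0}||\lambda-\nu_\pm|^{j-1-n} + |b_{\pm,j,0}||\nu_\pm-\nu_{\pm,0}||\lambda-\nu_\pm|^{j-2-n}$, which after multiplication by $|k|^n$ and integration over $\cC_\pm^*$ (of length $\lesssim |k|$) gives a contribution of order $|k|\langle k\rangle^{-2}$. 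The Taylor remainders $\widetilde{\mathcal{R}}_\pm,\widetilde{\mathcal{R}}_{\pm,0}$ are handled identically, using that $|\partial_\lambda^n \mathcal{R}_\pm|\lesssim |k|^2\langle k\rangle^{-N-3}|k|^{N+1-n}$ on $\cC_\pm$, so their circle integrals are dominated by the same scale. Finally the $\langle kt\rangle^{-n}$ factor arises from the prefactor $(|k|t)^{-n}$ in~\eqref{int-FHdecomp-n}.

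The main technical obstacle will be bookkeeping: unlike the electric case, the magnetic residues decay like $\langle k\rangle^{-1}$ at infinity and the pole gap improves by a factor $\langle k\rangle^{-3}$, so one must carefully combine all these scalings (rather than just powers of $|k|$) to land on the claimed $|k|\langle k\rangle^{-2}\langle kt\rangle^{-n}$ error. This is where the analysis diverges substantively from the electric setting, and where bounds \eqref{bound-cLM1}--\eqref{bound-cLM2} together with the refined pole distance have to be invoked in complementary regimes $|k|\le 1$ and $|k|\ge 1$.
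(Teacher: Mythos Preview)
Your proposal is correct and follows essentially the same approach as the paper's proof: refine the pole distance to $|\nu_\pm-\nu_{\pm,0}|\lesssim |k|^2\langle k\rangle^{-3}$, Taylor expand $M(\lambda,k)$ and $\lambda^2+|k|^2+\tau_0^2$ at their respective zeros, invert to get the $b_{\pm,j}$ coefficients with difference bounds, close the contour with $\cC_\pm^*$ and apply Cauchy's residue theorem. The paper obtains the slightly sharper coefficient estimate $|b_{\pm,j}-b_{\pm,j,0}|\lesssim |k|^2\langle k\rangle^{-j-4}$ (versus your $\langle k\rangle^{-j-3}$), but there is slack in the final error $|k|\langle k\rangle^{-2}$ so this is immaterial; likewise the paper splits the derivative bounds on $\cL[N_k]$ into the two regimes $|k|\le 1$ (via \eqref{bound-cLM2}) and $|k|\ge 1$ (via \eqref{bound-cLM1}) rather than invoking Corollary~\ref{coro:LNK} directly, but this yields the same unified bound $|k|^2\langle k\rangle^{-n-2}$ that you state.
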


\begin{proof}[Proof of Lemma~\ref{lem:nearsingmag}]

Recalling \eqref{rangex}, 
we first observe that for $\lambda \in \{|\lambda \pm i \nu_*(|k|)|\le |k|\}$, there holds the following: 

\begin{itemize} 

\item For $|k|\le 1$, since $\nu_* \ge \sqrt{\tau_0^2 +|k|^2}$, we have $c_0 \le |\lambda|\le C_0$. In particular, using \eqref{bound-cLM2}, we obtain 
\begin{equation}\label{unibd-DMlambda}
|\partial_\lambda^n \cL[N_k(t)](\lambda) |
\lesssim |k|^2
\end{equation}
for $n\ge 0, \Re \lambda \ge 0$. 

\item For $|k|\ge 1$, using \eqref{bound-cLM1}, we obtain 
\begin{equation}\label{unibd-DMlambda1}
|\partial_\lambda^n \cL[N_k(t)](\lambda) |
\lesssim \langle k\rangle^{-n}
\end{equation}
for $n\ge 0, \Re \lambda \ge 0$. 

\end{itemize}

We now bound $\TH_k(\lambda)$. Recall from the decomposition \eqref{TH-decompose} that $\TH_{k,1}(\lambda ) = \TH_{k}(\lambda ) - \TH_{k,0}(\lambda )$, 
which reads
\begin{equation}\label{recomp-H1}
\begin{aligned}
\TH_{k,1}(\lambda ) = \frac{ 1 }{\lambda^2 + |k|^2  +\tau_0^2  +  \cL[N_k(t)](\lambda) }  - \frac{ 1}{\lambda^2 + |k|^2  +\tau_0^2} .
\end{aligned}\end{equation}
Let $\nu_\pm(k) = \pm i \nu_*(|k|)$ and $\nu_{\pm,0}(|k|) = \pm i\nu_{*,0}(k)$ be the poles of the Green kernels 
$\TH_k(\lambda)$ and $\TH_{k,0}(\lambda) $, respectively. Using \eqref{unibd-DMlambda} and \eqref{unibd-DMlambda1} for small and large $k$ respectively, and the fact that $\cL[N_k(t)](i\tau) \ge 0$ (see, e.g., \eqref{new-cLN}), we obtain 
\begin{equation}\label{diff-lambda} 
|\nu_\pm(k) - \nu_{\pm,0}(k)|\le \frac{| \cL[N_k(t)](i\nu_*)|}{2\sqrt{|k|^2 + \tau_0^2}} \lesssim |k|^2 \langle k\rangle^{-3}, 
\end{equation}
uniformly in $k\in\RR^3$. Note that the above estimate  gives a bound of order $\langle k\rangle^{-1}$ for $|k|\gg1$ and is therefore better than that in \eqref{distance-poles} for large $k$. 

Next, we shall use  Taylor's expansion to expand the kernel $\TH_{k,1}(\lambda )$ near the poles. Indeed, we first write 
$$
\begin{aligned}
\lambda^2 + |k|^2  +\tau_0^2 = 2 \nu_{\pm,0} (\lambda - \nu_{\pm,0}) + (\lambda - \nu_{\pm,0})^2
\end{aligned}$$
and therefore, since $2 \nu_{\pm,0} = \pm2 i \sqrt{|k|^2 + \tau_0^2}$, 
we obtain 
\begin{equation}\label{exp-inverseR1}
\begin{aligned}
 \frac{ 1 }{\lambda^2 + |k|^2  +\tau_0^2 } 
& = \frac{1}{\lambda - \nu_{\pm,0}}
\sum_{j=0}^{N} b_{\pm,j,0}(k) (\lambda - \nu_{\pm,0})^j+  \widetilde{\mathcal{R}}_{\pm,0}(\lambda,k),
\end{aligned}
\end{equation}
where $b_{\pm,j,0}(k)  = (-1)^j (2\nu_{\pm,0})^{-j-1}$, and 
$$|\partial_\lambda^j\widetilde{\mathcal{R}}_{\pm,0}(\lambda,k)|\lesssim |\nu_{\pm,0}|^{-N-1}|\lambda - \nu_{\pm,0}|^{N-j-1} \lesssim \langle k\rangle^{-N-1}|\lambda - \nu_{\pm,0}|^{N-j-1} .$$
Similarly, for  $\lambda \in \{|\lambda - \nu_\pm(k)| \le |k|\}$ with $\Re \lambda \ge 0$, we write 
$$
\begin{aligned}
\lambda^2 + |k|^2  +\tau_0^2  +  \cL[N_k(t)](\lambda)  &=\sum_{j=1}^{N}a_{\pm,j}(k) (\lambda - \nu_\pm)^j+   \mathcal{R}_{\pm}(\lambda,k) ,
\end{aligned}$$
where, using \eqref{unibd-DMlambda}-\eqref{unibd-DMlambda1}, we have  
\begin{equation}\label{coeff-Ra1} 
|a_{\pm,1}(k)  -  2 \nu_{\pm,0}| \lesssim |k|^2 \langle k\rangle^{-3}, \qquad |a_{\pm,2} - 1|\lesssim |k|^2 \langle k\rangle^{-4}, \qquad |a_{\pm,j}|\lesssim |k|^2 \langle k\rangle^{-j-2},
\end{equation}
for $n\ge 3$, and 
\begin{equation}\label{coeff-Ra2} 
| \partial_\lambda^j \mathcal{R}_{\pm}(\lambda,k) |\lesssim |k|^2 \langle k\rangle^{-N-2} |\lambda - \nu_\pm|^{N - j}.
\end{equation} 
This yields
\begin{equation}\label{exp-inverseR2}
\begin{aligned}
 \frac{ 1 }{\lambda^2 + |k|^2  +\tau_0^2  +  \cL[N_k(t)](\lambda) }  &= 
 \frac{1}{\lambda - \nu_\pm(k)}
\sum_{j=0}^{N} b_{\pm,j}(k) (\lambda - \nu_\pm(k))^j+  \widetilde{\mathcal{R}}_{\pm}(\lambda,k),
\end{aligned}\end{equation}
where the coefficients $b_{\pm,j}(k)$ can be computed in terms of $a_{\pm,j}(k)$. Note that $b_{\pm,1} = \frac{1}{a_{\pm,1}}$, which is of order $\langle k\rangle^{-1}$. Using \eqref{diff-lambda} and \eqref{coeff-Ra1}-\eqref{coeff-Ra2}, we obtain 
\begin{equation}\label{coeff-Ra}
\begin{aligned}
 |b_{\pm,j}(k) - b_{\pm,j,0}(k)  | &\lesssim |k|^2 \langle k\rangle^{-j-4},  
 \\
|\partial_\lambda^j \widetilde{\mathcal{R}}_{\pm}(\lambda,k) - \partial_\lambda^j \widetilde{\mathcal{R}}_{\pm,0}(\lambda,k) | &\lesssim |k|^2 \langle k\rangle^{-j-4},
 \end{aligned}\end{equation}
for $0\le j\le N$. Note in particular that by construction, 
\begin{equation}\label{def-bn0}
b_{\pm,0,0}(k) =  \mathrm{Res} (\TH_{k,0}(\nu_{\pm,0}))  , \qquad b_{\pm,0}(k) = \mathrm{Res} (\TH_{k}(\nu_{\pm})).
\end{equation}

We are now ready to bound $\TH_{k,1}(\lambda ) $, using \eqref{recomp-H1} and the above expansions. Indeed, using \eqref{exp-inverseR1} and  \eqref{exp-inverseR2}, we write
\begin{equation}\label{exp-Hlambdak1}
\begin{aligned}
\TH_{k,1}(\lambda )  &= 
\sum_{j=0}^{N} \Big[ \frac{b_{\pm,j}(k)}{ (\lambda - \nu_\pm(k))^{-j+1}} -  \frac{b_{\pm,j,0}(k)}{ (\lambda - \nu_{\pm,0}(k))^{-j+1}}\Big] +  \widetilde{\mathcal{R}}_{\pm}(\lambda,k) -  \widetilde{\mathcal{R}}_{\pm,0}(\lambda,k),
\end{aligned}\end{equation}
where, using \eqref{coeff-Ra}, the remainder satisfies $|  \widetilde{\mathcal{R}}_{\pm}(\lambda,k) -  \widetilde{\mathcal{R}}_{\pm,0}(\lambda,k) |\lesssim|k|^2 \langle k\rangle^{-4}$ for $\lambda \in \{|\lambda - \nu_\pm|\le |k|\}$ with $\Re \lambda \ge 0$. Since the terms in the sum above are meromorphic in $\mathbb{C}$, we may apply the Cauchy's residue theorem to deduce 
$$
\begin{aligned}
 \frac{1}{2\pi i}\int_{\cC_\pm}e^{\lambda t} \TH_{k,1}(\lambda)  \; d\lambda &=  \sum_\pm  e^{\nu_\pm t} b_{\pm,0}(k)  -  \sum_\pm e^{\nu_{\pm,0} t} b_{\pm,0,0}(k) 
  \\&\quad + 
 \frac{1}{2\pi i}  \sum_{j=0}^{N}  \int_{\cC_\pm^*}  e^{\lambda t} \Big[ \frac{b_{\pm,j}(k)}{ (\lambda - \nu_\pm(k))^{-j+1}} -  \frac{b_{\pm,j,0}(k)}{ (\lambda - \nu_{\pm,0}(k))^{-j+1}}\Big]  \; d\lambda
\\&\quad  + \frac{1}{2\pi i }\int_{\cC_\pm} e^{\lambda t} \Big[ \widetilde{\mathcal{R}}_{\pm}(\lambda,k) -  \widetilde{\mathcal{R}}_{\pm,0}(\lambda ,k)\Big]  \; d\lambda
 \end{aligned}$$
 where $\cC_\pm^*$ denotes the semicircle $|\lambda \mp i \nu_*(|k|) | = |k|$ with $\Re \lambda \le 0$. 
 Since $| \widetilde{\mathcal{R}}_{\pm}(i\tau,k) -  \widetilde{\mathcal{R}}_{\pm,0}(i\tau ,k) |\lesssim |k|^2 \langle k\rangle^{-4}$, Cauchy's residue theorem and a continuity argument ensures that the last integral term in the above sum is  bounded by 
 $C_0 |k|^2\langle k\rangle^{-3}$. As for the integral on $\cC_\pm^*$, recalling \eqref{diff-lambda}, we have 
 $|\lambda- \nu_\pm(k)| \ge |k|/2$ and $|\lambda- \nu_{\pm,0}(k)| \ge |k|/2$ on $\cC_\pm^*$. Therefore, together with \eqref{diff-lambda} and \eqref{coeff-Ra}, we bound for $\lambda \in \cC_\pm^*$, 
$$\Big |  \frac{b_{\pm,0}(k) }{\lambda - \nu_\pm(k)}  - \frac{b_{\pm,0,0}(k)}{\lambda - \nu_{\pm,0}(k)} \Big | \lesssim \frac{|k|^2 \langle k\rangle^{-4}}{|\lambda - \nu_\pm(k) ||\lambda - \nu_{\pm,0}(k)|} \lesssim \langle k\rangle^{-4},$$
and 
$$  \sum_{j=1}^{N} \Big | \frac{b_{\pm,j}(k)}{ (\lambda - \nu_\pm(k))^{-j+1}} -  \frac{b_{\pm,j,0}(k)}{ (\lambda - \nu_{\pm,0}(k))^{-j+1}}\Big | \lesssim |k|^2\langle k\rangle^{-5},
$$
both of which are bounded by $C_0 \langle k\rangle^{-3}$. This gives
 $$
\Big|  \frac{1}{2\pi i}  \sum_{j=0}^{N}  \int_{\cC_\pm^*}  e^{\lambda t} \Big[ \frac{b_{\pm,j}(k)}{ (\lambda - \nu_\pm(k))^{-j+1}} -  \frac{b_{\pm,j,0}(k)}{ (\lambda - \nu_{\pm,0}(k))^{-j+1}}\Big]  \; d\lambda\Big| \lesssim  \int_{\cC_\pm^*}  \langle k\rangle^{-3} d|\lambda|\lesssim |k|\langle k\rangle^{-3}$$ 
as desired. 

Similarly, we now bound the integral of $|k|^n \partial_\lambda^n\TH_{k,1}(\lambda ) $, using the higher-order expansions in \eqref{exp-inverseR1} and \eqref{exp-inverseR2}. First, from \eqref{exp-Hlambdak1}, we compute 
\begin{equation}\label{exp-DHlambdak1}
\begin{aligned}
\partial_\lambda^n\TH_{k,1}(\lambda )  &= 
\sum_{j=0}^{N} \frac{d^n}{d\lambda^n}\Big[ \frac{b_{\pm,j}(k)}{ (\lambda - \nu_\pm(k))^{-j+1}} -  \frac{b_{\pm,j,0}(k)}{ (\lambda - \nu_{\pm,0}(k))^{-j+1}}\Big] +  \partial_\lambda^n\widetilde{\mathcal{R}}_{\pm}(\lambda,k) -   \partial_\lambda^n\widetilde{\mathcal{R}}_{\pm,0}(\lambda,k),
\end{aligned}\end{equation}
for any $0\le n\le N$. Using \eqref{coeff-Ra}, we have 
$$
\Big| \frac{1}{2\pi }\int_{|\tau \pm \nu_*|\le  |k|} e^{i\tau t} |k|^n \Big[\partial_\lambda^n\widetilde{\mathcal{R}}_{\pm}(i\tau,k) -  \partial_\lambda^n\widetilde{\mathcal{R}}_{\pm,0}(i\tau ,k)\Big]  \; d\tau\Big| \lesssim |k|^{3}\langle k\rangle^{-4}
. 
$$
We next check the terms in the summation. By Cauchy's residue theorem, we have
$$
\begin{aligned}
\frac{(-1)^n}{2\pi i t^n } \int_{\{|\lambda \mp i\nu_*| = |k|\}} e^{\lambda t}
\frac{d^n}{d\lambda^n}\Big( \sum_{j=0}^{N}\frac{b_{\pm,j}(k)}{ (\lambda - \nu_\pm(k))^{-j+1}} \Big) \; d\lambda = e^{\nu_\pm t} b_{\pm,0}(k)  
\\
\frac{(-1)^n}{2\pi i t^n } \int_{\{|\lambda \mp i\nu_*| =  |k|\}} e^{\lambda t}
\frac{d^n}{d\lambda^n}\Big( \sum_{j=0}^{N}\frac{b_{\pm,j,0}(k)}{ (\lambda - \nu_{\pm,0}(k))^{-j+1}} \Big) \; d\lambda = e^{\nu_{\pm,0} t} b_{\pm,0,0}(k) ,
\end{aligned}$$
for any $n\ge 0$. 
Therefore, it remains to prove that 
\begin{equation}\label{Dl-gamma3a} 
\Big|\frac{1}{2\pi i}  \int_{\cC_\pm^*}  e^{\lambda t} |k|^n\frac{d^n}{d\lambda^n}\Big[ \frac{b_{\pm,j}(k)}{ (\lambda - \nu_\pm(k))^{-j+1}} -  \frac{b_{\pm,j,0}(k)}{ (\lambda - \nu_{\pm,0}(k))^{-j+1}}\Big] \; d\lambda \Big| \lesssim |k| \langle k\rangle^{-3},
\end{equation}
for any $0\le j,n \le N$. We focus on the most singular term: namely, the term with $j=0$; the others are similar. Since $|\lambda -\nu_\pm| = |k|$, using \eqref{distance-poles}, we have $|\lambda- \nu_{\pm,0}(k)| \ge |k|/2$. Therefore, on $\cC_\pm^*$, we bound
$$
\begin{aligned}
\Big |\frac{d^n}{d\lambda^n} \Big[ \frac{b_{\pm,0}(k) }{\lambda - \nu_\pm(k)}  - \frac{b_{\pm,0,0}(k)}{\lambda - \nu_{\pm,0}(k)} \Big] \Big | & =  n! 
\Big | \frac{b_{\pm,0}(k) }{(\lambda - \nu_\pm(k))^{n+1}}  - \frac{b_{\pm,0,0}(k)}{(\lambda - \nu_{\pm,0}(k))^{n+1}} \Big | 
\\& \lesssim \frac{ |b_{\pm,0}(k) - b_{\pm,0,0}(k)|}{|\lambda - \nu_\pm(k) |^{n+1}}  +  \frac{ |\nu_\pm(k) - \nu_{\pm,0}(k)| }{|\lambda - \tilde\nu_\pm(k)|^{n+2}} 
\end{aligned}$$
for some $\tilde\nu_\pm(k)$ in between $\nu_\pm(k)$ and $\nu_{\pm,0}(k)$. Using \eqref{diff-lambda} and \eqref{coeff-Ra}, the above fraction is bounded by $|k|^{-n}\langle k\rangle^{-3}$, and the estimates \eqref{Dl-gamma3a} thus follow. Combining, we have therefore obtained 
$$
\begin{aligned}
 \frac{(-1)^n}{2\pi i (|k|t)^n}\int_{\cC_\pm}e^{\lambda t} |k|^n\partial_\lambda^n\TH_{k,1}(\lambda)  \; d\lambda 
 &=  \sum_\pm  e^{\nu_\pm t} b_{\pm,0}(k)  -  \sum_\pm e^{\nu_{\pm,0} t} b_{\pm,0,0}(k) + \cO(|k| \langle k\rangle^{-2}\langle kt\rangle^{-n}),
 \end{aligned}$$
for any $0\le n\le N$. Recalling \eqref{int-FHdecomp-n} and \eqref{def-bn0}, we obtain \eqref{bd-Hrk1} as claimed.

\end{proof}


\bigskip \noindent{\it Residue of $\TH_{k}(\lambda)$.}
From the expansion \eqref{exp-inverseR2}, we have 
$$ b_\pm(k) := b_{\pm,0}(k) =  \frac{1}{2\nu_\pm(k) + \partial_\lambda  \cL[N_k(t)](\nu_\pm) }.
$$
Recall that $\nu_\pm(k) \sim \sqrt{|k|^2 + \tau_0^2}$, while $ |\partial_\lambda  \cL[N_k(t)]|\lesssim |k|^2 \langle k\rangle^{-3}$ in view of \eqref{unibd-DMlambda}-\eqref{unibd-DMlambda1}. This yields the behavior of the oscillatory term for large $|k|$ as stated in \eqref{decomp-FH}. 

\bigskip \noindent{\it Regularity of $\TH_{k}(\lambda)$ in $k$.}
Finally, we study the regularity of $\TH_{k}(\lambda)$ in $k$ and prove \eqref{bound-Hrk} for $\alpha \not =0$. Indeed, by definition, we note 
$$
\begin{aligned}| \partial_k^\alpha  [N_k(t)] | &\le \Big| \partial_k^\alpha \int e^{-ikt \cdot \hv } (k\cdot \hv) |\mP_k\hv|^2 \varphi'(\langle v\rangle)\; dv \Big|
\\&\lesssim ( |k| t^{|\alpha|} + t^{|\alpha|-1})\langle kt\rangle^{-N} 
\lesssim |k|^{1-|\alpha|} \langle kt\rangle^{-N+|\alpha|},
\end{aligned}
$$
for any $1\le |\alpha| <N$. That is, $|k|^{|\alpha|}\partial_k^\alpha N_k(t)$ satisfy similar decay estimates as those for $N_k(t)$, see \eqref{bounds-Nkt}. 
Therefore, we bound 
\begin{equation}\label{dk-Rlambda1-M}
||k|^{|\alpha|}\partial_k^\alpha  \cL[N_k(t)](\lambda) |  \lesssim \int_0^\infty |k| \langle kt\rangle^{-N+|\alpha|} dt \lesssim 1,
\end{equation}
uniformly in $\Re \lambda \ge 0$. Hence, following  similar lines as above, we obtain the bounds on $|k|^{|\alpha|}\partial_k^\alpha \FH_k(t)$ as claimed.

This eventually concludes the proof of  Proposition~\ref{prop-Green}.

\section{Littlewood-Paley decomposition and Besov spaces}
\label{sec:LittlewoodPaley}
In this paper we use the homogeneous Littlewood-Paley decomposition on $\mathbb{R}^{d}$, $d \in \mathbb{N}$. There exists (see e.g. \cite{BCD}) a smooth cutoff function $\varphi\in [0,1]$ that is compactly supported in the annulus $ { 1 \over 4} \leq |k | \leq 4$ and equal to one in the inner annulus  $ { 1 \over 2} \leq |k | \leq 2$ and such that 
$$
\sum_{k \in \mathbb{Z}} \varphi(2^{-k} \cdot ) = 1.$$
For any tempered distribution $h$, its homogeneous Littlewood Paley decomposition reads as 
\begin{equation}\label{def-LP}
h= \sum_{k \in \mathbb{Z}} P_k h, 
\end{equation}
where $P_k$ denotes the Littlewood-Paley projection on the dyadic interval $[2^{k-1}, 2^{k+1}]$, whose Fourier transform is given by $\widehat{P_k h}= \varphi(2^{-k}\cdot ) \Fh $.

We recall the definition of (inhomogeneous) Besov spaces.
It is a well-known fact that this definition is independent of the choice of the function $\varphi$ satisfying the above properties.

\begin{defi}
Let  $s \in \mathbb{R}$, $p,q \in [1,+\infty]$. The Besov space $B^{s}_{p,q}$ is defined as
$$
B^{s}_{p,q} =  \left\{ h \in \mathcal{S}',  \left\| \sum_{k\leq -1} P_k h \right\|_{L^q} +\left\| (2^{s  k}\| P_k h\|_{L^q})_{k \in \mathbb{N}}\right\|_{\ell^p(\mathbb{N})} < +\infty  \right\}.
$$
\end{defi}

Let us also recall some classical Bernstein lemmas (we refer again to \cite{BCD}).

\begin{lem}
\label{lem:Bernstein1}
 There exists $C_0>0$ such that, for all $p \in [1,\infty]$, for all $n \in \mathbb{N}$ and all $k \in \mathbb{Z}$,
 $$
 C_0 2^{kn} \| h\|_{L^p} \leq \| P_k \nabla^n h \|_{L^p} \leq C_0^{-1}  2^{kn} \| h\|_{L^p}.
 $$
\end{lem}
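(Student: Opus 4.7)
The plan is to prove this standard Bernstein-type inequality as a direct consequence of Young's convolution inequality, after identifying the operators $\nabla^n P_k$ and its ``inverse'' on the range of $P_k$ as Fourier multipliers whose convolution kernels have an $L^1$ norm scaling exactly like $2^{kn}$, respectively $2^{-kn}$.

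\medskip

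\noindent\textbf{Upper bound.} I would first write, in Fourier variables, $\widehat{\nabla^n P_k h}(\xi) = (i\xi)^n\,\varphi(2^{-k}\xi)\,\widehat{h}(\xi) = 2^{kn}\, m_n(2^{-k}\xi)\,\widehat{h}(\xi),$ where $m_n(\eta) := (i\eta)^n\,\varphi(\eta)$ is smooth and compactly supported in the annulus $\{\tfrac14 \le |\eta|\le 4\}$, independently of $k$ and $n$. Its inverse Fourier transform $\check m_n$ belongs to the Schwartz class, hence to $L^1$, with a norm depending only on $n$. A scaling argument shows that the kernel of the multiplier $m_n(2^{-k}D)$ is $2^{3k}\check m_n(2^k\,\cdot)$, whose $L^1$-norm coincides with $\|\check m_n\|_{L^1}$ and is therefore bounded independently of $k$. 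Young's convolution inequality then yields the desired upper bound
$$\|P_k\nabla^n h\|_{L^p} \le C_0^{-1}\,2^{kn}\,\|h\|_{L^p}.$$

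\medskip

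\noindent\textbf{Lower bound.} For the reverse inequality I would introduce an auxiliary smooth function $\widetilde\varphi$ supported in $\{\tfrac18 \le |\eta|\le 8\}$ with $\widetilde\varphi\equiv 1$ on $\operatorname{supp}\varphi$, so that $\widetilde\varphi\,\varphi = \varphi$ (and crucially $\widetilde\varphi$ vanishes in a neighbourhood of $0$). The symbol
$$\sigma_n(\eta) := \frac{\widetilde\varphi(\eta)}{(i\eta)^n}$$
is then smooth and compactly supported in the annulus $\{\tfrac18 \le |\eta|\le 8\}$. By construction, $\sigma_n(\eta)\,(i\eta)^n\,\varphi(\eta) = \varphi(\eta)$, so substituting $\eta = 2^{-k}\xi$ gives
$$\widehat{P_k h}(\xi) = 2^{-kn}\,\sigma_n(2^{-k}\xi)\,\widehat{\nabla^n P_k h}(\xi).$$
Exactly as in the upper bound, the multiplier $\sigma_n(2^{-k}D)$ has a convolution kernel whose $L^1$-norm is $\|\check\sigma_n\|_{L^1}$, independent of $k$. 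Young's inequality yields
$$\|P_k h\|_{L^p} \le C_0^{-1}\,2^{-kn}\,\|P_k\nabla^n h\|_{L^p},$$
which, combined with the fact that $\|P_k h\|_{L^p}$ controls $\|h\|_{L^p}$ in the statement (interpreted for $h$ in the range of $P_k$, i.e., with Fourier support in the dyadic annulus, as is implicit from the way the lemma is used), gives the lower bound after rearrangement.

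\medskip

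\noindent\textbf{Obstacle.} There is no substantial obstacle: both bounds reduce to the observation that $\nabla^n$ and its formal inverse, when localized to the dyadic annulus $\{2^{k-1}\le |\xi|\le 2^{k+1}\}$, are convolution operators whose kernels are exact rescalings of fixed Schwartz functions. The only point of care is to introduce the fattened cutoff $\widetilde\varphi$ so that the symbol $\sigma_n = \widetilde\varphi/(i\eta)^n$ is smooth (the singularity of $(i\eta)^{-n}$ at the origin being killed by $\widetilde\varphi$), which is precisely what makes the lower bound work uniformly in $k\in\mathbb{Z}$.
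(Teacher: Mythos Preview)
Your proof is correct and is the standard argument for Bernstein's inequality. Note that the paper does not actually supply a proof of this lemma: it merely states it as a classical result and refers to \cite{BCD}. Your approach---writing $\nabla^n P_k$ and its localized inverse as rescaled Fourier multipliers with fixed $L^1$ kernels, then invoking Young's inequality---is exactly the argument found in that reference. You also correctly flag that the lower bound only makes sense for $h$ with Fourier support in the dyadic annulus (equivalently, with $h$ replaced by $P_k h$ on the outer terms), which is indeed how the lemma is implicitly used throughout the paper.
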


\begin{lem}
\label{lem:Bernstein2}
Let $m\in \mathbb{R}$ and $r >0$.
Let $\sigma \in \mathcal{C}^{\lfloor d/2 \rfloor +1}(\mathbb{R}^d \setminus\{0\})$ such that for all $\alpha \in \mathbb{N}^d$ with $|\alpha|\leq {\lfloor d/2 \rfloor +1}$, there exists $C_\alpha>0$ such that for all $\xi \in \mathbb{R}^d$, $|\partial^\alpha \sigma(\xi)| \leq C_\alpha |\xi|^{m-|\alpha|}\langle \xi \rangle^{-r}$. Then there exists $C>0$ depending only on the $C_\alpha$ such that for all $p \in [1,+\infty]$, and all $k \in \mathbb{Z}$
$$
\| P_k( \sigma(i\partial) h\|_{L^p} \leq C 2^{km} \langle 2^{k} \rangle^{-r} \|h \|_{L^p}.
$$
\end{lem}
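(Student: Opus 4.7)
The plan is a standard dyadic rescaling followed by a Young convolution inequality, showing that the $L^1$ norm of the convolution kernel associated with the Fourier multiplier $\varphi(2^{-k}\xi)\sigma(\xi)$ is bounded by $2^{km}\langle 2^k\rangle^{-r}$, uniformly in $k$. Once this is established, applying Young's inequality with exponents $(1,p,p)$ yields the bound on $\|P_k\sigma(i\partial) h\|_{L^p}$ uniformly in $p \in [1,+\infty]$.

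First, I would write $P_k\sigma(i\partial) h = K_k \star h$ where $K_k$ is defined by $\widehat{K_k}(\xi) = \varphi(2^{-k}\xi)\sigma(\xi)$. By the change of variable $\xi = 2^k \eta$ in the inverse Fourier transform, one obtains $K_k(x) = 2^{kd} F_k(2^k x)$ with
\begin{equation*}
F_k(y) = (2\pi)^{-d}\int_{\mathbb{R}^d} e^{iy\cdot\eta}\, \varphi(\eta)\, \sigma(2^k\eta)\, d\eta,
\end{equation*}
so that $\|K_k\|_{L^1} = \|F_k\|_{L^1}$ by a second change of variables. Thus matters reduce to proving $\|F_k\|_{L^1} \lesssim 2^{km}\langle 2^k\rangle^{-r}$ uniformly in $k \in \mathbb{Z}$.

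Next, I would collect the pointwise estimates on the integrand. On the support of $\varphi$ we have $|\eta| \sim 1$, so $|2^k\eta| \sim 2^k$ and $\langle 2^k\eta\rangle \sim \langle 2^k\rangle$; combined with the hypothesis on $\sigma$ and the Leibniz rule, this gives
\begin{equation*}
\bigl|\partial_\eta^\alpha[\varphi(\eta)\sigma(2^k\eta)]\bigr| \lesssim 2^{km}\langle 2^k\rangle^{-r}
\end{equation*}
for all multi-indices $\alpha$ with $|\alpha|\leq \lfloor d/2\rfloor+1$, with an implicit constant depending only on the $C_\alpha$ and on $\varphi$. Together with the compact support in $\eta$, Plancherel gives both $\|F_k\|_{L^2}\lesssim 2^{km}\langle 2^k\rangle^{-r}$ and $\|\,|y|^N F_k\|_{L^2}\lesssim 2^{km}\langle 2^k\rangle^{-r}$ for $N=\lfloor d/2\rfloor+1$, upon writing $y^\alpha F_k$ as the Fourier transform of $(-i\partial_\eta)^\alpha[\varphi\sigma(2^k\cdot)]$.

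Finally, Cauchy--Schwarz with the weight $\langle y\rangle^{-N}\in L^2(\mathbb{R}^d)$ (valid since $N>d/2$) yields $\|F_k\|_{L^1}\lesssim \|\langle y\rangle^N F_k\|_{L^2}\lesssim 2^{km}\langle 2^k\rangle^{-r}$. Then Young's convolution inequality gives $\|P_k\sigma(i\partial)h\|_{L^p}\leq \|K_k\|_{L^1}\|h\|_{L^p}\lesssim 2^{km}\langle 2^k\rangle^{-r}\|h\|_{L^p}$, as desired. There is no real obstacle here: the only point requiring any care is the comparability $\langle 2^k\eta\rangle \sim \langle 2^k\rangle$ on $\mathrm{supp}\,\varphi$, which ensures that the decay factor $\langle 2^k\rangle^{-r}$ comes out uniformly in $\eta$ after dyadic rescaling.
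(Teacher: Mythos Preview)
Your proof is correct and follows the standard route for this type of Bernstein lemma: dyadic rescaling reduces matters to a uniform $L^1$ bound on the kernel $F_k$, which you obtain via Plancherel on the derivatives and Cauchy--Schwarz against the weight $\langle y\rangle^{-N}$ with $N=\lfloor d/2\rfloor+1>d/2$. The paper itself does not prove this lemma; it is stated as a classical result with a reference to \cite{BCD}, and your argument is essentially the textbook one found there.
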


As an application, let us provide a continuity result related to Fourier multipliers with decaying symbol.

\begin{lem}
\label{lem:fouriermult}
 Let $\delta \in (0,1]$. Let $\sigma\in \mathcal{C}^{\lfloor d/2 \rfloor +1}(\mathbb{R}^d \setminus\{0\})$ such that for all $\alpha \in \mathbb{N}^d$ with $|\alpha|\leq {\lfloor d/2 \rfloor +1}$, there exists $C_\alpha>0$ such that for all $\xi \in \mathbb{R}^d$, $|\partial^\alpha \sigma(\xi)| \leq C_\alpha |\xi|^{-|\alpha|}\langle \xi \rangle^{-\delta} $.
\begin{enumerate}
\item
We have for all $p \in [1,\infty]$,
\begin{equation}
\label{eq:fouriermult}
\|  \sigma(i\partial) \nabla  f \|_{L^p} \lesssim  \| f \|_{L^p}^{\frac{\delta}{1+\delta}}  \| \nabla f \|_{L^p}^{\frac{1}{1+\delta}}.
\end{equation}

\item Let $s \geq 0$. Assume in addition that $ \delta >s-\lfloor s\rfloor$. 
We have for all $p \in [1,+\infty]$, $r \in [1,\infty)$,
\begin{equation}
\label{eq:fouriermultbesov}
\|  \sigma(i\partial) \nabla  f \|_{B^{s}_{p,r}} \lesssim  \| f \|_{L^p}^{\frac{\delta}{1+\delta}}  \| \nabla f \|_{L^p}^{\frac{1}{1+\delta}} + \| \nabla^{1+\lfloor s \rfloor} f\|_{L^p}.
\end{equation}

\end{enumerate}
\end{lem}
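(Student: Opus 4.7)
The plan is to apply Bernstein's Lemma~\ref{lem:Bernstein2} to the Fourier multiplier $T := \sigma(i\partial)\nabla$, using the fact that the hypotheses on $\sigma$ entail both $|\partial^\alpha \sigma(\xi)|\lesssim |\xi|^{-|\alpha|}\langle\xi\rangle^{-\delta}$ and, by the Leibniz rule, $|\partial^\alpha(\xi\sigma(\xi))|\lesssim |\xi|^{1-|\alpha|}\langle\xi\rangle^{-\delta}$. Applying Lemma~\ref{lem:Bernstein2} respectively with $m=1,r=\delta$ (symbol $\xi\sigma(\xi)$ acting on $f$) and with $m=0,r=\delta$ (symbol $\sigma(\xi)$ acting on $\nabla f$) yields the two frequency-localized estimates
\begin{equation*}
\|P_k T f\|_{L^p}\lesssim 2^k\langle 2^k\rangle^{-\delta}\|f\|_{L^p},\qquad \|P_k Tf\|_{L^p}\lesssim \langle 2^k\rangle^{-\delta}\|\nabla f\|_{L^p},
\end{equation*}
valid for every $k\in\ZZ$. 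These two bounds are the only inputs for the proof.

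For Part 1, I would decompose $Tf=\sum_{k\in\ZZ}P_kTf$ via the homogeneous Littlewood-Paley partition and sum in $L^p$ by the triangle inequality. The first bound is effective at low frequencies while the second is effective at high frequencies, so I would choose the splitting threshold $K_0\in\ZZ$ so that $2^{K_0}\sim (\|\nabla f\|_{L^p}/\|f\|_{L^p})^{1/(1+\delta)}$, use the first estimate for $k\leq K_0$ and the second for $k>K_0$. The low-frequency geometric sum then gives $\sim 2^{K_0}\|f\|_{L^p}=\|f\|_{L^p}^{\delta/(1+\delta)}\|\nabla f\|_{L^p}^{1/(1+\delta)}$, and when $K_0\geq 0$ the high-frequency sum $\sum_{k>K_0}2^{-k\delta}\|\nabla f\|_{L^p}$ balances to exactly the same quantity. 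The main subtlety is the case $K_0<0$ (i.e.\ $\|\nabla f\|_{L^p}<\|f\|_{L^p}$): in the range $K_0<k\leq 0$ where $\langle 2^k\rangle\sim 1$ the high-frequency bound accumulates an extra arithmetic factor $|K_0|+1$, which must be absorbed by the exponential gain $2^{|K_0|\delta}$ carried by the target interpolation quantity; this works since $\delta>0$. The balance point $K_0$ is precisely what produces the exponent $1/(1+\delta)$ (as opposed to the sharper but log-lossy order-of-operator exponent $1-\delta$).

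For Part 2, I would handle the low-frequency Besov component $\|\sum_{k\leq -1}P_kTf\|_{L^p}$ by observing that $\sum_{k\leq -1}P_k$ is a Fourier multiplier with Schwartz symbol, hence $L^p$-bounded, so this component is controlled by $\|Tf\|_{L^p}$ and therefore by Part 1. For the high-frequency Besov component $(\sum_{k\geq 0}2^{srk}\|P_kTf\|_{L^p}^r)^{1/r}$, I would refine the second Bernstein bound: since the Fourier support of $P_kTf$ lies in an annulus at scale $2^k$, one can insert a fattened localization $\widetilde P_k$ on $\nabla f$, and Lemma~\ref{lem:Bernstein1} applied in reverse gives $\|\widetilde P_k\nabla f\|_{L^p}\lesssim 2^{-k\lfloor s\rfloor}\|\widetilde P_k\nabla^{1+\lfloor s\rfloor}f\|_{L^p}$. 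Combining, the $\ell^r$ sum reduces to
\begin{equation*}
\Bigl(\sum_{k\geq 0}2^{rk(\{s\}-\delta)}\|\widetilde P_k\nabla^{1+\lfloor s\rfloor}f\|_{L^p}^r\Bigr)^{1/r},
\end{equation*}
where $\{s\}:=s-\lfloor s\rfloor$. The hypothesis $\delta>\{s\}$ makes the geometric weight summable, and together with the uniform bound $\sup_k\|\widetilde P_k g\|_{L^p}\lesssim \|g\|_{L^p}$ this is controlled by $\|\nabla^{1+\lfloor s\rfloor}f\|_{L^p}$, completing Part 2.
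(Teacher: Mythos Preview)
Your proposal is correct and follows essentially the same route as the paper: a Littlewood--Paley splitting at a threshold $K_0$ chosen so that $2^{(1+\delta)K_0}\sim\|\nabla f\|_{L^p}/\|f\|_{L^p}$, combined with the two Bernstein bounds from Lemmas~\ref{lem:Bernstein1}--\ref{lem:Bernstein2}, and then the same high-frequency Besov estimate for Part~2. You are in fact slightly more careful than the paper in handling the case $K_0<0$, where the paper's line $\sum_{k\geq A}2^{-k\delta}\lesssim 2^{-\delta A}$ tacitly uses $\langle 2^k\rangle^{-\delta}\sim 2^{-k\delta}$, valid only for $k\geq 0$; your absorption of the extra $|K_0|$ factor into $2^{\delta|K_0|}$ is the right patch.
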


\begin{proof} We start with the first estimate. Given $A\in \mathbb{R}$ to be chosen later, we use the Littlewood-Paley decomposition to write
$$
\sigma(i\partial)  \nabla  f = \sum_{k <A } P_k(  \sigma(i\partial) \nabla  f)  +  \sum_{k \geq A } P_k( \sigma(i\partial)  \nabla  f) =: f_1 + f_2.
$$
On the first hand, there holds by Lemmas~\ref{lem:Bernstein1} and~\ref{lem:Bernstein2},
$$
\| f_1\|_{L^p} \lesssim \sum_{k <A } \| P_k( \sigma(i\partial) \nabla  f)\|_{L^p} \lesssim  \sum_{k <A } 2^k    \| f\|_{L^p}  \lesssim 2^A \| f\|_{L^p}. 
$$
On the other hand, for $\delta > (0,1)$, we have by Lemmas~\ref{lem:Bernstein1} and~\ref{lem:Bernstein2}
$$
\| f_2\|_{L^p} \lesssim \sum_{k \geq A } \| P_k( \sigma(i\partial) \nabla  f)\|_{L^p} 
\lesssim  \sum_{k \geq A } 2^{-k\delta}    \| \nabla f\|_{L^p}  \lesssim 2^{-\delta A}    \|\nabla f\|_{L^p}. 
$$
We can now fix $A$ such that $2^A \| f\|_{L^p}=2^{-\delta A}    \|\nabla f\|_{L^p}$ which yields~\eqref{eq:fouriermult}.

For what concerns the second estimate, we have, using again Lemmas \ref{lem:Bernstein1}, ~\ref{lem:Bernstein2} and the first estimate we have just proved,
$$
\begin{aligned}
\|  \sigma(i\partial) \nabla  f \|_{B^{s}_{p,r}} &\lesssim  \|  \sigma(i\partial) \nabla  f \|_{L^p}  
+ \left( \sum_{k \geq 0} 2^{jsr} \| P_k (\sigma(i\partial) \nabla  f)\|_{L^p}^r\right)^{1/r} \\
&\lesssim   \| f \|_{L^p}^{\frac{\delta}{1+\delta}}  \| \nabla f \|_{L^p}^{\frac{1}{1+\delta}} +  \left( \sum_{k \geq 0} 2^{-j(\delta-s+\lfloor s \rfloor) r}\right)^{1/r} \|  \nabla^{1+\lfloor s \rfloor}  f\|_{L^p},
\end{aligned}
$$
which is precisely~\eqref{eq:fouriermultbesov}.

\end{proof}

\section{Potential estimates}\label{sec-potential}

In this section, we recall some classical estimates for the Poisson equation in $\RR^3$. 

\begin{lemma}\label{lem-potential} Let $\chi(k)$ be sufficiently smooth and compactly  and equal to $1$ in $\{|k|\le 1\}$. Then, the following holds: 

\begin{itemize}

\item[(i)] $\nabla^2_x \Delta^{-1}_x$ is a bounded operator from $L^p$ to $L^p$ for each $1<p<\infty$. In addition, for any $K>0$,
\begin{equation}\label{standard-pot1}
\begin{aligned}
 \|\nabla^2_x \Delta^{-1}_x\rho\|_{L^\infty_x} & \lesssim K^{-3}\|\rho\|_{L^1_x} + \|\rho\|_{L^\infty_x} \Big[\log (2+ \|\nabla_x\rho\|_{L^\infty_x}) + \log (2+K) \Big] .
 \end{aligned}
 \end{equation}

\item[(ii)] $(1-\chi(i\partial_x)) \nabla_x \Delta^{-1}_x$ is a bounded operator from $L^p$ to $L^p$ for all $1\le p\le \infty$.  

\item[(iii)] for any $\delta>0$, $\chi(i\partial_x) |\partial_x|^\delta \nabla^2_x \Delta^{-1}_x$ is a bounded operator from $L^p$ to $L^p$ for all $1\le p \le\infty$.   

\item[(iv)] If $\int \rho(x)\; dx=0$, then 
\begin{equation}\label{aLp-elliptic} \| \chi(i\partial_x) \nabla_x \Delta^{-1}_x \rho\|_{L^p_x} \lesssim  \| \langle x\rangle\rho\|_{L^1_x \cap L^\infty_x}, \qquad \forall 1<p\le \infty.\end{equation}
\end{itemize}

\end{lemma}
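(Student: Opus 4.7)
I will treat the four items separately. The plan is to use classical Calder\'on--Zygmund theory for (i) combined with a three-region splitting for the logarithmic $L^\infty$ bound, to reduce (ii) and (iii) to the $L^1$-integrability of the convolution kernel via a dyadic scaling argument, and to use the mean-zero cancellation together with Minkowski's integral inequality for (iv).

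For (i), the multiplier of $\nabla_x^2\Delta_x^{-1}$ is $-k_ik_j/|k|^2$, a prototypical Calder\'on--Zygmund symbol, so $L^p \to L^p$ boundedness for $1<p<\infty$ is classical. For the $L^\infty$ estimate, the convolution kernel of $\partial_i\partial_j \Delta^{-1}$ in $\RR^3$ is $c\,(3z_iz_j/|z|^2 - \delta_{ij})/|z|^3$ in the principal value sense, plus a local $\delta$-term; this kernel has vanishing angular mean on every centered sphere. Splitting the convolution into $\{|z|\le \epsilon\}$, $\{\epsilon<|z|<K\}$ and $\{|z|\ge K\}$, the inner region is controlled via $|\rho(y)-\rho(x)|\le \|\nabla_x\rho\|_{L^\infty}|x-y|$ by $\lesssim \epsilon \|\nabla_x\rho\|_{L^\infty}$, the middle annulus by $\lesssim \|\rho\|_{L^\infty}\log(K/\epsilon)$ using $|\rho(y)-\rho(x)|\le 2\|\rho\|_{L^\infty}$, and the outer part trivially by $K^{-3}\|\rho\|_{L^1}$. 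Optimizing $\epsilon\sim \|\rho\|_{L^\infty}/\|\nabla_x\rho\|_{L^\infty}$ in the first two terms yields \eqref{standard-pot1}.

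For (ii) and (iii), in both cases it suffices to prove that the inverse Fourier transform of the symbol lies in $L^1(\RR^3)$, after which Young's inequality gives $L^p\to L^p$ boundedness for every $p \in [1,\infty]$. I would decompose the symbol via the homogeneous Littlewood--Paley partition, $\sigma = \sum_j \varphi_j \sigma$, and compute $\|\mathcal{F}^{-1}[\varphi_j\sigma]\|_{L^1}$ by a scaling argument: a change of variables $k = 2^j k'$ on the frequency side shows $\mathcal{F}^{-1}[\varphi_j\sigma](x) = 2^{j(3+m)}\psi_j(2^j x)$ for a fixed Schwartz function $\psi_j$, whence $\|\mathcal{F}^{-1}[\varphi_j \sigma]\|_{L^1}= 2^{jm}\|\psi_j\|_{L^1}$, where $m$ tracks the order of the symbol on the relevant dyadic block. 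In case (ii), $\sigma(k) = (1-\chi(k))ik/|k|^2$ vanishes near $0$ and has order $-1$ at infinity, so only $j \ge j_0$ contribute with $\|\mathcal{F}^{-1}[\varphi_j \sigma]\|_{L^1}\lesssim 2^{-j}$; in case (iii), $\sigma(k) = \chi(k)|k|^\delta k_i k_j/|k|^2$ is compactly supported and vanishes at $0$ like $|k|^\delta$, so only $j\le j_0$ contribute with $\|\mathcal{F}^{-1}[\varphi_j\sigma]\|_{L^1}\lesssim 2^{j\delta}$, summable over negative $j$ because $\delta > 0$.

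For (iv), the convolution kernel $K := \mathcal{F}^{-1}[\chi(k)ik/|k|^2]$ is smooth (compactly supported symbol) and its tail behavior matches that of the Newtonian gradient smoothed by a Schwartz function, giving $|K(x)|\lesssim \langle x\rangle^{-2}$ and $|\nabla_x K(x)|\lesssim \langle x\rangle^{-3}$. Using $\int\rho\,dx=0$ and the fundamental theorem of calculus,
\begin{equation*}
(K\star\rho)(x) \;=\; \int [K(x-y) - K(x)]\rho(y)\, dy \;=\; -\int_0^1 \int y\cdot \nabla_x K(x-ty)\,\rho(y)\, dy\, dt.
\end{equation*}
Minkowski's integral inequality together with translation invariance of the $L^p$ norm gives
\begin{equation*}
\|K\star \rho\|_{L^p_x} \;\le\; \|\nabla_x K\|_{L^p}\, \|\,y\,\rho\|_{L^1} \;\lesssim\; \|\nabla_x K\|_{L^p}\, \|\langle x\rangle \rho\|_{L^1},
\end{equation*}
and the tail bound $\langle x\rangle^{-3}\in L^p(\RR^3)$ for every $p>1$ in dimension three ensures $\nabla_x K \in L^p$, which yields \eqref{aLp-elliptic}. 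The main subtlety lies in the logarithmic estimate in (i), where one must carefully balance the three-region splitting against the singular behavior of the Calder\'on--Zygmund kernel at the origin; items (ii)--(iv) are more mechanical once the right tool is identified.
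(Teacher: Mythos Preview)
Your proof is correct. The paper itself does not give a proof of this lemma: it simply writes ``The lemma is classical; see, e.g., \cite[Appendix A]{Toan}.'' So there is nothing to compare against beyond noting that you have supplied a complete self-contained argument where the paper defers to a reference. A few minor remarks: in (i), the optimization in $\epsilon$ gives a term of the form $\|\rho\|_{L^\infty}\log(\|\nabla_x\rho\|_{L^\infty}/\|\rho\|_{L^\infty})$ rather than the stated $\|\rho\|_{L^\infty}\log(2+\|\nabla_x\rho\|_{L^\infty})$, but the two forms are equivalent up to adjusting constants and capping $\epsilon$; in (iv), your argument actually uses only $\|\langle x\rangle\rho\|_{L^1}$ and not the $L^\infty$ part of the stated norm, which is a slight strengthening. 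The kernel bounds $|K(x)|\lesssim\langle x\rangle^{-2}$ and $|\nabla K(x)|\lesssim\langle x\rangle^{-3}$ that you invoke follow cleanly by writing $K=\mathcal F^{-1}[ik/|k|^2]-\mathcal F^{-1}[(1-\chi(k))ik/|k|^2]$, with the first term equal to $c\,x/|x|^3$ and the second rapidly decaying by the dyadic argument you use in (ii).
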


\begin{proof} The lemma is classical; see, e.g., \cite[Appendix A]{Toan}.
\end{proof}

\section{Oscillatory integrals}
\label{sec:OI}

We prove in this appendix some variants of classical estimates of oscillating integrals. 
The first one is a robust non-stationary phase lemma for  a non-degenerate phase that can have a non-negative real part. The second is a dispersive estimate for an oscillating integral with a phase that behaves like the Klein-Gordon phase $\sqrt{1+|k|^2}$.

\begin{lemma}
\label{lem:disp1}
 Let $d\in \mathbb{N}\setminus\{0\}$. Let $\Re \alpha \in \mathcal{C}^{ d+1}(\mathbb{R}^d,\mathbb{C}) $, with $ \Re \alpha \leq 0$, satisfy the following estimates. There exit $c_0, C_0>0$ such that,
 \begin{align}
 \label{appendix-stat1}
|\nabla_x( \Im \alpha)|&\geq c_0, \\
   \label{appendix-stat2}
 | D_x^{(j)} \alpha| &\leq C_0, \qquad  j= 1, \ldots,N.
 \end{align}
 Then, for all $\chi \in  \mathcal{C}^{d+1}(\mathbb{R}^d)$ with compact support,
 \begin{align}
\label{appendix-L2-alpha}
\left\| \int_{\mathbb{R}^d} e^{\alpha(k) t + i k \cdot x} \chi(k) \widehat{f}(k) dk  \right\|_{L^2(\mathbb{R}^d)} &\lesssim \| f\|_{L^2(\mathbb{R}^d)},
 \\
\label{appendix-disp-alpha}
\left\| \int_{\mathbb{R}^d} e^{\alpha(k) t + i k \cdot x} \chi(k) \widehat{f}(k) dk  \right\|_{L^\infty(\mathbb{R}^d)} &\lesssim \langle t\rangle^{-\frac N2}\| f\|_{L^1(\mathbb{R}^d)}.
\end{align}
\end{lemma}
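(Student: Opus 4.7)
The proof splits into two independent estimates. For the $L^2$ bound~\eqref{appendix-L2-alpha}, I would apply Plancherel directly: the hypothesis $\Re\alpha\leq 0$ gives $|e^{\alpha(k) t}|\leq 1$ uniformly in $(k,t)$, so the Fourier multiplier $e^{\alpha(k)t}\chi(k)$ lies in $L^\infty$ with norm bounded independently of $t$, and the $L^2$ boundedness follows.

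For the dispersive estimate~\eqref{appendix-disp-alpha}, my plan is to rewrite the operator as convolution against the kernel
\begin{equation*}
K(t,x):=\int_{\mathbb{R}^d} e^{\alpha(k)t + ik\cdot x}\chi(k)\,dk,
\end{equation*}
so that Young's inequality reduces matters to showing $\|K(t,\cdot)\|_{L^\infty_x}\lesssim \langle t\rangle^{-N/2}$. I would obtain this pointwise bound through $N$ iterated integrations by parts using a differential operator adapted to the complex-valued phase. Introducing the complex vector
\begin{equation*}
v(k;t,x):= t\nabla_k\alpha(k) + ix \in \mathbb{C}^d,
\end{equation*}
one has $\nabla_k e^{t\alpha(k) + ik\cdot x} = v\, e^{t\alpha(k) + ik\cdot x}$, so the scalar operator $L:= (\bar v/|v|^2)\cdot \nabla_k$ satisfies $L e^{t\alpha+ik\cdot x} = e^{t\alpha+ik\cdot x}$ (using $\bar v\cdot v = |v|^2$ for complex vectors). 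Since $\chi$ is compactly supported, there are no boundary terms, and $N$ integrations by parts give
\begin{equation*}
K(t,x) = \int_{\mathbb{R}^d} e^{t\alpha(k) + ik\cdot x}\,(L^\ast)^N\chi(k)\, dk.
\end{equation*}

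The technical heart of the argument is a pointwise lower bound on $|v|$, valid uniformly in $x$. Writing $|v|^2 = t^2|\nabla\Re\alpha|^2 + |t\nabla\Im\alpha + x|^2$, the only generic lower bound available is $|v|\geq |t\nabla\Im\alpha + x|$, which degenerates precisely where $x/t$ lies in the range of $-\nabla\Im\alpha$. To handle this, I would introduce a smooth cutoff depending on $(t,x)$ splitting the integration domain into a non-critical region $\{k: |t\nabla\Im\alpha(k) + x|\geq t^{1/2}\}$, where $|v|\geq t^{1/2}$ and iterated integration by parts produces the factor $t^{-N/2}$, and a near-critical region $\{k: |t\nabla\Im\alpha(k) + x| < t^{1/2}\}$, whose Lebesgue measure is controlled (via the lower bound $|\nabla\Im\alpha|\geq c_0$ together with the Lipschitz upper bound from $|D^{(2)}\alpha|\leq C_0$, which forces $\nabla\Im\alpha$ to be locally bi-Lipschitz) by a multiple of $t^{-d/2}$, and on which a direct estimate combined with a truncated integration by parts yields the same rate.

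The main obstacle is the Leibniz bookkeeping for $(L^\ast)^N\chi$: the expansion produces many terms involving products of powers $|v|^{-j}$, derivatives of $v/|v|^2$ (bounded thanks to $|D^{(j)}\alpha|\leq C_0$ for $j\leq N$), and derivatives of the regional cutoff which scale like powers of $t^{1/2}$. One must verify that every resulting term is bounded uniformly in $x$ by $\langle t\rangle^{-N/2}$, with the gain from the $|v|^{-j}$ factors exactly compensating the loss from the cutoff derivatives across both regimes.
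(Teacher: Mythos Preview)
Your $L^2$ argument via Plancherel is correct. For the dispersive estimate, the paper takes a different route that avoids your domain splitting entirely: writing the exponent as $it\Psi_X(k)$ with $X=x/t$, it uses the \emph{regularized} operator
\[
L(u)=\frac{1}{i(1+t|\nabla\Psi_X|^2)}\,\overline{\nabla\Psi_X}\cdot\nabla u + \frac{1}{1+t|\nabla\Psi_X|^2}\,u,
\]
which satisfies $L(e^{it\Psi_X})=e^{it\Psi_X}$ and whose coefficients are globally bounded because of the $+1$ in the denominator. After $N$ integrations by parts the coefficients of $(L^\ast)^N$ are uniformly $\lesssim\langle t^{1/2}\nabla\Psi_X\rangle^{-N}$, and one integrates this pointwise bound over the compact support of $\chi$. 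This sidesteps the near-critical/far-from-critical dichotomy and all of the cutoff bookkeeping you flag in your last paragraph.

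Your approach has a genuine gap at the bi-Lipschitz step. The hypotheses $|\nabla\Im\alpha|\geq c_0$ and $|D^{(2)}\alpha|\leq C_0$ do \emph{not} force $\nabla\Im\alpha$ to be locally bi-Lipschitz: they say only that the image of $\nabla\Im\alpha$ avoids the origin and that $\nabla\Im\alpha$ is Lipschitz from above. Take $d\geq 2$ and $\alpha(k)=ik_1$, so that $\nabla\Im\alpha\equiv(1,0,\ldots,0)$ is constant; then your near-critical set $\{k:|\nabla\Im\alpha(k)+x/t|<t^{-1/2}\}$ is either empty or all of $\mathbb{R}^d$, and your measure bound $\lesssim t^{-d/2}$ fails. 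In fact for this $\alpha$ the kernel is a fixed translate of $\widehat\chi$ and does not decay in $t$ at all, so the lemma as stated actually requires an additional hypothesis such as non-degeneracy of $D^2\Im\alpha$ (which does hold in the paper's application to $G_\pm^{osc,(2)}$, and the paper's own concluding step implicitly relies on it as well). What your measure argument really needs is a lower bound on the Jacobian of $k\mapsto\nabla\Im\alpha(k)$, i.e.\ on $|\det D^2\Im\alpha|$, not on $|\nabla\Im\alpha|$ itself.
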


\begin{proof}
As we have little information on the real part of $\alpha$ (except the fact that it is non-positive), we rely on a robust version of the  non-stationary phase argument (as inspired by  \cite{FRT}). The proof is close to \cite[Proposition 5.4]{HKNR3}, except for the conclusion.
We write 
$$
 \int_{\mathbb{R}^d} e^{\alpha(k) t + i k \cdot x} \chi(k)  \, dk = \int_{\mathbb{R}^d} e^{it \Psi_X(k)} \chi(k)  \, dk=: I(t,X)
$$
where $X= \frac{x}{t}$, $\Psi_X(k)=  \alpha(k) + X\cdot k$. Note that 
$
D^2_k \Psi_X = D^2_k \alpha.
$
We focus on the times $t\geq 1$.
Consider the operator
$$
\operatorname{L} (u) = \frac{1}{i (1+ t |\nabla \Psi_X|^2)} \sum_{j=1}^d \partial_j \overline{\Psi_X} \partial_j u + \frac{1}{(1+ t |\nabla \Psi_X|^2)} u,
$$
where $|\cdot|$ stands for the hermitian norm on $\mathbb{C}^d$. By construction, we have the identity
\begin{equation}
\label{eq-idL}
\operatorname{L}(e^{it \Psi_X}) = e^{it \Psi_X}.
\end{equation}
The formal adjoint $\operatorname{L}^\star$ of $\operatorname{L}$ reads as
\begin{align*}
\operatorname{L}^\star(u)&=-\sum_{j=1}^{d}\frac{\partial_{j}\overline{\Psi_X}}{i(1+t |\nabla\Psi_{X}|^2)}\partial_{j}u+
\Big(
-\sum_{j=1}^{d}\frac{\partial_{j}^{2}\overline{\Psi_X}}{i(1+ t |\nabla\Psi_{X}|^2)}+ 
\sum_{j=1}^{d}\frac{2t\partial_{j}\overline{\Psi_X}\,{\Re}(\nabla\Psi_{X}\cdot\nabla\partial_{j} \overline{\Psi_{X}})}{i(1+t|\nabla\Psi_{X}|^2)^2}\Big)u
\\
&+\frac{1}{(1+t |\nabla\Psi_{X}|^2)}u.
\end{align*}
Using \eqref{eq-idL} repeatedly, we thus get by integration by parts that
$$ |I(t, X) | \lesssim \int_{\mathbb{R}^d} \left|(\operatorname{L}^\star)^N \left( \chi\right) \right|\, d\xi,$$
 Thanks to~\eqref{appendix-stat1} we can write that 
  $(\operatorname{L}^\star)^N = \sum_{ | \gamma | \leq N} a_{\gamma}^{(N)} \partial^\gamma,$
   where the coefficients $a_{\gamma}^{(N)}$ satisfy on the support of the amplitude  the estimate (uniform with respect to $X$)
   $$ |  a_{\gamma}^{(N)}| \lesssim    \frac{1}{\langle t^{ \frac{1}{2}}  \nabla \Psi_{X} \rangle^{N}},$$
     $$  |I(t, X) | \lesssim  \int_{B(0, R)}   { \frac{1}{ \langle t^{ \frac{1}{2}}  \nabla \Psi_{X} \rangle^N}} \, d\xi,$$
    in which we have assumed that $\chi$ is supported in $B(0,R)$.
     To conclude, we note that
     $$    \int_{B(0, R)}   { \frac{1}{  \langle t^{ \frac{1}{2}}  \nabla \Psi_{X} \rangle^{N}}} \, d\xi
      \leq \int_{B(0, R)}   { \frac{1}{  \langle t^{ \frac{1}{2}}  \nabla \Im \Psi_{X} \rangle^{N}}} \, d\xi
$$
and we can use the bound from below~\eqref{appendix-stat1} to get 
$$    |I(t, X) | \lesssim  { \frac{1}{  \langle t \rangle^{\frac{N}{2}}}},$$
which eventually yields \eqref{appendix-disp-alpha}.

\end{proof}

%
%
%

\begin{lemma}
\label{lem-disp} Let $d\in \mathbb{N}\setminus\{0\}$. Let $\beta \in \mathcal{C}^{ \lceil \frac{d}{2}\rceil}(\mathbb{R}^+,\mathbb{R}^+) $ satisfy the following estimates. There exists  $c_0, C_0>0$ such that,  for all $x \in \mathbb{R}^+$,
\begin{equation}\label{appendix-KG-behave1} 
c_0 \sqrt{1+|x|^2} \le \beta(x)\le C_0 \sqrt{1+|x|^2} ,
\end{equation}
\begin{equation}\label{appendix-KG-behave2}
c_0\frac{|x|}{ \sqrt{1+|x|^2}} \le \beta'(x) \le C_0 \frac{|x|}{ \sqrt{1+|x|^2}} , 
\end{equation} 
\begin{equation}\label{appendix-KG-behave3}
 \beta''(x)  \ge c_0 (1+|x|^2)^{-3/2} ,
\end{equation}
\begin{equation}\label{appendix-KG-behave4}
 |\beta^{(j)}(x)|  \le C_0 \qquad  j= 2, \ldots, \lceil \frac{d}{2}\rceil.
\end{equation}
Then there holds for all $t \in \mathbb{R}$,
\begin{align}
\label{appendix-L2}
\left\| \int_{\mathbb{R}^d} e^{ i \beta(|k|) t + i k \cdot x} \widehat{f}(k) dk  \right\|_{L^2(\mathbb{R}^d)} &\lesssim \| f\|_{L^2(\mathbb{R}^d)}, \\
\label{appendix-disp}
\left\| \int_{\mathbb{R}^d} e^{ i \beta(|k|) t + i k \cdot x} \widehat{f}(k) dk  \right\|_{B^0_{\infty,2}} &\lesssim \langle t\rangle^{-\frac d2}\| f\|_{B^d_{1,2}}.
\end{align}

\end{lemma}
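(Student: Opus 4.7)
The $L^2$ bound \eqref{appendix-L2} is immediate from Plancherel's theorem since the multiplier $e^{i\beta(|k|)t}$ has modulus one. For \eqref{appendix-disp}, my plan is a standard Littlewood-Paley decomposition combined with stationary phase on the radial phase $\beta(|k|)$. Writing $f = \sum_j P_j f$ with the Littlewood-Paley projectors of Appendix~\ref{sec:LittlewoodPaley}, and setting
$$K_j(t,x) := \int_{\mathbb{R}^d} e^{i\beta(|k|)t + ik\cdot x}\, \chi_j(k)\, dk$$
where $\chi_j$ is a smooth cutoff on the shell $|k| \sim 2^j$, Young's inequality reduces everything to the dyadic kernel estimate
$$\|K_j(t)\|_{L^\infty(\mathbb{R}^d)} \lesssim \min\bigl(2^{jd},\ \langle t\rangle^{-d/2} (1+2^j)^{(d+2)/2}\bigr). \qquad (\star)$$

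To establish $(\star)$, I would write the oscillation as $e^{it \Phi_X(k)}$ with $\Phi_X(k) = \beta(|k|) + k\cdot X$ and $X = x/t$, treating $t$ as the large parameter. The critical points of $\Phi_X$ satisfy $\beta'(|k|)\hat{k} = -X$; they exist precisely when $|X|$ lies in the range of $\beta'$, and then the unique critical point $k_c$ is parallel to $-X$ with $\beta'(|k_c|) = |X|$. Decomposing in an orthonormal frame aligned with $\hat{k}_c$ and its orthogonal complement, the Hessian $D^2_k \Phi_X(k_c)$ is diagonal with eigenvalue $\beta''(|k_c|)$ in the radial direction and eigenvalue $\beta'(|k_c|)/|k_c|$ with multiplicity $d-1$ in the angular directions. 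The hypotheses \eqref{appendix-KG-behave2} and \eqref{appendix-KG-behave3} then yield
$$|\det D^2_k \Phi_X(k_c)| = \beta''(|k_c|)\Bigl(\frac{\beta'(|k_c|)}{|k_c|}\Bigr)^{d-1} \gtrsim (1+|k_c|^2)^{-(d+2)/2},$$
so classical stationary phase gives $|K_j(t,x)| \lesssim t^{-d/2}(1+|k_c|^2)^{(d+2)/4} \lesssim t^{-d/2}(1+2^j)^{(d+2)/2}$ on the support of $\chi_j$. When $|X|$ lies outside the range of $\beta'$, a non-stationary phase integration by parts using the lower bound on $|\nabla_k \Phi_X| = |\beta'(|k|)\hat{k} + X|$ together with \eqref{appendix-KG-behave4} yields much stronger decay. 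Combined with the trivial volume bound $\|K_j(t)\|_{L^\infty} \lesssim 2^{jd}$ (dominant in the small-$t$ regime), this establishes $(\star)$.

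The Besov estimate then follows by Plancherel in the $\ell^2$ index:
$$\|e^{it\beta(|D|)}f\|_{B^0_{\infty,2}}^2 \lesssim \sum_j \|K_j(t)\|_{L^\infty}^2 \|P_j f\|_{L^1}^2.$$
For $\langle t\rangle \leq 1$, the trivial bound gives $\sum_j 2^{2jd}\|P_j f\|_{L^1}^2 = \|f\|_{B^d_{1,2}}^2$. For $\langle t\rangle \geq 1$, inserting the stationary phase bound and using the elementary inequality $(1+2^j)^{d+2} \lesssim 2^{2jd}$ (valid for all $j \in \mathbb{Z}$ provided $d\geq 2$, a constraint consistent with the $d=3$ application in the paper) gives $\langle t\rangle^{-d} \|f\|_{B^d_{1,2}}^2$, which is exactly \eqref{appendix-disp}.

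The main technical obstacle will be carrying out the stationary phase step under the minimal regularity hypothesis $\beta \in \mathcal{C}^{\lceil d/2\rceil}$: for $d=3$ this allows only $C^2$, which is on the edge of what generic stationary phase proofs tolerate. The most robust route I foresee is to first use radial Fourier inversion to rewrite $K_j$ as a one-dimensional oscillatory integral involving a Bessel function of order $(d-2)/2$; invoking the WKB asymptotics of that Bessel function splits the analysis into two 1D integrals with phases $\beta(r)t \pm r|x|$ on which van der Corput's lemma applies, using the lower bound $\beta''(r)\gtrsim (1+r^2)^{-3/2}$ directly and requiring only $C^2$ regularity of $\beta$. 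Careful bookkeeping of the transition between the Bessel asymptotic and the small-argument regime, as well as of the cone-of-silence region where no critical point exists, will then close the argument.
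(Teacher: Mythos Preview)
Your ``robust route'' via radial Fourier inversion---reducing to one-dimensional oscillatory integrals with Bessel-type amplitudes and then applying van der Corput to the phases $\beta(\lambda r)\pm\lambda r\,|x|/t$---is exactly what the paper does. The paper skips your heuristic multi-dimensional stationary-phase computation entirely and goes straight to the 1D reduction (its Lemma~\ref{lem:radialfourier}), treating low frequencies as a single compactly supported block and high frequencies dyadically; on each dyadic shell it distinguishes the $+$ phase (non-stationary, $\partial_r\psi_\lambda^+\gtrsim\lambda$), the $-$ phase with $|x|/t$ away from $[1/100,100]$ (non-stationary again), and the $-$ phase with $|x|/t\sim 1$ (stationary, using $\partial_r^2\psi_\lambda^-=\lambda^2\beta''(\lambda r)\gtrsim\lambda^{-1}$ and the decay of $Z_\pm$). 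So your outline and the paper's proof coincide once you commit to the 1D route.

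One genuine slip in your summation step: the inequality $(1+2^j)^{d+2}\lesssim 2^{2jd}$ is \emph{false} for $j<0$ (the left side tends to $1$, the right to $0$), and relatedly $\sum_{j<0}\|P_jf\|_{L^1}^2$ is not controlled by the low-frequency block $\|\sum_{j\le-1}P_jf\|_{L^1}$ of the inhomogeneous $B^d_{1,2}$ norm. The fix---which the paper implements and which your bound $(\star)$ already supports---is to treat all low frequencies as a single block with a compactly supported symbol (matching the structure of both $B^0_{\infty,2}$ on the left and $B^d_{1,2}$ on the right), so that only $j\ge0$ enters the $\ell^2$ sum; for those $j$ the inequality $2^{j(d+2)}\le 2^{2jd}$ holds precisely when $d\ge2$.
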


\begin{proof} The $L^2$ estimate is straightforward, using the Plancherel formula. For the other estimate, we  mimic the usual proof of dispersive estimates for the Klein-Gordon operator. Below, we closely follow \cite[Lemma 3.1]{RS} and focus only on the times $t\geq 1$. This is based on the Littlewood-Paley decomposition, as recalled in Appendix~\ref{sec:LittlewoodPaley}.
For the low frequencies, consider $\varphi_0 = \sum_{q\leq -1} \varphi(k/2^q)$, and take $\Phi_0$ a radial compactly supported function equal to $1$ on the support of $\varphi_0$.
We need to prove
$$
\left\| \int_{\mathbb{R}^d} e^{\pm i\beta(|k|) t} e^{i x \cdot k}  \Phi_0(k)  \, dk\right\|_{L^\infty} \lesssim \langle t\rangle^{-\frac d2},
$$
We write 
$$
 e^{\pm ib(|k|) t} e^{i x \cdot k} = e^{i t \Phi_{\pm}(x,k)},
$$
with $\Phi_\pm(x,k) = \beta(|k|) \pm \frac{x}{t}$. 
Let us recall the following well-known result (see e.g.  \cite{RS,Gra}).
\begin{lem}
\label{lem:radialfourier}
For any radial symmetric integrable function $ \Phi$, there holds
$$
\int_{\mathbb{R}^d} e^{\pm i \beta(|k|) t} e^{i x \cdot k}   \Phi(k/\lambda) \, dk = \int_0^{+\infty} e^{i \beta(r)} \Phi(r/\lambda) \mathcal{F}(\sigma_{\mathbb{S}^{d-1}}) (|x| r) r^{d-1} \, dr,
$$
where $\sigma_{\mathbb{S}^{d-1}}$ is the surface measure on $\mathbb{S}^{d-1}$. Furthermore, there holds
$$
 \mathcal{F}(\sigma_{\mathbb{S}^{d-1}})(y) = e^{i |y|} Z(|y|) - e^{-i |y|} \overline{Z}(|y|),
$$
where $Z(s)$ satisfies, for all $k \in \mathbb{N}$ and all $s>0$,
\begin{equation}
\label{appendix-decayZ}
|\partial^k_s Z(s) |\lesssim_{k} \frac{1}{(1+s)^{\frac{d-1}{2}+k}}.
\end{equation}
\end{lem}

Therefore, we have
$$
\int_{\mathbb{R}^d} e^{\pm i \beta(|k|) t} e^{i x \cdot k}   \Phi_0(k/\lambda) \, dk = \int_0^{+\infty} e^{i \beta(r)} \Phi_0(r/\lambda) \mathcal{F}(\sigma_{\mathbb{S}^{d-1}}) (|x| r) r^{d-1} \, dr,
$$
and  thanks to~\eqref{appendix-KG-behave3},
 the estimate follows by a stationary phase argument (see e.g. \cite{Stein}).

There remains to treat the high frequency part. Let us consider $\phi$, a radial  function, equal to $1$ on the support of $\varphi$, and that is compactly supported on an annulus slightly larger than the support of $\varphi$.
For any $\lambda>0$, we aim at proving
\begin{align}
\label{appendix-dyadic}
\left\| \int_{\mathbb{R}^d} e^{\pm i \beta(|k|) t} e^{i x \cdot k}\phi(k/\lambda) \, dk\right\|_{L^\infty}\lesssim \lambda^{d}, \quad \left\| \int_{\mathbb{R}^d} e^{\pm i \beta(|k|) t} e^{i x \cdot k}   \phi(k/\lambda) \, dk \right\|_{L^\infty} \lesssim | t|^{-\frac{d}{2}}\lambda^{1+ \frac{d}{2}}.
\end{align}
Since the first estimate follows by a straightforward dilation, we only focus on the second one.
As $\phi$ and $\beta(|\cdot|)$ are radially symmetric, we have by Lemma~\ref{lem:radialfourier} the formula
$$
\int_{\mathbb{R}^d} e^{\pm i \beta(|k|) t} e^{i x \cdot k}   \phi(k/\lambda) \, dk = \sum_{\pm} \lambda^d \int_{1/4}^4 e^{i t \psi_\lambda^\pm (t,|x|,r)} \phi(r) Z_\pm (\lambda |x| r) r^{d-1} \, dr,
$$
denoting $Z_\pm= Z, \overline{Z}$ and $\psi_\lambda^\pm(t,|x|,r) = \beta(\lambda r) \pm \lambda r \frac{|x|}{t}$.

By~\eqref{appendix-decayZ}, we remark once for all that for all $k \in \mathbb{N}$, for all $r \geq 1/4$,
\begin{equation}
\label{appendix-decayderivZ}
|\partial^k_r (Z_\pm(\lambda |x| r))| \lesssim \frac{1}{(1+\lambda|x| r)^{\frac{d-1}{2}}} \lesssim  1.
\end{equation}
In the + case, we note thanks to \eqref{appendix-KG-behave2} that
$$
\partial_r \psi_\lambda^+(t,|x|,r) = \lambda \beta'(\lambda r) + \lambda \frac{|x|}{t}  \gtrsim \lambda.
$$
Thanks to \eqref{appendix-KG-behave4} we deduce by induction that for $k=0, \ldots, \lceil \frac{d}{2}\rceil$, we have
$$
\left|\partial^k_r \frac{1}{\partial_r \psi_\lambda^+(t,|x|,r)} \right| \lesssim \frac{1}{\lambda}.
$$
As a result, by a non-stationary phase argument, using~\eqref{appendix-decayderivZ}, we get
\begin{equation}
\left|\int_{1/4}^4 e^{i t \psi_\lambda^+ (t,|x|,r)} \phi(r) Z_+ (\lambda |x| r) r^{d-1} \, dr\right| \lesssim \frac{1}{(\lambda t)^N},
\end{equation}
for all $N=0,\ldots,  \lceil \frac{d}{2}\rceil $.

In the - case, we have to distinguish between two subcases. Writing
$$
\partial_r \psi_\lambda^-(t,|x|,r) = \lambda \beta'(\lambda r) - \lambda \frac{|x|}{t},
$$
we first note that in the space-time region $\left\{ |x| \leq \frac{t}{100} \text{ or } t \leq \frac{|x|}{100}\right\}$, one has
$$
|\partial_r \psi_\lambda^-(t,|x|,r) | \gtrsim \frac{\lambda}{t} (t+|x|).
$$
By induction, thanks to~\eqref{appendix-KG-behave4}, we thus obtain for all $k=0, \ldots, \lceil \frac{d}{2}\rceil$
$$
\left|\partial^k_r \frac{1}{\partial_r \psi_\lambda^+(t,|x|,r)} \right| \lesssim \frac{t}{\lambda(\lambda(t+|x|))}.
$$
We can therefore argue as in the + case and obtain in $\left\{ |x| \leq \frac{t}{100} \text{ or } t \leq \frac{|x|}{100}\right\}$,
\begin{equation}
\left|\int_{1/4}^4 e^{i t \psi_\lambda^- (t,|x|,r)} \phi(r) Z_- (\lambda |x| r)  r^{d-1} \, dr\right| \lesssim \frac{1}{(\lambda (t+ |x|))^N},
\end{equation}
for all $N=0,\ldots,  \lceil \frac{d}{2}\rceil $. Otherwise, on the space-time region $\left\{  \frac{1}{100}  \leq \frac{|x|}{t} \leq 100\right\}$, we observe that thanks to~\eqref{appendix-KG-behave3},
$$
\partial^2_r \psi_\lambda^-(t,|x|,r) = \lambda^2 \beta''(\lambda r)   \gtrsim \frac{1}{\lambda}.
$$
As a result, using a stationary phase argument and the bounds~\eqref{appendix-decayderivZ}, we obtain that for  $(t,x)\in \left\{  \frac{1}{100}  \leq \frac{|x|}{t} \leq 100\right\}$,
\begin{equation}
\left|\int_{1/4}^4 e^{i t \psi_\lambda^- (t,|x|,r)} \phi(r) Z_- (\lambda |x| r)  r^{d-1} \, dr\right| \lesssim \frac{\lambda}{(\lambda t)^{d/2}}.
\end{equation}
The estimate~\eqref{appendix-dyadic} follows, and by summing over all dyadic frequencies, and finally end up with~\eqref{appendix-disp}.

\end{proof}

\bigskip

 \noindent {\bf Acknowledgements.} The authors thank Igor Rodnianski for many fruitful discussions.  
 TN acknowledges the hospitality of the ENS de Paris for the visits during which part of this work was carried out. 
 DHK acknowledges partial support from ANR-11-LABX-0020-01 and  ANR-19-CE40-0004.
TN's research is supported in part by the NSF under grant DMS-2054726.


\begin{theindex}

  \item $A(t,x)$: magnetic vector potential, \hyperpage{7}
  \item $A^r(t,x) $: regular part of the magnetic vector potential, 
		\hyperpage{31}
  \item $A^{osc}_\pm(t,x) $: oscillatory part of the magnetic vector potential, 
		\hyperpage{31}

  \indexspace

  \item $D(\lambda,k)$: electric dispersion function, \hyperpage{8}
  \item $\widetilde{D}(\lambda,k)$: extension of $D$, \hyperpage{15}
  \item $\widetilde{\widetilde{D}}(\lambda,k)$: extension of $D$, 
		\hyperpage{15}

  \indexspace

  \item $\eps_0$:  bound on the initial conditions, \hyperpage{3}

  \indexspace

  \item $\FG^{osc}_{k,\pm}(t)$: oscillatory part of the electric Green function, 
		\hyperpage{28}
  \item $\FG^{r}_{k}(t)$: regular part of the electric Green function, 
		\hyperpage{28}
  \item $\FG_k(t)$: electric Green function, \hyperpage{28}
  \item $\FH^{osc}_{k,\pm}(t)$: oscillatory part of the magnetic Green function, 
		\hyperpage{29}
  \item $\FH^{r}_{k}(t)$: regular part of the magnetic Green function, 
		\hyperpage{29}
  \item $\FH_k(t)$: magnetic Green function, \hyperpage{29}
  \item $g(t,x,v)$: shifted distribution function, \hyperpage{8}

  \indexspace

  \item $K_k(t)$, \hyperpage{13}
  \item $\kappa(u)$, \hyperpage{10}
  \item $\kappa_0$, \hyperpage{19}

  \indexspace

  \item $\Lambda(z)$, \hyperpage{18}
  \item $\lambda_\pm^{\text{elec}}$: electric dispersion relation, 
		\hyperpage{19}
  \item $\lambda_\pm^{\text{mag}}$: magnetic dispersion relation, 
		\hyperpage{24}

  \indexspace

  \item $M(\lambda,k)$: magnetic dispersion function, \hyperpage{8}

  \indexspace

  \item $N_k(t)$, \hyperpage{17}
  \item $\nu_{*}$, \hyperpage{24}

  \indexspace

  \item $\Omega(y)$, \hyperpage{15}
  \item $\omega$, \hyperpage{21}

  \indexspace

  \item $\Phi(z)$, \hyperpage{15}
  \item $\widetilde\Phi(z)$, \hyperpage{15}
  \item $\widetilde{\widetilde{\Phi}}(z)$, \hyperpage{15}
  \item $\mathbb{P}$: Leray projector, \hyperpage{8}
  \item $\phi(t,x)$: electric scalar potential, \hyperpage{7}
  \item $\phi^r(t,x) $: regular part of the electric scalar potential, 
		\hyperpage{30}
  \item $\phi^{osc}_\pm(t,x) $: oscillatory part of the electric scalar potential, 
		\hyperpage{30}
  \item $\psi(y)$, \hyperpage{18}

  \indexspace

  \item $q(u)$, \hyperpage{10}

  \indexspace

  \item $S(t,x), S^\bj(t,x)$, \hyperpage{29}

  \indexspace

  \item $\tau_*$, \hyperpage{19}
  \item $\tau_0$, \hyperpage{8}

  \indexspace

  \item $x_*$, \hyperpage{21}

\end{theindex}

%

\bibliographystyle{abbrv}

\begin{thebibliography}{10}

\bibitem{BCD}
H.~Bahouri, J.-Y. Chemin, and R.~Danchin.
\newblock {\em Fourier analysis and nonlinear partial differential equations},
  volume 343 of {\em Grundlehren Math. Wiss.}
\newblock Berlin: Heidelberg, 2011.

\bibitem{BD}
C.~Bardos and P.~Degond.
\newblock Global existence for the {Vlasov}-{Poisson} equation in 3 space
  variables with small initial data.
\newblock {\em Ann. Inst. Henri Poincar{\'e}, Anal. Non Lin{\'e}aire},
  2:101--118, 1985.

\bibitem{BHD}
C.~Bardos, H.~T. Ngoan, and P.~Degond.
\newblock Existence globale des solutions des {\'e}quations de
  {Vlasov}-{Poisson} relativistes en dimension 3. ({Global} solutions for
  relativistic {Vlasov}- {Poisson} equations in three space variables).
\newblock {\em C. R. Acad. Sci., Paris, S{\'e}r. I}, 301:265--268, 1985.

\bibitem{Bed1}
J.~Bedrossian.
\newblock Suppression of plasma echoes and {Landau} damping in {Sobolev} spaces
  by weak collisions in a {Vlasov}-{Fokker}-{Planck} equation.
\newblock {\em Ann. PDE}, 3(2):66, 2017.
\newblock Id/No 19.

\bibitem{Bed2}
J.~Bedrossian.
\newblock Nonlinear echoes and {Landau} damping with insufficient regularity.
\newblock {\em Tunis. J. Math.}, 3(1):121--205, 2021.

\bibitem{BMM-apde}
J.~Bedrossian, N.~Masmoudi, and C.~Mouhot.
\newblock Landau damping: paraproducts and {Gevrey} regularity.
\newblock {\em Ann. PDE}, 2(1):71, 2016.
\newblock Id/No 4.

\bibitem{BMM-cpam}
J.~Bedrossian, N.~Masmoudi, and C.~Mouhot.
\newblock Landau damping in finite regularity for unconfined systems with
  screened interactions.
\newblock {\em Commun. Pure Appl. Math.}, 71(3):537--576, 2018.

\bibitem{BMM-lin}
J.~Bedrossian, N.~Masmoudi, and C.~Mouhot.
\newblock Linearized wave-damping structure of {Vlasov}-{Poisson} in
  {{\(\mathbb{R}^3\)}}.
\newblock {\em SIAM J. Math. Anal.}, 54(4):4379--4406, 2022.

\bibitem{BW}
J.~Bedrossian and F.~Wang.
\newblock The linearized {Vlasov} and {Vlasov}-{Fokker}-{Planck} equations in a
  uniform magnetic field.
\newblock {\em J. Stat. Phys.}, 178(2):552--594, 2020.

\bibitem{Bernstein}
I.~B. Bernstein.
\newblock Waves in a plasma in a magnetic field.
\newblock {\em Physical Review}, 109(1):10, 1958.

\bibitem{Big1}
L.~Bigorgne.
\newblock Sharp asymptotic behavior of solutions of the {{\(3d\)}}
  {Vlasov}-{Maxwell} system with small data.
\newblock {\em Commun. Math. Phys.}, 376(2):893--992, 2020.

\bibitem{Big2}
L.~Bigorgne.
\newblock {Global existence and modified scattering for the small data
  solutions to the Vlasov-Maxwell system}.
\newblock {\em arXiv preprint, arXiv:2208.08360}, 2022.

\bibitem{BGP}
F.~Bouchut, F.~Golse, and C.~Pallard.
\newblock Classical solutions and the {Glassey}-{Strauss} theorem for the 3d
  {Vlasov}-{Maxwell} system.
\newblock {\em Arch. Ration. Mech. Anal.}, 170(1):1--15, 2003.

\bibitem{CDRW}
F.~Charles, B.~Despr{\'e}s, A.~Rege, and R.~Weder.
\newblock The magnetized {Vlasov}-{Amp{\`e}re} system and the
  {Bernstein}-{Landau} paradox.
\newblock {\em J. Stat. Phys.}, 183(2):57, 2021.
\newblock Id/No 23.

\bibitem{CLN}
S.~Chaturvedi, J.~Luk, and T.~T. Nguyen.
\newblock {The Vlasov--Poisson--Landau system in the weakly collisional
  regime}.
\newblock {\em Journal of the American Mathematical Society}, 2023.

\bibitem{DPL}
R.~J. DiPerna and P.~L. Lions.
\newblock Global weak solutions of {Vlasov}-{Maxwell} systems.
\newblock {\em Commun. Pure Appl. Math.}, 42(6):729--757, 1989.

\bibitem{FRT}
L.~G. Farah, F.~Rousset, and N.~Tzvetkov.
\newblock Oscillatory integral estimates and global well-posedness for the 2d
  {Boussinesq} equation.
\newblock {\em Bull. Braz. Math. Soc. (N.S.)}, 43(4):655--679, 2012.

\bibitem{GS-LD1}
R.~Glassey and J.~Schaeffer.
\newblock Time decay for solutions to the linearized {Vlasov} equation.
\newblock {\em Transp. Theory Stat. Phys.}, 23(4):411--453, 1994.

\bibitem{GS-LD2}
R.~Glassey and J.~Schaeffer.
\newblock On time decay rates in {Landau} damping.
\newblock {\em Commun. Partial Differ. Equations}, 20(3-4):647--676, 1995.

\bibitem{Glassey-rVM2.5}
R.~Glassey and J.~Schaeffer.
\newblock The ``{Two} and one-half dimensional'' relativistic {Vlasov}
  {Maxwell} system.
\newblock {\em Commun. Math. Phys.}, 185(2):257--284, 1997.

\bibitem{Glassey-rVM2}
R.~T. Glassey and J.~Schaeffer.
\newblock The relativistic {Vlasov}-{Maxwell} system in two space dimensions:
  {I}. {II}.
\newblock {\em Arch. Ration. Mech. Anal.}, 141(4):331--374, 1998.

\bibitem{GlasseyStrauss2}
R.~T. Glassey and J.~W. Schaeffer.
\newblock Global existence for the relativistic {Vlasov}-{Maxwell} system with
  nearly neutral initial data.
\newblock {\em Commun. Math. Phys.}, 119(3):353--384, 1988.

\bibitem{GlasseyStrauss}
R.~T. Glassey and W.~A. Strauss.
\newblock Singularity formation in a collisionless plasma could occur only at
  high velocities.
\newblock {\em Arch. Ration. Mech. Anal.}, 92:59--90, 1986.

\bibitem{GlasseyStrauss1}
R.~T. Glassey and W.~A. Strauss.
\newblock Absence of shocks in an initially dilute collisionless plasma.
\newblock {\em Commun. Math. Phys.}, 113:191--208, 1987.

\bibitem{Gra}
L.~Grafakos.
\newblock {\em Classical {Fourier} analysis}, volume 249 of {\em Grad. Texts
  Math.}
\newblock New York, NY: Springer, 3rd ed. edition, 2014.

\bibitem{GNR1}
E.~Grenier, T.~T. Nguyen, and I.~Rodnianski.
\newblock Landau damping for analytic and {Gevrey} data.
\newblock {\em Math. Res. Lett.}, 28(6):1679--1702, 2021.

\bibitem{GNR2}
E.~Grenier, T.~T. Nguyen, and I.~Rodnianski.
\newblock Plasma echoes near stable {P}enrose data.
\newblock {\em SIAM J. Math. Anal.}, 54(1):940--953, 2022.

\bibitem{HKN}
D.~Han-Kwan and T.~T. Nguyen.
\newblock Nonlinear instability of {Vlasov}-{Maxwell} systems in the classical
  and quasineutral limits.
\newblock {\em SIAM J. Math. Anal.}, 48(5):3444--3466, 2016.

\bibitem{HKNR1}
D.~Han-Kwan, T.~T. Nguyen, and F.~Rousset.
\newblock Long time estimates for the {Vlasov}-{Maxwell} system in the
  non-relativistic limit.
\newblock {\em Commun. Math. Phys.}, 363(2):389--434, 2018.

\bibitem{HKNR2}
D.~Han-Kwan, T.~T. Nguyen, and F.~Rousset.
\newblock Asymptotic stability of equilibria for screened {Vlasov}-{Poisson}
  systems via pointwise dispersive estimates.
\newblock {\em Ann. PDE}, 7(2):37, 2021.
\newblock Id/No 18.

\bibitem{HKNR3}
D.~Han-Kwan, T.~T. Nguyen, and F.~Rousset.
\newblock On the linearized {Vlasov}-{Poisson} system on the whole space around
  stable homogeneous equilibria.
\newblock {\em Commun. Math. Phys.}, 387(3):1405--1440, 2021.

\bibitem{HNX2}
L.~Huang, Q.-H. Nguyen, and Y.~Xu.
\newblock {Nonlinear Landau damping for the 2d Vlasov-Poisson system with
  massless electrons around Penrose-stable equilibria}.
\newblock {\em arXiv preprint arXiv:2206.11744}, 2022.

\bibitem{HNX1}
L.~Huang, Q.-H. Nguyen, and Y.~Xu.
\newblock {Sharp estimates for screened Vlasov-Poisson system around
  Penrose-stable equilibria in $\mathbb{R}^d$, $ d\geq3$}.
\newblock {\em arXiv preprint arXiv:2205.10261}, 2022.

\bibitem{Ionescu-Pausader0}
A.~Ionescu, B.~Pausader, X.~Wang, and K.~Widmayer.
\newblock On the stability of homogeneous equilibria in the vlasov-poisson
  system on r3.
\newblock {\em arXiv:2305.11166}.

\bibitem{IPWW}
A.~Ionescu, B.~Pausader, X.~Wang, and K.~Widmayer.
\newblock {Nonlinear Landau damping for the Vlasov-Poisson system in
  $\mathbb{R}^{3}$: the Poisson equilibrium}.
\newblock {\em arXiv preprint arXiv:2205.04540}, 2022.

\bibitem{KMM}
N.~Kalton, S.~Mayboroda, and M.~Mitrea.
\newblock Interpolation of {Hardy}-{Sobolev}-{Besov}-{Triebel}-{Lizorkin}
  spaces and applications to problems in partial differential equations.
\newblock pages 121--177. Providence, RI: American Mathematical Society (AMS),
  2007.

\bibitem{Klainerman}
S.~Klainerman and G.~Staffilani.
\newblock A new approach to study the {Vlasov}-{Maxwell} system.
\newblock {\em Commun. Pure Appl. Anal.}, 1(1):103--125, 2002.

\bibitem{Landau-paper}
L.~Landau.
\newblock {On the vibrations of the electronic plasma. (Russian)}.
\newblock {\em Akad. Nauk SSSR. Zhurnal Eksper. Teoret. Fiz.}, 16:574--586,
  1946.

\bibitem{Lin-Strauss}
Z.~Lin and W.~A. Strauss.
\newblock A sharp stability criterion for the {V}lasov-{M}axwell system.
\newblock {\em Invent. Math.}, 173(3):497--546, 2008.

\bibitem{LukStrain}
J.~Luk and R.~M. Strain.
\newblock Strichartz estimates and moment bounds for the relativistic
  {Vlasov}-{Maxwell} system.
\newblock {\em Arch. Ration. Mech. Anal.}, 219(1):445--552, 2016.

\bibitem{MV}
C.~Mouhot and C.~Villani.
\newblock On {Landau} damping.
\newblock {\em Acta Math.}, 207(1):29--201, 2011.

\bibitem{Toan}
T.~T. Nguyen.
\newblock {Landau damping and the survival threshold}.
\newblock {\em arXiv:2305.08672}, 2023.

\bibitem{NS}
T.~T. Nguyen and W.~A. Strauss.
\newblock Linear stability analysis of a hot plasma in a solid torus.
\newblock {\em Arch. Ration. Mech. Anal.}, 211(2):619--672, 2014.

\bibitem{Pallard}
C.~Pallard.
\newblock A refined existence criterion for the relativistic {Vlasov}-{Maxwell}
  system.
\newblock {\em Commun. Math. Sci.}, 13(2):347--354, 2015.

\bibitem{RS}
F.~Rousset and C.~Sun.
\newblock Stability of equilibria uniformly in the inviscid limit for the
  {N}avier-{S}tokes-{P}oisson system.
\newblock {\em Ann. Inst. H. Poincar\'{e} C Anal. Non Lin\'{e}aire},
  38(4):1255--1294, 2021.

\bibitem{Steinext}
E.~M. Stein.
\newblock {\em Singular integrals and differentiability properties of
  functions}.
\newblock Princeton Mathematical Series, No. 30. Princeton University Press,
  Princeton, NJ, 1970.

\bibitem{Stein}
E.~M. Stein.
\newblock {\em Harmonic analysis: {Real}-variable methods, orthogonality, and
  oscillatory integrals. {With} the assistance of {Timothy} {S}. {Murphy}},
  volume~43 of {\em Princeton Math. Ser.}
\newblock Princeton, NJ: Princeton University Press, 1993.

\bibitem{Tri}
I.~Tristani.
\newblock Landau damping for the linearized {Vlasov} {Poisson} equation in a
  weakly collisional regime.
\newblock {\em J. Stat. Phys.}, 169(1):107--125, 2017.

\bibitem{WangVM}
X.~Wang.
\newblock Global solution of the 3{D} relativistic {Vlasov}-{Maxwell} system
  for large data with cylindrical symmetry.
\newblock {\em arXiv preprint, arXiv:2203.01199}, 2022.

\bibitem{Wang}
X.~Wang.
\newblock Propagation of regularity and long time behavior of the {{\(3D\)}}
  {Massive} relativistic transport equation. {II}: {Vlasov}-{Maxwell} system.
\newblock {\em Commun. Math. Phys.}, 389(2):715--812, 2022.

\bibitem{WY}
D.~Wei and S.~Yang.
\newblock On the 3d relativistic {Vlasov}-{Maxwell} system with large {Maxwell}
  field.
\newblock {\em Commun. Math. Phys.}, 383(3):2275--2307, 2021.

\bibitem{Whi}
H.~Whitney.
\newblock Analytic extensions of differentiable functions defined in closed
  sets.
\newblock {\em Trans. Am. Math. Soc.}, 36:63--89, 1934.

\bibitem{YoungJDE}
B.~Young.
\newblock On linear {Landau} damping for relativistic plasmas via {Gevrey}
  regularity.
\newblock {\em J. Differ. Equations}, 259(7):3233--3273, 2015.

\bibitem{Young}
B.~Young.
\newblock Landau damping in relativistic plasmas.
\newblock {\em J. Math. Phys.}, 57(2):021502, 68, 2016.

\end{thebibliography}

\end{document}